\pdfoutput=1
\documentclass[11pt,a4paper]{article}
\usepackage[utf8]{inputenc}
\usepackage{jheppub}
\hypersetup{unicode=true,bookmarksopen=true}

\usepackage{mathtools}
\mathtoolsset{showonlyrefs=true}
\usepackage{mlmodern}
\usepackage[T1]{fontenc}
\usepackage{bbm}
\DeclareMathAlphabet{\mathbfi}{OML}{cmm}{b}{it}

\usepackage{amsthm}
\newtheorem{theorem}{Theorem}[section]
\newtheorem{lemma}[theorem]{Lemma}
\newtheorem{corollary}[theorem]{Corollary}
\newtheorem{assumption}[theorem]{Assumption}
\newtheorem{proposition}[theorem]{Proposition}

\theoremstyle{definition}
\newtheorem{definition}[theorem]{Definition}

\newtheorem{example}[theorem]{Example}

\let\originalleft\left
\let\originalright\right
\renewcommand{\left}{\mathopen{}\mathclose\bgroup\originalleft}
\renewcommand{\right}{\aftergroup\egroup\originalright}

\makeatletter
\newenvironment{equations}[1][]{\subequations\ifx\relax#1\relax\else\label{#1}\fi\align\ignorespaces}{\endalign\ignorespacesafterend\endsubequations}
\def\@spliteq#1{\begin{equation}\begin{split}#1\end{split}\end{equation}}
\def\splitequation{\collect@body\@spliteq}

\makeatother

\newcommand{\eqend}[1]{\,#1}

\newcommand{\mathe}{\mathrm{e}}
\newcommand{\mathi}{\mathrm{i}}
\let\oldre\Re
\let\oldim\Im
\renewcommand{\Re}{\oldre\mathfrak{e}\,}
\renewcommand{\Im}{\oldim\mathfrak{m}\,}
\newcommand{\total}{\mathop{}\!\mathrm{d}}

\makeatletter
\def\abs{\@ifnextchar[\abs@size\abs@nosize}
\def\abs@size[#1]#2{\mathopen{#1\lvert}{#2}\mathclose{#1\rvert}}
\def\abs@nosize#1{\left\lvert{#1}\right\rvert}
\def\norm{\@ifnextchar[\norm@size\norm@nosize}
\def\norm@size[#1]#2{\mathopen{#1\lVert}{#2}\mathclose{#1\rVert}}
\def\norm@nosize#1{\left\lVert{#1}\right\rVert}
\makeatother

\newcommand{\1}{\mathbbm{1}}

\newcommand{\Tr}{\operatorname{Tr}}
\newcommand{\dom}{\operatorname{dom}}
\newcommand{\Hil}{\mathcal{H}}
\newcommand{\BH}{{\mathcal{B}(\mathcal{H})}}
\newcommand{\M}{\mathcal{M}}
\newcommand{\PM}{\mathcal{P}(\mathcal{M})}
\newcommand{\I}{\mathcal{I}}
\newcommand{\N}{\mathcal{N}}
\newcommand{\sth}{_{*,\mathrm{h}}}
\newcommand{\stp}{_{*,+}}

\newcommand{\half}{{\mathinner{1/2}}}
\newcommand{\pinv}{{\mathinner{1/p}}}

\newcommand{\defeq}{\coloneq}
\newcommand{\eqdef}{\eqcolon}

\makeatletter
\newcommand*{\n@meref}[3][]{#3~\ref{#2}\if\relax\detokenize{#1}\relax\else ~\ref{#1}\fi}
\newcommand*{\secref}[2][]{\n@meref[#1]{#2}{Sec.}}
\newcommand*{\defref}[2][]{\n@meref[#1]{#2}{Def.}}
\newcommand*{\lemmaref}[2][]{\n@meref[#1]{#2}{Lemma}}
\newcommand*{\propref}[2][]{\n@meref[#1]{#2}{Prop.}}
\newcommand*{\thmref}[2][]{\n@meref[#1]{#2}{Thm.}}
\newcommand*{\corref}[2][]{\n@meref[#1]{#2}{Cor.}}
\newcommand*{\exref}[2][]{\n@meref[#1]{#2}{Example}}
\makeatother

\makeatletter
\gdef\@fpheader{\strut}
\makeatother

\allowdisplaybreaks

\begin{document}

\title{Integral representations of $f$-divergences for general von Neumann algebras}

\author[a]{Ricardo Correa da Silva,}
\author[b]{Markus B. Fr{\"o}b,}
\author[b]{Gandalf Lechner}
\author[c]{and Leonardo Sangaletti}

\affiliation[a]{Instituto de F{\'\i}sica, Universidade de S{\~a}o Paulo, Rua do Mat{\~a}o 1371, 05508-090 S{\~a}o Paulo, Brazil}
\affiliation[b]{Department Mathematik, Friedrich-Alexander-Universit{\"a}t Erlangen-N{\"u}rnberg, Cauerstra{\ss}e 11, 91058 Erlangen, Germany}
\affiliation[c]{Dipartimento di Fisica, Universit{\`a} di Genova, Via Dodecaneso 33, 16146 Genova, Italy}

\emailAdd{ricardo.correa.silva@usp.br}
\emailAdd{markus.froeb@fau.de}
\emailAdd{gandalf.lechner@fau.de}
\emailAdd{leonardo.sangaletti@edu.unige.it}

\abstract{We define and analyze hockeystick divergences and $f$-divergences for normal positive functionals on general von Neumann algebras, generalizing and unifying previous work in classical probability and finite-dimensional von Neumann algebras. All the main properties of these state distinguishability measures (including in particular monotonicity, convexity, semicontinuity, bounds, state discrimination, data processing inequality) are derived from properties of the Jordan decomposition of selfadjoint normal functionals. This is done by representing the $f$-divergences as integrals over hockeystick divergences, and their significance in quantum hypothesis testing is reviewed. The $f_0$-divergence given by the information function $f_0(t) = t \ln t$ is shown to coincide with Araki's relative entropy, extending results of Frenkel to general von Neumann algebras.}

\keywords{von Neumann algebras, $f$-divergences, relative entropy}

\maketitle

\section{Introduction and overview}
\label{sec:intro}

\paragraph{$f$-divergences in classical probability.} A central question in probability and information theory and their applications to physics consists in distinguishing states of interest, and finding operationally meaningful ways of quantifying their distinguishability. In classical probability, classical $f$- or Csisz{\'a}r divergences~\cite{csiszar1963} $D_f$ (distinguishability measures) are among the basic tools for doing so: Given two probability measures $\mu_1,\mu_2$ on a measurable space $X$, which for the purposes of this introduction we may assume to be absolutely continuous with respect to each other ($\mu_1 \ll \mu_2 \ll \mu_1$), and a convex function $f \colon (0,\infty) \to \mathbb{R}_+$, they are defined as \cite[Def.~3]{sasonverdu2016}
\begin{equation}
\label{eq:DfIntro}
D_f(\mu_1 \vert \mu_2) = \int_X f\left( \frac{\total \mu_1}{\total \mu_2} \right) \total \mu_2 \eqend{.}
\end{equation}
Different choices of $f$ lead to divergences with different features. Within the rich class of $f$-divergences, two cases are of particular importance: The Kullback--Leibler divergence or relative entropy and the hockeystick divergences. The Kullback--Leibler divergence $D_{f_0}$ is given by setting $f$ in \eqref{eq:DfIntro} equal to the information function
\begin{equation}
f_0(t) = t \ln t \eqend{,}
\end{equation}
and is distinguished by its additivity properties over tensor products. On the other hand, the hockeystick divergence at likelihood parameter $t > 0$, which owes its name to the shape of the graph of $s \mapsto (s-t)_+ = \max( s-t, 0 )$, is defined as $E_t \defeq D_{(\,\cdot\, -t)_+}$, also plays a special role because the other $f$-divergences can be expressed as integrals over hockeystick divergences. Namely, as shown by Sason and Verd{\'u} \cite{sasonverdu2016}, for twice differentiable convex $f \colon (0,\infty) \to (0,\infty)$ with $f(1) = 0$, it holds that\footnote{This formula follows from straightforward integration. It will also be a special case of our analysis in \secref{sec:hockeystick:fdivergence}.}
\begin{equation}
\label{eq:DIntegralIntro}
D_f(\mu_1 \vert \mu_2) = \int_1^\infty \left[ f''(t) E_t(\mu_1 \vert \mu_2) + t^{-3} f''\left( t^{-1} \right) E_t(\mu_2 \vert \mu_1) \right] \total t \eqend{.}
\end{equation}
Hence many features of a whole family of $f$-divergences can be derived from features of hockeystick divergences, even if the hockeystick divergence at fixed $t$ lacks some desirable features, such as the ability to distinguish arbitrary probability measures.

\paragraph{$f$-divergences in quantum theory.} When passing from this classical setting to the quantum setting, as appropriate for applications in quantum physics, the classical measure spaces $(X,\mu)$ are replaced by noncommutative measure spaces, namely von Neumann algebras~$\M$~\cite{takesaki2001}. This includes the classical situation as the commutative von Neumann algebra $\M = L^\infty(X,\mu)$, with normal states given by integrating against probability measures. In quantum information theory \cite{watrous2018,holevo2019}, one often considers finite-dimensional matrix algebras $\M = M_N(\mathbb{C})$, although also more general systems are of prominent interest from the point of view of entanglement \cite{vanluijkstottmeisterwernerwilming2024,vanluijkstottmeisterwernerwilming2025}. In other physical applications, various types of von Neumann algebras appear anyway, such as infinite-dimensional type I algebras like $\BH$ in quantum mechanics, type II algebras in certain models of quantum statistical mechanics \cite{emch1972}, and type III algebras as local observable algebras in quantum field theory \cite{haag1996}. Because interesting information-theoretic questions arise in all of these scenarios, we would like to define and study $f$-divergences for general von Neumann algebras. This is the subject of the present work. As this is a long article, we now give a rather detailed discussion of more background, relation of our work to the existing literature, and a guideline to the main contents and results.

Even in the simplest case of matrix algebras, states $\varphi_j(\,\cdot\,) = \Tr\left( h_j \,\cdot\, \right)$, $j = 1,2$, will typically have noncommuting density matrices $h_1, h_2$. This leads to many new effects and challenges: for instance, the naive replacement $h_1 h_2^{-1}$ of the classical Radon--Nikod{\'y}m derivative $\total \mu_1/\total \mu_2$ is no longer positive. Among various noncommutative replacements discussed in the literature \cite{puszworonowicz1975,kuboando1980}, the relative modular operator $\Delta_{\varphi_1,\varphi_2}$ \cite{araki1976,araki1977,arakimasuda1982} provides a natural replacement which is available in generality.\footnote{We review the necessary mathematical background in \secref{sec:relentropy}.} For example, Araki's definition of the relative entropy $S^\M_\mathrm{rel}(\varphi_1 \vert \varphi_2)$ of two normal states $\varphi_1,\varphi_2$ on a von Neumann algebra $\M$ \cite{araki1976,araki1977} amounts to replacing $\total \mu_1/\total \mu_2$ by $\Delta_{\varphi_1,\varphi_2}$ in \eqref{eq:DfIntro}, with $f = f_0$, and reading the integral against~$\mu_2$ as evaluation in the functional~$\varphi_2$ (see \defref{def:RelativeEntropy} for details). Similarly, general $f$-divergences have been extended to the von Neumann algebra setting by Petz~\cite{petz1985,petz1986a}, see also Hiai~\cite{hiai2018}.

\paragraph{Hockeystick divergences and quantum hypothesis testing.} However, the approach to quantum $f$-divergences based on relative modular operators does not take into account the significance of hockeystick divergences in Bayesian (quantum) hypothesis testing (see \cite{hayashi2017,jaksicogatapilletseiringer2012,hirche2018} for a small sample of a large body of literature). For a quick review of this perspective, consider a von Neumann algebra $\M$ representing a physical system, two normal states $\varphi_1,\varphi_2$ on $\M$, and the two hypotheses ``the system is in state $\varphi_j$'', $j=1,2$. We use a projection $p \in \M$ as a test for the first hypothesis, namely we interpret $p$ taking the value~$1$ as indicating that the state is $\varphi_1$, and $p$ taking the value $0$ as indicating that the state is~$\varphi_2$. In case prior probabilities $q_1, q_2 \in (0,1)$ with $q_1 + q_2 = 1$ for the two hypotheses are known, the success probability of this test is
\begin{equation}
\label{eq:SuccessProbabilityTest}
\mathbb{P}(q_1 \varphi_1, q_2 \varphi_2; p) \defeq q_1 \varphi_1(p) + q_2 \varphi_2(p^\perp) = q_2 + q_1 \left[ \varphi_1(p) - \frac{q_2}{q_1} \varphi_2(p) \right] \eqend{.}
\end{equation}

We denote by $t \defeq q_2/q_1$ the likelihood ratio and would like to maximize the success probability for fixed $q_1,q_2$. For this, we need to choose $p$ such that the weighted difference $\varphi_1(p) - t \varphi_2(p)$ becomes maximal. The optimal success probability
\begin{equations}
\mathbb{P}_\mathrm{max}(q_1 \varphi_1, q_2 \varphi_2) &\defeq \sup_p \mathbb{P}(q_1 \varphi_1, q_2 \varphi_2; p) = q_2 + q_1 \widetilde{E}_t(\varphi_1 \vert \varphi_2) \eqend{,} \\
\widetilde{E}_t(\varphi_1 \vert \varphi_2) &\defeq \sup_p \Bigl[ \varphi_1(p) - t \varphi_2(p) \Bigr]
\end{equations}
is called the \emph{Helstrom bound} in the classical and quantum information literature \cite{helstrom1969}, \cite[Thm.~3.4]{watrous2018}, and the optimal test $p$ is called a \emph{Neyman--Pearson} test (see, e.g., \cite[Lemma~3]{jencova2010}). As $q_2$ corresponds to the success probability using the trivial test $p = 0$ (meaning always accepting the second hypothesis), the quantity $\widetilde{E}_t(\varphi_1 \vert \varphi_2)$ measures the excess optimal success probability above the trivial strategy, normalized by the prior $q_1$.

It is instructive to consider the classical case of a probability space $(X,\mu)$, where $\M = L^\infty(X,\mu)$, and $\mu_1 \ll \mu_2 \ll \mu$ are two probability measures absolutely continuous with respect to $\mu$, defining the two states $\varphi_j(f) \defeq \int_X f \total \mu_j$, $j=1,2$. In this case, it is easy to check that $\widetilde{E}_t(\varphi_1 \vert \varphi_2)$ coincides with the previously introduced hockeystick divergence:
\begin{equation}
\widetilde{E}_t(\varphi_1 \vert \varphi_2) = E_t(\varphi_1 \vert \varphi_2) = \int_X \left( \frac{\total \mu_1}{\total \mu_2} - t \right)_+ \total \mu_2 = \norm{ (\mu_1 - t \mu_2)_+ } \eqend{.}
\end{equation}
In the last expression, the norm is the total variation norm of the positive part $(\mu_1 - t \mu_2)_+$ of the signed measure $\mu_1 - t \mu_2$.

In von Neumann algebra language, this norm is $\norm{ (\varphi_1 - t \varphi_2)_+ }$, the norm of the positive part of the Jordan decomposition of the selfadjoint normal functional $\varphi_1 - t \varphi_2$ in $L^\infty(X,\mu)_* \cong L^1(X,\mu)$. This observation suggests to define hockeystick divergences of two normal positive selfadjoint functionals $\varphi_1,\varphi_2$ on a general von Neumann algebra $\M$ by
\begin{equation}
\label{eq:EIntro}
E^\M_t(\varphi_1 \vert \varphi_2) \defeq \norm{ (\varphi_1 - t \varphi_2)_+ }_{\M_*} \eqend{,}
\end{equation}
which we will take as our definition of hockeystick divergence, \defref{def:hockeystick}. It directly generalizes the definition of hockeystick divergences for states with density matrices $h_1,h_2$ on matrix algebras, $E^{M_N(\mathbb{C})}_t(h_1 \vert h_2) = \Tr\left[ (h_1 - t h_2)_+ \right]$ \cite{sharmawarsi2012,sharmawarsi2013} (see also \cite{hirchetomamichel2024} for a closely related approach), and has a clearcut interpretation as a distinguishability measure for states in terms of quantum hypothesis testing.

Note that the hockeystick divergences defined as \eqref{eq:EIntro} are independent of modular theory and typically \emph{different} from replacing the Radon--Nikod{\'y}m derivative $\total \mu_1 / \total \mu_2$ in $D_{(\,\cdot\, - t)_+}$ \eqref{eq:DfIntro} by the relative modular operator $\Delta_{\varphi_1,\varphi_2}$. For example, for a matrix algebra and two states with invertible densities $h_1,h_2$, using the relative modular operator would result in $\sum_{\lambda_1,\lambda_2} (\lambda_1 - t \lambda_2)_+ \Tr\left[ E^{h_1}(\lambda_1) E^{h_2}(\lambda_2) \right]$, where $\lambda_j$ and $E^{h_j}(\lambda_j)$ are the eigenvalues and eigenprojections of $h_j$, $j=1,2$. If $h_1$ and $h_2$ do not commute, this is typically different from $\Tr\left[ (h_1 - t h_2)_+ \right]$. Therefore, also the $f$-divergences \eqref{eq:DIntegralIntro} will be typically different from the ones defined using the relative modular operator~\cite{petz1985,petz1986a,hiai2018}, which can also be seen numerically~\cite{hirchetomamichel2024}.

\paragraph{Aims and overview of the article, connection to the literature.} Given the background described up this point, this article has two aims. The first one is the introduction and analysis of hockeystick divergences \eqref{eq:EIntro} and $f$-divergences on general von Neumann algebras, defined in \defref{def:fdivergence} by the analogue of the integral formula \eqref{eq:DIntegralIntro} over hockeystick divergences, unifying and extending the special cases studied before \cite{hirchetomamichel2024,frenkel2023,sasonverdu2016,liuhirchecheng2025,jencova2024,vanluijkwilming2026}.

Our analysis starts in \secref{sec:hockeystick}, where we first recall the Jordan decomposition of normal selfadjoint functionals $\varphi$, discuss typical example scenarios, and formulate the main properties of the \emph{positive variation}, the map $\varphi \mapsto \norm{ \varphi_+ }$ in \secref{sec:hockeystick:jordan}. Based on this, we introduce hockeystick divergences in \secref{sec:hockeystick:hockeystick}, and afterwards $f$-divergences in \secref{sec:hockeystick:fdivergence}.

Our main result regarding this first aim is \thmref{thm:fdivergence_properties}, which collects various properties of $f$-divergences on general von Neumann algebras. These properties include, amongst others, monotonicity, convexity and semicontinuity properties, bounds, state discrimination properties and the data processing inequality. An attractive feature of the approach via hockeystick divergences is that the proofs of all these properties are simple, and can in fact be carried over from the finite-dimensional setting \cite{sharmawarsi2012,sharmawarsi2013,hircherouzefranca2023,liuhirchecheng2025} in various instances. In particular, the proof of the data processing inequality can be carried out in a few lines, in contrast to earlier more involved arguments \cite{beigi2013,muellerhermesreeb2017,jencova2018}. In this regard, $f$-divergences are similar to Kosaki's approach to relative entropy \cite{kosaki1986}, which also allows for simple proofs of various properties, albeit restricted to one particular divergence. However, Kosaki's variational formula has a more complicated expression in comparison to \eqref{eq:DIntegralIntro} and \eqref{eq:Df0Intro}.

As mentioned before, quantum $f$-divergences defined on the basis of the quantum hockeystick divergences \eqref{eq:EIntro} are in general different from the ones defined using the relative modular operator. However, for the special case of the relative entropy, there have recently been various important results indicating a close connection between $D^\M_{f_0}$ and Araki's relative entropy $S^\M_\mathrm{rel}$. For positive normal functionals $\varphi_1,\varphi_2$, the divergence $D^\M_{f_0}$ takes the form
\begin{equation}
\label{eq:Df0Intro}
D^\M_{f_0}(\varphi_1 \vert \varphi_2) = \int_1^\infty \left[ \frac{1}{t} E^\M_t(\varphi_1 \vert \varphi_2) + \frac{1}{t^2}E^\M_t(\varphi_2 \vert \varphi_1) \right] \total t + \varphi_1(\1) - \varphi_2(\1) \eqend{.}
\end{equation}
This line of research began with the work of Frenkel, who proved $D^{M_N(\mathbb{C})}_{f_0} = S^{M_N(\mathbb{C})}_\mathrm{rel}$ for matrix algebras (in an equivalent form), and showed how the data processing inequality follows from this integral representation \cite{frenkel2023}. This result has subsequently been used and extended by several authors: Jen{\v c}ov{\'a} extended Frenkel's formula to trace-class operators on a separable Hilbert space $\Hil$ \cite{jencova2024}, and Friedland extended it to bounded operators, establishing $D^\BH_{f_0} = S^\BH_\mathrm{rel}$ \cite{friedland2026}. Hirche and Tomamichel defined quantum $f$-divergences as described here for $\M = M_N(\mathbb{C})$ \cite{hirchetomamichel2024}. Liu, Hirche, and Cheng proposed an alternative proof of Frenkel's formula via so-called operator layer cake decompositions \cite{liuhirchecheng2025}, generalizing the well-known layer cake representation from measure theory \cite[Thm.~1.13]{liebloss1997}. Van Luijk and Wilming have shown that Frenkel's finite-dimensional result carries over to approximately finite-dimensional von Neumann algebras~\cite{vanluijkwilming2026}.

\paragraph{Proving $\boldsymbol{D^\M_{f_0} = S^\M_\mathrm{rel}}$.} In view of these results, one might wonder whether the $f_0$-diver\-gence $D^\M_{f_0}$ coincides with Araki's relative entropy $ S^\M_\mathrm{rel}$ also for general von Neumann algebras, which would open a new perspective on relative entropy in settings where infinite-dimensional von Neumann algebras are relevant, such as for example quantum field theory. Proving this is the second objective of this paper.

While $f$-divergences and their properties are based entirely on the Jordan decomposition, Araki's approach to relative entropy is based on relative modular theory, and the connection between the two is non-trivial already in the case of matrix algebras. We present two approaches to this question: In \secref{sec:ModularBounds}, we show how hockeystick divergences $E^\M_t(\varphi_1 \vert \varphi_2)$ can be bounded above and below by expressions involving $\Delta_{\varphi_2,\varphi_1}$, partially building on previous work by Jak{\v s}i{\'c}, Ogata, Pillet, and Seiringer in quantum hypothesis testing \cite{jaksicogatapilletseiringer2012} and Ogata's generalization \cite{ogata2011} of the Powers--St{\o}rmer inequality \cite{powersstormer1970}. In integrated form, this leads to bounds on the $f_0$-divergence, including in particular the simple upper bound
\begin{equation}
D^\M_{f_0}(\varphi_1 \vert \varphi_2) \leq S^\M_\mathrm{rel}(\varphi_1 \vert \varphi_2) + \varphi_1(\1) \eqend{,}
\end{equation}
and a more involved lower bound. These bounds are sharp enough to show that when passing to tensor powers $\M^{\otimes n}$, $\varphi_1^{\otimes n},\varphi_2^{\otimes n}$, one obtains (\thmref{thm:ModularBoundsDf0}):
\begin{equation}
\lim_{n \to \infty} \frac{1}{n} D^{\M^{\otimes n}}_{f_0}\left( \varphi_1^{\otimes n} \vert \varphi_2^{\otimes n} \right) = S^\M_\mathrm{rel}(\varphi_1 \vert \varphi_2) \eqend{.}
\end{equation}
In hypothesis testing, this limit is the multi-shot regularization of $D^\M_{f_0}(\varphi_1 \vert \varphi_2)$ and describes asymptotic decay rates of testing errors (see~\cite{chengliu2026} and references therein).

While this approach works directly for arbitrary von Neumann algebras $\M$, it seems not to yield the desired equality $D^\M_{f_0}(\varphi_1 \vert \varphi_2) = S^\M_\mathrm{rel}(\varphi_1 \vert \varphi_2)$ because we currently do not have an independent proof of additivity of $D_{f_0}$ over tensor products. We therefore develop a second approach to comparing the $f_0$-divergence with relative entropy. This approach passes through several steps and relies on various approximation techniques: In \secref{sec:semifinite} we first consider the special case of a semifinite von Neumann algebra $\M$ (a von Neumann algebra with a normal semifinite tracial weight $\tau$), in which the two positive functionals $\varphi_1,\varphi_2$ to be compared are given by densities $h_1,h_2$ with respect to $\tau$. These densities are unbounded operators in general, and for $\varphi_1 \ll \varphi_2$, the relative entropy takes the form
\begin{equation}
S^\M_\mathrm{rel}(\varphi_1 \vert \varphi_2) = \tau\left( h_1 \ln h_1 - h_1 \ln h_2 \right)
\end{equation}
given in Cor.~\ref{cor:semifinite_relative_entropy}. The identification of this expression with $D^\M_{f_0}$ then relies on establishing the equality of the two operator integrals
\begin{equations}
J(h_1,h_2) &= \int_0^\infty \sqrt{ \frac{s}{h_1 + s} } \, \left[ C_{h_1,h_2}(s) \right]^2 f''\left( C_{h_1,h_2}(s) \right) \sqrt{ \frac{s}{h_1 + s} } \total s \eqend{,} \\
I(h_1, h_2) &= \int_0^\infty (h_2 - t h_1)_+ f''(t) \total t \eqend{,}
\end{equations}
where $C_{h_1,h_2}(s) = (h_1 + s)^{-\half} h_2 (h_1 + s)^{-\half}$. For positive numbers $h_1, h_2 > 0$, the integral equality $I(h_1,h_2) = J(h_1,h_2)$ is a simple consequence of the change of variables theorem. For matrix algebras, a similar identity (with the projection $E^{h_2 - t h_1}_{\mathbb{R}_+}$ on the positive part instead of the positive part $(h_2 - t h_1 )_+$ itself) was first derived in~\cite{chengliu2025}, based on a layer cake representation of the derivative of the operator logarithm. It was later shown to be equivalent to Frenkel's formula for the relative entropy~\cite{chenggourlamiliu2025}.

In our general setup, the idea is that for $f = f_0$ the expectation value $\tau\left( J(h_1,h_2) \right)$ in the tracial weight essentially gives $\tau( h_1 \ln h_1 - h_1 \ln h_2) $, and $\tau\left( I(h_1,h_2) \right)$ essentially gives $D^\M_{f_0}$; for details, see \secref{sec:operatorint}. This approach is quite different from Frenkel's original work, but similar to the one taken by Liu, Hirche, and Cheng in \cite{liuhirchecheng2025} for matrix algebras. In order to prove the equality, we need to approximate the densities by spectrally cut off versions, and $f_0$ by a monotonically increasing sequence of $L^1$-functions. Having proven $D^\M_{f_0} = S^\M_\mathrm{rel}$ for semifinite von Neumann algebras, we extend this result in a final step to general von Neumann algebras by passing through finite subalgebras of a discrete crossed product (Haagerup reduction) in \secref{sec:Df=Srel}, and arrive in \thmref{thm:D=Sgeneral} at $D^\M_{f_0} = S^\M_\mathrm{rel}$ for general~$\M$. The underlying operator integral constructions are collected in \secref{sec:operatorint}.

The entropy formula \eqref{eq:Df0Intro} presents a new perspective on relative entropy in general quantum systems, and is expected to be useful in various fields. Our motivation mostly derives from applications in quantum field theory, where infinite von Neumann algebras are ubiquitous. However, the current work is entirely dedicated to a general analysis of $f$-divergences, and such applications will appear elsewhere.

\paragraph{Notation.} Our notation is standard, but briefly recalled here. We consider an arbitrary von Neumann algebra~$\M$, with closed unit ball $\M_1$, positive cone $\M_+ \subset \M$, projection lattice $\PM$, and write $[0,1]_\M \defeq \{ x \in \M \colon 0 \leq x \leq \1_\M \}$ for the unit order interval in $\M$, the effects, where $\1_\M \in \M$ denotes the identity. The dual of $\M$ is denoted $\M^*$ and the predual of normal functionals $\M_*$. We write $\M\stp \subset \M_*$ for the positive normal functionals, and $\M\sth \supset \M\stp$ for the real Banach space $\M\sth \subset \M_*$ consisting of all normal linear functionals that are selfadjoint (or Hermitian) in the sense that $\varphi = \varphi^*$ with $\varphi^*(x) = \overline{\varphi(x^*)}$, $x \in \M$.

We write $s^\M(\varphi) \defeq \inf\{ p \in \PM \colon \varphi(p) = 1 \}$ for the support projection of a positive functional $\varphi \in \M\stp$, and also use the standard notation $\varphi_1 \perp \varphi_2 \iff s^\M(\varphi_1) s^\M(\varphi_2) = 0$ to denote orthogonality, and $\varphi_1 \ll \varphi_2 \iff s^\M(\varphi_1) \leq s^\M(\varphi_2)$ for absolute continuity/support inclusion. For $\varphi_1,\varphi_2 \in \M\stp$, we will often abbreviate support projections as $s_j \defeq s^\M(\varphi_j)$ when the functionals $\varphi_j$ are clear from the context.

Working in a standard representation of $\M \subset \BH$, we also recall that any $\varphi \in \M\stp$ is represented by a unique vector $\Omega_\varphi \in \Hil$ lying in the natural positive cone, namely $\varphi(x) = \langle \Omega_\varphi,  x \, \Omega_\varphi \rangle$ for $x \in \M$, and in particular $\norm{ \Omega_\varphi }^2 = \varphi(\1)$. In this case, the support projection $s^\M(\varphi)$ can be characterized as the smallest projection in $\M$ such that $s^\M(\varphi) \, \Omega_\varphi = \Omega_\varphi$, which is the projection with range $\overline{ \M' \, \Omega_\varphi }$.

Given a selfadjoint operator $X$ on a Hilbert space, we write $E^X$ for its spectral measure. We furthermore set $\mathbb{R}_+ \defeq [0,\infty)$.

We also adopt the usual interpretation of this setting: The selfadjoint elements $x \in \M$ model the observables of a physical system (quantum if $\M$ is noncommutative), and the normalized elements of $\M\stp$ its states. Projections $p \in \PM$ and effects $x \in [0,1]_\M$ correspond to sharp and unsharp measurements, respectively.

\section{Hockeystick divergences and \texorpdfstring{$f$}{f}-divergences}
\label{sec:hockeystick}

In this section we introduce $f$-divergences for general von Neumann algebras. We begin by recalling some properties of the Jordan decomposition of normal functionals in \secref{sec:hockeystick:jordan}, based on which hockeystick divergences are defined in \secref{sec:hockeystick:hockeystick}. Hockeystick divergences are the basic building block of $f$-divergences, which are defined and discussed in \secref{sec:hockeystick:fdivergence}.

\subsection{Jordan decomposition}
\label{sec:hockeystick:jordan}

The Jordan decomposition theorem is essential for our definition of hockeystick divergences:
\begin{theorem}
\label{theorem:JordanDecomposition}
Let $\M$ be a von Neumann algebra and $\varphi \in \M\sth$ a normal selfadjoint functional. Then there exist unique positive $\varphi_\pm \in \M\stp$ such that
\begin{equation}
\varphi = \varphi_+ - \varphi_- \eqend{,} \qquad \norm{ \varphi } = \norm{ \varphi_+ } + \norm{ \varphi_- } \eqend{.}
\end{equation}
The norm equality $\norm{ \varphi } = \norm{ \varphi_+ } + \norm{ \varphi_- }$ is equivalent to $\varphi_+$ and $\varphi_-$ being orthogonal, namely $s^\M(\varphi_+) s^\M(\varphi_-) = 0$.
\end{theorem}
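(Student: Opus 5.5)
Our plan is the classical Hahn--Jordan argument, with the Hahn set replaced by a projection obtained by maximizing over effects. Since $\M\sth$ is the predual of the selfadjoint part, $\norm{\varphi} = \sup\{ \varphi(x) \colon x = x^*, \norm{x} \leq 1 \}$. The set $\{ x = x^* \colon \norm{x} \leq 1 \}$ is $\sigma$-weakly compact and $\varphi$ is $\sigma$-weakly continuous, so the supremum is attained at some selfadjoint contraction $x_0$.

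We then replace $x_0$ by a symmetry. Write $x_0 = \int \lambda \, \total E^{x_0}(\lambda)$ and set $u \defeq E^{x_0}((0,1]) - E^{x_0}([-1,0])$. To see that $u$ is also a maximizer we cannot simply compare $\varphi(u)$ with $\varphi(x_0)$, since $\varphi$ is not positive. Instead, for any spectral projection $q$ of $x_0$ we perturb $x_0$ to $x_0 \pm \epsilon(\ldots) q$ within the unit ball; maximality of $x_0$ then forces $\varphi(q) \geq 0$ for spectral projections $q$ of $(0,1]$ and $\varphi(q) \leq 0$ for those of $[-1,0)$. Integrating, $\varphi(x_0) \leq \varphi(E^{x_0}((0,1])) - \varphi(E^{x_0}([-1,0)))$. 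A cleaner route to the same conclusion is to note that the extreme points of the selfadjoint unit ball are exactly the symmetries $2p - \1$, and by Krein--Milman and the face property some extreme point maximizes. So we obtain a projection $p$ with $\norm{\varphi} = \varphi(p) - \varphi(p^\perp)$.

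Set $\varphi_+ \defeq \varphi(p \,\cdot\, p)$ and $\varphi_- \defeq -\varphi(p^\perp \,\cdot\, p^\perp)$. For $q \leq p$, the unitary $u' = 2(p-q) - \1$ is admissible, which gives $\varphi(q) \geq 0$; applying this to spectral projections of $pyp$ for $y \geq 0$ shows $\varphi_+ \geq 0$, and the same argument gives $\varphi_- \geq 0$. To prove $\varphi = \varphi_+ - \varphi_-$, the cross terms $\varphi(p \,\cdot\, p^\perp)$ must vanish. Here we use $\norm{\varphi} = \varphi(p) - \varphi(p^\perp) = \norm{\varphi_+} + \norm{\varphi_-}$ together with $\norm{\varphi} \geq \norm{\varphi(p\,\cdot\,p) + \varphi(p^\perp\,\cdot\,p^\perp)}$, which is the pinching inequality obtained from the average of $\varphi$ and $\varphi(u\,\cdot\,u)$. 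We then show that a nonzero off-diagonal part strictly increases the norm: perturbing $u$ by $\epsilon(p y p^\perp + p^\perp y^* p)$ and renormalizing yields a selfadjoint element whose $\varphi$-value exceeds $\norm{\varphi}$ at second order in $\epsilon$. \textbf{This is the step we expect to be the main obstacle.} The first thing we would try is the polar decomposition / $\sigma$-weakly continuous GNS vector picture, writing $\varphi = \langle \xi, \cdot\, \eta\rangle$ type bounds via Cauchy--Schwarz for the positive pieces. Orthogonality $s(\varphi_+) \leq p$ and $s(\varphi_-) \leq p^\perp$ is then immediate.

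For the equivalence and uniqueness, suppose $\varphi = \psi_+ - \psi_-$ with $\psi_\pm \geq 0$. Then $\norm{\varphi} \leq \norm{\psi_+} + \norm{\psi_-}$, and if $s(\psi_+) s(\psi_-) = 0$, testing against $s(\psi_+) - s(\psi_-)$ gives equality. Conversely, if equality holds, choose the maximizing projection $p$ from above; then $\psi_+(p) - \psi_-(p) - \psi_+(p^\perp) + \psi_-(p^\perp) = \psi_+(\1) + \psi_-(\1)$ forces $\psi_+(p^\perp) = 0 = \psi_-(p)$. Hence $s(\psi_+) \leq p$ and $s(\psi_-) \leq p^\perp$, so the supports are orthogonal and $\psi_+ = \varphi(p\,\cdot\,p) = \varphi_+$, which gives uniqueness.
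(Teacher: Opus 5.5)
Your argument is correct, and it takes a different route from the paper. The paper gives no proof of its own here; it simply cites Takesaki, where the Jordan decomposition comes together with (and is read off from) the polar decomposition $\varphi = v\abs{\varphi}$ of an arbitrary normal functional.

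You instead maximize $\varphi$ over the selfadjoint unit ball only. Since the extreme points of that ball are the symmetries $2p-\1$, the maximizer directly produces a ``Hahn projection'' $p$ with $s^\M(\varphi_+)\le p\le \1-s^\M(\varphi_-)$. This is exactly a projection attaining $\sup_p\varphi(p)$ as in \corref{cor:Jordan}. Your route is more elementary and self-contained, and it gives existence, uniqueness and the support characterization in one stroke. It is, however, tailored to hermitian functionals and does not yield $\abs{\varphi}$ for general $\varphi\in\M_*$.

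The step you flag as the main obstacle closes along the lines you sketch, without the pinching inequality or a GNS/Cauchy--Schwarz argument. Take $u=2p-\1$ and $h=pyp^\perp+p^\perp y^*p$. Then $uh=pyp^\perp-p^\perp y^*p=-hu$, so $(u+\epsilon h)^2=\1+\epsilon^2h^2$ and $\norm{u+\epsilon h}=(1+\epsilon^2\norm{h}^2)^{1/2}$. Meanwhile $\varphi(u+\epsilon h)=\norm{\varphi}+2\epsilon\,\Re\varphi(pyp^\perp)$. The gain is therefore of \emph{first} order in $\epsilon$, not second order as you wrote, while renormalizing costs only second order. Maximality of $u$ then forces $\Re\varphi(pyp^\perp)=0$. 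Replacing $y$ by $\mathi y$ gives $\varphi(pyp^\perp)=0$ for all $y\in\M$, and hence $\varphi(p^\perp yp)=\overline{\varphi(py^*p^\perp)}=0$.

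One minor bookkeeping point in your first route to a symmetry. You integrate over $[-1,0)$ but put $E^{x_0}(\{0\})$ into the negative part of $u$. This is harmless, because perturbing to $x_0\pm\epsilon E^{x_0}(\{0\})$ gives $\varphi(E^{x_0}(\{0\}))=0$; alternatively, just use the Krein--Milman route.
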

\begin{proof}
See \cite[Thm.~III~4.2~(ii)]{takesaki2001}.
\end{proof}

The following well-known examples are the simplest explicit realizations of Jordan decompositions.
\begin{example}
\label{example:Jordan}
\leavevmode
\begin{enumerate}
\item \label{item:TypeIJordan} Let $\Hil$ be a complex Hilbert space, $\M = \BH$ the type I factor of all bounded linear operators on $\Hil$, and $\Tr_\Hil$ the Hilbert space trace. Let furthermore $L^1(\BH, \Tr_\Hil)$ be the Banach space of trace-class operators with trace norm $\norm{ h }_1 = \Tr_\Hil \abs{h}$. The map
\begin{equation}
\label{eq:TypeIIsomorphism}
L^1(\BH, \Tr_\Hil) \to \BH_* \eqend{,} \quad h \mapsto \varphi_h \eqend{,} \quad \varphi_h(x) \defeq \Tr_\Hil(h x) \eqend{,}
\end{equation}
is an isometric isomorphism \cite[Prop.~2.4.3]{brattelirobinson1}. Through this isomorphism, $\varphi_h^* = \varphi_{h^*}$, so selfadjoint functionals correspond to selfadjoint trace class operators. The Jordan decomposition is carried into the decomposition of $h = h^*$ into its positive and negative parts $h_\pm \defeq \left( \abs{h} \pm h \right)/2 \geq 0$. In particular, $\norm{ (\varphi_h)_+ } = \Tr_\Hil(h_+)$.

\item \label{item:AbelianJordan} Let $(X,\mu)$ be a $\sigma$-finite measure space, and consider the Abelian von Neumann algebra $\M = L^\infty(X,\mu)$, faithfully represented by multiplication operators on $\Hil = L^2(X,\mu)$. The map
\begin{equation}
\label{eq:AbelianIsomorphism}
L^1(X,\mu) \to L^\infty(X,\mu)_* \eqend{,} \quad h \mapsto \varphi_h \eqend{,} \quad \varphi_h(f) \defeq \int_X f h \total \mu \eqend{,}
\end{equation}
is an isometric isomorphism \cite[Thm.~6.16]{rudin1987}. Through this isomorphism, selfadjoint functionals correspond to real densities $h \in L^1(X,\mu)$, and the Jordan decomposition is carried into the decomposition of a real $L^1$-function $h$ into its positive and negative parts $h_\pm \defeq \left( \abs{h} \pm h \right)/2 \geq 0$. In particular, the variational norm of the positive part of the signed measure $h \mu$, the \emph{positive variation}, is given by $\norm{ (\varphi_h)_+ } = \int_X h_+ \total \mu$.

\item \label{item:TypeIIJordan} Let $\M$ be a von Neumann algebra and $\tau \in \M\stp$ a faithful normal finite trace, i.e., $\tau(x^* x) = 0 \iff x = 0$ and $\tau(x y) = \tau(y x)$ for all $x, y \in \M$. Then $\norm{ x }_1 \defeq \tau( \abs{x} )$ is a norm on $\M$. We denote the completion of $\M$ in this norm by $L^1(\M,\tau)$ and note that~$\M$ acts norm-continuously on $L^1(\M,\tau)$ by multiplication from the left and right. The adjoint map $\M \ni h \mapsto h^* \in \M$ and the trace $\M \in h \mapsto \tau(h) \in \mathbb{C}$ extend continuously to $L^1(\M,\tau)$; the extensions are denoted by the same symbols. The map
\begin{equation}
L^1(M,\tau) \to \M_* \eqend{,} \quad h \mapsto \varphi_h \eqend{,} \quad \varphi_h(x) \defeq \tau(h x) \eqend{,}
\end{equation}
is an isometric isomorphism \cite[Sec.~I.6.10]{dixmier1981}. Through this isomorphism, $\varphi_h^* = \varphi_{h^*}$, so selfadjoint functionals correspond to selfadjoint densities. To describe the Jordan decomposition, we here restrict ourselves to densities $h \in \M \subset L^1(\M,\tau)$. In this case, it is clear that the Jordan decomposition of $\varphi_h$ is given by the decomposition of $h$ into its positive and negative parts $h_\pm \defeq \left( \abs{h} \pm h \right) \geq 0$. In particular, $\norm{ (\varphi_h)_+ } = \tau(h_+)$. The case of general densities works similarly and will be recalled in \secref{sec:semifinite}.
\end{enumerate}
\end{example}

All of these examples fit into the general setting of semifinite von Neumann algebras that will be recalled in \secref{sec:semifinite}. For now, we collect several properties of the generalization of the different maps $\varphi_h \mapsto \Tr_\Hil( h_+ )$, $\varphi_h \mapsto \int_X h_+ \total \mu$, and $\varphi_h \mapsto \tau(h_+)$ to arbitrary von Neumann algebras, generalizing in particular the positive variation of a signed measure in \exref[item:AbelianJordan]{example:Jordan}. We formulate these facts as corollaries because they are direct consequences of the Jordan decomposition theorem.
\begin{corollary}
\label{cor:Jordan}
Let $\M$ be a von Neumann algebra, and define the \emph{positive variation}
\begin{equation}
\label{eq:P}
P^\M \colon \M\sth \to \mathbb{R}_+ \eqend{,} \quad \varphi \mapsto \norm{ \varphi_+ } \eqend{.}
\end{equation}
Then it holds that
\begin{equation}
\label{eq:varphiSup}
P^\M(\varphi) = \varphi_+(\1) = \varphi(s_+) = \frac{1}{2} \Bigl( \varphi(\1) + \norm{ \varphi } \Bigr) = \sup_{x \in [0,1]_\M} \varphi(x) = \sup_{p \in \PM} \varphi(p) \eqend{,}
\end{equation}
where $s_+ \defeq s^\M(\varphi_+)$ is the support projection of $\varphi_+$. Furthermore, for $\varphi, \psi \in \M\sth$ the positive variation $P^\M$ has the following properties:
\begin{enumerate}
\item \label{item:PContractive} \emph{(Continuity)} $\norm{ \varphi_+ } \leq \norm{ \varphi }$, and $P^\M$ is continuous in the norm topology of $\M^*$. It is also lower semicontinuous in the $\sigma(\M_*,\M)$-topology on $\M\sth$.
\item \label{item:PMonotone} \emph{(Monotonicity)} $\varphi \leq \psi \implies \norm{ \varphi_+ } \leq \norm{ \psi_+ }$.
\item \label{item:PSubadditive} \emph{(Subadditivity)} $\norm{ (\varphi + \psi)_+ } \leq \norm{ \varphi_+ } + \norm{ \psi_+ }$.
\item \label{item:PHomogeneous} \emph{(Positive homogeneity)} $\norm{ (\lambda \varphi)_+ } = \lambda \norm{ \varphi_+ }$ for $\lambda \geq 0$.
\item \label{item:PConvex} \emph{(Convexity)} $\norm{ (\lambda \varphi + (1-\lambda) \psi)_+ } \leq \lambda \norm{ \varphi_+ } + (1-\lambda) \norm{ \psi_+ }$ for $\lambda \in [0,1]$.
\end{enumerate}
\end{corollary}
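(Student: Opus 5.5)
The plan is to derive the chain of equalities \eqref{eq:varphiSup} directly from the Jordan decomposition of \thmref{theorem:JordanDecomposition}, and then obtain all listed properties from the variational formula $P^\M(\varphi) = \sup_{x \in [0,1]_\M} \varphi(x)$.

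\textbf{The equalities.} Write $\varphi = \varphi_+ - \varphi_-$ with $s_+ s_- = 0$, where $s_\pm \defeq s^\M(\varphi_\pm)$. For a positive functional, $\norm{ \varphi_+ } = \varphi_+(\1)$. Since $\varphi_+(s_+) = \varphi_+(\1)$ and $\varphi_-(s_+) = \varphi_-(s_- s_+ s_-) = 0$ by orthogonality, we get $\varphi(s_+) = \varphi_+(\1)$. Combining $\varphi(\1) = \varphi_+(\1) - \varphi_-(\1)$ with $\norm{ \varphi } = \varphi_+(\1) + \varphi_-(\1)$ yields $\varphi_+(\1) = \frac{1}{2}\bigl( \varphi(\1) + \norm{ \varphi } \bigr)$.

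For the suprema, any $x \in [0,1]_\M$ satisfies $\varphi(x) = \varphi_+(x) - \varphi_-(x) \leq \varphi_+(x) \leq \varphi_+(\1)$. The supremum over effects is therefore at most $\varphi_+(\1)$. It is attained at the projection $s_+$, so the supremum over projections (which is at most the supremum over effects) equals it as well.

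\textbf{The properties.}
\begin{enumerate}
\item Continuity. We have $\norm{ \varphi_+ } \leq \norm{ \varphi_+ } + \norm{ \varphi_- } = \norm{ \varphi }$. Since $\varphi \mapsto \varphi(x)$ is $1$-Lipschitz for $\norm{ x } \leq 1$, the supremum $P^\M$ is $1$-Lipschitz, i.e.\ $\abs{ P^\M(\varphi) - P^\M(\psi) } \leq \norm{ \varphi - \psi }$, hence norm continuous. (Alternatively, use $\frac{1}{2}\bigl( \varphi(\1) + \norm{ \varphi } \bigr)$ directly.) Lower semicontinuity in $\sigma(\M_*,\M)$ holds because $P^\M$ is a pointwise supremum of the $\sigma(\M_*,\M)$-continuous functions $\varphi \mapsto \varphi(x)$.
\item Monotonicity. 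If $\varphi \leq \psi$, then $\varphi(x) \leq \psi(x)$ for all $x \geq 0$; taking the supremum over $[0,1]_\M$ gives $\norm{ \varphi_+ } \leq \norm{ \psi_+ }$.
\item Subadditivity. This follows from $\sup_x (\varphi + \psi)(x) \leq \sup_x \varphi(x) + \sup_x \psi(x)$.
\item Positive homogeneity. Scaling by $\lambda \geq 0$ commutes with the supremum.
\item Convexity. Combine items~\ref{item:PSubadditive} and~\ref{item:PHomogeneous}.
\end{enumerate}

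No step is a genuine obstacle. The only point needing care is the use of orthogonality of supports to get $\varphi_-(s_+) = 0$, together with the standard fact that $\norm{ \omega } = \omega(\1)$ for positive $\omega$.
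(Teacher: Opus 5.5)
Your proof is correct and follows essentially the same route as the paper. The chain of equalities is derived identically from the Jordan decomposition and the orthogonality $s_+ s_- = 0$, monotonicity and convexity are argued the same way, and the only minor difference is that you get subadditivity, positive homogeneity, norm continuity (via an explicit $1$-Lipschitz bound) and $\sigma(\M_*,\M)$-lower semicontinuity from the variational formula $\sup_{x \in [0,1]_\M} \varphi(x)$, where the paper uses the representation $\frac{1}{2}\bigl(\varphi(\1) + \norm{\varphi}\bigr)$ — both work equally well.
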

\begin{proof}
We begin with proving the five representations of $P^\M$ claimed in \eqref{eq:varphiSup}. The first equality in \eqref{eq:varphiSup}, $\norm{ \varphi_+ } = \varphi_+(\1)$, holds by positivity of $\varphi_+$. The second equality, $\varphi_+(\1) = \varphi(s_+)$, follows by orthogonality of the support projections $s_\pm$ of $\varphi_\pm$, namely $\varphi(s_+) = \varphi_+(s_+) - \varphi_-(s_+) = \varphi_+(\1) - \varphi_-(s_- s_+ s_-) = \varphi_+(\1)$.

Since $\varphi(\1) = \varphi_+(\1) - \varphi_-(\1) = \norm{ \varphi_+ } - \norm{ \varphi_- }$ and $\norm{ \varphi } = \norm{ \varphi_+ } + \norm{ \varphi_- }$, the third equality follows. To prove the fourth equality, the variational formula $\varphi_+(\1) = \sup_{x \in[0,1]_\M} \varphi(x)$, note that for $x \in [0,1]_\M$ we have $\varphi(x) = \varphi_+(x) - \varphi_-(x) \leq \varphi_+(x) \leq \varphi_+(\1)$, hence $\varphi_+(\1) \geq \sup_{x \in [0,1]_\M}\varphi(x)$. The opposite inequality follows by noting $s_+ \in [0,1]_\M$ and $\varphi(s_+) = \varphi_+(\1)$. The last equality is now clear because $s_+ \in \PM \subset [0,1]_\M$.

The properties \ref{item:PContractive}--\ref{item:PConvex} are also easy to prove: The positive variation $P^\M$ is contractive because $\norm{ \varphi } = \norm{ \varphi_+ } + \norm{ \varphi_- } \geq \norm{ \varphi_+ }$. In $P^\M(\varphi) = \left( \varphi(\1) + \norm{ \varphi } \right)/2$, the map $\varphi \mapsto \varphi(\1)$ is clearly $\sigma(\M_*,\M)$-continuous, and $\varphi \mapsto \norm{ \varphi }$ is $\sigma(\M_*,\M)$-lower semicontinuous as the supremum over the continuous functions $\varphi \mapsto \varphi(x)$, $x \in [0,1]_\M$. Hence $P^\M$ is $\sigma(\M_*,\M)$-lower semicontinuous.

Let $\varphi \leq \psi$. Then
\begin{equation}
P^\M(\varphi) = \sup_{x \in [0,1]_\M}\varphi(x) \leq \sup_{x \in [0,1]_\M} \psi(x) = P^\M(\psi) \eqend{,}
\end{equation}
which shows the monotonicity of $P^\M$. Similarly, for $\varphi,\psi \in \M\sth$ and $\lambda \in \mathbb{R}_+$,
\begin{equations}
\begin{split}
\norm{ (\varphi + \psi)_+ } &= \frac{1}{2} \Bigl( \varphi(\1) + \psi(\1) + \norm{ \varphi + \psi } \Bigr) \\
&\leq \frac{1}{2} \Bigl( \varphi(\1) + \psi(\1) + \norm{ \varphi } + \norm{ \psi } \Bigr) = \norm{ \varphi_+ } + \norm{ \psi_+ } \eqend{,}
\end{split} \\
\norm{ (\lambda \varphi)_+ } &= \frac{1}{2} \Bigl( \lambda \varphi(\1) + \norm{ \lambda \varphi } \Bigr) = \lambda \norm{ \varphi_+ } \eqend{,}
\end{equations}
which shows that $P^\M$ is subadditive and positively homogeneous. Convexity is a consequence of these properties.
\end{proof}

We also need to establish how the Jordan decomposition behaves on direct sums and tensor products of von Neumann algebras ($\N \otimes \M$ will denote the von Neumann algebraic tensor product), and how it behaves with respect to maps between two different von Neumann algebras. These properties are collected next:
\begin{corollary}
\label{cor:Jordan2}
Let $\M$, $\N$ be von Neumann algebras, and $\varphi \in \M\sth$, $\psi \in \N\sth$. Then the positive variation \eqref{eq:P} has the following properties:
\begin{enumerate}
\item \label{item:DirectSums} \emph{(Direct sums)} $P^{\M \oplus \N} = P^\M \oplus P^\N$, i.e., $\varphi \oplus \psi \in (\M \oplus \N)\sth$, and $\norm{ (\varphi \oplus \psi)_+ } = \norm{ \varphi_+ } + \norm{ \psi_+ }$.
\item \label{item:TensorProducts} \emph{(Tensor products)} $\varphi \otimes \psi \in (\M \otimes \N)\sth$, and $\norm{ (\varphi \otimes \psi)_+ } = \norm{ \varphi_+ \otimes \psi_+ + \varphi_- \otimes \psi_- }$. In particular, if $\psi$ is positive, then $\norm{ (\varphi \otimes \psi)_+ } = \psi(\1) \norm{ \varphi_+ }$.
\item \label{item:PositiveContractions} \emph{(Positive contractions)} Let $T \colon \N \to \M$ be a normal positive linear contraction and $T^* \colon \M_* \to \N_*$, $T^* \varphi \defeq \varphi \circ T$ its pullback. Then $P^\N \circ T^* \leq P^\M$, i.e.,
\begin{equation}
\norm{ (\varphi \circ T)_+ } \leq \norm{ \varphi_+ } \eqend{.}
\end{equation}
\item \label{item:CondExp} \emph{(Conditional expectations)} Let $\M \subset \N$, and let $C \colon \N \to \M$ be a normal conditional expectation. Then $P^\N \circ C^* = P^\M$, i.e.,
\begin{equation}
\norm{ (\varphi \circ C)_+ } = \norm{ \varphi_+ } \eqend{.}
\end{equation}
\item \label{item:Compressions} \emph{(Compressions)} Let $\N = p \M p$ for a projection $p \in \PM$. Then $P^\N(\varphi) = P^\M(\varphi)$ for all $\varphi$ with $s^\M(\varphi) \leq p$.
\item \label{item:Martingale} \emph{(Martingale convergence)} Let $(\M_i)_{i \in I}$ with $M_i \subset \M$ for all $i \in I$ be an increasing net of von Neumann algebras such that $\bigcup_{i \in I} \M_i$ is $\sigma(\M, \M_*)$-dense in $\M$. Then it holds that
\begin{equation}
\lim_{i \in I} P^{\M_i}\left( \varphi \rvert_{\M_i} \right) = P^\M(\varphi) \eqend{.}
\end{equation}
\end{enumerate}
\end{corollary}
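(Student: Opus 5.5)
The common tool for all six items is the variational formula \eqref{eq:varphiSup} from \corref{cor:Jordan}, $P^\M(\varphi)=\sup_{x\in[0,1]_\M}\varphi(x)$, together with the uniqueness and orthogonality characterization of the Jordan decomposition in \thmref{theorem:JordanDecomposition}. I will go through the items in order.

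\emph{Direct sums.} Selfadjointness of $\varphi\oplus\psi$ is immediate. The functional $\varphi_+\oplus\psi_+ - \varphi_-\oplus\psi_-$ is a decomposition into positive parts with supports $s^\M(\varphi_+)\oplus s^\N(\psi_+)$ and $s^\M(\varphi_-)\oplus s^\N(\psi_-)$. These supports are orthogonal, so by uniqueness this is the Jordan decomposition, and the norm of its positive part is $\norm{\varphi_+}+\norm{\psi_+}$. Alternatively, the supremum over $[0,1]_{\M\oplus\N}=[0,1]_\M\oplus[0,1]_\N$ splits.

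\emph{Tensor products.} Expanding gives $\varphi\otimes\psi=(\varphi_+\otimes\psi_++\varphi_-\otimes\psi_-)-(\varphi_+\otimes\psi_-+\varphi_-\otimes\psi_+)$. Both brackets are positive normal functionals on $\M\otimes\N$. Their supports lie under $s_+\otimes t_++s_-\otimes t_-$ and $s_+\otimes t_-+s_-\otimes t_+$ respectively, where $s_\pm,t_\pm$ are the supports of $\varphi_\pm,\psi_\pm$. These two projections are orthogonal, so uniqueness again identifies the positive part. If $\psi\ge0$, then $\psi_-=0$ and $\norm{\varphi_+\otimes\psi}=\varphi_+(\1)\psi(\1)$. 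The only detail to check is that the support of a product functional is the tensor product of the supports, or at least is dominated by it. This holds because $(s_+\otimes t_+)$ evaluates to the full norm.

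\emph{Positive contractions.} $T$ maps $[0,1]_\N$ into $[0,1]_\M$, since $0\le T(x)\le T(\1)\le\norm{T}\1\le\1$. Hence $\sup_{x\in[0,1]_\N}\varphi(T x)\le\sup_{y\in[0,1]_\M}\varphi(y)$.

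\emph{Conditional expectations.} The inequality $\le$ follows from the previous item, because $C$ is a normal positive contraction. For the reverse, take $x\in[0,1]_\M\subset[0,1]_\N$; then $C(x)=x$ gives $\varphi(x)=(\varphi\circ C)(x)$, and taking suprema yields $\ge$.

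\emph{Compressions.} Apply the previous two ideas to the maps $x\mapsto pxp$ from $\M$ to $p\M p$ and the inclusion $p\M p\hookrightarrow\M$. Here $\varphi(x)=\varphi(pxp)$ whenever $s^\M(\varphi)\le p$, because $\varphi$ is selfadjoint with $\varphi=\varphi(p\,\cdot\,p)$; this follows by applying it to $\varphi_\pm$, whose supports lie under $p$. Thus $[0,1]_\M$ and $[0,1]_{p\M p}$ give the same supremum.

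\emph{Martingale convergence.} I expect this item to be the main obstacle. By monotonicity of restriction (the positive-contraction item applied to the inclusions), the net $P^{\M_i}(\varphi|_{\M_i})$ is increasing and bounded above by $P^\M(\varphi)$. It remains to show that it approximates $\varphi(s_+)$.

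The plan is to use the Kaplansky density theorem. The union $\bigcup_i\M_i$ is a $*$-algebra that is $\sigma$-weakly dense in $\M$. Kaplansky, applied to selfadjoint elements, yields a net of selfadjoint $y_\alpha$ in the union with $\norm{y_\alpha}\le1$ converging strongly, hence $\sigma$-weakly on bounded sets, to $2s_+-\1$. Then $x_\alpha=(y_\alpha+\1)/2$ lies in $[0,1]_{\M_{i(\alpha)}}$ and $x_\alpha\to s_+$ $\sigma$-weakly. Normality of $\varphi$ gives $\varphi(x_\alpha)\to\varphi(s_+)=P^\M(\varphi)$. Since each $\varphi(x_\alpha)\le P^{\M_{i(\alpha)}}(\varphi|)\le P^\M(\varphi)$ and the net over $i$ is increasing, the limit equals $P^\M(\varphi)$.

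The care needed in this last item is ensuring that Kaplansky applies to the possibly non-closed union. Kaplansky holds for any $*$-subalgebra, so this is fine; one works on $\Hil$ with a unit contained in each $\M_i$, or adjusts the argument accordingly.
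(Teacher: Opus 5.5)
Your proof is correct. The positive-contraction and conditional-expectation items are argued exactly as in the paper.

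For direct sums and tensor products you use the orthogonality half of the Jordan decomposition theorem. The paper uses the norm half instead, via $P^\M(\varphi)=\frac12\bigl(\varphi(\1)+\norm{\varphi}\bigr)$ and the cross-norm identity $\norm{\varphi\otimes\psi}=\norm{\varphi}\norm{\psi}$. Your support bound $s(\varphi_+\otimes\psi_+)\le s_+\otimes t_+$, obtained by evaluating on $s_+\otimes t_+$, does the work of that cross-norm identity, and is justified correctly.

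The genuinely different step is the martingale item. The paper writes $P^{\M_i}(\varphi|_{\M_i})=\frac12\bigl(\varphi(\1_{\M_i})+\norm{\varphi|_{\M_i}}\bigr)$ and gets $\1_{\M_i}\to\1_\M$ from Vigier's theorem. It then asserts that the norms converge ``with the same arguments''. You instead use the variational formula and obtain monotonicity of the net from the positive-contraction item. You then only need to approximate the single element $s_+$ from inside $\bigcup_i\M_i$. This makes explicit the Kaplansky-type density input that the paper's norm-convergence step also needs; Vigier's theorem alone does not supply it.

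What the paper's formulation buys is that it is set up for subalgebras with $\1_{\M_i}\neq\1_\M$. Your choice $x_\alpha=(y_\alpha+\1)/2$ excludes that case. You can remove the unit assumption with the positive-part version of Kaplansky density. It is valid for $*$-algebras, since a positive contraction $c^*c$ in the norm closure is a norm limit of rescaled $a_n^*a_n$ with $a_n$ in the algebra. It yields $x_\alpha\in\bigcup_i\M_i$ with $0\le x_\alpha\le\1$ and $x_\alpha\to s_+$ strongly. Any such $x_\alpha\in\M_i$ satisfies $x_\alpha=\1_{\M_i}x_\alpha\1_{\M_i}\le\1_{\M_i}$, so $x_\alpha\in[0,1]_{\M_i}$ with no condition on the units.
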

\begin{proof}
\ref{item:DirectSums} $\varphi \oplus \psi \in (\M \oplus \N)\sth$ is clear, and $\norm{ (\varphi \oplus \psi)_+ } = \norm{ \varphi_+ } + \norm{ \psi_+ }$ follows from the third equality in \eqref{eq:varphiSup} because the identity in $\M \oplus \N$ and the norm on $(\M \oplus \N)^*$ split over direct sums.

\ref{item:TensorProducts} Tensor products of normal hermitian functionals are normal and hermitian. For the claimed formula, we write
\begin{equation}
\label{eq:TensorProductJordan}
\varphi \otimes \psi = ( \varphi_+ \otimes \psi_+ + \varphi_- \otimes \psi_- ) - ( \varphi_+ \otimes \psi_- + \varphi_- \otimes \psi_+ ) \eqend{,}
\end{equation}
and note that the two terms in brackets are normal positive functionals on $\M \otimes \N$. Regarding their norms, we have $\norm{ \varphi \otimes \psi } = \norm{ \varphi } \norm{ \psi } = \left( \norm{ \varphi_+ } + \norm{ \varphi_- } \right) \left( \norm{ \psi_+ } + \norm{ \psi_- } \right)$, and by positivity
\begin{equations}
\norm{ \varphi_+ \otimes \psi_+ + \varphi_- \otimes \psi_- } &= \left( \varphi_+ \otimes \psi_+ + \varphi_- \otimes \psi_- \right)(\1_\M \otimes \1_\N) = \norm{ \varphi_+ } \norm{ \psi_+ } + \norm{ \varphi_- } \norm{ \psi_- } \eqend{,} \\
\norm{ \varphi_+ \otimes \psi_- + \varphi_- \otimes \psi_+ } &= \left( \varphi_+ \otimes \psi_- + \varphi_- \otimes \psi_+ \right)(\1_\M \otimes \1_\N) = \norm{ \varphi_+ } \norm{ \psi_- } + \norm{ \varphi_- } \norm{ \psi_+ } \eqend{.}
\end{equations}
This shows that the sum of the norm of the terms in brackets equals the norm of $\norm{ \varphi \otimes \psi }$, which shows that \eqref{eq:TensorProductJordan} is the Jordan decomposition of $\varphi \otimes \psi$. This implies the claim.

\ref{item:PositiveContractions} $\varphi \circ T \in \N\sth$ by normality and positivity of $T$. Recall that a positive contraction $T \colon \M \to \N$ satisfies $T(\1_\N) \leq \1_\M$. This implies that $T$ maps $[0,1]_\N$ into $[0,1]_\M$, and consequently
\begin{equation}
\norm{ (\varphi \circ T)_+ } = \sup_{x \in [0,1]_\N} \varphi(T(x)) \leq \sup_{y \in [0,1]_\M} \varphi(y) = \norm{ \varphi_+ } \eqend{.}
\end{equation}

\ref{item:CondExp} As a normal conditional expectation is in particular a normal positive linear contraction, we have $\norm{ (\varphi \circ C)_+ } \leq \norm{ \varphi_+ }$ from the previous item. On the other hand, $[0,1]_\M$ is pointwise invariant under $C$, and contained in $[0,1]_\N$. Hence
\begin{equation}
\norm{ \varphi_+ } = \sup_{x \in [0,1]_\M} \varphi(x) = \sup_{x \in [0,1]_\M} \varphi(C(x)) \leq \sup_{x \in [0,1]_\N} \varphi(C(x)) = \norm{ (\varphi \circ C)_+ } \eqend{.}
\end{equation}

\ref{item:Compressions} is obvious.

\ref{item:Martingale} We use the third equality in \eqref{eq:varphiSup}: $P^{\M_i}\left( \varphi \rvert_{\M_i} \right) = \left( \varphi\left( \1_{\M_i} \right) + \norm{ \varphi \rvert_{\M_i} } \right)/2$. By Vigier's theorem~\cite[App.~II]{dixmier1981}, we have $\lim_{i \in I} \1_{\M_i} = \1_\M$ in the strong operator topology. Since the net is bounded, this limit also holds in $\sigma(\M, \M_*)$-topology. Hence $\lim_{i \in I} \varphi\left( \1_{\M_i} \right) = \varphi(\1_\M)$, and with the same arguments $\lim_{i \in I} \norm{ \varphi \rvert_{\M_i} } = \sup_{x \in [0,1]_{\M_i}} \abs{ \varphi(x) } = \norm{ \varphi }$.
\end{proof}
Note that in item~\ref{item:PositiveContractions}, we may in particular consider inclusion maps $\iota \colon \N \hookrightarrow \M$. In case $T \in \operatorname{Aut}(\M)$ is an automorphism, \ref{item:PositiveContractions} of course implies $P^\M(\varphi \circ T) = P^\M(\varphi)$.

We remark that the Jordan decomposition also exists for continuous selfadjoint functionals on a general $C^*$-algebra \cite[Prop.~III~2.1]{takesaki2001}. By passing to its bidual, the properties listed in \corref{cor:Jordan} and~\ref{cor:Jordan2} carry over to $C^*$-algebras. As we will not need this version in this article, we refrain from giving the details.

\subsection{Hockeystick divergences}
\label{sec:hockeystick:hockeystick}

We now define hockeystick divergences as described in the introduction.
\begin{definition}
\label{def:hockeystick}
Let $\M$ be a von Neumann algebra. The \emph{hockeystick divergence} is the map
\begin{splitequation}
\label{eq:HockeystickFormula}
&E^\M \colon \mathbb{R}_+ \times \M\stp \times \M\stp \to \mathbb{R}_+ \eqend{,} \\
&E^\M(t, \varphi_1, \varphi_2) \defeq E^\M_t(\varphi_1 \vert \varphi_2) \defeq P^\M(\varphi_1 - t \varphi_2) = \norm{ (\varphi_1 - t \varphi_2)_+ } \eqend{.}
\end{splitequation}
\end{definition}

\begin{example}
\leavevmode
\begin{enumerate}
\item Let $\M = \BH$ be a type I factor on a Hilbert space $\Hil$ as in \exref[item:TypeIJordan]{example:Jordan}. Then we may identify $\varphi_1, \varphi_2 \in \M\stp$ with their positive densities $0 \leq h_1, h_2 \in L^1(\BH, \Tr_\Hil)$ via \eqref{eq:TypeIIsomorphism}, and the hockeystick divergence becomes
\begin{equation}
E^\BH_t(\varphi_1 \vert \varphi_2) = \Tr_\Hil\left[ (h_1 - t h_2)_+ \right] \eqend{.}
\end{equation}

\item Let $\M = L^\infty(X,\mu)$ be an Abelian von Neumann algebra on a $\sigma$-finite measure space $(X,\mu)$ as in \exref[item:AbelianJordan]{example:Jordan}. Then we may identify $\varphi_1, \varphi_2 \in \M\stp$ with their positive densities $0 \leq h_1, h_2 \in L^1(X,\mu)$ via \eqref{eq:AbelianIsomorphism}, and the hockeystick divergence becomes
\begin{equation}
E^{L^\infty(X,\mu)}_t(\varphi_1 \vert \varphi_2) = \int_X \left( h_1 - t h_2 \right)_+(x) \total \mu(x) \eqend{.}
\end{equation}
\end{enumerate}
\end{example}
The first example includes in particular the case of the finite-dimensional matrix algebras $\M = M_n(\mathbb{C})$. We now summarize the main properties of hockeystick divergences that relate to a single von Neumann algebra, and in particular establish the link to the formula \eqref{eq:SuccessProbability} motivated from hypothesis testing.
\begin{proposition}
\label{prop:hockeystick_properties}
The hockeystick divergence of a von Neumann algebra $\M$ can be expressed as
\begin{equations}[eq:EFormulae]
E^\M_t(\varphi_1 \vert \varphi_2) &= \left( \varphi_1 - t \varphi_2 \right)_+(\1) \\
&= \frac{1}{2} \Bigl( \varphi_1(\1) - t \varphi_2(\1) + \norm{ \varphi_1 - t \varphi_2 } \Bigr) \label{eq:EFormulaDiffNorm} \\
&= \sup_{x \in [0,1]_\M} \Bigl[ \varphi_1(x) - t \varphi_2(x) \Bigr] \\
&= \sup_{p \in \PM} \Bigl[ \varphi_1(p) - t \varphi_2(p) \Bigr] \label{eq:SuccessProbability}
\end{equations}
and for $t \geq 0$ and $\varphi_1,\varphi_2,\varphi_3,\psi_1,\psi_2 \in \M\stp$ satisfies the following properties:
\begin{enumerate}
\item \label{item:EContinuity} \emph{(Continuity)} $E^\M$ is jointly continuous when $\M\stp$ is given the norm topology of $\M^*$, and jointly lower semicontinuous when $\M\stp$ is given the $\sigma(\M_*,\M)$-topology.
\item \label{item:EMonotonicity} \emph{(Monotonicity)} $E^\M$ is monotone in the following sense:
\begin{equation}
\varphi_1 \geq \psi_1 \eqend{,} \ t_1 \leq t_2 \eqend{,} \ \varphi_2 \leq \psi_2 \implies E^\M_{t_1}(\varphi_1 \vert \varphi_2) \geq E^\M_{t_2}(\psi_1 \vert \psi_2) \eqend{.}
\end{equation}
\item \label{item:EBoundsLimit} \emph{(Bounds and limit)} Let $s_2 \in \PM$ be the support projection of $\varphi_2$. Then
\begin{equation}
0 \leq \max\Bigl( \varphi_1(\1) - t \varphi_2(\1) \eqend{,} \ \varphi_1(s_2^\perp) \Bigr) \leq E^\M_t(\varphi_1 \vert \varphi_2) \leq \varphi_1(\1)
\end{equation}
and
\begin{equation}
\lim_{t \to \infty} E^\M_t(\varphi_1 \vert \varphi_2) = \varphi_1(s_2^\perp) \eqend{.}
\end{equation}
\item \label{item:EConvexity} \emph{(Convexity)} For fixed $t \geq 0$, the hockeystick divergence is jointly convex in the functionals, namely
\begin{equation}
E^\M_t\left( \lambda \varphi_1 + (1-\lambda) \varphi_2 \vert \lambda \psi_1 + (1-\lambda) \psi_2 \right) \leq \lambda E^\M_t(\varphi_1 \vert \psi_1) + (1-\lambda) E^\M_t(\varphi_2 \vert \psi_2)
\end{equation}
for all $\lambda \in [0,1]$, and for fixed $\varphi_1,\varphi_2$, it is convex in the parameter, namely
\begin{align}
E^\M_{\lambda t_1 + (1-\lambda) t_2}(\varphi_1 \vert \varphi_2) \leq \lambda E^\M_{t_1}(\varphi_1 \vert \varphi_2) + (1-\lambda) E^\M_{t_2}(\varphi_1 \vert \varphi_2)
\end{align}
for all $\lambda \in [0,1]$.
\item \label{item:ETriangleInequality} \emph{(``Triangle inequality'')} For $t_1, t_2 \geq 0$, we have
\begin{align}
E^\M_{t_1 t_2}(\varphi_1 \vert \varphi_3) \leq E^\M_{t_1}(\varphi_1 \vert \varphi_2) + t_1 E^\M_{t_2}(\varphi_2 \vert \varphi_3) \eqend{.}
\end{align}
\item \label{item:EStateExchange} \emph{(State exchange and rescaling)}
\begin{equations}
E^\M_t(\varphi_2 \vert \varphi_1) &= \varphi_2(\1) - t \varphi_1(\1) + t \, E^\M_{1/t}(\varphi_1 \vert\varphi_2) \eqend{,} \quad t > 0 \eqend{,} \\
E^\M_t(c_1 \varphi_1 \vert c_2 \varphi_2) &= c_1 E^\M_{t c_2 / c_1}(\varphi_1 \vert \varphi_2) \eqend{,} \quad c_1, c_2 > 0 \eqend{.}
\end{equations}
\end{enumerate}
\end{proposition}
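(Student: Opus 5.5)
The four formulae in \eqref{eq:EFormulae} come from applying \corref{cor:Jordan} to $\varphi = \varphi_1 - t\varphi_2 \in \M\sth$. Concretely, \eqref{eq:varphiSup} gives $(\varphi_1 - t\varphi_2)_+(\1)$, the half-sum formula, and the suprema over $[0,1]_\M$ and over $\PM$. Everything else will be derived from these expressions together with items \ref{item:PContractive}--\ref{item:PConvex} of \corref{cor:Jordan}.

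\emph{Continuity.} The map $(t,\varphi_1,\varphi_2) \mapsto \varphi_1 - t\varphi_2$ is jointly continuous into $\M\sth$, both in norm and for $\sigma(\M_*,\M)$; for the latter, continuity in $t$ uses that $t\varphi_2(x)$ is jointly continuous in $(t,\varphi_2)$. Composing with $P^\M$ and using \ref{item:PContractive} gives norm continuity and weak lower semicontinuity. For the lower semicontinuity one can also argue directly: $E^\M$ is a supremum over $x \in [0,1]_\M$ of the jointly continuous functions $\varphi_1(x) - t\varphi_2(x)$.

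\emph{Monotonicity.} Under the hypotheses, $\varphi_1 - t_1\varphi_2 \geq \psi_1 - t_2\psi_2$, because $t_1\varphi_2 \leq t_2\varphi_2 \leq t_2\psi_2$. Item \ref{item:PMonotone} then gives the claim.

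\emph{Convexity in the functionals.} Joint convexity follows from \ref{item:PConvex} applied to $\varphi_1 - t\psi_1$ and $\varphi_2 - t\psi_2$.

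\emph{Convexity in $t$.} The function $t \mapsto \varphi_1(x) - t\varphi_2(x)$ is affine for each $x$. A supremum of affine functions is convex.

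\emph{Triangle inequality.} Write $\varphi_1 - t_1 t_2 \varphi_3 = (\varphi_1 - t_1\varphi_2) + t_1(\varphi_2 - t_2\varphi_3)$. Subadditivity and homogeneity, items \ref{item:PSubadditive} and \ref{item:PHomogeneous}, give the bound.

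\emph{State exchange.} Use the half-sum formula \eqref{eq:EFormulaDiffNorm}. We have $\norm{\varphi_2 - t\varphi_1} = t\norm{\varphi_1 - t^{-1}\varphi_2}$, which gives
\begin{equation}
2E^\M_t(\varphi_2 \vert \varphi_1) = \varphi_2(\1) - t\varphi_1(\1) + t\Bigl(2E^\M_{1/t}(\varphi_1 \vert \varphi_2) - \varphi_1(\1) + t^{-1}\varphi_2(\1)\Bigr) \eqend{.}
\end{equation}
Rearranging yields the stated identity. Rescaling is homogeneity: $c_1\varphi_1 - tc_2\varphi_2 = c_1(\varphi_1 - (tc_2/c_1)\varphi_2)$.

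\emph{Bounds and limit.} This is the only step requiring a small argument.
\begin{itemize}
\item Lower bounds: take $x = \1$ and $x = s_2^\perp$ in the supremum formula, using $\varphi_2(s_2^\perp) = 0$.
\item Upper bound: $\varphi_1(x) - t\varphi_2(x) \leq \varphi_1(x) \leq \varphi_1(\1)$.
\item Limit: monotonicity in $t$ gives existence of the limit, which is at least $\varphi_1(s_2^\perp)$. For the reverse inequality, take any $p \in \PM$ and split $\varphi_1(p)$ using $s_2$ and $s_2^\perp$. By Cauchy--Schwarz, the cross terms $\varphi_1(s_2 p s_2^\perp)$ and its adjoint are controlled by $\varphi_1(s_2 p s_2)^{1/2}\varphi_1(s_2^\perp)^{1/2}$. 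It therefore suffices to show that $\sup_p[\varphi_1(s_2 p s_2) - t\varphi_2(p) + 2\varphi_1(s_2 p s_2)^{1/2}\varphi_1(s_2^\perp)^{1/2}] \to 0$.
\end{itemize}

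The main obstacle is this last step. The cleanest route is probably to avoid it altogether: since $\varphi_1(s_2^\perp\,\cdot\,s_2^\perp) \perp \varphi_2$, one can reduce to $\varphi_1 \ll \varphi_2$ by splitting. In that reduced case, the plan is to show $E^\M_t(\varphi_1 \vert \varphi_2) \to 0$. Using the norm continuity of item \ref{item:EContinuity}, approximate $\varphi_1$ in norm by functionals $\varphi_1' \leq C\varphi_2$. This is possible for $\varphi_1 \ll \varphi_2$, for example via the dense set $\{\langle x'\Omega_{\varphi_2}, \,\cdot\, x'\Omega_{\varphi_2}\rangle : x' \in \M'\}$ with $\norm{x'}$ bounded. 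For such $\varphi_1'$ the hockeystick divergence vanishes once $t \geq C$, which gives the limit.
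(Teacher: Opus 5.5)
Everything except the $t\to\infty$ limit is correct and is read off from \corref{cor:Jordan} essentially as in the paper. Your half-sum computation for state exchange and the ``supremum of affine functions'' argument for convexity in $t$ are harmless variants.

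The gap is the reduction to $\varphi_1 \ll \varphi_2$ ``by splitting''. There is no splitting $\varphi_1 = \varphi_1(s_2\,\cdot\,s_2) + \varphi_1(s_2^\perp\,\cdot\,s_2^\perp)$, because the off-diagonal part $\varphi_1(s_2\,\cdot\,s_2^\perp) + \varphi_1(s_2^\perp\,\cdot\,s_2)$ is nonzero in general. For example, in $M_2(\mathbb{C})$ take $\varphi_2$ to be the vector state of $e_1$ and $\varphi_1$ that of $(e_1+e_2)/\sqrt{2}$. The two diagonal pieces then add up to $\frac{1}{2}\Tr \neq \varphi_1$. Subadditivity and monotonicity of $P^\M$ need either an exact decomposition of $\varphi_1 - t\varphi_2$ or an order inequality. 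The orthogonality $\varphi_1(s_2^\perp\,\cdot\,s_2^\perp) \perp \varphi_2$ supplies neither. So the cross terms you correctly identified in your Cauchy--Schwarz attempt are still present, just hidden.

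A repair within your strategy goes as follows. For $x \in \M_+$, apply $\norm{a+b}^2 \leq (1+\epsilon)\norm{a}^2 + (1+\epsilon^{-1})\norm{b}^2$ with $a = x^{1/2} s_2\Omega_1$ and $b = x^{1/2} s_2^\perp \Omega_1$. This gives $\varphi_1 \leq (1+\epsilon)\varphi_1(s_2\,\cdot\,s_2) + (1+\epsilon^{-1})\varphi_1(s_2^\perp\,\cdot\,s_2^\perp)$. Monotonicity, subadditivity and homogeneity of $P^\M$ then yield
\[
E^\M_t(\varphi_1 \vert \varphi_2) \leq (1+\epsilon)\, E^\M_{t/(1+\epsilon)}\bigl( \varphi_1(s_2\,\cdot\,s_2) \big\vert \varphi_2 \bigr) + (1+\epsilon^{-1})\, \varphi_1(s_2^\perp) \eqend{.}
\]
Since $\varphi_1(s_2\,\cdot\,s_2) \ll \varphi_2$, the absolutely continuous case lets you send $t\to\infty$; then send $\epsilon\to\infty$ to get $\lim_t E^\M_t(\varphi_1\vert\varphi_2) \leq \varphi_1(s_2^\perp)$.

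Your density argument for the absolutely continuous case is correct: $\Omega_1 \in s_2\Hil = \overline{\M'\Omega_2}$, and $\langle x'\Omega_2, \,\cdot\, x'\Omega_2\rangle \leq \norm{x'}^2\varphi_2$. However, the approximation must be uniform in $t$. That uniformity comes from $\abs{P^\M(\varphi) - P^\M(\psi)} \leq \norm{\varphi-\psi}$, not from norm continuity at fixed $t$.

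For comparison, the paper avoids both the reduction and the density argument. It picks $x_n \in [0,1]_\M$ nearly optimal at $t = n$, so that $n\varphi_2(x_n) \leq \varphi_1(\1) + 1$. By ultraweak compactness of $[0,1]_\M$, a cluster point $x$ satisfies $\varphi_2(x) = 0$ and $\varphi_1(x) \geq \lim_t E^\M_t(\varphi_1\vert\varphi_2)$. The condition $\varphi_2(x) = 0$ forces $x \leq s_2^\perp$, which gives the bound. Because the paper works with a limiting effect rather than decomposing $\varphi_1$, the off-diagonal terms never appear.
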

\begin{proof}
The expressions \eqref{eq:EFormulae} for $E^\M$ follow directly from \eqref{eq:varphiSup}.

\ref{item:EContinuity} The continuity claims are consequences of the continuity of $P^\M$ from \corref[item:PContractive]{cor:Jordan} and of the vector space operations on $\M\sth$.

\ref{item:EMonotonicity} Monotonicity follows directly from \corref[item:PMonotone]{cor:Jordan}.

\ref{item:EBoundsLimit} Expressing the hockeystick divergence as the supremum over all $p \in \PM$, the lower bound follows from choosing the projections $p = \1$ and $p = s_2^\perp$. The upper bound follows from monotonicity: $E^\M_t(\varphi_1 \vert \varphi_2) \leq E^\M_0(\varphi_1 \vert \varphi_2) = \varphi_1(\1)$.

The limit $l \defeq \lim\limits_{t \to \infty} E^\M_t(\varphi_1 \vert \varphi_2)$ exists and satisfies $l \geq \varphi_1(s_2^\perp)$ due to monotonicity and lower boundedness. For the converse inequality, we consider $t = n \in \mathbb{N}$ and choose $x_n \in [0,1]_\M$ such that $\varphi_1(x_n) - n \varphi_2(x_n) \geq E^\M_n(\varphi_1 \vert \varphi_2) - \frac{1}{n}$, i.e.,
\begin{equation}
\label{eq:ELimitEstimate}
0 \leq n \varphi_2(x_n) \leq \varphi_1(x_n) + \frac{1}{n} - E^\M_n(\varphi_1 \vert \varphi_2) \eqend{.}
\end{equation}
Since $[0,1]_\M$ is ultraweakly compact and the functionals are ultraweakly continuous, we may pass to a convergent subsequence $x_{n_j} \to x \in [0,1]_\M$ such that the right-hand side of \eqref{eq:ELimitEstimate} converges to $\varphi_1(x) - l$. It follows that $\varphi_2(x) = 0$, i.e., the limit satisfies $l \leq \varphi_1(x)$, and $0 = \varphi_2(x) = \varphi_2(s_2 x s_2)$, which implies $x s_2 = s_2 x = 0$. Hence $x = s_2^\perp x \, s_2^\perp \leq s_2^\perp$, and the reverse inequality $l \leq \varphi_1(s_2^\perp)$ follows.

\ref{item:EConvexity} Both convexity properties follow directly from \corref[item:PConvex]{cor:Jordan}.

\ref{item:ETriangleInequality} The ``triangle inequality'' follows by evaluating $\varphi_1 - t_1 t_2 \varphi_3 = (\varphi_1 - t_1 \varphi_2) + t_1 (\varphi_2 - t_2 \varphi_3)$ in~$P^\M$ and use its subadditivity and positive homogeneity from \corref{cor:Jordan}.

\ref{item:EStateExchange} State exchange and rescaling properties follow similarly from
\begin{splitequation}
\left( \varphi_2 - t \varphi_1 \right)_+ &= \varphi_2 - t \varphi_1 + \left( \varphi_2 - t \varphi_1 \right)_- = \varphi_2 - t \varphi_1 + t \left( t^{-1} \varphi_2 - \varphi_1 \right)_- \\
&= \varphi_2 - t \varphi_1 + t\left( \varphi_1 - t^{-1} \varphi_2 \right)_+ \eqend{,}
\end{splitequation}
and
\begin{equation}
\left( c_1 \varphi_1 - t c_2 \varphi_2 \right)_+ = c_1 \left( \varphi_1 - \frac{t c_2}{c_1} \varphi_2 \right)_+ \eqend{.}
\end{equation}
\end{proof}
We note that the proof of the ``triangle inequality'' \ref{item:ETriangleInequality} is analogous to the one for matrix algebras, \cite[Prop.~II.5]{hircherouzefranca2023}. We will obtain more specific upper and lower bounds on hockeystick divergences with the help of relative modular operators in \secref{sec:ModularBounds}.

As the main motivation underlying hockeystick divergences is the discrimination of states, we also characterize some special state configurations in terms of their hockeystick divergences.
\begin{corollary}
\label{cor:EStateComparison}
Let $\M$ be a von Neumann algebra and $\varphi_1, \varphi_2 \in \M\stp$.
\begin{enumerate}
\item \label{item:orthogonality} \emph{(Orthogonality)} $\varphi_1 \perp \varphi_2 \iff E^\M_t(\varphi_1 \vert \varphi_2) = \varphi_1(\1)$ for all $t \geq 0$ $\iff E^\M_{t_0}(\varphi_1 \vert \varphi_2) = \varphi_1(\1)$ for some $t_0 > 0$.

\item \label{item:phi12} \emph{(Absolute continuity)} $\varphi_1 \ll \varphi_2 \iff \lim\limits_{t \to \infty} E^\M_t(\varphi_1 \vert \varphi_2) = 0$.

\item \label{item:phi121} \emph{(Equal supports)} $s^\M(\varphi_1) = s^\M(\varphi_2) \iff \lim\limits_{t \to \infty} E^\M_t(\varphi_1 \vert \varphi_2) = \lim\limits_{t \to \infty} E^\M_t(\varphi_2 \vert \varphi_1) = 0$.

\item \label{item:multiples} \emph{(Multiples)} $\varphi_2 = c \, \varphi_1$ for some $c \geq 0$ $\iff E^\M_t(\varphi_1 \vert \varphi_2) = \Bigl( \varphi_1(\1) - t \varphi_2(\1) \Bigr)_+$ for all $t \geq 0$.

\item \label{item:equality} \emph{(Equality)} $\varphi_1 = \varphi_2 \iff \varphi_1(\1) = \varphi_2(\1)$ and $E^\M_t(\varphi_1 \vert \varphi_2) = \varphi_1(\1) (1-t)_+$ for all $t \geq 0$.

\item \label{item:submultiples} \emph{(Domination)} $\varphi_1 \leq c \, \varphi_2$ for some $c > 0$ $\iff E^\M_t(\varphi_1 \vert \varphi_2) = 0$ for all $t \geq c$. Equivalently, $E^\M_t(\varphi_1 \vert \varphi_2) = 0$ for $t \geq \mathe^{D^\M_\mathrm{max}(\varphi_1 \vert \varphi_2)}$, where $D^\M_\mathrm{max}(\varphi_1 \vert \varphi_2) = \inf \{ \lambda \in \mathbb{R} \colon \varphi_1 \leq \mathe^\lambda \varphi_2 \} \in [-\infty,\infty]$ is the max-relative entropy of $\varphi_1$ and $\varphi_2$.
\end{enumerate}
\end{corollary}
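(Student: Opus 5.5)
The six items follow from \propref{prop:hockeystick_properties}, mainly from the variational formula $E^\M_t(\varphi_1\vert\varphi_2)=\sup_{p\in\PM}[\varphi_1(p)-t\varphi_2(p)]$, the limit $\lim_{t\to\infty}E^\M_t(\varphi_1\vert\varphi_2)=\varphi_1(s_2^\perp)$, and the uniqueness of the Jordan decomposition (\thmref{theorem:JordanDecomposition}). I treat them in order.

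\textbf{Orthogonality.} If $\varphi_1\perp\varphi_2$, take $p=s_1$. Then $s_1\le s_2^\perp$ gives $\varphi_2(s_1)=0$, so $E_t\ge\varphi_1(\1)$, and the upper bound in \propref[item:EBoundsLimit]{prop:hockeystick_properties} forces equality. The middle implication is trivial.

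For the converse, suppose $E_{t_0}=\varphi_1(\1)$ for some $t_0>0$. Compactness of $[0,1]_\M$ together with the ultraweak continuity of $x\mapsto\varphi_1(x)-t_0\varphi_2(x)$ gives a maximizer $x$ with $\varphi_1(x)-t_0\varphi_2(x)=\varphi_1(\1)$. Rearranging, $\varphi_1(\1-x)+t_0\varphi_2(x)=0$, so both terms vanish. From $\varphi_2(x)=0$ we get $x\le s_2^\perp$, as in the proof of the limit. From $\varphi_1(\1-x)=0$ we get $\varphi_1(s_2)\le\varphi_1(\1-x)=0$, hence $s_1\le s_2^\perp$.

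\textbf{Absolute continuity.} $\varphi_1(s_2^\perp)=0$ holds if and only if $s_1\le s_2$. Combined with the limit formula this gives the claim.

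\textbf{Equal supports.} This is the previous item applied in both directions.

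\textbf{Multiples.} If $\varphi_2=c\varphi_1$, then $\varphi_1-t\varphi_2=(1-tc)\varphi_1$, and positive homogeneity yields the formula.

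For the converse, the main obstacle is to extract $c$. Assume $\varphi_1\neq0$ (otherwise set $\varphi_2=0$, noting $E_t=0$ forces $t\varphi_2(\1)\le0$, and use the bound $E_t\ge\varphi_1(s_2^\perp)$ as needed). Put $c=\varphi_2(\1)/\varphi_1(\1)$ and $t=1/c$, or let $t\to\infty$ if $c=0$. Then $E_{1/c}(\varphi_1\vert\varphi_2)=0$, i.e.\ $\varphi_1\le c^{-1}\varphi_2$. Moreover $(\varphi_1-c^{-1}\varphi_2)(\1)=0$. A nonpositive functional with zero value at $\1$ vanishes, so $\varphi_2=c\varphi_1$.

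If instead $c=0$, then $\varphi_2(\1)=0$ and hence $\varphi_2=0$. If $\varphi_1=0$, the formula reads $E_t=(-t\varphi_2(\1))_+=0$, which always holds. This case does not give a multiple unless $\varphi_2=0$, so I will have to check how the statement handles the degenerate case. With the convention $\varphi_1=c'\varphi_2$ it may need $\varphi_1\ne0$; I will handle this carefully in the write-up.

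\textbf{Equality.} This follows from the multiples item with $c=1$, which is determined by $\varphi_1(\1)=\varphi_2(\1)$.

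\textbf{Domination.} $E_t=0$ means $(\varphi_1-t\varphi_2)_+=0$, i.e.\ $\varphi_1\le t\varphi_2$. So $E_c=0$ if and only if $\varphi_1\le c\varphi_2$, and monotonicity in $t$ extends this to all $t\ge c$. The $D_{\max}$ reformulation follows by taking the infimum. The set of admissible $c$ is closed, so at $t=\mathe^{D_{\max}}$ we still have $E_t=0$ by the continuity of $E$ in $t$.
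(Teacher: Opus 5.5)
Your proposal follows essentially the paper's route: the variational formula, the limit $\varphi_1(s_2^\perp)$, and evaluation at the critical likelihood ratio where the positive part must vanish. In the orthogonality converse you get a maximizer from ultraweak compactness of $[0,1]_\M$, whereas the paper uses the support projection of $(\varphi_1 - t_0\varphi_2)_+$; this amounts to the same thing.

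\textbf{The degenerate case in (Multiples).} Your suspicion is correct, and you should state it outright rather than defer it.
\begin{itemize}
\item \emph{Counterexample.} Take $\varphi_1 = 0 \neq \varphi_2$, e.g.\ $\M = \mathbb{C}$, $\varphi_1 = 0$, $\varphi_2 = 1$. Then $E^\M_t(\varphi_1 \vert \varphi_2) = 0 = (\varphi_1(\1) - t\varphi_2(\1))_+$ for all $t \geq 0$, yet $\varphi_2$ is not a multiple of $\varphi_1$. So the converse implication is false as stated.
\item \emph{The paper misses this too.} Its step of evaluating at $t = \varphi_1(\1)/\varphi_2(\1)$ only yields $\varphi_1 = \frac{\varphi_1(\1)}{\varphi_2(\1)}\varphi_2$, which gives no information when $\varphi_1 = 0$.
\item \emph{Your argument is complete} once you either add the hypothesis $\varphi_1 \neq 0$, or weaken the conclusion to ``one of $\varphi_1, \varphi_2$ is a nonnegative multiple of the other''. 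In either case, delete the parenthetical ``otherwise set $\varphi_2 = 0$ \ldots''. Nothing forces $\varphi_2 = 0$ there.
\end{itemize}

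The (Equality) item is unaffected, because the assumption $\varphi_1(\1) = \varphi_2(\1)$ rules out $\varphi_1 = 0 \neq \varphi_2$.

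One small point in the forward direction of (Multiples): for $tc > 1$, positive homogeneity alone is not enough. You also need $P^\M(-\varphi_1) = 0$.
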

\begin{proof}
\ref{item:orthogonality} The orthogonality $\varphi_1 \perp \varphi_2 \iff \varphi_1 \perp t \varphi_2$ for $t > 0$ is equivalent to the Jordan decomposition $(\varphi_1 - t \varphi_2)_+ = \varphi_1$, and this implies $E^\M_t(\varphi_1 \vert \varphi_2) = \varphi_1(\1)$. If $E^\M_{t_0}(\varphi_1 \vert \varphi_2) = \varphi_1(\1)$ for some $t_0 > 0$, then $\psi \defeq \varphi_1 - t_0 \varphi_2$ satisfies
\begin{equation}
\norm{ \varphi_1 } = \norm{ \psi_+ } = \varphi_1\left( s^\M(\psi_+) \right) - t_0 \varphi_2\left( s^\M(\psi_+) \right) \leq \norm{ \varphi_1 } - t_0 \varphi_2\left( s^\M(\psi_+) \right) \eqend{.}
\end{equation}
This implies $\varphi_2\left( s^\M(\psi_+) \right) = 0$ and thus $\varphi_1\left( s^\M(\psi_+) \right) = \varphi_1(\1)$, hence $\varphi_1 \perp \varphi_2$.

\ref{item:phi12} We have $\varphi_1 \ll \varphi_2 \iff s_1 \leq s_2 \iff 0 = \varphi_1\left( s_2^\perp \right) = \lim\limits_{t \to \infty} E^\M_t(\varphi_1 \vert \varphi_2)$. Requiring also $\varphi_2 \ll \varphi_1$ gives \ref{item:phi121}.

\ref{item:multiples} The case $\varphi_1 = 0$ (which also implies $\varphi_2 = 0$) is trivial, as is the case $\varphi_1 \neq 0$ and $c = 0$. If $\varphi_2 = c \, \varphi_1 \neq 0$ for some $c > 0$, then $c = \varphi_2(\1) / \varphi_1(\1)$, and the equality
\begin{equation}
E^\M_t(\varphi_1 \vert \varphi_2) = \Bigl( \varphi(\1) - t \varphi_2(\1) \Bigr)_+
\end{equation}
follows directly. Conversely, if the equality holds for all $t$, it holds in particular for $t = \varphi_1(\1) / \varphi_2(\1)$. Then the right hand side vanishes, hence we obtain
\begin{equation}
0 = \left( \varphi_1 - \frac{\varphi_1(\1)}{\varphi_2(\1)} \varphi_2 \right)_+(\1) \implies \norm{ \varphi_1 - \frac{\varphi_1(\1)}{\varphi_2(\1)} \varphi_2 } = 0 \eqend{,}
\end{equation}
with the implication following from \eqref{eq:EFormulaDiffNorm}. \ref{item:equality} follows by specializing to $c = 1$.

\ref{item:submultiples} Domination follows directly from the definition of $E^\M_t(\varphi_1 \vert \varphi_2)$.
\end{proof}

The limit $\lim\limits_{t \to \infty} E^\M_t(\varphi_1 \vert \varphi_2) = \varphi_1(s_2^\perp)$ obtained in \propref[item:EBoundsLimit]{prop:hockeystick_properties} can be interpreted as the maximal probability of an event occuring in the state $\varphi_1$ that has probability zero for occuring in the state $\varphi_2$. For example, consider two vector states on $\BH$, given by unit vectors $\Omega_1$ and $\Omega_2$. Then $\varphi_1(s_2^\perp) = 1 - \abs{ \langle \Omega_1, \Omega_2 \rangle}^2$, the complement of the transition probability.

The following example illustrates the hockeystick divergence in simple situations with modular flows with non-trivial fixed points, and shows that the rate at which the $t \to \infty$ limit is attained can be arbitrary.
\begin{example}
Let $\M \subset \BH$ with cyclic separating unit vector $\Omega_1$, and let $h \geq 0$ be a positive operator\footnote{For example, $h$ could be the range projection of an isometry $V$ that behaves like $\sigma_t(V) = \lambda^{\mathi t} V$ under the modular flow $\sigma$ of $(\M,\Omega_1)$, for some $0 < \lambda \leq 1$.} affiliated with the centralizer $\M_{\omega_1}$ of the vector state $\omega_1$ given by $\Omega_1$, such that $\omega_1(h) = 1$. Then $\omega_2(x) \defeq \omega_1(h x) = \omega_1\left( h^\half x h^\half \right)$ is a normal state of $\M$, and the Jordan decomposition of $\varphi \defeq \omega_1 - t \omega_2$ is given by $\varphi_\pm(x) = \omega_1\left( (1-th)_\pm x \right)$. Denoting by $\mu(\cdot) \defeq \langle \Omega_1, E^h(\cdot) \Omega_1 \rangle$ the spectral measure $E^h$ of $h$ in $\Omega_1$, it follows that
\begin{splitequation}
\label{eq:IntegralELimit1}
E^\M_t(\omega_1 \vert \omega_2) &= \omega_1\left( (1-th)_+ \right) = \int_0^\infty (1 - \lambda t)_+ \total \mu(\lambda) = \int_0^\infty \int_0^1 \chi_{[\lambda,\infty)}\left( \tfrac{s}{t} \right) \total s \total \mu(\lambda) \\
&= \int_0^1 \mu\left( \left[ 0, \tfrac{s}{t} \right] \right) \total s \ \xrightarrow{t \to \infty} \ \mu\left( \{0\} \right)
\end{splitequation}
and analogously
\begin{equation}
\label{eq:IntegralELimit2}
E^\M_t(\omega_2 \vert \omega_1) = \omega_1\left( (h-t)_+ \right) = \int_t^\infty \mu\left( (s,\infty) \right) \total s \ \xrightarrow{t \to \infty} \ 0 \eqend{.}
\end{equation}
\end{example}
The hockeystick divergence limits computed in this example are attained for finite large enough $t$ for measures with spectral gap at $0$ and $\infty$, respectively (this is of course always the case in finite dimensions). In general, they can however be attained at arbitrary rates depending on the weight distribution of the measure $\mu$.

As we will in particular be interested in the parameter dependence of the hockeystick divergence, i.e., in the functions
\begin{equation}
\label{eq:Et}
\mathbb{R}_+ \ni t \mapsto E^\M_t(\varphi_1 \vert \varphi_2) \in \mathbb{R}_+
\end{equation}
for fixed $\varphi_1, \varphi_2 \in \M\stp$, we emphasize that these functions are continuous, monotonically decreasing, non-negative, bounded by $\varphi_1(s_2^\perp) \leq E^\M_t(\varphi_1 \vert \varphi_2) \leq \varphi_1(\1) = \norm{ \varphi_1 }$, and convex, hence almost everywhere twice differentiable with non-negative second derivative. In addition, \eqref{eq:Et} is Lipschitz continuous with Lipschitz constant $\varphi_2(\1) = \norm{ \varphi_2 }$. To prove this, let $t' \geq t$. Using the subadditivity and positive homogeneity of $P^\M$, we obtain
\begin{splitequation}
\abs{ E^\M_t(\varphi_1 \vert \varphi_2) - E^\M_{t'}(\varphi_1 \vert \varphi_2) } &= E^\M_t(\varphi_1 \vert \varphi_2) - E^\M_{t'}(\varphi_1 \vert \varphi_2) \\
&= P^\M\left( \varphi_1 - t' \varphi_2 + (t'-t) \varphi_2 \right) - P^\M\left( \varphi_1 - t' \varphi_2 \right) \\
&\leq (t'-t) P^\M(\varphi_2) = \abs{ t' - t } \norm{ \varphi_2 } \eqend{.}
\end{splitequation}

As a consequence of \corref{cor:Jordan2}, hockeystick divergences also exhibit several interesting properties relating to pairs of von Neumann algebras.
\begin{proposition}
\label{prop:hockeystick_properties2}
Let $\M, \N$ be von Neumann algebras, $\varphi, \varphi_1, \varphi_2 \in \M\stp$, $\psi, \psi_1, \psi_2 \in \N\stp$ and $t \geq 0$.
\begin{enumerate}
\item \emph{(Direct sums)} $E^{\M \oplus \N}_t = E^\M_t \oplus E^\N_t$, i.e.,
\begin{equation}
E^{\M \oplus \N}_t\left( \varphi_1 \oplus \psi_1 \vert \varphi_2 \oplus \psi_2 \right) = E^\M_t(\varphi_1 \vert \varphi_2) + E^\N_t(\psi_1 \vert \psi_2) \eqend{.}
\end{equation}

\item \label{item:ETensorProducts} \emph{(Tensor products)} We have
\begin{equation}
E^{\M \otimes \N}_t\left( \varphi \otimes \psi_1 \vert \varphi \otimes \psi_2 \right) = \varphi(\1) E^\N_t(\psi_1 \vert \psi_2) \eqend{.}
\end{equation}

\item \emph{(Data processing inequality)} Let $T \colon \N \to \M$ be a normal positive contraction. Then
\begin{equation}
E^\N_t\left( \varphi \circ T \vert \psi \circ T \right) \leq E^\M_t(\varphi \vert \psi) \eqend{.}
\end{equation}

\item \label{item:Hockey_prop2_ConditionalExpectations} \emph{(Conditional expectations)} Let $\M \subset \N$ and $C \colon \N \to \M$ a normal conditional expectation. Then
\begin{align}
E^\N_t\left( \varphi \circ C \vert \psi \circ C \right) = E^\M_t(\varphi \vert \psi) \eqend{.}
\end{align}

\item \label{item:Hockey_prop2_Compressions} \emph{(Compressions)} Let $\N = p \M p$ for a projection $p \in \PM$. Then $E^\N_t(\varphi \vert \psi) = E^\M_t(\varphi \vert \psi)$ for all $\varphi, \psi$ with $s^\M(\varphi), s^\M(\psi) \leq p$.

\item \label{item:Hockey_prop2_Martingale} \emph{(Martingale convergence)} Let $(\M_i)_{i \in I}$ with $M_i \subset \M$ for all $i \in I$ be an increasing net of von Neumann algebras such that $\bigcup_{i \in I} \M_i$ is $\sigma(\M, \M_*)$-dense in $\M$. Then it holds that
\begin{equation}
\lim_{i \in I} E^{\M_i}_t\left( \varphi_1 \rvert_{\M_i} \Big\vert \varphi_2 \rvert_{\M_i} \right) = E^\M_t(\varphi_1 \vert \varphi_2) \eqend{.}
\end{equation}
\end{enumerate}
\end{proposition}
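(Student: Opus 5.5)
Each item follows by applying the corresponding item of \corref{cor:Jordan2} to the selfadjoint normal functional $\varphi_1 - t\varphi_2$ (or its analogue). The only work is to check that the linear operations involved (direct sum, tensor product, pullback, restriction) commute with forming $\varphi_1 - t\varphi_2$. Recall that $E^\M_t(\varphi_1\vert\varphi_2) = P^\M(\varphi_1 - t\varphi_2)$ by \defref{def:hockeystick}.

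\emph{Direct sums.} We have $(\varphi_1\oplus\psi_1) - t(\varphi_2\oplus\psi_2) = (\varphi_1 - t\varphi_2)\oplus(\psi_1 - t\psi_2)$. The claim then follows from \corref[item:DirectSums]{cor:Jordan2}.

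\emph{Tensor products.} Bilinearity of the tensor product of functionals gives $\varphi\otimes\psi_1 - t\,\varphi\otimes\psi_2 = \varphi\otimes(\psi_1 - t\psi_2)$. Apply \corref[item:TensorProducts]{cor:Jordan2} with the positive factor $\varphi$ placed in the first slot. That corollary is stated with the positive factor in the second slot, so either use the flip isomorphism $\M\otimes\N\cong\N\otimes\M$, which is a $*$-isomorphism and hence preserves $P$ by \corref[item:PositiveContractions]{cor:Jordan2}, or rerun its short argument symmetrically. This yields $\varphi(\1)\,\norm{(\psi_1 - t\psi_2)_+}$.

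\emph{Data processing, conditional expectations, compressions.} Pullback is linear: $\varphi\circ T - t\,\psi\circ T = (\varphi - t\psi)\circ T$. Hence \corref[item:PositiveContractions]{cor:Jordan2} gives the data processing inequality, and \corref[item:CondExp]{cor:Jordan2} gives equality for a normal conditional expectation $C$. For compressions, note that $s^\M(\varphi), s^\M(\psi)\le p$ gives $\varphi(x) = \varphi(pxp)$, and the same for $\psi$. So $\varphi - t\psi$ is determined by its restriction to $p\M p$. Its positive and negative parts are supported under $p$, because they arise from the Jordan decomposition of the restriction extended by $x\mapsto(\,\cdot\,)(pxp)$, and uniqueness in \thmref{theorem:JordanDecomposition} identifies the two decompositions. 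With this, \corref[item:Compressions]{cor:Jordan2} applies directly. Alternatively, use $\sup_{p'\in\mathcal{P}(\M)}(\varphi - t\psi)(p') \ge \sup_{q\le p}$ together with $(\varphi - t\psi)(p') = (\varphi - t\psi)(pp'p)$ and $pp'p\in[0,1]_{p\M p}$ to get both inequalities.

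\emph{Martingale convergence.} Restriction is linear: $\varphi_1|_{\M_i} - t\,\varphi_2|_{\M_i} = (\varphi_1 - t\varphi_2)|_{\M_i}$. So \corref[item:Martingale]{cor:Jordan2} gives the limit.

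The only step needing slight care is the compression item, namely making sure that the positive part computed in $p\M p$ agrees with the one computed in $\M$. The variational formula \eqref{eq:SuccessProbability} over effects handles this cleanly, so no step is a real obstacle.
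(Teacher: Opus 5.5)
Your proposal is correct and is the paper's proof: the paper simply states that all items are immediate consequences of \corref{cor:Jordan2}, and you make explicit that direct sums, tensoring with a fixed $\varphi$, pullbacks and restrictions are linear, so they commute with forming $\varphi_1 - t\varphi_2$ before that corollary is applied. For the tensor product you don't need the flip isomorphism, because the general formula $\norm{(\varphi\otimes\psi)_+} = \norm{\varphi_+\otimes\psi_+ + \varphi_-\otimes\psi_-}$ in that corollary is symmetric, and $\varphi_- = 0$ when $\varphi$ is positive.
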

\begin{proof}
All claims are immediate consequences of \corref{cor:Jordan2}.
\end{proof}
We refer to \cite[Lemma~4]{sharmawarsi2012} for a similar proof of the data processing inequality for matrix algebras.

At this point, it is apparent that the hockeystick divergence at fixed likelihood ratio, i.e., the map $E^\M_t \colon \M\stp \times \M\stp \to \mathbb{R}_+$, exhibits various properties characteristic of relative entropies: it is non-negative, jointly (lower semi-) continuous, jointly convex, satisfies the data processing inequality (in particular, $E^\M_t$ is monotone under inclusions and invariant under automorphisms $\alpha \in \operatorname{Aut}(\M)$), and distinguishes states, namely $E^\M_t(\varphi_1 \vert \varphi_2) = 0 \iff \varphi_1 = \varphi_2$ for $t \geq 1$. On the other hand, the hockeystick divergence also has properties that distinguishes it from relative entropies: $E^\M_t(\varphi_1 \vert \varphi_2)$ is always finite (also for $\varphi_1 \not\ll \varphi_2$), and the hockeystick divergence is not additive under tensor products (note that in \propref[item:ETensorProducts]{prop:hockeystick_properties2}, the states on the second factor were assumed to coincide).

The latter two properties will change when we pass to $f$-divergences by integrating in the likelihood ratio $t$.

\subsection{\texorpdfstring{$f$}{f}-divergences}
\label{sec:hockeystick:fdivergence}

As explained in the introduction, the hockeystick divergences are the basic building blocks of our main object of interest, $f$-divergences in von Neumann algebras that generalize the classical Csiszár divergences~\cite{csiszar1963}.

The main idea is to consider integrals of hockeystick divergences in the likelihood ratio parameter $t$ as in \eqref{eq:DIntegralIntro}
\begin{equation}
D_f(\mu_1 \vert \mu_2) = \int_1^\infty \left[ f''(t) E_t(\mu_1 \vert \mu_2) + t^{-3} f''\left( t^{-1} \right) E_t(\mu_2 \vert \mu_1) \right] \total t \eqend{.}
\end{equation}
Note that the integral starts at $t = 1$, not at $t = 0$; the small $t$-values are accounted for by the second term using the state exchange relation, \propref[item:EStateExchange]{prop:hockeystick_properties}. Since $f$ is supposed to be convex, the function in front of this term is also natural: the map
\begin{equation}
\label{eq:fStarInvolution}
f \mapsto f_* \eqend{,} \qquad f_*(t) \defeq t f\left( t^{-1} \right)
\end{equation}
is an involution on the set of convex functions $(0, \infty) \to \mathbb{R}$, and $(f_*)''(t) = t^{-3} f''\left( t^{-1} \right)$. This leads to the definition of $f$-divergences, in complete analogy to the definition given in~\cite{hirchetomamichel2024} for matrix algebras.
\begin{definition}
\label{def:fdivergence}
Let $\M$ be a von Neumann algebra $\M$ and $f \colon (0, \infty) \to \mathbb{R}$ convex. The \emph{$f$-divergence} of $\M$ is the map
\begin{splitequation}
\label{eq:Df}
D^\M_f &\colon \M\stp \times \M\stp \to (-\infty,\infty] \eqend{,} \\
D^\M_f(\varphi_1 \vert \varphi_2) &\defeq \int_1^\infty \left[ f''(t) E^\M_t(\varphi_1 \vert \varphi_2) + (f_*)''(t) E^\M_t(\varphi_2 \vert \varphi_1) \right] \total t + \varphi_1(\1) - \varphi_2(\1) \eqend{.}
\end{splitequation}
\end{definition}
Since $t \mapsto E^\M_t(\varphi_1 \vert \varphi_2)$ is continuous in $t$ by \propref{prop:hockeystick_properties}, and convex functions are almost everywhere twice differentiable with $f''(t) \geq 0$, the integral is well-defined as a Lebesgue integral with values in the extended positive reals $[0,\infty]$. Because of the usual correction term $\varphi_1(\1) - \varphi_2(\1)$ for non-normalized functionals, in general the $f$-divergence takes values in $(-\infty,\infty]$. We also note that since $D^\M_f$ depends on $f$ only via its second derivative, $D^\M_f = D^\M_g$ if $f-g$ is affine linear.

As a case of particular interest, we point out that for the information function $f = f_0$ with $f_0(t) = t \ln t$, the $f$-divergence takes the form
\begin{equation}
\label{eq:fDivergenceInformationFunction}
D^\M_{f_0}(\varphi_1 \vert \varphi_2) = \int_1^\infty \left[ \frac{1}{t} E^\M_t(\varphi_1 \vert \varphi_2) + \frac{1}{t^2} E^\M_t(\varphi_2 \vert \varphi_1) \right] \total t + \varphi_1(\1) - \varphi_2(\1) \eqend{.}
\end{equation}

Before we study the properties of $D^\M_f$, we indicate that they are more closely related to relative entropies than hockeystick divergences by considering two very special test cases.
\begin{lemma}
\label{lemma:DfCommutingDensities}
Let $f \colon (0,\infty) \to \mathbb{R}$ be convex and assume that $f$ is differentiable at $1$.
\begin{enumerate}
\item \label{item:DfC} For the trivial von Neumann algebra $\M = \mathbb{C}$ with positive linear functionals $a_1, a_2 \in \mathbb{R}_+$, we have\footnote{We here adopt the convention $0 \cdot \infty = 0$.}
\begin{equation}
\label{eq:DfC}
D^\mathbb{C}_f(a_1 \vert a_2) = a_2 f\left( \frac{a_1}{a_2} \right) - a_2 f(1) + (a_2 - a_1) \left[ f'(1) - 1 \right] \eqend{,} \quad a_1, a_2 > 0 \eqend{,}
\end{equation}
and the cases $a_1 = 0$ and $a_2 = 0$ can be obtained as limits.

\item \label{item:DfCommutingDensities} Let $\M$ be a von Neumann algebra with a faithful normal finite trace $\tau$, and consider positive normal functionals of the form $\varphi_j(x) = \tau(h_j x)$ with $h_j \in \M_+$, \exref[item:TypeIIJordan]{example:Jordan}. If the densities $h_1$ and $h_2$ commute and $h_2$ is invertible, then
\begin{equation}
\label{eq:fDivergenceCommutingDensities}
D^\M_f(\varphi_1 \vert \varphi_2) = \tau\left( h_2 f\left( h_1 h_2^{-1} \right) \right) - \tau(h_2) f(1) + \tau\left( h_1 - h_2 \right) \left[ f'(1) - 1 \right] \eqend{.}
\end{equation}

\item \label{item:ProbabilityMeasures} Let $X$ be a measurable space with two probability measures $\mu_1 \ll \mu_2$. Then for $f(1) = 0$ it holds that
\begin{equation}
\label{eq:DfProbabilityMeasures}
D^{L^\infty(X,\mu_2)}_f(\mu_1 \vert \mu_2) = D^{L^\infty(X,\mu_2)}_{f_*}(\mu_2 \vert \mu_1) = \int_X f\left( \frac{\total \mu_1}{\total \mu_2} \right) \total \mu_2 \eqend{.}
\end{equation}
\end{enumerate}
\end{lemma}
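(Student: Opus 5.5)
The plan is to reduce everything to an exact scalar computation in item~\ref{item:DfC}, and then to obtain items~\ref{item:DfCommutingDensities} and~\ref{item:ProbabilityMeasures} by applying that computation pointwise under a spectral integral, using Tonelli's theorem to exchange the $t$-integral with the spectral integral.

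\emph{Item~\ref{item:DfC}.} For $\M = \mathbb{C}$ the Jordan decomposition of $a_1 - t a_2 \in \mathbb{R}$ is trivial, so $E^\mathbb{C}_t(a_1 \vert a_2) = (a_1 - t a_2)_+$. I would split into the cases $a_1 \geq a_2$ and $a_1 < a_2$.

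If $a_1 \geq a_2$, set $r = a_1/a_2 \geq 1$.
\begin{itemize}
\item The second integrand $(a_2 - t a_1)_+$ vanishes for $t \geq 1$.
\item The first term is $a_2 \int_1^r f''(t)(r-t) \total t$.
\item By Taylor's formula with integral remainder this equals $a_2\left[ f(r) - f(1) - f'(1)(r-1) \right]$.
\item Adding $a_1 - a_2$ gives exactly \eqref{eq:DfC}.
\end{itemize}

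If $a_1 < a_2$, the first term vanishes and the second is $a_1\left[ f_*(s) - f_*(1) - f_*'(1)(s-1) \right]$ with $s = a_2/a_1$. Using $f_*(1) = f(1)$, $f_*'(1) = f(1) - f'(1)$ and $a_1 f_*(s) = a_2 f(a_1/a_2)$, this rearranges to $a_2 f(r) - a_2 f(1) + f'(1)(a_2 - a_1)$. Adding $a_1 - a_2$ again gives \eqref{eq:DfC}.

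The boundary cases $a_1 = 0$ or $a_2 = 0$ follow by monotone convergence in the integrals, with $0 \cdot \infty = 0$.

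\emph{Main obstacle.} The delicate point is the Taylor step. For a general convex $f$, the a.e.\ second derivative only reproduces $f(r) - f(1) - f'(1)(r-1)$ if $f'$ is locally absolutely continuous. Otherwise one only gets an inequality, because the singular part of the second-derivative measure is lost. I would therefore read $f''(t)\total t$ as the (nonnegative) second distributional derivative of $f$, which makes the identity exact. Alternatively, I would restrict to $f'$ locally absolutely continuous, as in the twice-differentiable setting of \cite{sasonverdu2016}. Differentiability at $1$ is used only so that $f'(1)$ is unambiguous.

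\emph{Item~\ref{item:DfCommutingDensities}.} Since $h_1,h_2 \in \M_+$ commute, $(h_1 - t h_2)_\pm$ are orthogonal positive elements of the abelian algebra they generate. By \exref[item:TypeIIJordan]{example:Jordan} this gives $E^\M_t(\varphi_1 \vert \varphi_2) = \tau\left( (h_1 - t h_2)_+ \right)$, and similarly with the roles exchanged.

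Let $\mu(B) \defeq \tau(E(B))$, where $E$ is the joint spectral measure of $(h_1,h_2)$ on $[0,\norm{h_1}] \times [\epsilon,\norm{h_2}]$; here $\epsilon > 0$ exists because $h_2$ is invertible. This $\mu$ is a finite positive measure, and
\begin{equation}
E^\M_t(\varphi_1 \vert \varphi_2) = \int (\lambda_1 - t \lambda_2)_+ \total \mu(\lambda_1,\lambda_2) \eqend{.}
\end{equation}
The integrands in $t$ are nonnegative, so Tonelli allows exchanging $\int_1^\infty \total t$ with $\int \total\mu$. Item~\ref{item:DfC} applied pointwise to $(\lambda_1,\lambda_2)$ then yields $\int \left[ \lambda_2 f(\lambda_1/\lambda_2) - \lambda_2 f(1) + (\lambda_2 - \lambda_1)(f'(1) - 1) \right] \total\mu$. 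By functional calculus this is the right-hand side of \eqref{eq:fDivergenceCommutingDensities}. The term $\tau(h_2 f(h_1 h_2^{-1}))$ is understood in $(-\infty,\infty]$ when $f$ is unbounded near $0$; it is bounded below since $f$ is convex and hence bounded below by an affine function.

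\emph{Item~\ref{item:ProbabilityMeasures}.} Take $\M = L^\infty(X,\mu_2)$, $g = \total\mu_1/\total\mu_2 \in L^1(X,\mu_2)$, and let $\mu_2$ have density $1$. Then $E_t(\mu_1 \vert \mu_2) = \int_X (g - t)_+ \total\mu_2$ and $E_t(\mu_2 \vert \mu_1) = \int_X (1 - t g)_+ \total\mu_2$. The same Tonelli argument with item~\ref{item:DfC} at $(a_1,a_2) = (g(x),1)$ gives $\int_X \left[ f(g) - f(1) + (1 - g)(f'(1) - 1) \right] \total\mu_2$. Since $f(1) = 0$ and $\int_X g \total\mu_2 = 1 = \mu_2(X)$, this reduces to $\int_X f(g) \total\mu_2$. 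Points where $g = 0$ are covered by the limiting case of item~\ref{item:DfC}.

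Finally, the equality $D_f(\mu_1 \vert \mu_2) = D_{f_*}(\mu_2 \vert \mu_1)$ is immediate from \defref{def:fdivergence}. Because $(f_*)_* = f$, the two integrands coincide, and the correction terms $\pm(\mu_1(X) - \mu_2(X))$ both vanish for probability measures.
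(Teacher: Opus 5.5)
Your proposal is correct and follows essentially the same route as the paper: the explicit scalar computation in (a) (the paper handles $a_1<a_2$ by substituting $s=t^{-1}$ in the $f_*''$-integral, which is equivalent to your Taylor expansion of $f_*$), then the joint spectral measure of $h_1,h_2$ in $\tau$ plus Tonelli's theorem in (b), and specialization to $h_2=1$ in (c). Your caveat is well founded and points at a genuine imprecision in the paper. The integration-by-parts step needs $f'$ locally absolutely continuous, or $f''(t)\,dt$ read as the distributional second derivative; the paper uses this silently, and e.g.\ $f(t)=\abs{t-2}$ gives $D^{\mathbb{C}}_f(3\vert 1)=2$ under the a.e.\ reading, whereas the claimed right-hand side equals $4$. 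Likewise, your direct Tonelli argument in (c) covers an unbounded $\total\mu_1/\total\mu_2$, which the paper's appeal to (b) (stated only for $h_j\in\M_+$) does not literally do.
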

Observe that for the information function $f = f_0$, the $f$-divergence \eqref{eq:fDivergenceCommutingDensities} simplifies to
\begin{equation}
D^\M_{f_0}(\varphi_1 \vert \varphi_2) = \tau\left( h_1 \ln h_1 - h_1 \ln h_2 \right) \eqend{,}
\end{equation}
the relative entropy of $\varphi_1$ and $\varphi_2$. Moreover, the formula \eqref{eq:DfProbabilityMeasures} is the definition of $f$-divergence in classical probability \eqref{eq:DfIntro}. Hence the $f$-divergences considered here generalize their classical counterparts.
\begin{proof}
\ref{item:DfC} By definition of the $f$-divergence, we have
\begin{equation}
D^\mathbb{C}_f(a_1 \vert a_2) = \mathord{\underbrace{\int_1^\infty f''(t) (a_1 - t a_2)_+ \total t}_{\eqdef I_1}} + \mathord{\underbrace{\int_1^\infty f_*''(t) (a_2 - t a_1)_+ \total t}_{\eqdef I_2}} + a_1 - a_2 \eqend{.}
\end{equation}
Let $a_2 > 0$ and $r \defeq a_1/a_2$. The integrand of $I_1$ vanishes for $t \geq r$, so $I_1 = 0$ for $r \leq 1$. For $r > 1$, we have
\begin{equation}
I_1 = a_2 \int_1^\infty f''(t) (r-t)_+ \total t = a_2 \int_1^r f''(t) (r-t) \total t = a_2 \left[ f(r) - f(1) - (r-1) f'(1) \right] \eqend{.}
\end{equation}
The integrand of $I_2$ vanishes for $t \geq 1/r$, so $I_2 = 0$ for $r \geq 1$. For $r < 1$, we use $f_*''(t) = t^{-3} f''(t^{-1})$ and set $s = t^{-1}$ to obtain
\begin{splitequation}
I_2 &= a_2 \int_1^\infty t^{-3} f''(t^{-1}) (1 - r t)_+ \total t = a_2 \int_r^1 f''(s) (s-r) \total s \\
&= a_2 \left[ f(r) - f(1) + (1-r) f'(1) \right] \eqend{.}
\end{splitequation}
Adding both integrals gives the claimed result for $a_2 > 0$. That the formula for $a_1 = 0$ or $a_2 = 0$ can be obtained as a limit is immediate.

\ref{item:DfCommutingDensities} As $h_1$ and $h_2$ commute, they have a joint spectral calculus, and we denote by $\mu^{1,2}_\tau$ their joint spectral measure in the tracial state $\tau$, supported on their joint spectrum $\sigma(h_1, h_2) \subset (\mathbb{R}_+)^2$. Using this spectral measure, Tonelli's Theorem, and part~\ref{item:DfC}, we obtain
\begin{splitequation}
&D^\M_f(\varphi_1 \vert \varphi_2) = \int_1^\infty \left[ f''(t) \tau\left( (h_1 - t h_2)_+ \right) + f_*''(t) \tau\left( (h_2 - t h_1)_+ \right) \right] \total t + \tau\left( h_1 - h_2 \right) \\
&\quad= \int_1^\infty \int_{(\mathbb{R}_+)^2} \Bigl[ f''(t) (\lambda_1 - t \lambda_2)_+ + f_*''(t) (\lambda_2 - t \lambda_1)_+ \Bigr] \total \mu_\tau^{1,2}(\lambda_1,\lambda_2) \total t + \tau\left( h_1 - h_2 \right) \\
&\quad= \int_{(\mathbb{R}_+)^2} \int_1^\infty \Bigl[ f''(t) (\lambda_1 - t \lambda_2)_+ + f_*''(t) (\lambda_2 - t \lambda_1)_+ \Bigr] \total t \total \mu_\tau^{1,2}(\lambda_1,\lambda_2) + \tau\left( h_1 - h_2 \right) \\
&\quad= \int_{(\mathbb{R}_+)^2} \Bigl[ D^\mathbb{C}_f(\lambda_1 \vert \lambda_2) - \lambda_1 + \lambda_2 \Bigr] \total \mu_\tau^{1,2}(\lambda_1,\lambda_2) + \tau\left( h_1 - h_2 \right) \\
&\quad= \int_{(\mathbb{R}_+)^2} D^\mathbb{C}_f(\lambda_1 \vert \lambda_2) \total \mu_\tau^{1,2}(\lambda_1,\lambda_2) \\
&\quad= \int_{(\mathbb{R}_+)^2} \left[ \lambda_2 f\left( \frac{\lambda_1}{\lambda_2} \right) - \lambda_2 f(1) + (\lambda_2 - \lambda_1) \left[ f'(1) - 1 \right] \right] \total \mu_\tau^{1,2}(\lambda_1,\lambda_2) \\
&\quad= \tau\left( h_2 f\left( h_1 h_2^{-1} \right) \right) - \tau(h_2) f(1) + \tau\left( h_1 - h_2 \right) \left[ f'(1) - 1 \right] \eqend{.}
\end{splitequation}

\ref{item:ProbabilityMeasures} Integration against $\mu_2$ defines a faithful normal finite trace on $\M = L^\infty(X,\mu_2)$, and the states $\varphi_1, \varphi_2 \in \M\stp$ given by the densities $h_1 \in L^1(X,\mu_2)$, $h_2 = 1$ satisfy $h_1/h_2 = \total \mu_1 / \total \mu_2$ with the Radon--Nikod{\'y}m derivative. Hence \eqref{eq:DfProbabilityMeasures} follows by specialization from~\ref{item:DfCommutingDensities}, compare \exref[item:AbelianJordan]{example:Jordan}.
\end{proof}

As for the hockeystick divergences, the $f$-divergences enjoy various useful properties.
\begin{theorem}
\label{thm:fdivergence_properties}
Let $\M$ be a von Neumann algebra, $\varphi_1, \varphi_2 \in \M\stp$ and $f \colon (0,\infty) \to \mathbb{R}$ convex with $f_*$ defined in~\eqref{eq:fStarInvolution}. Then the $f$-divergence $D^\M_f$ has the following properties:
\begin{enumerate}
\item \label{item:DBounds} \emph{(Bounds and state discrimination)}
\begin{equation}
\label{eq:DBounds}
\varphi_1(\1) - \varphi_2(\1) \leq D^\M_f(\varphi_1 \vert \varphi_2) \leq \varphi_1(\1) \norm{ f'' \bigr\rvert_{[1,\infty)} }_{L^1} + \varphi_2(\1) \norm{ (f_*)'' \bigr\rvert_{[1,\infty)} }_{L^1} \eqend{.}
\end{equation}
If $f''(t) > 0$ for almost all $t > 0$, then
\begin{equation}
\label{eq:DDistinguishesStates}
D^\M_f(\varphi_1 \vert \varphi_2) = \varphi_1(\1) - \varphi_2(\1) \iff \varphi_1 = \varphi_2 \eqend{.}
\end{equation}
Furthermore, for general convex $f$,
\begin{equations}[eq:DInfinite]
\varphi_1 &\not\ll \varphi_2 \eqend{,} \quad \norm{ f'' \bigr\rvert_{[1,\infty)} }_{L^1} = \infty \implies D^\M_f(\varphi_1 \vert \varphi_2) = \infty \eqend{,} \\
\varphi_2 &\not\ll \varphi_1 \eqend{,} \quad \norm{ (f_*)'' \bigr\rvert_{[1,\infty)} }_{L^1} = \infty \implies D^\M_f(\varphi_1 \vert \varphi_2) = \infty \eqend{.}
\end{equations}
\item \label{item:DContinuity} \emph{(Lower semicontinuity)} $D^\M_f \colon \M\stp \times \M\stp \to (-\infty,\infty]$ is jointly lower semicontinuous in the $\sigma(\M_*, \M)$-topology, and in particular in the norm topology of $\M_*$. For $\norm{ f'' \bigr\rvert_{[1,\infty)} }_{L^1} + \norm{ (f_*)'' \bigr\rvert_{[1,\infty)} }_{L^1} < \infty$, the $f$-divergence takes only finite values and is continuous in the norm topology of $\M_*$.
\item \label{item:DConvexity} \emph{(Convexity)} $D^\M_f$ is jointly convex in the functionals.
\item \label{item:DStateExchange} \emph{(State exchange)} $D^\M_f(\varphi_2 \vert \varphi_1) = D^\M_{f_*}(\varphi_1 \vert \varphi_2) + 2 \varphi_2(\1) - 2 \varphi_1(\1)$.
\item \label{item:DAlternativeFormula} \emph{(Alternative representation)} If $f$ is differentiable at $1$, we have
\begin{splitequation}
\label{eq:f-divergenceFullIntegralFormula}
D^\M_f(\varphi_1 \vert \varphi_2) &= \lim_{\epsilon \searrow 0} \Biggl[ \int_\epsilon^\infty f''(t) E^\M_t(\varphi_1 \vert \varphi_2) \total t + \varphi_2(\1) [ f(\epsilon) - f(1) + f'(\epsilon) - \epsilon f'(\epsilon) ] \\
&\hspace{6em}+ [ \varphi_1(\1) - \varphi_2(\1) ] [ 1 + f'(\epsilon) - f'(1) ] \Biggr] \eqend{.}
\end{splitequation}
\end{enumerate}
Furthermore, if $\N$ is another von Neumann algebra,
\begin{enumerate}
\setcounter{enumi}{5}
\item \label{item:DDPI} \emph{(Data processing inequality)} Let $T \colon \N \to \M$ be a normal linear unital map (i.e., $T(\1_N) = \1_\M$). Then
\begin{align}
D^\N_f\left( \varphi_1 \circ T \vert \varphi_2 \circ T \right) \leq D^\M_f(\varphi_1 \vert \varphi_2) \eqend{.}
\end{align}
If $\M \subset \N$ is a subalgebra and $T$ a normal conditional expectation, equality holds.
\item \label{item:DSumsAndProducts} \emph{(Direct sums and tensor products)} We have for $\varphi, \varphi_1, \varphi_2 \in \M\stp$ and $\psi_1,\psi_2 \in \N\stp$
\begin{equations}
D^{\M \oplus \N}_f\left( \varphi_1 \oplus \psi_1 \vert \varphi_2 \oplus \psi_2 \right) &= D^\M_f(\varphi_1 \vert \varphi_2) + D^\N_f(\psi_1 \vert \psi_2) \eqend{,} \\
D^{\M \otimes \N}_f\left( \varphi \otimes \psi_1 \vert \varphi \otimes \psi_2 \right) &= \varphi\left( \1_\M \right) D^\N_f(\psi_1 \vert \psi_2) \eqend{.}
\end{equations}
\item \label{item:f-divergence_Compressions} \emph{(Compressions)} Let $\N = p \M p$ for a projection $p \in \PM$. Then $D^\N_f(\varphi \vert \psi) = D^\M_f(\varphi \vert \psi)$ for all $\varphi, \psi$ with $s^\M(\varphi), s^\M(\psi) \leq p$.
\item \label{item:f-divergenceMartingale} \emph{(Martingale convergence)} Let $(\M_i)_{i \in I}$ with $M_i \subset \M$ for all $i \in I$ be an increasing net of von Neumann algebras such that $\bigcup_{i \in I} \M_i$ is $\sigma(\M, \M_*)$-dense in $\M$. Then it holds that
\begin{equation}
\lim_{i \in I} D^{\M_i}_f\left( \varphi_1 \rvert_{\M_i} \Big\vert \varphi_2 \rvert_{\M_i} \right) = D^\M_f(\varphi_1 \vert \varphi_2) \eqend{.}
\end{equation}
\end{enumerate}
\end{theorem}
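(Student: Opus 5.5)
The plan is to derive every item pointwise in $t$ from the corresponding property of hockeystick divergences (\propref{prop:hockeystick_properties}, \propref{prop:hockeystick_properties2}, \corref{cor:EStateComparison}), and then integrate against the non-negative weights $f''(t)$ and $(f_*)''(t)$ on $[1,\infty)$. The affine correction $\varphi_1(\1)-\varphi_2(\1)$ is linear in the functionals and is handled separately.

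\emph{Bounds.} For \ref{item:DBounds}, the bounds follow from $0\le E^\M_t(\varphi_1\vert\varphi_2)\le\varphi_1(\1)$. For state discrimination, if $f''>0$ a.e.\ and $D_f=\varphi_1(\1)-\varphi_2(\1)$, then both integrands vanish a.e. By continuity in $t$, this gives $E_t(\varphi_1\vert\varphi_2)=E_t(\varphi_2\vert\varphi_1)=0$ for all $t\ge1$. In particular $E_1=0$ in both directions, i.e.\ $\varphi_1\le\varphi_2\le\varphi_1$. For \eqref{eq:DInfinite}, if $\varphi_1\not\ll\varphi_2$, then $E_t(\varphi_1\vert\varphi_2)\ge\varphi_1(s_2^\perp)>0$ by \propref[item:EBoundsLimit]{prop:hockeystick_properties}, so the integral diverges.

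\emph{Semicontinuity and convexity.} For \ref{item:DContinuity}, each integrand is jointly $\sigma(\M_*,\M)$-lower semicontinuous and non-negative, so lower semicontinuity of the integral follows from Fatou's lemma. Concretely, I would write the integral as the supremum over truncations $\int_1^N\min(f'',k)\,\cdots$ to get l.s.c.\ as a supremum of l.s.c.\ functions. In the finite case, norm continuity follows from dominated convergence using the bound $E_t\le\varphi_1(\1)$. Joint convexity \ref{item:DConvexity} is immediate from \propref[item:EConvexity]{prop:hockeystick_properties}.

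\emph{State exchange and alternative representation.} State exchange \ref{item:DStateExchange} comes from $(f_*)_*=f$: swapping the roles in \defref{def:fdivergence} reproduces the same integral, and only the constant term flips sign. For \ref{item:DAlternativeFormula}, I split $\int_\epsilon^\infty=\int_\epsilon^1+\int_1^\infty$. On $[\epsilon,1]$ I apply the exchange formula
\[
E_t(\varphi_1\vert\varphi_2)=\varphi_1(\1)-t\varphi_2(\1)+tE_{1/t}(\varphi_2\vert\varphi_1)
\]
and substitute $s=1/t$. This turns $\int_\epsilon^1 f''(t)\,t\,E_{1/t}\,dt$ into $\int_1^{1/\epsilon}(f_*)''(s)E_s(\varphi_2\vert\varphi_1)\,ds$, using $s^{-3}f''(1/s)=(f_*)''(s)$. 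The leftover terms $\int_\epsilon^1 f''(t)(\varphi_1(\1)-t\varphi_2(\1))\,dt$ are computed by integration by parts, which is where the boundary terms in $f(\epsilon)$, $f'(\epsilon)$ and $f'(1)$ arise. Taking $\epsilon\searrow0$ then uses monotone convergence. Some care is needed because $f'$ exists only a.e.\ and $f''$ is understood as the a.e.\ second derivative; I would restrict to regular points, or assume $f\in C^2$ and approximate. This bookkeeping is routine but delicate.

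\emph{Two-algebra properties.} For \ref{item:DDPI}, note that a normal unital positive map is a positive contraction. Pointwise \propref{prop:hockeystick_properties2} gives $E^\N_t(\varphi_1\circ T\vert\varphi_2\circ T)\le E^\M_t(\varphi_1\vert\varphi_2)$, and the constant term is preserved since $T(\1)=\1$. Here I assume positivity, which the statement presumably intends. Conditional expectations give equality pointwise. Direct sums, tensor products with equal first factor, and compressions follow pointwise from \propref{prop:hockeystick_properties2}, with the constant term splitting accordingly: $(\varphi\otimes\psi_j)(\1)=\varphi(\1)\psi_j(\1)$.

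\emph{Main obstacle: martingale convergence.} For \ref{item:f-divergenceMartingale}, pointwise convergence in $t$ comes from \propref[item:Hockey_prop2_Martingale]{prop:hockeystick_properties2}. By the data processing inequality applied to the inclusions $\M_i\subset\M_j$, the net $i\mapsto E^{\M_i}_t$ is increasing. So I would invoke the monotone convergence theorem for nets. This is legitimate here because the functions of $t$ are continuous and monotone, so convergence is countably determined: I would pass to a cofinal sequence by evaluating at rational $t$. Alternatively, I would combine lower semicontinuity (Fatou) with the upper bound $D^{\M_i}_f\le D^\M_f$ from the data processing inequality. The constant term also converges, since $\1_{\M_i}\to\1$.
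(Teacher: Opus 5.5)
Your plan is the paper's proof: each item is read off pointwise in $t$ from \propref{prop:hockeystick_properties}, \propref{prop:hockeystick_properties2} and \corref{cor:EStateComparison}, integrated against the non-negative weights $f''$ and $(f_*)''$ on $[1,\infty)$, with $\varphi_1(\1)-\varphi_2(\1)$ handled separately. You are more explicit than the paper in two useful places. For state discrimination you use $E^\M_1=0$ in both directions, which is what is needed, since one direction alone gives only $\varphi_1\le\varphi_2$. Your martingale argument is also sound, with one correction: the sequence $i_1\le i_2\le\dots$ you extract via rational $t$ need not be cofinal in $I$, and it need not be. The integral part of $D^{\M_i}_f$ is increasing in $i$ and bounded by that of $D^\M_f$, so reaching the bound along one increasing sequence forces the whole net to converge.

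The place where your proposal claims more than can be proved is item 5. Your exchange-and-substitute computation is right. However, the integrations by parts $\int_\epsilon^1 f''(t)\,dt=f'(1)-f'(\epsilon)$ and $\int_\epsilon^1 tf''(t)\,dt=f'(1)-\epsilon f'(\epsilon)-f(1)+f(\epsilon)$ need $f'$ to be absolutely continuous on $[\epsilon,1]$. Your fallback of approximating by $C^2$ functions cannot remove this. The almost-everywhere second derivative used in \defref{def:fdivergence} ignores the singular part of the second-derivative measure of $f$, whereas the $C^2$ approximants do not, so the identity does not pass to the limit. In fact item 5 is false in that generality. For $f(t)=\abs{t-\tfrac12}$ one has $f''=(f_*)''=0$ a.e., hence $D^\M_f(\varphi_1\vert\varphi_2)=\varphi_1(\1)-\varphi_2(\1)$. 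Yet for $\epsilon<\tfrac12$ the expression inside the limit in \eqref{eq:f-divergenceFullIntegralFormula} equals $-\varphi_1(\1)$. So item 5 needs $f'$ locally absolutely continuous (e.g.\ $f\in C^2$), or $f''$ must be read as a measure throughout; the paper's ``straightforward computations'' tacitly assume the same.

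Two smaller points of the same nature. First, your bounds argument, like the paper's, gives the upper bound in \eqref{eq:DBounds} plus $\varphi_1(\1)-\varphi_2(\1)$. The printed bound without this term fails for affine $f$ whenever $\varphi_1(\1)>\varphi_2(\1)$. Second, since $\sigma(\M_*,\M)$ is in general not metrizable, Fatou's lemma would have to hold for nets, where it fails in general; truncating $f''$ does not yet give lower semicontinuity. What makes it work is equi-Lipschitz continuity in $t$. The map $t\mapsto E^\M_t(\varphi_1^\alpha\vert\varphi_2^\alpha)$ is Lipschitz with constant $\varphi_2^\alpha(\1)$, and the reversed one with constant $\varphi_1^\alpha(\1)$; both constants are eventually bounded along a convergent net. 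Hence the pointwise $\liminf$ inequality at finitely many grid points upgrades to a uniform one on each $[1,N]$, where $f''$ and $(f_*)''$ are integrable.
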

\begin{proof}
\ref{item:DBounds} The bounds \eqref{eq:DBounds} follow directly from positivity of the hockeystick divergences and the upper bound in \propref[item:EBoundsLimit]{prop:hockeystick_properties}. We note that the second $L^1$-norm can be written as $\norm{ (f_*)'' \bigr\rvert_{[1,\infty)} }_{L^1} = \int_0^1 s f''(s) \total s$. If the lower bound is attained, $D^\M_f(\varphi_1 \vert \varphi_2) = \varphi_1(\1) - \varphi_2(\1)$, and $f'' > 0$ almost everywhere, positivity of the integrand of $D^\M_f$ implies $E^\M_t(\varphi_1 \vert \varphi_2) = 0$ for almost all $t$, hence for all $t$ by continuity. \corref[item:submultiples]{cor:EStateComparison} then implies $\varphi_1 = \varphi_2$.

The implications \eqref{eq:DInfinite} follow because $\varphi_1 \not\ll \varphi_2$ implies that $E^\M_t(\varphi_1 \vert \varphi_2) \geq \varphi_1(s_2^\perp) > 0$ for all $t$, and analogously for $\varphi_2 \not\ll \varphi_1$.

\ref{item:DContinuity} The lower semicontinuity of $(\varphi_1,\varphi_2) \mapsto E^\M_t(\varphi_1 \vert \varphi_2)$ given by \propref[item:EContinuity]{prop:hockeystick_properties} passes on to $D^\M_f$ by Fatou's lemma. If the specified $L^1$-norm bound holds, $D^\M_f(\varphi_1 \vert \varphi_2) < \infty$ by \ref{item:DBounds}. The norm continuity of the hockeystick divergence given by \propref[item:EContinuity]{prop:hockeystick_properties} implies the norm continuity of $D^\M_f$ by dominated convergence.

All remaining properties follow from the corresponding properties of the hockeystick divergence, either directly or by straightforward computations. Note that in the data processing inequality, we have to assume $T$ to be unital only because of the additional term $\varphi_1(\1) - \varphi_2(\1)$.
\end{proof}
As expected of a general $f$-divergence, $D^\M_f$ is not additive on tensor products for general $f$. In the special case of the information function we will have additivity; this will be investigated later.

Relative entropies and $f$-divergences are typically hard to compute explicitly, so it is important to have methods for estimating them. On the one hand, any subalgebra $\N \subset \M$ gives lower bounds because by the data processing inequality, the inclusion $\iota \colon \N \to \M$ yields $D^\M_f(\varphi_1 \vert \varphi_2) \geq D^\N_f\left( \varphi_1 \rvert_\N \Big\vert \varphi_2 \rvert_\N \right)$. For instance,
\begin{equation}
D^\M_f(\varphi_1 \vert \varphi_2) \geq D^\mathbb{C}_f(\varphi_1(\1) \vert \varphi_2(\1)) \eqend{.}
\end{equation}
Sharper bounds of this form depend on a suitable choice of $\N$, and we consider two-dimensional subalgebras.
\begin{lemma}
\label{lemma:2dLowerBounds}
Let $\M$ be a von Neumann algebra, $f \colon (0,\infty) \to \mathbb{R}$ convex and differentiable at $1$, and $\varphi_1,\varphi_2 \in \M\stp$ states. For any projection $p \in \PM$ such that $\varphi_2(p) \neq 0$, $\varphi_2(p^\perp) \neq 0$, it holds that
\begin{equation}
\label{eq:LowerBound2}
D^\M_f(\varphi_1 \vert \varphi_2) \geq \varphi_2(p) f\left( \frac{\varphi_1(p)}{\varphi_2(p)} \right) + \varphi_2(p^\perp) f\left( \frac{\varphi_1(p^\perp)}{\varphi_2(p^\perp)} \right) - f(1) \eqend{.}
\end{equation}
For the information function $f = f_0$, we obtain Pinsker's inequality
\begin{equation}
\label{eq:DBound3}
D^\M_{f_0}(\varphi_1 \vert \varphi_2) \geq 2 \left[ E^\M_1(\varphi_1 \vert \varphi_2) \right]^2 = \frac{1}{2} \norm{ \varphi_1 - \varphi_2 }^2 \eqend{.}
\end{equation}
\end{lemma}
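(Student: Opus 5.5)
The plan is to combine the data processing inequality of \thmref[item:DDPI]{thm:fdivergence_properties} with the explicit formula \eqref{eq:DfC} of \lemmaref{lemma:DfCommutingDensities} for the trivial algebra.

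\emph{Step 1: reduce to a two-dimensional algebra.} Given $p$, let $\N \defeq \mathbb{C} p + \mathbb{C} p^\perp \subset \M$. The hypotheses $\varphi_2(p) \neq 0 \neq \varphi_2(p^\perp)$ force $p \neq 0, \1$, so $\N \cong \mathbb{C} \oplus \mathbb{C}$. The inclusion $\iota \colon \N \hookrightarrow \M$ is normal, unital and positive. Hence the data processing inequality gives
\begin{equation}
D^\M_f(\varphi_1 \vert \varphi_2) \geq D^\N_f\left( \varphi_1 \rvert_\N \big\vert \varphi_2 \rvert_\N \right) \eqend{.}
\end{equation}
Under the isomorphism, $\varphi_j \rvert_\N$ corresponds to $\varphi_j(p) \oplus \varphi_j(p^\perp)$. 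The direct sum property in \thmref[item:DSumsAndProducts]{thm:fdivergence_properties} then splits the right-hand side as
\begin{equation}
D^\mathbb{C}_f\left( \varphi_1(p) \vert \varphi_2(p) \right) + D^\mathbb{C}_f\left( \varphi_1(p^\perp) \vert \varphi_2(p^\perp) \right) \eqend{.}
\end{equation}

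\emph{Step 2: evaluate the two terms.} Insert \eqref{eq:DfC} with $a_2 > 0$. If some $a_1 = 0$, $f(0)$ is read as the limit $f(0^+)$, in line with the limit convention of that lemma. Since $\varphi_1, \varphi_2$ are states, $\varphi_2(p) + \varphi_2(p^\perp) = 1$, so the two $-a_2 f(1)$ terms add up to $-f(1)$. Likewise $\sum (a_2 - a_1) = 0$, so the $[f'(1) - 1]$ terms cancel. This yields \eqref{eq:LowerBound2}.

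\emph{Step 3: Pinsker's inequality.} Take $t = 1$ in \eqref{eq:EFormulaDiffNorm}; for states this gives $E^\M_1(\varphi_1 \vert \varphi_2) = \frac{1}{2} \norm{ \varphi_1 - \varphi_2 }$, which is the stated equality. For the inequality, choose $p = s_+$, the support of $(\varphi_1 - \varphi_2)_+$. By \eqref{eq:varphiSup}, $\delta \defeq \varphi_1(p) - \varphi_2(p) = E^\M_1(\varphi_1 \vert \varphi_2)$. Write $a = \varphi_1(p)$ and $b = \varphi_2(p)$. Three cases arise:
\begin{enumerate}
\item If $b = 1$, then $a \leq 1$ forces $\delta \leq 0$, so $\delta = 0$ and the claim is trivial.
\item If $b = 0$ and $\delta > 0$, then $\varphi_1(s_2^\perp) \geq \varphi_1(p) > 0$, so $\varphi_1 \not\ll \varphi_2$. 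Since $\int_1^\infty t^{-1} \total t = \infty$, \eqref{eq:DInfinite} gives $D^\M_{f_0} = \infty$.
\item Otherwise $b \in (0,1)$, and \eqref{eq:LowerBound2} with $f_0(1) = 0$ gives the binary relative entropy $a \ln \frac{a}{b} + (1-a) \ln \frac{1-a}{1-b}$.
\end{enumerate}

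In the last case it remains to prove the classical binary Pinsker inequality $a \ln \frac{a}{b} + (1-a) \ln \frac{1-a}{1-b} \geq 2 (a-b)^2$. I will do this by fixing $a$ and setting $g(b) \defeq a \ln \frac{a}{b} + (1-a) \ln \frac{1-a}{1-b} - 2(a-b)^2$. Then $g(a) = 0$ and
\begin{equation}
g'(b) = (b-a) \left[ \frac{1}{b(1-b)} - 4 \right] \eqend{.}
\end{equation}
Since $b(1-b) \leq 1/4$, the bracket is non-negative, so $g$ is decreasing for $b < a$ and increasing for $b > a$. Hence $g \geq 0$.

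The only delicate points are these degenerate boundary cases and the identification of $\N$ with $\mathbb{C} \oplus \mathbb{C}$. I expect neither to be a real obstacle.
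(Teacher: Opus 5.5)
Your proposal is correct and follows the paper's proof. Both restrict via the data processing inequality to $\mathbb{C}p + \mathbb{C}p^\perp \cong \mathbb{C}\oplus\mathbb{C}$ and evaluate with the commutative formula. You go through the direct-sum property and \eqref{eq:DfC}, while the paper uses \eqref{eq:fDivergenceCommutingDensities} with the trace on $\mathbb{C}\oplus\mathbb{C}$, which is the same computation. Both then take $p=s_+$ and prove binary Pinsker by calculus, with the same degenerate-case analysis: you differentiate in $b$ at fixed $a$, the paper uses $F''\geq 4$ in $\delta$ at fixed $\varphi_2(p)$.

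The only slip is that your ``otherwise $b\in(0,1)$'' omits the case $b=0$, $\delta=0$. That case is trivial, since then the right-hand side vanishes and $D^\M_{f_0}(\varphi_1\vert\varphi_2)\geq 0$ by \eqref{eq:DBounds}.
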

\begin{proof}
For $p \in \PM$, we consider the (one- or) two-dimensional subalgebra $\mathbb{C} \oplus \mathbb{C} \cong \N \subset \M$ generated by $p$. On $\mathbb{C} \oplus \mathbb{C}$, we have the non-normalized faithful trace $\tau = \Tr_{M_2}$, with respect to which the restrictions of the functionals $\varphi_j$ have the densities $h_j \defeq \varphi_j(p) \oplus \varphi_j(p^\perp)$, $j=1,2$. By assumption, $h_2$ is invertible. Hence we may apply \eqref{eq:fDivergenceCommutingDensities} to get the claimed bound \eqref{eq:LowerBound2}.

We now specialize to $f = f_0$. Setting $w_j \defeq \varphi_j(p)$ and $\delta \defeq w_1 - w_2$, the bound \eqref{eq:LowerBound2} simplifies to
\begin{splitequation}
&w_1 \ln\left( \frac{w_1}{w_2} \right) + (1-w_1) \ln\left( \frac{1-w_1}{1-w_2} \right) \\
&\quad= (w_2+\delta) \ln\left( 1 + \frac{\delta}{w_2} \right) + (1-w_2-\delta) \ln\left( 1 - \frac{\delta}{1-w_2} \right) \defeq F(\delta) \eqend{.}
\end{splitequation}
As a function of $\delta$ for fixed $w_2$, one checks that $F(0) = F'(0) = 0$ and $F''(\delta) = (w_2 + \delta)^{-1} + (1-w_2-\delta)^{-1} \geq 4$, the inequality following since $\delta + w_2 = w_1 \in (0,1)$. This implies $F(\delta) \geq 2 \delta^2 = 2 \bigl[ \varphi_1(p) - \varphi_2(p) \bigr]^2$. We now consider the support projection $p$ of $\psi_+$, where $\psi \defeq \varphi_1 - \varphi_2$. If $\varphi_2(p) \not\in \{ 0,1 \}$, we may apply the bound, which equals $2 \left[ E^\M_1(\varphi_1 \vert \varphi_2) \right]^2 = \frac{1}{2} \norm{ \varphi_1 - \varphi_2 }^2$ as claimed, with the equality following from the third equality in \eqref{eq:varphiSup}, taking into account that $\varphi_1(\1) = \varphi_2(\1) = 1$.

It remains to consider the two cases $\varphi_2(p) = 0$ and $\varphi_2(p) = 1$. Since $\psi_+(\1) = \frac{1}{2} \norm{ \varphi_1 - \varphi_2 }$, in the first case $\varphi_2(p) = 0$ we obtain $\varphi_1(p) = \psi(p) = \psi_+(\1) = \frac{1}{2} \norm{ \varphi_1 - \varphi_2 }$. For $\varphi_1 = \varphi_2$, \eqref{eq:DBound3} is trivial, and for $\varphi_1(p) > 0$, we have $\varphi_1 \not\ll \varphi_2$, hence $D^\M_{f_0}(\varphi_1 \vert \varphi_2) = \infty$ by \thmref[item:DBounds]{thm:fdivergence_properties}. So \eqref{eq:DBound3} is trivial also in this case.

For $\varphi_2(p) = 1$, we also obtain $\varphi_1 = \varphi_2$ and again triviality of \eqref{eq:DBound3}.
\end{proof}

\section{Relative modular theory and relative entropies}
\label{sec:relentropy}

In this section we recall some elements of relative modular theory. We will work with a general von Neumann algebra $\M$ in a standard representation on a Hilbert space $\Hil$, where every positive normal functional $\varphi \in \M\stp$ is given as $\varphi(x) = \langle \Omega_\varphi, x \, \Omega_\varphi\rangle$ for $x \in \M$, with a unique vector $\Omega_\varphi \in \Hil$ in the natural positive cone.
\begin{definition}
\label{def:RelativeS}
Let $\M \subset \BH$ be a von Neumann algebra and $\varphi_j \in \M\stp$ be given by vectors $\Omega_j$, $j = 1, 2$. The \emph{relative Tomita operator} $S_{\varphi_2, \varphi_1}$ is the closure of
\begin{equations}[eq:RelativeS]
S^0_{\varphi_2,\varphi_1} \colon \M \Omega_1 + (\M\Omega_1)^\perp &\to \Hil \eqend{,} \\
S^0_{\varphi_2,\varphi_1}(x \, \Omega_1 + \Psi) &\defeq s^\M(\varphi_1) x^* \Omega_2 \eqend{,}
\end{equations}
and the \emph{relative modular operator} is its modulus squared:
\begin{equation}
\Delta_{\varphi_2,\varphi_1} \defeq S_{\varphi_2,\varphi_1}^* S_{\varphi_2,\varphi_1} \eqend{.}
\end{equation}
\end{definition}
The implicit claim that $S^0_{\varphi_2,\varphi_1}$ is closable is proven in \cite[Lemma~2.2]{araki1977}. Note that the support projection $s^\M(\varphi_1)$ in \eqref{eq:RelativeS} is necessary to make $S^0_{\varphi_2,\varphi_1}$ well-defined for non-faithful states. We will not need the antiunitary part of the polar decomposition of $S_{\varphi_2,\varphi_1}$, but only the positive part, the relative modular operator $\Delta_{\varphi_2,\varphi_1}$. Some of its basic properties are the following \cite{araki1976,araki1977}:
\begin{lemma}
\label{lemma:RelativeDelta}
The relative modular operator $\Delta_{\varphi_2, \varphi_1}$ of two normal positive functionals $\varphi_1, \varphi_2 \in \M\stp$ on a von Neumann algebra $\M$ satisfies:
\begin{enumerate}
\item $\Delta_{\varphi_2,\varphi_1}$ is a selfadjoint positive operator,
\item \label{item:RelativeDeltaKernel} The projection onto $\ker \Delta_{\varphi_2, \varphi_1}$ is $\1 - s^{\M'}(\varphi_1) s^\M(\varphi_2)$,
\item \label{item:RelativeDeltaScaling} $\Delta_{c_2 \varphi_2, c_1 \varphi_1} = \dfrac{c_2}{c_1} \Delta_{\varphi_2, \varphi_1}$ for $c_1, c_2 > 0$.
\end{enumerate}
\end{lemma}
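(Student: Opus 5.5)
The plan is to derive all three items from the definition of the relative Tomita operator $S_{\varphi_2,\varphi_1}$, using the standard facts on closed operators and on the natural cone that Araki establishes in \cite{araki1977}.

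\textbf{Positivity and selfadjointness.} By \cite[Lemma~2.2]{araki1977}, $S^0_{\varphi_2,\varphi_1}$ is closable. Its closure $S \defeq S_{\varphi_2,\varphi_1}$ is then a densely defined closed (antilinear) operator; density holds because $\M\Omega_1 + (\M\Omega_1)^\perp$ is dense. For any densely defined closed operator, von Neumann's theorem gives that $S^*S$ is selfadjoint and positive. The proof for the antilinear case is identical: $\1 + S^*S$ is bijective, with $\langle \Psi, S^*S\Psi\rangle = \norm{S\Psi}^2 \geq 0$. This settles item~1.

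\textbf{Kernel.} We have $\ker \Delta = \ker S$, since $\langle \Psi, \Delta\Psi\rangle = \norm{S\Psi}^2$. The aim is to show $\overline{\ker S}^\perp = $ range of $s^{\M'}(\varphi_1)s^\M(\varphi_2)$. Recall that $s^{\M'}(\varphi_1)$ is the projection onto $\overline{\M\Omega_1}$, and that $s^\M(\varphi_2)$ projects onto $\overline{\M'\Omega_2}$; as supports in $\M$ and $\M'$ these commute.

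First, the complement is in the kernel. If $\Psi \perp \M\Omega_1$, then $S\Psi = 0$ by definition. For $x\Omega_1$ with $x\Omega_1 \in s^\M(\varphi_2)^\perp \Hil$, I would replace $x$ by $x\,s^{\M}(\varphi_1)$ and check that $s_1 x^* \Omega_2 = s_1 x^* s_2 \Omega_2$. Writing $x^*s_2 = (s_2 x)^*$ and using $s_2 x \Omega_1 = 0$ together with the separating property of $\Omega_1$ for $s_1\M s_1$ on the relevant subspace, this gives $S\, x\Omega_1 = 0$.

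Second, the kernel is in the complement. Compute $S^*$ on vectors $x'\Omega_2$, $x' \in \M'$, via $\langle S\, x\Omega_1, x'\Omega_2\rangle$ relations. This shows that the range of $S$ is dense in $s^{\M}(\varphi_1)\overline{\M'\Omega_2}$-type subspaces, and by the symmetric argument that $\Delta$ is injective on $s^{\M'}(\varphi_1)s^\M(\varphi_2)\Hil$.

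I expect this bookkeeping of supports to be the main obstacle. The cleanest route is to cite \cite[Thm.~C.1]{araki1977}, or to reduce to the faithful case: compress to $e\M e$ with $e = s_1 \vee s_2$ and use the $2\times2$ matrix trick, where the relative Tomita operator becomes a corner of the Tomita operator of the faithful weight $\varphi_1\oplus\varphi_2$ on $\M\otimes M_2$. Injectivity of the full modular operator then yields the kernel formula.

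\textbf{Scaling.} The natural-cone representatives are $\Omega_{c_j\varphi_j} = \sqrt{c_j}\,\Omega_j$, and supports are unchanged under scaling. Hence
\begin{equation}
S^0_{c_2\varphi_2,c_1\varphi_1}\bigl(x\sqrt{c_1}\Omega_1 + \Psi\bigr) = s_1 x^*\sqrt{c_2}\,\Omega_2 ,
\end{equation}
which gives $S_{c_2\varphi_2,c_1\varphi_1} = \sqrt{c_2/c_1}\, S_{\varphi_2,\varphi_1}$ on the same domain. Taking $S^*S$ then yields item~\ref{item:RelativeDeltaScaling}.
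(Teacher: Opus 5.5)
Items 1 and 3 are fine. The paper gives no proof of this lemma beyond quoting Araki \cite{araki1976,araki1977}, so falling back on a citation is consistent with it. The gap is in your self-contained argument for item~2.

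\textbf{What works.} Your first inclusion is correct. From $s_2 x\Omega_1=0$ you get $s_2xs_1=0$, hence $S\,x\Omega_1=s_1x^*s_2\Omega_2=0$. Combine this with $(\M\Omega_1)^\perp\subset\ker S$, with density of $(\1-s_2)\M\Omega_1$ in $(\1-s_2)s^{\M'}(\varphi_1)\Hil$, and with closedness of $\ker S$. This gives $(\1-s^{\M'}(\varphi_1)s^\M(\varphi_2))\Hil\subseteq\ker S=\ker\Delta_{\varphi_2,\varphi_1}$.

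\textbf{Where it fails.} The reverse inclusion is not proved, and the sketch would not work as written.
\begin{itemize}
\item $x'\Omega_2$ is the wrong vector at which to evaluate $S^*$. One has $\langle S\,x\Omega_1,x'\Omega_2\rangle=\langle x'^*\Omega_2,x\,s_1\Omega_2\rangle$, which is not of the form $\langle\eta,x\Omega_1\rangle$ for a fixed $\eta$.
\item Density of $\operatorname{ran}S$ in $s^\M(\varphi_1)s^{\M'}(\varphi_2)\Hil$ holds directly from the definition. But it only tells you $\ker S^*$, i.e.\ the final projection of the polar decomposition, not $\ker S$.
\item The ``symmetric argument'' is then just asserted.
\item The $2\times2$ fallback does not rescue this. $\varphi_1\oplus\varphi_2$ on $e\M e\otimes M_2$ has support $\mathrm{diag}(s_1,s_2)$, so it is faithful only when $s_1=s_2=e$. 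Moreover, relating $\Delta_{\varphi_2,\varphi_1}$ on $\Hil$ to the standard form of $e\M e$ requires exactly the support bookkeeping you wanted to avoid.
\end{itemize}

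\textbf{The missing step.} It is a two-line computation with the commutant-side operator. For $x'\in\M'$ and $\chi'\perp\M'\Omega_1$ put $F^0(x'\Omega_1+\chi')\defeq s^{\M'}(\varphi_1)\,x'^*\Omega_2$. Take $\xi=x\Omega_1+\chi$ in the domain of $S^0$. Using $s_1x'\Omega_1=x'\Omega_1$, $s_1\chi'=0$, $s^{\M'}(\varphi_1)x\Omega_1=x\Omega_1$ and $s^{\M'}(\varphi_1)\chi=0$, one gets
\begin{equation}
\langle S^0\xi,\,x'\Omega_1+\chi'\rangle=\langle\Omega_2,\,xx'\Omega_1\rangle=\langle F^0(x'\Omega_1+\chi'),\,\xi\rangle \eqend{,}
\end{equation}
so $F^0\subseteq S^*$ (antilinear adjoint). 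Now $\ker S\perp\operatorname{ran}S^*\supseteq s^{\M'}(\varphi_1)\M'\Omega_2$, and the closure of the latter is $s^{\M'}(\varphi_1)s^\M(\varphi_2)\Hil$. Hence $\ker S\subseteq(\1-s^{\M'}(\varphi_1)s^\M(\varphi_2))\Hil$, which completes item~2.
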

In~\ref{item:RelativeDeltaKernel}, $s^{\M'}(\varphi_1)$ is the support projection of $\varphi_1$ for the commutant $\M'$ of $\M$, namely the orthogonal projection onto $\overline{\M \Omega_{1}}$.

With the help of relative modular operators, the relative entropy has been defined by Araki~\cite{araki1977} for general von Neumann algebras\footnote{Note that Ref.~\cite{araki1977} writes $S(\psi/\varphi)$ for $S^\M_\mathrm{rel}(\varphi \vert \psi)$.} as follows:
\begin{definition}
\label{def:RelativeEntropy}
Let $\M$ be a von Neumann algebra. The \emph{relative entropy} is the map
\begin{splitequation}
\label{eq:Srel}
S^\M_\mathrm{rel} &\colon \M\stp \times \M\stp \to [0,\infty] \eqend{,} \\
S^\M_\mathrm{rel}(\varphi \vert \psi) &= \begin{cases} - \langle \Omega_\varphi, \ln \Delta_{\psi,\varphi} \, \Omega_\varphi \rangle = \langle \Omega_\varphi, \ln \Delta_{\varphi,\psi} \, \Omega_\varphi \rangle & \varphi \ll \psi \eqend{,} \\ + \infty & \text{otherwise.} \end{cases}
\end{splitequation}
\end{definition}
In the case of matrix algebras or only semifinite von Neumann algebras, this definition coincides with the familiar trace expression over densities as shown in \corref{cor:semifinite_relative_entropy}. The case distinction in \eqref{eq:Srel} is a consequence of \lemmaref[item:RelativeDeltaKernel]{lemma:RelativeDelta}, which implies
\begin{equation}
\Omega_1 \perp \ker \Delta_{\varphi_2,\varphi_1} \iff \Omega_1 = s^{\M'}(\varphi_1) s^\M(\varphi_2) \Omega_1 = s^\M(\varphi_2) \Omega_1 \iff \varphi_1 \ll \varphi_2 \eqend{,}
\end{equation}
i.e. for $\varphi_1 \not\ll \varphi_2$, the expectation value $- \langle \Omega_{\varphi_1}, \ln \Delta_{\varphi_2,\varphi_1} \Omega_{\varphi_1} \rangle$ is ill-defined/infinite. We remark that also for $\varphi_1 \ll \varphi_2$, the relative entropy may be infinite.

Araki's relative entropy is well-studied, and we refer to \cite{araki1976,araki1977,ohyapetz1993} for a detailed discussion of all its properties. Here we recall only those properties of $S^\M_\mathrm{rel}$ that we will need in the following.
\begin{proposition}
\label{prop:srel_properties}
Let $\M$ be a von Neumann algebra, and $\varphi, \psi \in \M\stp$.
\begin{enumerate}
\item \label{item:Srel_independence} $S^\M_\mathrm{rel}( \varphi \vert \psi )$ is independent of the choice of vector representatives of $\varphi$ and $\psi$.
\item \emph{(Joint convexity and bounds)} $S^\M_\mathrm{rel}$ is jointly convex and satisfies
\begin{equation}
S^\M_\mathrm{rel}( \varphi \vert \psi ) \geq - \varphi(\1) \ln\left[ \frac{\psi\left( s^\M(\varphi) \right)}{\varphi(\1)} \right] \eqend{.}
\end{equation}
If $\varphi(\1) = \psi(\1) > 0$, it holds that $S^\M_\mathrm{rel}( \varphi \vert \psi ) \geq 0$, and equality is obtained if and only if $\varphi = \psi$. If $\psi_1 \geq \psi_2$, it holds that $S^\M_\mathrm{rel}( \varphi \vert \psi_1 ) \leq S^\M_\mathrm{rel}( \varphi \vert \psi_2 )$.
\item \emph{(Scaling)} For $\lambda, \mu > 0$ one has the scaling relation
\begin{equation}
S^\M_\mathrm{rel}( \lambda \varphi \vert \mu \psi ) = \lambda S^\M_\mathrm{rel}( \varphi \vert \psi ) - \lambda \varphi(\1) \ln \frac{\mu}{\lambda} \eqend{.}
\end{equation}
\item \label{item:domination} \emph{(Dominated and monotone convergence)} Consider two families $\{ \varphi_n \}_{n \in \mathbb{N}}$, $\{ \psi_n \}_{n \in \mathbb{N}}$ of normal positive linear functionals converging in norm to $\varphi$ and $\psi$. If $\psi_n$ uniformly dominates $\varphi_n$, i.e., if $\varphi_n \leq \lambda \psi_n$ for a fixed $\lambda > 0$ and all $n \in \mathbb{N}$, it holds that
\begin{equation}
\lim_{n \to \infty} S^\M_\mathrm{rel}( \varphi_n \vert \psi_n ) = S^\M_\mathrm{rel}( \varphi \vert \psi ) \eqend{.}
\end{equation}
If $\psi_n$ is monotonically decreasing, it holds that
\begin{equation}
\lim_{n \to \infty} S^\M_\mathrm{rel}( \varphi \vert \psi_n ) = S^\M_\mathrm{rel}( \varphi \vert \psi ) \eqend{.}
\end{equation}
\item \label{item:Srel_ConditionalExpectation} \emph{(Conditional expectation)} Let $\M \subset \N$ be a von Neumann subalgebra and $T \colon \N \to \M$ a normal conditional expectation. Then it holds that
\begin{equation}
S^\N_\mathrm{rel}( \varphi \circ T \vert \psi \circ T ) = S^\M_\mathrm{rel}( \varphi \vert \psi ) \eqend{.}
\end{equation}
\item \label{item:Srel_Martingale} \emph{(Martingale convergence)} Let $(\M_i)_{i \in I}\subset \M$ be an increasing net of von Neumann algebras such that $\bigcup_{i \in I} \M_i$ is $\sigma(\M,\M_*)$-dense in $\M$. Then it holds that
\begin{equation}
\lim_{i \in I} S^{\M_i}_\mathrm{rel}\left( \varphi \rvert_{\M_i} \Big\vert \psi \rvert_{\M_i} \right) = S^\M_\mathrm{rel}( \varphi \vert \psi ) \eqend{.}
\end{equation}
\end{enumerate}
\end{proposition}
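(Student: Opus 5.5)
This proposition collects standard properties of Araki's relative entropy. I would prove it largely by reduction to the literature, adding short arguments where the paper's conventions (non-normalized functionals, support projections in $S_{\varphi_2,\varphi_1}$) require care.

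\emph{Item \ref{item:Srel_independence}.} Representatives of $\varphi,\psi$ in the natural cone are unique. Any other vector representatives differ by partial isometries in $\M'$. Under these, $S_{\psi,\varphi}$ transforms covariantly, and the spectral measure of $\Delta_{\psi,\varphi}$ in $\Omega_\varphi$ is unchanged. This is Araki's argument \cite{araki1977}, which I would cite.

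\emph{Scaling.} This follows from \lemmaref[item:RelativeDeltaScaling]{lemma:RelativeDelta}: $\Delta_{\mu\psi,\lambda\varphi} = \frac{\mu}{\lambda}\Delta_{\psi,\varphi}$, and $\Omega_{\lambda\varphi} = \lambda^{1/2}\Omega_\varphi$. Then
\[
-\langle \Omega_{\lambda\varphi}, \ln\Delta_{\mu\psi,\lambda\varphi}\,\Omega_{\lambda\varphi}\rangle
= \lambda S^\M_\mathrm{rel}(\varphi\vert\psi) - \lambda\varphi(\1)\ln\frac{\mu}{\lambda}.
\]
The case $\varphi\not\ll\psi$ is trivial because supports are scale invariant.

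\emph{Bound.} I would apply Jensen's inequality to the concave function $\ln$ with respect to the probability measure $\langle\Omega_\varphi, E^{\Delta_{\psi,\varphi}}(\cdot)\,\Omega_\varphi\rangle/\varphi(\1)$. This gives
\[
-\langle\Omega_\varphi,\ln\Delta_{\psi,\varphi}\,\Omega_\varphi\rangle \ge -\varphi(\1)\ln\bigl(\norm{\Delta_{\psi,\varphi}^{1/2}\Omega_\varphi}^2/\varphi(\1)\bigr).
\]
Then I compute $\norm{\Delta^{1/2}_{\psi,\varphi}\Omega_\varphi}^2 = \norm{S_{\psi,\varphi}\Omega_\varphi}^2 = \norm{s^\M(\varphi)\Omega_\psi}^2 = \psi(s^\M(\varphi))$. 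When $\varphi(\1)=\psi(\1)$ this yields $S^\M_\mathrm{rel}\ge0$.

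\emph{Equality case.} If $S^\M_\mathrm{rel}=0$, then Jensen is saturated and $\psi(s^\M(\varphi))=\psi(\1)$. Hence the spectral measure is concentrated at $1$, and $\Delta_{\psi,\varphi}\Omega_\varphi=\Omega_\varphi$. From this I would deduce $\varphi=\psi$ via the standard identity $\langle x\Omega_\varphi,\Delta_{\psi,\varphi}x\Omega_\varphi\rangle=\psi(xx^*)$, together with the corresponding relation with the roles of the states exchanged. Alternatively, I would use Pinsker-type arguments from \cite{ohyapetz1993}.

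\emph{Joint convexity, monotonicity in $\psi$, and the convergence statements.} Joint convexity and monotonicity in $\psi$ are deep: they require Kosaki's variational formula or Uhlmann's interpolation. I would cite them from \cite{araki1977,ohyapetz1993,kosaki1986}. For item \ref{item:domination}, the dominated convergence statement is Araki's continuity theorem \cite{araki1977}, which uses uniform domination to control $\ln\Delta$ near zero. The monotone statement follows from lower semicontinuity combined with the monotonicity in $\psi$: $S(\varphi\vert\psi_n)$ increases and is bounded above by $S(\varphi\vert\psi)$, while lower semicontinuity gives the reverse inequality.

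\emph{Item \ref{item:Srel_ConditionalExpectation}.} Monotonicity under the inclusion gives $\le$. For the reverse inequality, monotonicity under the unital normal positive map $T$ (Uhlmann) gives $S^\M_\mathrm{rel}(\varphi\vert\psi)=S^\M_\mathrm{rel}((\varphi\circ T)|_\M\vert(\psi\circ T)|_\M)\le S^\N_\mathrm{rel}(\varphi\circ T\vert\psi\circ T)$. Combining the two yields equality.

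\emph{Item \ref{item:Srel_Martingale}.} Monotonicity under inclusions makes the net increasing and bounded above by $S^\M_\mathrm{rel}$. The restricted functionals converge to $\varphi,\psi$ in the $\sigma(\M_*,\M)$ sense on $\M$, in the sense of Kosaki's sup formula. Lower semicontinuity then gives the limit; this is \cite[Thm.~5.23]{ohyapetz1993}.

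\emph{Main obstacle.} The hard part is the deep convexity and monotonicity input. I will not reprove it, but rely on Kosaki's formula, since everything else reduces to it.
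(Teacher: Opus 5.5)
Your overall route is the paper's. Its proof consists solely of citations: Araki's Lemma~3.2 and Thms.~3.6--3.8 for the first four items, Petz's Thm.~4 for conditional expectations, and Petz's Thm.~11 and Kosaki's Thm.~4.1(vi) for martingale convergence. Several of your self-contained additions are sound:
\begin{itemize}
\item the scaling computation;
\item the Jensen bound via $\norm{\Delta_{\psi,\varphi}^{1/2}\Omega_\varphi}^2=\norm{s^\M(\varphi)\Omega_\psi}^2=\psi(s^\M(\varphi))$;
\item the monotone-convergence argument. Here $\psi_n\geq\psi$ holds because $\psi_n-\psi$ is a norm limit of the positive functionals $\psi_n-\psi_m$, and lower semicontinuity is a standard external input.
\end{itemize}
Monotonicity in $\psi$ is also more elementary than you suggest. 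The identity $\norm{\Delta_{\psi_1,\varphi}^{1/2}x\Omega_\varphi}^2=\psi_1(x\,s^\M(\varphi)\,x^*)\geq\psi_2(x\,s^\M(\varphi)\,x^*)$ on a core gives $\Delta_{\psi_1,\varphi}\geq\Delta_{\psi_2,\varphi}$ in the form sense, and operator monotonicity of $\ln$ finishes the argument.

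\textbf{Item~5 fails as written.} Your displayed chain $S^\M_\mathrm{rel}(\varphi\vert\psi)=S^\M_\mathrm{rel}((\varphi\circ T)|_\M\vert(\psi\circ T)|_\M)\leq S^\N_\mathrm{rel}(\varphi\circ T\vert\psi\circ T)$ is monotonicity under the inclusion $\iota\colon\M\hookrightarrow\N$ (using $T\circ\iota=\mathrm{id}_\M$), not under $T$. The inclusion can only ever give this inequality. So you never obtain the opposite one, $S^\N_\mathrm{rel}(\varphi\circ T\vert\psi\circ T)\leq S^\M_\mathrm{rel}(\varphi\vert\psi)$, which is data processing for the pullback along $T\colon\N\to\M$. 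For that direction, Uhlmann's monotonicity theorem concerns unital Schwarz (e.g.\ 2-positive) maps, not arbitrary positive ones. It applies here because a normal conditional expectation is completely positive (Tomiyama). With the directions sorted out, the two-sided data-processing argument is a legitimate alternative to citing Petz.

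\textbf{Equality case: the identity needs the support projection.} With the paper's definition $S_{\psi,\varphi}x\Omega_\varphi=s^\M(\varphi)x^*\Omega_\psi$, the identity reads $\langle x\Omega_\varphi,\Delta_{\psi,\varphi}x\Omega_\varphi\rangle=\psi(x\,s^\M(\varphi)\,x^*)$. The version $\psi(xx^*)$ fails for non-faithful $\varphi$. The equality case still goes through, without exchanging roles:
\begin{itemize}
\item polarization gives $\langle\Delta_{\psi,\varphi}^{1/2}x\Omega_\varphi,\Delta_{\psi,\varphi}^{1/2}y\Omega_\varphi\rangle=\psi(y\,s^\M(\varphi)\,x^*)$;
\item setting $y=\1$ and using $\Delta_{\psi,\varphi}\Omega_\varphi=\Omega_\varphi$ gives $\varphi(x^*)=\psi(s^\M(\varphi)x^*)$;
\item $\psi(\1-s^\M(\varphi))=0$, which you derived, together with Cauchy--Schwarz yields $\varphi=\psi$.
\end{itemize}

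\textbf{Item~6: lower semicontinuity does not apply directly.} Lower semicontinuity on $\M$ cannot be applied to the functionals $\varphi|_{\M_i}$, since they live on different algebras. Normal conditional expectations $\M\to\M_i$ that would transport them back need not exist. So ``the restricted functionals converge in $\sigma(\M_*,\M)$'' has no meaning. The mechanism behind Kosaki's Thm.~4.1(vi) is different. His variational formula writes $S^\M_\mathrm{rel}$ as a supremum, over $\M$-valued step functions, of expressions continuous in the strong${}^*$ topology on bounded sets. By Kaplansky density, the supremum may be restricted to $\bigcup_i\M_i$-valued step functions, each of which is admissible for some single $\M_i$. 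This gives $\sup_i S^{\M_i}_\mathrm{rel}\geq S^\M_\mathrm{rel}$, which complements your monotonicity bound.
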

\begin{proof}
\ref{item:Srel_independence} -- \ref{item:domination} are proven in~\cite[Lemma~3.2 and Thms.~3.6 -- 3.8]{araki1977}. \ref{item:Srel_ConditionalExpectation} follows from~\cite[Thm.~4]{petz1986c}, and \ref{item:Srel_Martingale} was shown in~\cite[Thm.~11]{petz1985} and~\cite[Thm.~4.1~(vi)]{kosaki1986}.
\end{proof}

\subsection{Modular bounds on hockeystick and \texorpdfstring{$f$}{f}-divergences}
\label{sec:ModularBounds}

Our analysis of hockeystick and $f$-divergences up to this point rests entirely on the Jordan decomposition and its consequences, whereas Araki's relative entropy is formulated with the help of relative modular theory. Despite this difference in technique, it is apparent that the two concepts are related: In addition to the various properties shared by $f$-divergences and relative entropy given in \thmref{thm:fdivergence_properties}, the lower bounds of \lemmaref{lemma:2dLowerBounds} and the special cases considered in \lemmaref{lemma:DfCommutingDensities} indicate close connections, in particular for the information function $f = f_0$.

In this section, we make this connection explicit by showing how relative modular operators can be used to bound hockeystick divergences and $f$-divergences from above and below. We will make the following standing assumption:
\begin{assumption}
\label{ass:modularbounds}
Let $\M \subset \BH$ be a von Neumann algebra, and consider positive normal functionals $\varphi_1, \varphi_2 \in \M\stp$ represented by vectors $\Omega_1, \Omega_2 \in \Hil$, namely $\varphi_k(x) = \langle \Omega_k, x \, \Omega_k \rangle$ for $x \in \M$ and $\norm{ \Omega_k }^2 = \varphi_k(\1)$, $k = 1, 2$. Denote by $s_k = s^\M(\varphi_k)$, $k = 1, 2$ their support projections in $\M$, and by $\Delta_{\varphi_2,\varphi_1}$ the relative modular operator. Let furthermore $\mu_{12} \defeq \langle \Omega_1, E^{\Delta_{\varphi_2,\varphi_1}} \Omega_1 \rangle$ denote the spectral measure of the relative modular operator in $\Omega_1$, which is a finite measure of total mass $\int_0^\infty \total \mu_{12}(\lambda) = \norm{ \Omega_1 }^2 = \varphi_1(\1)$.
\end{assumption}
The following basic lemma shows that relative modular operators appear naturally in the setting of the hypothesis testing considered earlier.
\begin{lemma}
For $t > 0$, the hockeystick divergence (\defref{def:hockeystick}) can be expressed as
\begin{equation}
E^\M_t(\varphi_1 \vert \varphi_2) = \varphi_1(\1) - \inf_{p \in \PM} \left[ \norm{ \Omega_1 - p \, \Omega_1 }^2 + t \norm{ \Delta_{\varphi_2,\varphi_1}^\half p \, \Omega_1 }^2 + t \norm{ s_1^\perp \, p \, \Omega_2 }^2 \right] \eqend{.}
\end{equation}
\end{lemma}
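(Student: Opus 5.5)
The plan is to start from the projection form of the hockeystick divergence, \eqref{eq:SuccessProbability} in \propref{prop:hockeystick_properties}:
\begin{equation}
E^\M_t(\varphi_1 \vert \varphi_2) = \sup_{p \in \PM} \bigl[ \varphi_1(p) - t \varphi_2(p) \bigr] = \varphi_1(\1) - \inf_{p \in \PM} \bigl[ \varphi_1(p^\perp) + t \varphi_2(p) \bigr] \eqend{.}
\end{equation}
It then suffices to show, for each fixed projection $p \in \PM$, that the bracket in this infimum equals the bracket in the statement. I will identify the two summands separately.

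\emph{First term.} Since $\varphi_1(x) = \langle \Omega_1, x \, \Omega_1 \rangle$ and $p^\perp = (p^\perp)^* p^\perp$, we get $\varphi_1(p^\perp) = \norm{ p^\perp \Omega_1 }^2 = \norm{ \Omega_1 - p \, \Omega_1 }^2$.

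\emph{Second term.} Similarly $\varphi_2(p) = \norm{ p \, \Omega_2 }^2$. Split the vector $p \, \Omega_2$ orthogonally with respect to the projection $s_1 = s^\M(\varphi_1)$:
\begin{equation}
\norm{ p \, \Omega_2 }^2 = \norm{ s_1 p \, \Omega_2 }^2 + \norm{ s_1^\perp p \, \Omega_2 }^2 \eqend{.}
\end{equation}
The last summand already appears in the claim. So the remaining task is to show $\norm{ s_1 p \, \Omega_2 }^2 = \norm{ \Delta_{\varphi_2,\varphi_1}^\half p \, \Omega_1 }^2$.

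For this I use \defref{def:RelativeS} with $x = p$ and $\Psi = 0$. The vector $p \, \Omega_1$ lies in $\M \Omega_1$, hence in the domain of $S^0_{\varphi_2,\varphi_1}$, and
\begin{equation}
S_{\varphi_2,\varphi_1} \, p \, \Omega_1 = s_1 p^* \Omega_2 = s_1 p \, \Omega_2 \eqend{.}
\end{equation}
Since $S_{\varphi_2,\varphi_1}$ is closed and $\Delta_{\varphi_2,\varphi_1} = S_{\varphi_2,\varphi_1}^* S_{\varphi_2,\varphi_1}$, the polar decomposition $S_{\varphi_2,\varphi_1} = J \Delta_{\varphi_2,\varphi_1}^\half$ gives $\dom S_{\varphi_2,\varphi_1} = \dom \Delta_{\varphi_2,\varphi_1}^\half$. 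It also gives $\norm{ S_{\varphi_2,\varphi_1} \xi } = \norm{ \Delta_{\varphi_2,\varphi_1}^\half \xi }$ for every $\xi$ in this domain, because the partial isometry $J$ is isometric on $\overline{\operatorname{ran} \Delta_{\varphi_2,\varphi_1}^\half}$. Applying this with $\xi = p \, \Omega_1$ yields the desired identity.

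Inserting both identifications into the infimum gives the stated formula. The hypothesis $t > 0$ plays no real role beyond matching the parametrization. The only point needing care is this domain and norm statement for the closed operator $S_{\varphi_2,\varphi_1}$ versus $\Delta_{\varphi_2,\varphi_1}^\half$. It is the standard polar decomposition of a closed densely defined operator, with closability taken from \cite[Lemma~2.2]{araki1977}, so I do not expect a genuine obstacle. Everything else is bookkeeping of orthogonal decompositions.
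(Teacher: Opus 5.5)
Your proposal is correct and follows the paper's proof essentially step for step. You rewrite the projection supremum \eqref{eq:SuccessProbability}, split $\varphi_2(p) = \norm{ s_1 p \, \Omega_2 }^2 + \norm{ s_1^\perp p \, \Omega_2 }^2$, and identify $\norm{ s_1 p \, \Omega_2 } = \norm{ S_{\varphi_2,\varphi_1} p \, \Omega_1 } = \norm{ \Delta_{\varphi_2,\varphi_1}^\half p \, \Omega_1 }$. Your explicit justification of the last equality via polar decomposition is a detail the paper leaves implicit.
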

\begin{proof}
For any $x \in \M$, we have $S_{\varphi_2,\varphi_1} x \, \Omega_1 = s_1 x^* \Omega_2$ by \defref{def:RelativeS} of the relative Tomita operator, and in particular $\norm{ s_1 x^* \Omega_2 } = \norm{ S_{\varphi_2,\varphi_1} x \, \Omega_1 } = \norm{ \Delta^\half_{\varphi_2,\varphi_1} x \, \Omega_1 }$. In case $x = p \in \PM$ is a projection, this implies
\begin{equation}
\varphi_2(p) = \varphi_2(p s_1 p) + \varphi_2(p s_1^\perp \, p) = \norm{ \Delta_{\varphi_2,\varphi_1}^\half p \, \Omega_1 }^2 + \norm{ s_1^\perp \, p \, \Omega_2 }^2 \eqend{,}
\end{equation}
and therefore
\begin{splitequation}
\Bigl[ \varphi_1(p) - t \varphi_2(p) \Bigr] &= \varphi_1(\1) - \varphi_1(p^\perp) - t \norm{ \Delta_{\varphi_2,\varphi_1}^\half p \, \Omega_1 }^2 - t \norm{ s_1^\perp \, p \, \Omega_2 }^2 \\
&= \varphi_1(\1) - \left[ \norm{ \Omega_1 - p \, \Omega_1 }^2 + t \norm{ \Delta_{\varphi_2,\varphi_1}^\half p \, \Omega_1 }^2 + t \norm{ s_1^\perp \, p \, \Omega_2 }^2 \right] \eqend{.}
\end{splitequation}
Taking the supremum over $p \in \PM$ yields the hockeystick divergence $E^\M_t(\varphi_1 \vert \varphi_2)$~\eqref{eq:SuccessProbability}.
\end{proof}

Dropping the last term $t \norm{ s_1^\perp \, p \, \Omega_2 }^2$ (which vanishes for faithful $\varphi_1$), we obtain the useful upper bound
\begin{equation}
\label{eq:EUpperModularBound}
E^\M_t(\varphi_1 \vert \varphi_2) \leq \varphi_1(\1) - \inf_{p \in \PM} \left[ \norm{ \Omega_1 - p \, \Omega_1 }^2 + t \norm{ \Delta_{\varphi_2,\varphi_1}^\half p \, \Omega_1 }^2 \right] \eqend{.}
\end{equation}
The following general lemma is the basic tool for estimating this infimum. Similar versions can be found in \cite{kosaki1986,ohyapetz1993,jaksicogatapilletseiringer2012}, but the graph projection perspective taken below makes its geometric content particularly clear.
\begin{lemma}
\label{lemma:GraphBound}
Let $A$ be a positive selfadjoint operator on a Hilbert space, and $\xi \in \Hil$. Then
\begin{equation}
\left\langle \xi, \frac{A}{\1+A} \xi \right\rangle = \inf_{\eta \in \dom A^\half} \left( \norm{ \xi - \eta }^2 + \norm{ A^\half \eta }^2 \right) \eqend{,}
\end{equation}
and the infimum is attained at the unique vector $\eta = (\1 + A)^{-1} \xi$.
\end{lemma}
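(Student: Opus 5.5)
The plan is to read the functional as a distance in the graph Hilbert space of $A^\half$ and minimise it by completing the square. For $\eta \in \dom A^\half$ put
\begin{equation}
F(\eta) \defeq \norm{ \xi - \eta }^2 + \norm{ A^\half \eta }^2 \eqend{.}
\end{equation}
The candidate minimiser is $\eta_0 \defeq (\1+A)^{-1}\xi$. Since $(\1+A)^{-1}$ is a bounded positive operator with range $\dom A \subset \dom A^\half$, this vector is admissible. We also have $\xi - \eta_0 = A(\1+A)^{-1}\xi$, where $A(\1+A)^{-1}$ is bounded by functional calculus.

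The key step is an exact expansion of $F$ around $\eta_0$. Write $\eta = \eta_0 + \zeta$ with $\zeta \in \dom A^\half$. Expanding both squares gives
\begin{equation}
F(\eta) = F(\eta_0) + \norm{\zeta}^2 + \norm{A^\half\zeta}^2 + 2\Re\Bigl[ -\langle \xi - \eta_0, \zeta\rangle + \langle A^\half \eta_0, A^\half \zeta\rangle \Bigr] \eqend{.}
\end{equation}
The cross term should vanish. Because $\eta_0 \in \dom A$, we have $\langle A^\half\eta_0, A^\half\zeta\rangle = \langle A\eta_0, \zeta\rangle$. Moreover $A\eta_0 = A(\1+A)^{-1}\xi = \xi - \eta_0$. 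Hence
\begin{equation}
F(\eta) = F(\eta_0) + \norm{\zeta}^2 + \norm{A^\half\zeta}^2 \geq F(\eta_0) \eqend{,}
\end{equation}
with equality iff $\zeta = 0$. This proves both that the infimum is attained at $\eta_0$ and that $\eta_0$ is the unique minimiser.

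It remains to evaluate $F(\eta_0)$ with the spectral theorem. We have
\begin{equation}
\norm{\xi-\eta_0}^2 = \langle \xi, A^2(\1+A)^{-2}\xi\rangle \eqend{,} \qquad \norm{A^\half\eta_0}^2 = \langle \xi, A(\1+A)^{-2}\xi\rangle \eqend{.}
\end{equation}
Both are integrals of bounded functions against the spectral measure $\langle \xi, E^A(\cdot)\,\xi\rangle$. Adding them gives the integrand $(\lambda^2+\lambda)/(1+\lambda)^2 = \lambda/(1+\lambda)$, so $F(\eta_0) = \langle \xi, A(\1+A)^{-1}\xi\rangle$.

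The only point needing care is the domain bookkeeping in the cross term. One must check that the identity $\langle A^\half\eta_0, A^\half\zeta\rangle = \langle A\eta_0,\zeta\rangle$ holds for arbitrary $\zeta \in \dom A^\half$, not just for $\zeta \in \dom A$. This follows from selfadjointness of $A^\half$ together with $A^\half\eta_0 \in \dom A^\half$, which holds because $\eta_0 \in \dom A$. The rest is routine functional calculus.
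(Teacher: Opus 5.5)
Your proof is correct and is essentially the paper's argument. The paper identifies the infimum with the squared distance of $\xi \oplus 0$ to the graph of $A^{1/2}$, and reads off both the value and the minimiser from the explicit matrix of the graph projection; your completing-the-square computation instead verifies directly that $(\xi - \eta_0) \oplus (-A^{1/2}\eta_0)$ is orthogonal to that graph, which is exactly what that projection formula encodes, and your domain remark ($A^{1/2}\eta_0 \in \dom A^{1/2}$ because $\eta_0 \in \dom A$) correctly justifies the cancellation of the cross term.
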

\begin{proof}
The infimum coincides with the squared distance $d^2$ of $\tilde\xi \defeq \xi \oplus 0$ to the graph $\Gamma\left( A^\half \right)$ of $A^\half$, which is given by $d^2 = \norm{ P_A^\perp \tilde\xi }^2$ with the orthogonal projection $P_A$ onto the graph $\Gamma\left( A^\half \right)$. As
\begin{equation}
P_A = \begin{pmatrix} (\1 + A)^{-1} & A^\half (\1 + A)^{-1} \\ A^\half (\1 + A)^{-1} & A (\1 + A)^{-1} \end{pmatrix} \eqend{,}
\end{equation}
we have $d^2 = \langle \tilde\xi, (\1 - P_A) \tilde\xi \rangle = \langle \xi, A (\1 + A)^{-1} \xi\rangle$ as claimed. From the graph description, it is clear that the infimum is a minimum, and the minimum is attained at the unique $\eta \in \dom A^\half$ such that $P_A \tilde\xi = \eta \oplus A^\half \eta$, namely $\eta = (\1 + A)^{-1} \xi$.
\end{proof}

This lemma leads to an upper bound on hockeystick divergences in the following proposition, see \cite[Lemma~5.8, 5.9]{ohyapetz1993}, \cite[Thm.~6.1 (3)]{jaksicogatapilletseiringer2012} for closely related ideas. We also include a lower bound, which is based on the generalization of the Powers--St{\o}rmer inequality~\cite{powersstormer1970} to arbitrary exponents due to Ogata~\cite{ogata2011}, namely the inequality
\begin{equation}
\label{eq:PSO}
\frac{1}{2} \Bigl[ \varphi_1(\1) + \varphi_2(\1) - \norm{ \varphi_1 - \varphi_2 } \Bigr] \leq \langle \Omega_1, \Delta_{\varphi_2,\varphi_1}^r \Omega_1 \rangle \eqend{,}
\end{equation}
valid for any $\varphi_1,\varphi_2 \in \M\stp$ and any $r \in [0,1]$.
\begin{proposition}
\label{prop:EModularEstimate}
For $t > 0$, the hockeystick divergence satisfies the upper and lower bounds
\begin{equation}
\left\langle \Omega_1, \left( \1 - t^r \, \Delta_{\varphi_2,\varphi_1}^r \right) \Omega_1 \right\rangle \leq E^\M_t(\varphi_1 \vert \varphi_2) \leq \left\langle \Omega_1, \left( \1 + t \Delta_{\varphi_2,\varphi_1} \right)^{-1} \Omega_1 \right\rangle
\end{equation}
for any $r \in [0,1]$. The upper bound holds with equality in the limits $t \to 0$ and $t \to \infty$, and for $0 < t < \infty$, it holds with equality if and only if $\varphi_1 \perp \varphi_2$, i.e., $s_1 s_2 = 0$. For $r \in (0,1]$, the lower bound holds with equality in the limit $t \to 0$.
\end{proposition}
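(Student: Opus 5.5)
The plan is to derive the upper bound from the variational formula \eqref{eq:EUpperModularBound} together with \lemmaref{lemma:GraphBound}, and the lower bound from Ogata's inequality \eqref{eq:PSO} together with the scaling property \lemmaref[item:RelativeDeltaScaling]{lemma:RelativeDelta}. Throughout, write $\Delta \defeq \Delta_{\varphi_2,\varphi_1}$.

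\emph{Upper bound.} For every $p \in \PM$ the vector $p\,\Omega_1$ lies in $\M\Omega_1 \subset \dom S_{\varphi_2,\varphi_1} = \dom \Delta^\half$. Hence the infimum in \eqref{eq:EUpperModularBound} is bounded below by the infimum over all $\eta \in \dom (t\Delta)^\half$ of $\norm{ \Omega_1 - \eta }^2 + \norm{ (t\Delta)^\half \eta }^2$. By \lemmaref{lemma:GraphBound} with $A = t\Delta$, this infimum equals $\langle \Omega_1, t\Delta (\1 + t\Delta)^{-1} \Omega_1 \rangle$. Subtracting from $\varphi_1(\1) = \norm{\Omega_1}^2$ gives
\begin{equation}
E^\M_t(\varphi_1 \vert \varphi_2) \leq \langle \Omega_1, (\1 + t\Delta)^{-1} \Omega_1 \rangle \eqend{.}
\end{equation}

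\emph{Lower bound.} Apply \eqref{eq:PSO} to the pair $\varphi_1$, $t\varphi_2$ and use $\Delta_{t\varphi_2,\varphi_1} = t\Delta$. The right-hand side becomes $t^r \langle \Omega_1, \Delta^r \Omega_1 \rangle$. By \eqref{eq:EFormulaDiffNorm}, the left-hand side is exactly $\varphi_1(\1) - E^\M_t(\varphi_1 \vert \varphi_2)$. Rearranging gives the lower bound.

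\emph{Limits.} I will work with the spectral measure $\mu_{12}$. As $t \to 0$, dominated convergence gives $\int (1+t\lambda)^{-1} \total\mu_{12} \to \varphi_1(\1)$, and this equals $E^\M_0$. Since $\Omega_1 \in \dom \Delta^\half$, we have $\int \lambda^r \total\mu_{12} < \infty$ for $r \in (0,1]$, so the lower bound also tends to $\varphi_1(\1)$. As $t \to \infty$, the upper bound tends to $\mu_{12}(\{0\})$. By \lemmaref[item:RelativeDeltaKernel]{lemma:RelativeDelta},
\begin{equation}
\mu_{12}(\{0\}) = \langle \Omega_1, (\1 - s^{\M'}(\varphi_1) s_2) \Omega_1 \rangle = \varphi_1(\1) - \varphi_1(s_2) = \varphi_1(s_2^\perp) \eqend{.}
\end{equation}
This matches the limit in \propref[item:EBoundsLimit]{prop:hockeystick_properties}.

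\emph{Equality case for $0<t<\infty$.} If $s_1 s_2 = 0$, then $s_2 \Omega_1 = s_2 s_1 \Omega_1 = 0$, so $\Omega_1 \in \ker \Delta$. The upper bound is then $\varphi_1(\1)$, which equals $E^\M_t$ by \corref[item:orthogonality]{cor:EStateComparison}.

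The converse is the main obstacle, because the infimum over projections need not be attained. My plan is as follows.
\begin{enumerate}
\item Take projections $p_n$ that are nearly optimal in the exact formula for $E^\M_t$ from the preceding lemma. Equality then forces the dropped terms $t\norm{ s_1^\perp p_n \Omega_2 }^2 \to 0$.
\item It also forces $p_n\Omega_1$ to be a minimizing sequence for the graph-distance problem. Since that problem is the squared distance to a closed subspace, strict convexity gives $p_n\Omega_1 \to \eta \defeq (\1+t\Delta)^{-1}\Omega_1$ in norm.
\item Because the $p_n$ are projections, $\norm{ p_n\Omega_1 }^2 = \langle \Omega_1, p_n \Omega_1 \rangle$. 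Passing to the limit gives $\norm{\eta}^2 = \langle \Omega_1, \eta \rangle$, that is,
\begin{equation}
\int (1+t\lambda)^{-2} \total\mu_{12} = \int (1+t\lambda)^{-1} \total\mu_{12} \eqend{.}
\end{equation}
\item This forces $\mu_{12}$ to be concentrated on $\{0\}$, hence $\Delta^\half \Omega_1 = 0$. Then $\norm{ s_1 \Omega_2 } = \norm{ S_{\varphi_2,\varphi_1} \Omega_1 } = 0$, so $\varphi_2(s_1) = 0$, which is exactly $\varphi_1 \perp \varphi_2$.
\end{enumerate}
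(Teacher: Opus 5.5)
Your proof is correct and follows the paper's argument step by step. You use \lemmaref{lemma:GraphBound} with $A = t\Delta_{\varphi_2,\varphi_1}$ for the upper bound, Ogata's inequality \eqref{eq:PSO} for the pair $\varphi_1, t\varphi_2$ with the scaling $\Delta_{t\varphi_2,\varphi_1} = t\Delta_{\varphi_2,\varphi_1}$ for the lower bound, the spectral measure $\mu_{12}$ for the limits, and the identity $\norm{p\,\Omega_1}^2 = \langle \Omega_1, p\,\Omega_1\rangle$ to force $\mu_{12}((0,\infty)) = 0$ in the equality case.

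The one real difference is your minimizing sequence, which is unnecessary. Contrary to your worry, the optimum over projections is attained, namely at $p_t = s^\M\bigl((\varphi_1 - t\varphi_2)_+\bigr)$, since $P^\M(\varphi) = \varphi(s_+)$ by \eqref{eq:varphiSup}. The paper therefore gets $p_t\,\Omega_1 = (\1 + t\Delta_{\varphi_2,\varphi_1})^{-1}\Omega_1$ directly from the uniqueness of the minimizer in \lemmaref{lemma:GraphBound}. Your Pythagoras-in-the-graph argument is nonetheless valid, and you also spell out the easy direction (orthogonality implies equality), which the paper leaves implicit.
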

\begin{proof}
The upper bound follows by using \eqref{eq:EUpperModularBound} and \lemmaref{lemma:GraphBound} applied to $A = t \Delta_{\varphi_2,\varphi_1}$ and $\xi = \Omega_1$, and noting that $\eta \defeq p \, \Omega_1 \in \dom \Delta_{\varphi_2,\varphi_1}^\half$ for all $p \in \PM$:
\begin{splitequation}
\label{eq:EstimateUpper1}
E^\M_t(\varphi_1 \vert \varphi_2) &\leq \varphi_1(\1) - \inf_{\eta \in \dom \Delta_{\varphi_2,\varphi_1}^\half} \left[ \norm{ \Omega_1 - \eta }^2 + t \norm{ \Delta_{\varphi_2,\varphi_1}^\half \eta }^2 \right] \\
&= \varphi_1(\1) - \left\langle \Omega_1, \frac{t \Delta_{\varphi_2,\varphi_1}}{\1 + t \Delta_{\varphi_2,\varphi_1}} \Omega_1 \right\rangle = \left\langle \Omega_1, \left( \1 + t \Delta_{\varphi_2,\varphi_1} \right)^{-1} \Omega_1 \right\rangle \eqend{.}
\end{splitequation}
The lower bound is derived from \eqref{eq:PSO}. According to \eqref{eq:EFormulaDiffNorm}, we have
\begin{splitequation}
E^\M_t(\varphi_1 \vert \varphi_2) &= \frac{1}{2} \Bigl[ \varphi_1(\1) - t \varphi_2(\1) + \norm{ \varphi_1 - t \varphi_2 } \Bigr] \\
&= \varphi_1(\1) - \frac{1}{2} \Bigl[ \varphi_1(\1) + t \varphi_2(\1) - \norm{ \varphi_1 - t \varphi_2 } \Bigr] \\
&\geq \varphi_1(\1) - \left\langle \Omega_1, \Delta_{t \varphi_2,\varphi_1}^r \Omega_1 \right\rangle \eqend{,}
\end{splitequation}
which coincides with the claimed lower bound because $\Delta_{t \varphi_2,\varphi_1} = t \Delta_{\varphi_2,\varphi_1}$ by \lemmaref[item:RelativeDeltaScaling]{lemma:RelativeDelta}.

We now consider the limiting cases $t \to 0$ and $t \to \infty$. For $t \to 0$, the lower bound goes to $\varphi_1(\1)$. Also $E^\M_t(\varphi_1 \vert \varphi_2) = \norm{ (\varphi_1 - t \varphi_2)_+ } \to \norm{ (\varphi_1)_+ } = \varphi_1(\1)$, and the upper bound goes to $\left\langle \Omega_1, \left( \1 + t \Delta_{\varphi_2,\varphi_1} \right)^{-1} \Omega_1 \right\rangle = \int_0^\infty (1 + t \lambda)^{-1} \total \mu_{12}(\lambda) \to \int_0^\infty \total \mu_{12}(\lambda) = \varphi_1(\1)$ by dominated convergence. Similarly, in the limit $t \to \infty$ we have $E^\M_t(\varphi_1 \vert \varphi_2) \to \varphi_1(s_2^\perp)$ by \propref[item:EBoundsLimit]{prop:hockeystick_properties}, and for the upper bound $\int_0^\infty (1 + t \lambda)^{-1} \total \mu_{12}(\lambda) \to \mu_{12}\left( \{0\} \right) = \varphi_1(s_2^\perp)$.

It remains to show that for $0 < t < \infty$, the upper bound is not sharp unless $\varphi_1 \perp \varphi_2$. So assume that the upper bound holds with equality. Then the two estimates that we did, the inequality \eqref{eq:EUpperModularBound} and the inequality \eqref{eq:EstimateUpper1}, must hold as equalities. The infimum defining the hockeystick divergence is attained at the support projection $p_t$ of $( \varphi_1 - t \varphi_2 )_+$ by \propref{prop:hockeystick_properties}, and the upper bound in \eqref{eq:EUpperModularBound} derives from \lemmaref{lemma:GraphBound}, which is attained at the unique vector $\eta = \left( \1 + t \Delta_{\varphi_2,\varphi_1} \right)^{-1} \Omega_1$ coming from the projection onto the graph of $\Delta_{t \varphi_2,\varphi_1}^\half = t^\half \Delta_{\varphi_2,\varphi_1}^\half$. Hence this implies $p_t \, \Omega_1 = \left( \1 + t \Delta_{\varphi_2,\varphi_1} \right)^{-1} \Omega_1$. As $p_t$ is a projection, this further implies
\begin{splitequation}
0 &= \langle \Omega_1, p_t \, \Omega_1 \rangle - \norm{ p_t \, \Omega_1 }^2 = \left\langle \Omega_1, \left( \1 + t \Delta_{\varphi_2,\varphi_1} \right)^{-1} \Omega_1 \right\rangle - \norm{ \left( \1 + t \Delta_{\varphi_2,\varphi_1} \right)^{-1} \Omega_1 }^2 \\
&= \int_0^\infty \left[ \frac{1}{1 + t \lambda} - \frac{1}{(1 + t \lambda)^2} \right] \total \mu_{12}(\lambda) = \int_0^\infty \frac{t \lambda}{(1 + t \lambda)^2} \total \mu_{12}(\lambda) \eqend{.}
\end{splitequation}
Since the integrand is strictly positive for $t > 0$, we conclude $\mu_{12}((0,\infty)) = 0$, which is equivalent to $\Omega_1 \in \ker \Delta_{\varphi_2,\varphi_1}$ and $\varphi_1(s_2) = \varphi_1(s_1 s_2) = 0$, i.e. $\varphi_1 \perp \varphi_2$.
\end{proof}

This proposition establishes upper and lower bounds on hockeystick divergences, but also shows that these bounds are not sharp in general. As is apparent from the arguments given above, the error made in the upper estimate is due to replacing the infimum over $\dom \Delta_{\varphi_2,\varphi_1}^\half$, describing the distance of $\Omega_1$ to the graph of $t^\half \Delta_{\varphi_2,\varphi_1}^\half$, by an infimum over the smaller set of vectors given by projections $p \in \PM$ acting on $\Omega_1$. This would introduce no error if the projections in \eqref{eq:EUpperModularBound} were replaced by general elements $x \in \M$ (for cyclic $\Omega_1$). But for general $x \in \M$,
\begin{equation}
\norm{ \Omega_1 - x \, \Omega_1 }^2 + t \norm{ \Delta_{\varphi_2,\varphi_1}^\half x \, \Omega_1 }^2 = \varphi_1\Bigl( (\1 - x)^* (\1 - x) \Bigr) + t \, \varphi_2( x x^* )
\end{equation}
is no longer of hockeystick form because of the quadratic expressions $\abs{\1 - x }^2$ and $\abs{ x^* }^2$. Rather, it is the main ingredient into Kosaki's variational formula for relative entropies \cite{kosaki1986,ohyapetz1993}. It therefore seems difficult to establish the equality $D^\M_{f_0}(\varphi_1 \vert \varphi_2) = S^\M_\mathrm{rel}(\varphi_1 \vert \varphi_2)$ with these methods, and we will prove it via a different route in \secref{sec:operatorint}.

Nonetheless, interesting upper and lower bounds on $f$-divergences can be obtained. We first point out the modular bounds on $f$-divergences that follow from the hockeystick bounds in \propref{prop:EModularEstimate} by integration.
\begin{lemma}
\label{lemma:fdivergence_upperbound}
Let $f \colon (0,\infty) \to \mathbb{R}$ be a convex function, and assume that for $\lambda > 0$, the integral
\begin{equation}
F_\epsilon(\lambda) \defeq \int_\epsilon^\infty \frac{f''(t)}{1 + t \lambda} \total t
\end{equation}
is finite. Then we have the bound
\begin{splitequation}
\label{eq:DModularBound}
D^\M_f(\varphi_1 \vert \varphi_2) \leq \lim_{\epsilon \searrow 0} \Biggl( \left\langle \Omega_1, F_\epsilon\left( \Delta_{\varphi_2,\varphi_1} \right) \Omega_1 \right\rangle &+ \varphi_2(\1) \Bigl[ f(\epsilon) - f(1) + f'(\epsilon) - \epsilon f'(\epsilon) \Bigr] \\
&+ \Bigl[ \varphi_1(\1) - \varphi_2(\1) \Bigr] \Bigl[ 1 + f'(\epsilon) - f'(1) \Bigr] \Biggr) \eqend{.}
\end{splitequation}
\end{lemma}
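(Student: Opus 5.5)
The plan is to start from the alternative representation of $D^\M_f$ in \thmref[item:DAlternativeFormula]{thm:fdivergence_properties}, i.e.\ formula \eqref{eq:f-divergenceFullIntegralFormula}, and insert the upper hockeystick bound of \propref{prop:EModularEstimate}. For fixed $\epsilon > 0$ and every $t \geq \epsilon$, \propref{prop:EModularEstimate} gives
\begin{equation}
E^\M_t(\varphi_1 \vert \varphi_2) \leq \left\langle \Omega_1, \left( \1 + t \Delta_{\varphi_2,\varphi_1} \right)^{-1} \Omega_1 \right\rangle = \int_{[0,\infty)} \frac{1}{1 + t \lambda} \total \mu_{12}(\lambda) \eqend{,}
\end{equation}
with $\mu_{12}$ as in \assref{ass:modularbounds}. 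Since $f''(t) \geq 0$ almost everywhere, multiplying by $f''(t)$ and integrating over $[\epsilon,\infty)$ preserves the inequality. The non-$t$-dependent correction terms in \eqref{eq:f-divergenceFullIntegralFormula} are identical to those in \eqref{eq:DModularBound}, so they pass through unchanged.

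Next I would exchange the order of integration. The integrand $f''(t)(1+t\lambda)^{-1}$ is non-negative and jointly measurable on $[\epsilon,\infty)\times[0,\infty)$, so Tonelli's theorem gives
\begin{equation}
\int_\epsilon^\infty f''(t) \int \frac{\total \mu_{12}(\lambda)}{1+t\lambda} \total t = \int F_\epsilon(\lambda) \total \mu_{12}(\lambda) = \left\langle \Omega_1, F_\epsilon\left( \Delta_{\varphi_2,\varphi_1} \right) \Omega_1 \right\rangle \eqend{,}
\end{equation}
where the last equality is the functional calculus, interpreted as a quadratic form in $[0,\infty]$.

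The delicate point is the atom of $\mu_{12}$ at $\lambda = 0$, of mass $\varphi_1(s_2^\perp)$. There $F_\epsilon(0) = \int_\epsilon^\infty f'' \total t$, which may be infinite. The hypothesis only guarantees finiteness of $F_\epsilon(\lambda)$ for $\lambda > 0$. If $\varphi_1 \not\ll \varphi_2$ and $F_\epsilon(0) = \infty$, the right-hand side of \eqref{eq:DModularBound} is $+\infty$ and the bound is trivial; this is consistent with \eqref{eq:DInfinite}. Otherwise everything is finite, or at worst $+\infty$ on the right, so the inequality holds in $(-\infty,\infty]$ for each $\epsilon$.

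Finally I would take $\epsilon \searrow 0$. The left-hand bracket converges to $D^\M_f(\varphi_1\vert\varphi_2)$ by \eqref{eq:f-divergenceFullIntegralFormula}. Since the inequality holds for each $\epsilon$, it passes to the limit of the right-hand side whenever that limit exists, and to its $\liminf$ in any case. I expect this limit step to be the main obstacle: the correction terms involving $f(\epsilon)$ and $f'(\epsilon)$ may individually diverge, so one has to avoid splitting the right-hand side into separately divergent pieces. The safe approach is to keep the pointwise-in-$\epsilon$ inequality and read the statement's $\lim$ as the limit, or $\liminf$, of the whole bracket. Differentiability of $f$ at $1$ is implicitly needed, exactly as in \thmref[item:DAlternativeFormula]{thm:fdivergence_properties}, and I would assume it.
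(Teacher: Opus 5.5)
Your proposal is correct and is essentially the paper's proof: you insert the upper bound of Prop.~\ref{prop:EModularEstimate} into the representation of Thm.~\ref{thm:fdivergence_properties}~\ref{item:DAlternativeFormula}, then use Tonelli's theorem to identify $\int_\epsilon^\infty f''(t)\langle \Omega_1, (\1+t\Delta_{\varphi_2,\varphi_1})^{-1}\Omega_1\rangle \total t$ with $\langle \Omega_1, F_\epsilon(\Delta_{\varphi_2,\varphi_1})\Omega_1\rangle$. Your extra remarks are sound refinements of points the paper leaves implicit: the possible atom of $\mu_{12}$ at $\lambda=0$, reading the limit as a $\liminf$, and the needed differentiability of $f$ at $1$.
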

\begin{proof}
We use the upper bound from \propref{prop:EModularEstimate} and the formula from \thmref[item:DAlternativeFormula]{thm:fdivergence_properties} to obtain
\begin{splitequation}
D^\M_f(\varphi_1 \vert \varphi_2) &\leq \lim_{\epsilon \searrow 0} \Biggl[ \int_\epsilon^\infty f''(t) \left\langle\Omega_1, \left( \1 + t \Delta_{\varphi_2,\varphi_1} \right)^{-1} \Omega_1 \right\rangle \total t \\
&\hspace{4em}+ \varphi_2(\1) \Bigl[ f(\epsilon) - f(1) + f'(\epsilon) - \epsilon f'(\epsilon) \Bigr] \\
&\hspace{4em}+ \Bigl[ \varphi_1(\1) - \varphi_2(\1) \Bigr] \Bigl[ 1 + f'(\epsilon) - f'(1) \Bigr] \Biggr] \eqend{.}
\end{splitequation}
Using Tonelli's theorem, the integral can be rewritten as
\begin{equation}
\int_\epsilon^\infty f''(t) \left\langle \Omega_1, \left( \1 + t \Delta_{\varphi_2,\varphi_1} \right)^{-1} \Omega_1 \right\rangle \total t = \int_\epsilon^\infty \int_0^\infty \frac{f''(t)}{1 + t \lambda} \total \mu_{12}(\lambda) \total t = \left\langle \Omega_1, F_\epsilon\left( \Delta_{\varphi_2,\varphi_1} \right) \Omega_1 \right\rangle \eqend{,}
\end{equation}
which proves \eqref{eq:DModularBound}.
\end{proof}

We now concentrate on the most interesting case of the information function $f = f_0$. For $\varphi_1 \not\ll \varphi_2$, by \thmref[item:DBounds]{thm:fdivergence_properties} we have $D^\M_{f_0}(\varphi_1 \vert \varphi_2) = \infty$ because $t \mapsto f_0''(t) = t^{-1}$ is not integrable over $[1,\infty)$, and since in this case also $S^\M_\mathrm{rel}(\varphi_1 \vert \varphi_2) = \infty$ \eqref{eq:Srel}, we can restrict to the case $\varphi_1 \ll \varphi_2$.
\begin{theorem}
\label{thm:ModularBoundsDf0}
Assume in addition that $0 \neq \varphi_1 \ll \varphi_2$.
\begin{enumerate}
\item \label{item:DModBounds1} For any $r \in (0, 1)$, we have the lower bound
\begin{equation}
\label{eq:DModBounds1_lower}
- \frac{\delta_0}{r} \ln\left( \frac{\delta_r}{\delta_0} \right) - \frac{1-r}{r} \left( \delta_0 - \delta_1^{-\frac{r}{1-r}} \delta_r^\frac{1}{1-r} \right) \leq D^\M_{f_0}(\varphi_1 \vert \varphi_2) \eqend{,} \quad \delta_r \defeq \langle \Omega_1, \Delta_{\varphi_2,\varphi_1}^r \Omega_1 \rangle \eqend{,}
\end{equation}
as well as the upper bound
\begin{equation}
\label{eq:DModBounds1_upper}
D^\M_{f_0}(\varphi_1 \vert \varphi_2) \leq S^\M_\mathrm{rel}(\varphi_1 \vert \varphi_2) + \varphi_1(\1) \eqend{.}
\end{equation}

\item \label{item:DModBounds2} For $\varphi_1(\1) = \varphi_2(\1) = 1$, one has
\begin{equation}
\lim_{n \to \infty} \frac{1}{n} D^{\M^{\otimes n}}_{f_0}\left( \varphi_1^{\otimes n} \vert \varphi_2^{\otimes n} \right) = S^\M_\mathrm{rel}(\varphi_1 \vert \varphi_2) \eqend{.}
\end{equation}
\end{enumerate}
\end{theorem}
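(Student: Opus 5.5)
The plan is to integrate the hockeystick bounds of \propref{prop:EModularEstimate} against $f_0''(t) = t^{-1}$, using the alternative representation \thmref[item:DAlternativeFormula]{thm:fdivergence_properties}. For $f_0$ one has $f_0(\epsilon) - f_0(1) + f_0'(\epsilon) - \epsilon f_0'(\epsilon) = \ln\epsilon + 1 - \epsilon$ and $1 + f_0'(\epsilon) - f_0'(1) = 1 + \ln\epsilon$. So the boundary terms in \eqref{eq:f-divergenceFullIntegralFormula} combine to $\varphi_1(\1)(1+\ln\epsilon) - \epsilon\varphi_2(\1)$. Throughout I write $\Delta \defeq \Delta_{\varphi_2,\varphi_1}$.

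\emph{Upper bound.} I would apply \lemmaref{lemma:fdivergence_upperbound} with
\begin{equation}
F_\epsilon(\lambda) = \int_\epsilon^\infty \frac{\total t}{t(1+t\lambda)} = \ln(1+\epsilon\lambda) - \ln\epsilon - \ln\lambda \eqend{.}
\end{equation}
Since $\varphi_1 \ll \varphi_2$, \lemmaref[item:RelativeDeltaKernel]{lemma:RelativeDelta} gives $\mu_{12}(\{0\}) = 0$, so $\ln\lambda$ is $\mu_{12}$-a.e.\ finite. The term $-\ln\epsilon \, \varphi_1(\1)$ cancels against the $\ln\epsilon$ in the boundary terms. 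What remains is
\begin{equation}
\langle \Omega_1, \ln(\1+\epsilon\Delta)\Omega_1\rangle - \langle\Omega_1,\ln\Delta\,\Omega_1\rangle + \varphi_1(\1) - \epsilon\varphi_2(\1) \eqend{.}
\end{equation}
For the first term, $0 \leq \ln(1+\epsilon\lambda) \leq \epsilon\lambda$ gives $\langle \Omega_1, \ln(\1+\epsilon\Delta)\Omega_1\rangle \leq \epsilon\norm{\Delta^\half\Omega_1}^2 = \epsilon\varphi_2(s_1) \to 0$. This yields \eqref{eq:DModBounds1_upper}. The splitting of $\ln\lambda$ into positive and negative parts needs a little care: the positive part is integrable because $\ln\lambda \leq \lambda$, and if $S^\M_\mathrm{rel} = \infty$ there is nothing to prove.

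\emph{Lower bound.} I use $E^\M_t(\varphi_1\vert\varphi_2) \geq (\delta_0 - t^r\delta_r)_+$ for all $t>0$, which follows from \propref{prop:EModularEstimate} together with positivity. I insert this into \eqref{eq:f-divergenceFullIntegralFormula}; the integrand is $0$ for $t \geq T \defeq (\delta_0/\delta_r)^{1/r}$. An elementary integration gives
\begin{equation}
\int_\epsilon^T \frac{\delta_0 - t^r\delta_r}{t}\,\total t = \frac{\delta_0}{r}\ln\frac{\delta_0}{\delta_r} - \delta_0\ln\epsilon - \frac{\delta_0}{r} + O(\epsilon^r) \eqend{.}
\end{equation}
Adding the boundary terms, the $\ln\epsilon$ cancels again (as $\delta_0 = \varphi_1(\1)$), leaving $-\frac{\delta_0}{r}\ln\frac{\delta_r}{\delta_0} - \frac{1-r}{r}\delta_0$.

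The additional positive correction $\frac{1-r}{r}\delta_1^{-r/(1-r)}\delta_r^{1/(1-r)}$ (with $\delta_1 = \varphi_2(s_1)$) must come from a sharper estimate in the small-$t$ region. I would try to obtain it by replacing, for small $t$, the crude bound $(\delta_0 - t^r\delta_r)_+$ with the trivial bound $\varphi_1(\1) - t\varphi_2(\1) \geq \delta_0 - t\delta_1$. One then switches between the two bounds at the $t$ where they cross and optimizes the crossing point using Hölder's inequality $\delta_r \leq \delta_0^{1-r}\delta_1^r$. The explicit exponents $-r/(1-r)$ and $1/(1-r)$ are exactly what such an optimization produces. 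Getting this bookkeeping right is where I expect most of the work in part~\ref{item:DModBounds1}.

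\emph{Regularization.} I pass to $\M^{\otimes n}$ in the tensor product standard form, where $\Delta_{\varphi_2^{\otimes n},\varphi_1^{\otimes n}} = \Delta^{\otimes n}$. Hence $\delta_r^{(n)} = \delta_r^n$, $\delta_1^{(n)} = \delta_1^n$ and $\delta_0 = 1$.

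The upper bound together with additivity of $S_\mathrm{rel}$ on product states gives $\limsup_n \frac1n D_{f_0}^{(n)} \leq S^\M_\mathrm{rel}(\varphi_1\vert\varphi_2)$. The lower bound divided by $n$ gives $\liminf_n \frac1n D_{f_0}^{(n)} \geq -\frac1r\ln\delta_r$, since the correction term is bounded by $\frac{1-r}{r}$ and disappears after dividing by $n$.

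Finally I let $r \searrow 0$. The function $r \mapsto \ln\delta_r = \ln\int\lambda^r\,\total\mu_{12}$ is convex (Hölder) with $\ln\delta_0 = 0$. Therefore $-\frac1r\ln\delta_r$ increases as $r \searrow 0$, toward the right derivative $-\int\ln\lambda\,\total\mu_{12} = S^\M_\mathrm{rel}(\varphi_1\vert\varphi_2)$. I expect this to be the main obstacle: identifying the limit, including the case $S^\M_\mathrm{rel} = \infty$, needs monotone convergence applied to the difference quotients $(1-\lambda^r)/r \nearrow -\ln\lambda$ on $\{\lambda < 1\}$, and dominated convergence (via $\lambda^r \leq 1+\lambda$) on $\{\lambda \geq 1\}$.
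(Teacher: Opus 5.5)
Your upper bound is the paper's argument (\lemmaref{lemma:fdivergence_upperbound} with $F_\epsilon(\lambda) = \ln(1+\epsilon\lambda) - \ln\epsilon - \ln\lambda$). Your plan for the correction term in the lower bound is also what the paper does. The paper inserts the bound of \propref{prop:EModularEstimate} with exponent $1$ on $(0,T_1]$, exponent $r$ on $(T_1,T_2]$ and exponent $0$ beyond. This gives $\varphi_1(\1)(1+\ln T_2) - \delta_1 T_1 - (T_2^r - T_1^r)\delta_r/r$, which it then maximizes. The maximizers $T_1 = (\delta_r/\delta_1)^{1/(1-r)}$ and $T_2 = (\delta_0/\delta_r)^{1/r}$ are exactly your crossing points. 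H\"older's inequality $\delta_r \le \delta_0^{1-r}\delta_1^r$ is needed only to check $T_1 \le \delta_0/\delta_1 \le T_2$, and substituting gives the stated expression.

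The one step that is wrong is your justification of the small-$t$ bound. Since $\delta_0 = \varphi_1(\1)$ and $\delta_1 = \varphi_2(s_1) \le \varphi_2(\1)$, one has $\varphi_1(\1) - t\varphi_2(\1) \le \delta_0 - t\delta_1$, which is the opposite of what you wrote. If you ran your optimization with the genuinely trivial bound from \propref[item:EBoundsLimit]{prop:hockeystick_properties}, you would get $\varphi_2(\1)$ in place of $\delta_1$. That is strictly weaker than the claim whenever $\varphi_2(s_1) < \varphi_2(\1)$. What you actually need is $E^\M_t(\varphi_1 \vert \varphi_2) \ge \delta_0 - t\delta_1$. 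This is the case $r = 1$ of \propref{prop:EModularEstimate}, using $\langle \Omega_1, \Delta_{\varphi_2,\varphi_1}\Omega_1\rangle = \norm{S_{\varphi_2,\varphi_1}\Omega_1}^2 = \varphi_2(s_1)$. It also follows directly by choosing $p = s_1$ in \eqref{eq:SuccessProbability}.

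For the regularization, your order of limits differs from the paper's. The paper couples $r$ and $n$ via $r_n = n^{-1/2}$ and then asserts that $-r_n^{-1}\ln\delta_{r_n}$ converges to the derivative at $r=0$. You fix $r$, let $n\to\infty$, and then let $r\searrow 0$, which is cleaner. It only needs the correction term to be bounded below by $-\frac{1-r}{r}$ uniformly in $n$, so even your preliminary lower bound without the correction suffices here. Log-convexity then makes the $r\searrow0$ limit monotone, which handles $S^\M_\mathrm{rel}(\varphi_1\vert\varphi_2)=\infty$ properly. For the identification of the limit, the right integrable minorant is $(1-\lambda^r)/r \ge 1-\lambda$ for $r\in(0,1]$, which follows from convexity of $r\mapsto\lambda^r$; the bound $\lambda^r\le 1+\lambda$ is not what is needed. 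Alternatively, $\ln\delta_r \le \delta_r - 1$ gives $-\frac{1}{r}\ln\delta_r \ge \int \frac{1-\lambda^r}{r}\,\total\mu_{12}(\lambda)$, and the right side increases to $S^\M_\mathrm{rel}(\varphi_1\vert\varphi_2)$, which is all the lower bound requires.
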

\begin{proof}
\ref{item:DModBounds1} We begin with the upper bound \eqref{eq:DModBounds1_upper}, which follows from \lemmaref{lemma:fdivergence_upperbound}. For the information function $f = f_0$, we have $f''(t) = t^{-1}$ and therefore, by explicit calculation, $F_\epsilon(\lambda) = - \ln \lambda - \ln \epsilon + \ln( 1 + \epsilon \lambda )$. Inserting this into \eqref{eq:DModularBound} and using monotone convergence yields
\begin{splitequation}
D^\M_{f_0}(\varphi_1 \vert \varphi_2) &\leq \lim_{\epsilon \searrow 0} \int_0^\infty \left[ - \ln \lambda - \ln \epsilon + \ln( 1 + \epsilon \lambda ) \right] \total\mu_{12} + \varphi_1(\1) ( 1 + \ln \epsilon ) \\
&= S^\M_\mathrm{rel}(\varphi_1 \vert \varphi_2) + \varphi_1(\1) \eqend{.}
\end{splitequation}
The lower bound \eqref{eq:DModBounds1_lower} is based on the lower bound in \propref{prop:EModularEstimate}, namely
\begin{equation}
D_{f_0}^\M(\varphi_1 \vert \varphi_2) \geq \lim_{\epsilon \searrow 0} \left[ \int_\epsilon^\infty \frac{1}{t} \left\langle \Omega_1, \left( \1 - t^{r(t)} \, \Delta_{\varphi_2,\varphi_1}^{r(t)} \right) \Omega_1 \right\rangle \total t + \varphi_1(\1) ( 1 + \ln \epsilon ) \right] \eqend{,}
\end{equation}
where $r \colon \mathbb{R}_+ \to [0,1]$ has to be chosen appropriately. To obtain a finite lower bound, one needs to choose $r(t) \to 0$ for $t \to \infty$. For fixed $t$, we want to minimize $t^{r(t)} \left\langle \Omega_1, \Delta_{\varphi_2,\varphi_1}^{r(t)}) \Omega_1 \right\rangle$, which suggests to choose $r(t) \to 1$ for $t \to 0$. We here make the simple choice
\begin{equation}
r(t) = \begin{cases} 1 & t \in [0, T_1] \eqend{,} \\ r & t \in (T_1, T_2] \eqend{,} \\ 0 & t \in (T_2, \infty) \eqend{,} \end{cases}
\end{equation}
where $0 < r < 1$ and $0 < T_1 \leq T_2 \in \mathbb{R}$ are parameters.

Inserting this choice of $r(t)$ in the above integral gives by straightforward calculation
\begin{equation}
D^\M_{f_0}(\varphi_1 \vert \varphi_2) \geq \varphi_1(\1) (1 + \ln T_2) - \varphi_2(s_1) T_1 - (T_2^r - T_1^r) \frac{\delta_r}{r}
\end{equation}
with $\delta_r$ defined in \eqref{eq:DModBounds1_lower}. Since we assumed $\varphi_1 \ll \varphi_2$, we have $s_2 \, \Omega_1 \neq 0$ and therefore $\delta_r > 0$. Furthermore, $\delta_1 = \varphi_2(s_1)$ and $\delta_0 = \varphi_1(s_2) = \varphi_1(\1)$ because $\varphi_1 \ll \varphi_2$.

As a function of $T_1,T_2 > 0$, the maximum of this function (at fixed $r$) is at
\begin{equation}
T_1 = \left[ \frac{\delta_r}{\varphi_2(s_1)} \right]^\frac{1}{1-r} = \left( \frac{\delta_r}{\delta_1} \right)^\frac{1}{1-r} \eqend{,} \quad T_2 = \left[ \frac{\delta_r}{\varphi_1(\1)} \right]^{-\frac{1}{r}} = \left( \frac{\delta_0}{\delta_r} \right)^\frac{1}{r} \eqend{.}
\end{equation}
Because $r \mapsto \delta_r$ is log-convex (this follows from the spectral theorem and H{\"o}lder's Inequality), we have $\delta_r \leq \delta_0^{1-r} \delta_1^r$ and thus $T_1 \leq \delta_0/\delta_1 \leq T_2$. So these values of $T_1, T_2$ are admissible, and inserting them gives the claimed bound \eqref{eq:DModBounds1_lower}.

\ref{item:DModBounds2} We now proceed to tensor powers of $\M$ and $\varphi_1,\varphi_2$ under the assumption $\delta_0 = \varphi_1(\1) = 1$ and $\delta_1 = \varphi_2(s_1) \leq \varphi_2(\1) = 1$. Since $\Delta_{\varphi_2^{\otimes n},\varphi_1^{\otimes n}} = \Delta_{\varphi_2,\varphi_1}^{\otimes n}$, we have the bound
\begin{equation}
\label{eq:AnotherEstimate}
D^{\M^{\otimes n}}_{f_0}\left( \varphi_1^{\otimes n} \vert \varphi_2^{\otimes n} \right) \geq - \frac{n}{r} \ln \delta_r - \frac{1-r}{r} \left( 1 - \delta_1^{-\frac{r n}{1-r}} \delta_r^\frac{n}{1-r} \right) \eqend{.}
\end{equation}
By log-convexity of $\delta_r$, we have $\delta_r \leq \delta_0^{1-r} \delta_1^r = \delta_1^r$, which implies that the last term in brackets in \eqref{eq:AnotherEstimate} lies in $[0,1]$ for all $n \in \mathbb{N}$ and all $r \in [0, 1)$.

We now choose $r = r_n \defeq n^{-\half}$, and divide by $n$, resulting in
\begin{equation}
\frac{1}{n} D^{\M^{\otimes n}}_{f_0}\left( \varphi_1^{\otimes n} \vert \varphi_2^{\otimes n} \right) \geq - \frac{1}{r_n} \ln \delta_{r_n} - \left( \frac{1}{\sqrt{n}} - \frac{1}{n} \right) \left( 1 - \delta_1^{-\frac{n r_n}{1-r_n}} \delta_{r_n}^{\frac{n}{1-r_n}} \right) \eqend{.}
\end{equation}
By the boundedness mentioned above, the second term converges to $0$ as $n \to \infty$. As $\delta_0 = 1$, the first term converges to the derivative
\begin{splitequation}
\lim_{n \to \infty} \left[ - \frac{1}{r_n} \ln \left\langle \Omega_1, \Delta_{\varphi_2,\varphi_1}^{r_n} \Omega_1 \right\rangle \right] &= - \frac{\total}{\total r} \Bigr\rvert_{r = 0} \ln \left\langle \Omega_1, \Delta_{\varphi_2,\varphi_1}^r \Omega_1 \right\rangle \\
&= - \left\langle \Omega_1, \ln \Delta_{\varphi_2,\varphi_1} \Omega_1 \right\rangle = S^\M_\mathrm{rel}(\varphi_1 \vert \varphi_2) \eqend{.}
\end{splitequation}

On the other hand, the upper bound for tensor powers is
\begin{equation}
\frac{1}{n} D^{\M^{\otimes n}}_{f_0}\left( \varphi_1^{\otimes n} \vert \varphi_2^{\otimes n} \right) \leq - \frac{1}{n} \left\langle \Omega_1^{\otimes n}, \ln \Delta_{\varphi_2,\varphi_1}^{\otimes n} \Omega_1^{\otimes n} \right\rangle + \frac{1}{n} = S^\M_\mathrm{rel}(\varphi_1 \vert \varphi_2) + \frac{1}{n} \eqend{.}
\end{equation}
The combination of these two bounds gives the claimed result for the limit $n \to \infty$.
\end{proof}

As mentioned in the introdcution, passing to tensor powers means that in the context of hypothesis testing, one is handed multiple $(n)$ copies of the two states of the system and may now test the two hypotheses (system in state $\varphi_1$ or $\varphi_2$) using test projections from the larger algebra $\M^{\otimes n}$. Limits of the form $\lim_{n \to \infty} D^{\M^{\otimes n}}_{f_0}\left( \varphi_1^{\otimes n} \vert \varphi_2^{\otimes n} \right)$ often describe asymptotic decay rates of testing errors of such ``multi-shot'' tests, and are also called regularizations of $D^\M_{f_0}(\varphi_1 \vert \varphi_2)$. We have thus shown that the multi-shot regularization of the $f_0$-divergence coincides with Araki's relative entropy. Later we will see by different methods that these two quantities coincide also without regularization.

The lower bound in \thmref[item:DModBounds1]{thm:ModularBoundsDf0} bounds the $f_0$-divergence by R{\'e}nyi-type relative entropies. One may check that for states, the lower bound becomes trivial (converges to $0$) in the limits $r \to 0$ and $r \to 1$. It was chosen here mainly because of its good behaviour under tensor powers, which allowed us to identify the regularization of the $f_0$-divergence. We note that the technique of producing lower bounds underlying this theorem can also be applied to general $f$-divergences, but do not provide details here.

In \cite{hirchetomamichel2024}, a related analysis is carried out in the matrix case for R{\'e}nyi entropies and their multi-shot regularizations. In that case, another function $f \neq f_0$ appears whose integrability properties of $f''$ are better than the ones of $f_0''$, so that one can work with a fixed power $r$ (depending on the R{\'e}nyi parameter $\alpha$) instead of the $t$-dependent $r$ used in our proof. In our approach, different choices of the function $r(t)$ can be used to obtain lower bounds different from the one in \thmref[item:DModBounds1]{thm:ModularBoundsDf0}, but we refrain from giving more examples here.

\subsection{Semifinite von Neumann algebras}
\label{sec:semifinite}

In this section, we consider a setting which encompasses all examples given in \exref{example:Jordan}: a von Neumann algebra $\M$ with a faithful normal semifinite tracial weight.

\begin{definition}
Let $\M$ be a von Neumann algebra. A weight is a function $\varphi \colon \M_+ \to [0,\infty]$ satisfying $\varphi(x + \lambda y) = \varphi(x) + \lambda \varphi(y)$ for all $x,y \in \M_+$ and $\lambda \geq 0$. Furthermore, a weight is said to be:
\begin{enumerate}
\item Faithful if $\varphi(x) = 0 \implies x = 0$,
\item Normal if $\varphi\left( \sup_{i \in I} x_i \right) = \sup_{i \in I} \varphi(x_i)$ for every bounded increasing net $(x_i)_{i \in I} \subset \M_+$,
\item Semifinite if for any $0 \neq x \in \M_+$ there exists $0 < y \leq x$ with $\varphi(y) < \infty$, i.e., the set $\operatorname{span}\{ x \in \M_+ \colon \varphi(x) < \infty \}$ is $\sigma$-weakly dense in $\M$, and
\item Tracial if $\varphi\left( u x u^* \right) = \varphi(x)$ for all $x \in \M_+$ and all unitary $u \in \M$.
\end{enumerate}
\end{definition}
For a normal weight, the characterization $\varphi(x) = \sup \{ \varphi(y) \colon \ 0 \leq y \leq x \eqend{,} \ \varphi(y) < \infty \}$ is equivalent to semifiniteness \cite[Cor.~I.6.3]{dixmier1981}. We extend $\varphi$ by linearity to $\M_\varphi \defeq \operatorname{span} \{ x \in \M_+ \colon \varphi(x) < \infty \} \subset \M$.

Given a normal faithful semifinite tracial weight, we define noncommutative $L^p$ spaces as the natural generalization of classical $L^p$ spaces of measurable functions, with the trace playing the role of the measure~\cite[Thm.~IX.2.13]{takesaki2003}:
\begin{definition}
\label{def:noncommutative_lp_space}
A semifinite von Neumann algebra $(\M, \tau)$ is a von Neumann algebra $\M$ with a faithful normal semifinite tracial weight $\tau$. For $1 \leq p < \infty$, we denote by $L^p(\M,\tau)$ the completion of $\M_\tau \defeq \operatorname{span}\{ x \in \M_+ \colon \tau(x) < \infty \} \subset \M$ with respect to the norm $\norm{ x }_{L^p(\M,\tau)} \defeq \tau\left( \abs{x}^p \right)^\pinv$.
\end{definition}
Note that $L^p(\M,\tau)$ also contains unbounded operators affiliated with $\M$; for a detailed description of these spaces in terms of affiliated $\tau$-measurable operators, see~\cite{terp1981} and~\cite{correadasilva2018}. Moreover, for positive self-adjoint operators affiliated with $\M$ (not necessarily in $L^p(\M,\tau)$) we have~\cite[Ex.~IX.4.5 and Cor.~IX.4.9]{takesaki2003}
\begin{lemma}
\label{lemma:affiliated_tau_convergence}
Consider the extended positive cone $\widehat{\M}_+$ of $\M$ consisting of positive operators affiliated with $\M$. Any normal weight $\varphi \in \M\stp$ has a unique extension to $\widehat{\M}_+$ given by $\varphi(h) \defeq \int_0^\infty \lambda \total \varphi\left( E^h(\lambda) \right) \in [0,\infty]$. If $( h_i )_{i \in I}$, $h_i \in \widehat{\M}_+$ for all $i \in I$ is an increasing net, in the sense that $( \varphi(h_i) )_{i \in I}$ is increasing for all weights $\varphi$, it holds that $\varphi( \sup_{i \in I} h_i ) = \sup_{i \in I} \varphi(h_i)$.
\end{lemma}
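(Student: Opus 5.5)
The plan is to first define the extension on elementary operators and then pass to the general case by spectral approximation, using normality of $\varphi$ throughout.

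\textbf{Step 1 (definition and monotonicity).} For $h \in \widehat{\M}_+$ with spectral measure $E^h$, the map $\lambda \mapsto \varphi\bigl(E^h([0,\lambda])\bigr)$ is increasing and right continuous, because $\varphi$ is normal. It therefore defines a Lebesgue--Stieltjes measure, and the integral $\varphi(h) = \int_0^\infty \lambda \total \varphi\left( E^h(\lambda) \right) \in [0,\infty]$ is well defined. For bounded $h \in \M_+$, normality gives $\varphi(h) = \lim_n \varphi(h_n)$ for the increasing step-function approximations $h_n = \sum_k \frac{k}{2^n} E^h\bigl([\frac{k}{2^n},\frac{k+1}{2^n})\bigr)$, so the formula agrees with the original $\varphi$ on $\M_+$. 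Equivalently, $\varphi(h) = \sup_n \varphi\bigl(h E^h([0,n])\bigr)$, and the truncations $h E^h([0,n])$ are bounded and increase to $h$.

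\textbf{Step 2 (uniqueness).} Any extension that is additive, positively homogeneous, normal and monotone on $\widehat{\M}_+$ must agree with $\varphi$ on these bounded truncations. It must also equal the supremum of those values, so it coincides with the formula above. I would therefore read ``unique'' as uniqueness among normal extensions and state this hypothesis explicitly.

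\textbf{Step 3 (normality on $\widehat{\M}_+$).} Let $(h_i)$ be increasing in the stated sense and let $h = \sup_i h_i$. The natural route is to pass to bounded operators, for instance via the Cayley-type transform $h \mapsto h(1+h)^{-1}$, or via the quadratic-form description $\omega_\xi(h) = \norm{h^\half \xi}^2$ for vector functionals. The plan is:
\begin{enumerate}
\item Show that the supremum exists as an element of the extended positive part. This follows by taking suprema of the corresponding lower semicontinuous functions on the positive normal functionals, using Haagerup's identification of $\widehat{\M}_+$ with such functions on $\M\stp$.
\item Check that each normal weight is a pointwise increasing sum of positive normal functionals, $\varphi = \sum_j \omega_j$ (Haagerup's theorem).
\item Reduce to $\varphi = \omega \in \M\stp$, since suprema commute with sums of nonnegative increasing terms.
\end{enumerate}

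\textbf{Main obstacle.} The hard part is the case $\varphi = \omega \in \M\stp$. There, the identity $\omega(\sup_i h_i) = \sup_i \omega(h_i)$ is essentially the definition of the supremum in the extended positive part, once $\widehat{\M}_+$ is identified with lower semicontinuous affine maps $\M\stp \to [0,\infty]$. One must then check that the spectral-integral formula of Step 1 coincides with this functional picture. This is the point where I would rely on the cited results of Takesaki \cite[Ex.~IX.4.5, Cor.~IX.4.9]{takesaki2003} rather than reprove them.
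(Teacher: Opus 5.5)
Your proposal is correct and takes the same route as the paper. The paper gives no argument of its own and simply imports the statement from Takesaki's theory of the extended positive part \cite[Ex.~IX.4.5, Cor.~IX.4.9]{takesaki2003}, and your sketch unpacks that citation: truncation and normality for the spectral formula, uniqueness among \emph{normal} extensions (the right reading of ``unique''), Haagerup's decomposition $\varphi=\sum_j\omega_j$ to reduce to positive normal functionals, and deferring the identification with lower semicontinuous maps on $\M\stp$ to Takesaki.

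One small caution concerns Step~1. For infinite normal weights such as $\tau$, which is how the paper later uses the lemma, the Stieltjes function $\lambda\mapsto\varphi(E^h([0,\lambda]))$ can be identically $+\infty$, for instance when $\ker h$ has infinite trace. In that case the truncation formula $\sup_n\varphi\bigl(hE^h([0,n])\bigr)$, which you also give, should be taken as the definition.
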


It is clear that $L^p(\M,\tau)$ is invariant under taking adjoints, polar decomposition, and multiplication by elements from $\M$ from the left and right, the latter following again using the noncommutative H{\"o}lder inequality~\cite[Thm.~2.1.4]{correadasilva2018}. In addition, also due to the noncommutative H{\"o}lder inequality, it is easy to see that $x \in \M_\tau$ implies $x\in L^p(\M,\tau)$ for all $p \geq 1$ and $\lim_{p \to \infty} \norm{x}_{L^p(\M,\tau)} = \norm{ x }$. We therefore define $L^\infty(\M,\tau) \defeq \M$ with $\norm{ x }_{L^\infty(\M,\tau)} \defeq \norm{ x }$, due to the fact that clearly $\tau$ extends to $L^1(\M,\tau)$, and it holds that~\cite[Thm.~VIII.3.14, Lemma~IX.2.12]{takesaki2003}, \cite[Prop.~5.3.11]{pedersen2018}, \cite[Thm.~2.2.21]{correadasilva2018}:
\begin{theorem}
\label{thm:noncommutativeRadon-Nikodym}
Let $(\M,\tau)$ be a semifinite von Neumann algebra.
\begin{enumerate}
\item \label{item:RNIsomorphism} The map
\begin{equation}
L^1(\M,\tau) \to \M_* \eqend{,} \quad h \mapsto \varphi_h \quad\text{with}\quad \varphi_h(x) \defeq \tau(h x)
\end{equation}
is an isometric isomorphism. The support projection of $\varphi_h$ in $\M$ is given by $s^\M(\varphi_h) = \1 - E^h_{\{0\}}$.
\item \label{item:RNIsomorphismJordan} A functional $\varphi_h \in \M_*$ is selfadjoint (positive) if and only if $h$ is selfadjoint (positive). For positive $h$, it holds that
\begin{align}
\varphi_h(x) = \tau(h x) = \tau\left( h^\half x h^\half \right) \eqend{.}
\end{align}
For selfadjoint $h$, let $h = h^+ - h^-$ be the spectral decomposition of $h$ into positive and negative parts defined by $h^+ \defeq E^h_{(0,\infty)} h$ and $h^- \defeq - E^h_{(-\infty,0)} h$. The Jordan decomposition of $\varphi_h$ is then given by
\begin{equation}
\varphi_h = \varphi_h^+ - \varphi_h^- \quad\text{with}\quad \varphi_h^\pm(x) \defeq \tau( h^\pm x ) \eqend{.}
\end{equation}
\end{enumerate}
\end{theorem}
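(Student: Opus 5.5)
The statement has two parts, and the second is the substantial one. Part~\ref{item:RNIsomorphism} is the noncommutative Radon--Nikod\'ym theorem on $L^1(\M,\tau)$; part~\ref{item:RNIsomorphismJordan} identifies positivity and the Jordan decomposition on the density side. The plan is to reduce everything to the dense subspace $\M_\tau$ and extend by continuity, using the noncommutative H\"older inequality $\abs{\tau(hx)} \le \norm{h}_{L^1}\norm{x}$ as the basic estimate.

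\textbf{Part~\ref{item:RNIsomorphism}.}
\begin{enumerate}
\item For $h \in \M_\tau$, the H\"older inequality shows that $\varphi_h$ is a normal functional with $\norm{\varphi_h} \le \norm{h}_1$.
\item For the reverse inequality, write the polar decomposition $h = v\abs{h}$ and test against $x = v^*$. Then $\varphi_h(v^*) = \tau(v\abs{h}v^*) = \tau(\abs{h})$ by traciality, so the map is isometric on $\M_\tau$. It therefore extends isometrically to $L^1(\M,\tau)$, and its range is closed in $\M_*$.
\item Surjectivity: it suffices to show that the range is dense. If $x \in \M$ annihilates the range, then $\tau(hx) = 0$ for all $h \in \M_\tau$. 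Taking $h = x^* e$ with $e$ ranging over $\tau$-finite projections increasing to $\1$ (semifiniteness) gives $\tau(e x x^* e) = 0$, hence $x = 0$ by faithfulness. Since $(\M_*)^* = \M$, Hahn--Banach then gives density.
\item Support: for positive $h$, $\varphi_h(p) = \tau(h^{1/2} p h^{1/2}) = 0$ if and only if $p h^{1/2} = 0$. This holds if and only if $p \le E^h_{\{0\}}$, so $s(\varphi_h) = \1 - E^h_{\{0\}}$.
\end{enumerate}

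\textbf{Part~\ref{item:RNIsomorphismJordan}.}
\begin{enumerate}
\item Selfadjointness: $\varphi_h^* = \varphi_{h^*}$, by continuity from the identity $\overline{\tau(hx^*)} = \tau(x h^*)$ on $\M_\tau$. Injectivity of the isometry then gives $\varphi_h$ selfadjoint if and only if $h = h^*$.
\item Positivity: if $h \ge 0$, then $\tau(hx) = \tau(h^{1/2} x h^{1/2}) \ge 0$ for $x \ge 0$. The identity is proved first for bounded $h$ and extended by approximating with $h E^h_{[0,n]}$.
\item Conversely, if $\varphi_h \ge 0$ with $h = h^*$, apply it to the spectral projection $p = E^h_{(-\infty,0)}$. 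This gives $\tau(h p) = -\tau(h^-) \ge 0$, so $h^- = 0$.
\item Jordan decomposition: the operators $h^\pm$ are positive elements of $L^1$ with orthogonal supports $E^h_{(0,\infty)}$ and $E^h_{(-\infty,0)}$. By the support formula from part~\ref{item:RNIsomorphism}, $\varphi_h^\pm$ are positive and orthogonal. Uniqueness in \thmref{theorem:JordanDecomposition} then identifies them as the Jordan parts of $\varphi_h$.
\end{enumerate}

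\textbf{Main obstacle.} The delicate step is making the manipulations with unbounded $h$ rigorous. This covers traciality $\tau(hx) = \tau(h^{1/2} x h^{1/2})$, the polar-decomposition test element, and the spectral-projection evaluation. I would handle this by working with $\tau$-measurable operators, truncating spectrally to $h E^{\abs{h}}_{[1/n,n]} \in \M_\tau$, and passing to the limit. The limit is justified by $L^1$-convergence and normality, via \lemmaref{lemma:affiliated_tau_convergence}. Alternatively, I would simply cite the standard results stated before the theorem.
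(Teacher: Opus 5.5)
Your proof of part~(ii) is the paper's argument: $\varphi_h^*=\varphi_{h^*}$, positivity via $\tau(h^{\half}xh^{\half})$, and identification of the Jordan parts through the orthogonal supports $E^h_{(0,\infty)}$, $E^h_{(-\infty,0)}$ together with the uniqueness in \thmref{theorem:JordanDecomposition}; you also spell out the converse-positivity step that the paper calls clear. For part~(i) the paper only cites the literature. Your self-contained sketch is the standard correct argument: isometry by testing against $v^*$ from the polar decomposition, surjectivity via a trivial annihilator plus Hahn--Banach, and the support formula via faithfulness. One small correction: normality of $\varphi_h$ for $h\in\M_\tau$ comes from normality of $\tau$, not from H\"older, which only gives the norm bound.
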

\begin{proof}
For completeness, we give a proof of~\ref{item:RNIsomorphismJordan}. Given $h \in L^1(\M,\tau)$ and $x \in \M_+$, we directly see that $\varphi_{h^*}(x) = \tau(h^* x) = \varphi_h^*(x)$. The positivity claims are clear, and the formula $\varphi_h(x) = \tau\left( h^\half x h^\half \right)$ follows from the cyclicity of $\tau$. Finally, by \thmref{theorem:JordanDecomposition} the Jordan decomposition is uniquely determined by the orthogonality of the corresponding support projections $s^\M(\varphi_h^\pm) = \1 - E^{h^\pm}_{\{0\}}$ of $\varphi_h^\pm$, which follows from
\begin{equation}
\left( \1 - E^{h^+}_{\{0\}} \right) \left( \1 - E^{h^-}_{\{0\}} \right) = \left( \1 - E^h_{(-\infty,0]} \right) \left( \1 - E^h_{\mathbb{R}_+} \right) = \1 - E^h_{(-\infty,\infty)} = 0 \eqend{.}
\end{equation}
\end{proof}

\paragraph{Notation.} For the remaining part of this section, for a positive operator $h \geq 0$ we will denote $h^{-p} \defeq \int_{(0,\infty)} \lambda^{-p} \total E^h_\lambda$ for any $p > 0$ and $\ln h \defeq \int_{(0,\infty)} \ln(\lambda) \total E^h_\lambda$. From the definition, it is clear that $h h^{-1} = 1 - E^h_{\{0\}} = s^\M(\varphi_h)$.

For a semifinite von Neumann algebra, the relative modular operator from \defref{def:RelativeS} has an explicit form (see, e.g., \cite[Thm.~8']{luczakpodsedkowskawieczorek2021}):
\begin{theorem}
\label{thm:semifinite_relative_modular_operator}
Let $(\M,\tau)$ be a semifinite von Neumann algebra, and $\varphi,\psi \in \M\stp$. Then the relative modular operator $\Delta_{\varphi,\psi}$ satisfies
\begin{equation}
\Delta_{\varphi,\psi}^\half \eta = L\left( h_\varphi^\half \right) R\left( h_\psi^{-\half} \right) \eta = h_\varphi^\half \eta \, h_\psi^{-\half}
\end{equation}
for all $\eta \in \dom \Delta_{\varphi,\psi}^\half$, where $L$ and $R$ denote left and right multiplication.
\end{theorem}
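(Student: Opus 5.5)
Work in the standard representation of the semifinite algebra $(\M,\tau)$ on $\Hil = L^2(\M,\tau)$: $\M$ acts by left multiplication, $\M' = R(\M)$ by right multiplication, $J\xi = \xi^*$, and the natural cone is $L^2(\M,\tau)_+$. By \thmref{thm:noncommutativeRadon-Nikodym}, a functional $\varphi\in\M\stp$ has density $h_\varphi\in L^1(\M,\tau)_+$, and its cone representative is $\Omega_\varphi = h_\varphi^\half\in L^2(\M,\tau)_+$, since $\langle h_\varphi^\half, x h_\varphi^\half\rangle = \tau(h_\varphi^\half x h_\varphi^\half) = \varphi(x)$. The support projections follow. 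In $\M$ we have $s^\M(\varphi) = \1 - E^{h_\varphi}_{\{0\}}$, acting from the left. In $\M'$ the support is right multiplication by the same projection, because $\overline{\M\Omega_\varphi} = \overline{L^2 h_\varphi^\half}$ has this range.

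Define $T$ on the core $\M h_\psi^\half$ (plus its orthogonal complement) by $T\eta \defeq h_\varphi^\half\,\eta\, h_\psi^{-\half}$, and check that it agrees with $S^0_{\varphi,\psi}$ up to the antiunitary $J$. For $x\in\M$ with $x h_\psi^\half \in L^2$ we get $J\,T(x h_\psi^\half) = J\bigl(h_\varphi^\half x\, s^\M(\psi)\bigr) = s^\M(\psi) x^* h_\varphi^\half = S^0_{\varphi,\psi}(x\Omega_\psi)$, using $h_\psi^\half h_\psi^{-\half} = s^\M(\psi)$. On $(\M\Omega_\psi)^\perp$, i.e.\ on vectors $\eta$ with $\eta\,s^\M(\psi)=0$, both maps vanish, because $h_\psi^{-\half}$ is supported on $s^\M(\psi)$. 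Hence $S_{\varphi,\psi} = J\,\overline{T}$ on this core, and $\Delta_{\varphi,\psi}^\half = |S_{\varphi,\psi}| = |\overline{T}|$.

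It remains to identify $|\overline{T}|$ with the operator $L(h_\varphi^\half)R(h_\psi^{-\half})$ itself. Left and right multiplication by affiliated positive operators are commuting positive selfadjoint operators on $L^2(\M,\tau)$, with $L(h)\in\M$-affiliated and $R(k)\in\M'$-affiliated. Their product $A \defeq L(h_\varphi^\half)R(h_\psi^{-\half})$, defined through the joint functional calculus, is therefore positive selfadjoint. By the previous step $T\subset A$ on the core, so $\overline{T}\subset A$. The claimed identity then needs the reverse: the core $\M h_\psi^\half$ must also be a core for $A$, since then $\overline{T} = A$, $\overline{T}$ is already positive selfadjoint, and $|\overline{T}| = A$.

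\textbf{Main obstacle.} The delicate step is this core property, together with the meaning of the product of unbounded left and right multiplications on $\tau$-measurable operators. I would handle it by spectral truncation. Set $e_n \defeq E^{h_\psi}_{[1/n,n]}$ and $f_n \defeq E^{h_\varphi}_{[0,n]}$. For $\eta\in\dom A$, the vectors $f_n\,\eta\,e_n$ converge to $\eta$ and $A(f_n\eta e_n)$ converges to $A\eta$. Each truncated vector can in turn be approximated in graph norm by vectors $x h_\psi^\half$ with $x\in\M$, since on the range of $e_n$ the operator $h_\psi^{-\half}$ is bounded. Once that holds, the identity $\Delta_{\varphi,\psi}^\half\eta = h_\varphi^\half\,\eta\,h_\psi^{-\half}$ on $\dom\Delta_{\varphi,\psi}^\half$ follows; alternatively one can simply cite \cite[Thm.~8']{luczakpodsedkowskawieczorek2021}.
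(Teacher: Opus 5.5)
Your argument is correct and follows the paper's skeleton. Both proofs work in the standard form on $L^2(\M,\tau)$ with cone vectors $h_\varphi^{\half}, h_\psi^{\half}$. Both identify the relative Tomita operator on the core $\M h_\psi^{\half} + (\M h_\psi^{\half})^\perp$ with $L(h_\varphi^{\half})R(h_\psi^{-\half})$ up to $J$, and both finish with a core argument.

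The difference is in how $\Delta^{\half}_{\varphi,\psi}$ is extracted. The paper pairs $\Delta^{\half}_{\varphi,\psi}L(x)h_\psi^{\half}$ against natural-cone vectors $h^{\half}$. This uses $J h^{\half} = h^{\half}$, the fact that the cone spans $L^2(\M,\tau)$, and $S_{\varphi,\psi} = J\Delta^{\half}_{\varphi,\psi}$ on these vectors. You use only $J\xi = \xi^*$ to get $S_{\varphi,\psi} = J\overline{T}$, hence $\Delta_{\varphi,\psi} = \overline{T}^*\overline{T}$. You then read off $\Delta^{\half}_{\varphi,\psi} = \overline{T}$ once $\overline{T}$ equals the positive selfadjoint operator $L(h_\varphi^{\half})R(h_\psi^{-\half})$. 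This avoids any input about the phase of the relative Tomita operator for non-faithful functionals.

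Both routes hinge on the same core property. The paper only asserts it; you sketch it via the spectral truncations $L(f_n)R(e_n)$, which commute with $L(h_\varphi^{\half})R(h_\psi^{-\half})$ and make it bounded. To make your approximation step precise, take $x = f_n y e_n h_\psi^{-\half} \in \M$ with $y \in \M \cap L^2(\M,\tau)$ close to $\eta$. Then $x h_\psi^{\half} = f_n y e_n$, and graph-norm convergence follows from boundedness of $L(h_\varphi^{\half} f_n)R(e_n h_\psi^{-\half})$.

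One small correction: $f_n\eta e_n \to \eta\, s^\M(\psi)$, not $\eta$, because $e_n \to s^\M(\psi)$. This is harmless, since $\eta(\1 - s^\M(\psi))$ lies in $(\M h_\psi^{\half})^\perp$, where both $T$ and $L(h_\varphi^{\half})R(h_\psi^{-\half})$ vanish.
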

\begin{proof}
We consider the noncommutative $L^p$ space $L^2(\M,\tau)$ from \defref{def:noncommutative_lp_space}, which becomes a Hilbert space when equipped with the scalar product $\left\langle x, y \right\rangle_{L^2(\M,\tau)} \defeq \tau( x^* y )$. $\M$ is represented on $L^2(\M,\tau)$ by left multiplication $L(x) y \defeq x y$, and an antirepresentation is obtained by right multiplication $R(x) y \defeq y x$, both for $x \in \M$ and $y \in L^2(\M,\tau)$. Both are normal and faithful and it holds that $L(\M)' = R(\M)$~\cite[Thm.~V.2.22]{takesaki2001}. The standard representation of $\M$ on $L^2(\M,\tau)$ is the one by left multiplication, the natural positive cone consists of positive operators $x \in L^2(\M,\tau)$, and the antilinear conjugation $J$ exchanges left and right multiplication: $J L(x) J = R(x^*)$ for $x \in \M$. By \thmref{thm:noncommutativeRadon-Nikodym}, there exist positive operators $h_\varphi, h_\psi \in L^1(\M,\tau)$ such that the functionals $\varphi,\psi$ are given by $\varphi(x) = \tau\left( h_\varphi^\half x h_\varphi^\half \right)$ and $\psi(x) = \tau\left( h_\psi^\half x h_\psi^\half \right)$. Since $h_\varphi^\half, h_\psi^\half \in L^2(\M,\tau)$, it follows that the representing vectors for $\varphi$ and $\psi$ in the natural positive cone are given by $h_\varphi^\half$ and $h_\psi^\half$. Therefore, the relative Tomita operator (\defref{def:RelativeS}) is given by the closure of
\begin{equation}
S^0_{\varphi,\psi} \left( L(x) h_\psi^\half + \xi \right) = s^{L(\M)}(\psi) L(x^*) h_\varphi^\half \eqend{,} \quad x \in \M \eqend{,} \quad \xi \in \left[ L(\M) h_\psi^\half \right]^\perp \eqend{,}
\end{equation}
where we observe that $s^{L(\M)}(\psi) = L(s^\M(\psi))$.

Now on one hand, for any $x \in \M$ we have $L(x) h_\psi^\half \in \dom\left( L\left( h_\varphi^\half \right) R\left( h_\psi^{-\half} \right) \right)$, and for any $0 \leq h \in L^1(\M,\tau)$ we obtain
\begin{splitequation}
\label{eq:modularcalc1}
\left\langle h^\half, L\left( h_\varphi^\half \right) R\left( h_\psi^{-\half} \right) x h_\psi^\half \right\rangle_{L^2(\M,\tau)} &= \left\langle h^\half, h_\varphi^\half x s^\M(\psi) \right\rangle_{L^2(\M,\tau)} \\
&= \tau\left( h^\half h_\varphi^\half x s^\M(\psi) \right) \eqend{.}
\end{splitequation}
In addition, for all $\xi \in \left[ L(\M) h_\psi^\half \right]^\perp$, $x \in \M_\tau$ and $n \in \mathbb{N}$ we have that
\begin{splitequation}
&\left\langle L\left( h_\varphi^\half \right) R\left( h_\psi^{-\half} \right) \xi, E^{h_\varphi}_{[0,n]} x E^{h_\psi}_{[n^{-1},\infty)} \right\rangle_{L^2(\M,\tau)} \\
&\quad= \tau\left( h_\psi^{-\half} \xi^* h_\varphi^\half E^{h_\varphi}_{[0,n]} x E^{h_\psi}_{[n^{-1},\infty)} \right) = \left\langle \xi, h_\varphi^\half E^{h_\varphi}_{[0,n]} x E^{h_\psi}_{[n^{-1},\infty)} h_\psi^{-\half} \right\rangle_{L^2(\M,\tau)}\\
&\quad= \left\langle \xi, h_\varphi^\half E^{h_\varphi}_{[0,n]} x h_\psi^{-1} E^{h_\psi}_{[n^{-1},\infty)} h_\psi^\half \right\rangle_{L^2(\M,\tau)} = 0 \eqend{,}
\end{splitequation}
which implies $L\left( h_\varphi^\half \right) R\left( h_\psi^{-\half}\right) \xi = 0$ since $\bigcup_{n \in \mathbb{N}} E^{h_\varphi}_{[0,n]} \M_\tau E^{h_\psi}_{[n^{-1},\infty)}$ is dense in $L^2(\M,\tau)$. On the other hand, since $h^\half$ lies in the natural positive cone, we also obtain
\begin{splitequation}
\label{eq:modularcalc2}
\left\langle h^\half, \Delta_{\varphi,\psi}^\half L(x) h_\psi^\half \right\rangle_{L^2(\M,\tau)} &= \left\langle J \Delta_{\varphi,\psi}^\half L(x) h_\psi^\half, J h^\half \right\rangle_{L^2(\M,\tau)} \\
&= \left\langle S_{\varphi,\psi} L(x) h_\psi^\half, h^\half \right\rangle_{L^2(\M,\tau)} \\
&= \left\langle s^{L(\M)}(\psi) L(x^*) h_\varphi^\half, h^\half \right\rangle_{L^2(\M,\tau)} \\
&= \left\langle h_\varphi^\half, x s^\M(\psi) h^\half \right\rangle_{L^2(\M,\tau)} \\
&= \tau\left( h_\varphi^\half x s^\M(\psi) h^\half \right) = \tau\left( h^\half h_\varphi^\half x s^\M(\psi) \right)
\end{splitequation}
for all $x \in \M$. The coincidence of \eqref{eq:modularcalc1} and \eqref{eq:modularcalc2}, together with the fact that $L^2(\M,\tau)$ is spanned by the elements in the natural positive cone, yields
\begin{equation}
\Delta_{\varphi,\psi}^\half \eta = L\left( h_\varphi^\half \right) R\left( h_\psi^{-\half} \right) \eta
\end{equation}
for all $\eta \in \dom S^0_{\varphi,\psi}$. Since $\dom S^0_{\varphi,\psi}$ is a core for both $\Delta_{\varphi,\psi}^\half$ and $L\left( h_\varphi^\half \right) R\left( h_\psi^{-\half} \right)$, the equality of operators follows.
\end{proof}
\begin{corollary}
\label{cor:semifinite_relative_entropy}
Let $\M$ be a von Neumann algebra with a faithful normal semifinite tracial weight $\tau$, and $\varphi,\psi \in \M\stp$ two normal positive linear functionals. If $s^\M(\psi) \geq s^\M(\varphi)$, the relative entropy between $\varphi$ and $\psi$ is given by Umegaki's definition~\cite{umegaki1962}
\begin{equation}
S^\M_\mathrm{rel}(\varphi \vert \psi) = \tau\left( h_\varphi \ln h_\varphi - h_\varphi \ln h_\psi \right) \eqend{,}
\end{equation}
where $h_\varphi, h_\psi \in L^1(\M,\tau)$ are the operators implementing the functionals by \thmref[item:RNIsomorphism]{thm:noncommutativeRadon-Nikodym}.
\end{corollary}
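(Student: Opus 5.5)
We will compute $\langle \Omega_\varphi, \ln \Delta_{\varphi,\psi}\,\Omega_\varphi\rangle$ directly in the standard form on $L^2(\M,\tau)$ described in the proof of \thmref{thm:semifinite_relative_modular_operator}. There $\Omega_\varphi = h_\varphi^\half$, and by that theorem $\Delta_{\varphi,\psi}^\half = L(h_\varphi^\half) R(h_\psi^{-\half})$ on its domain. Since $L(h_\varphi)$ and $R(h_\psi^{-1})$ are strongly commuting positive operators, acting by left and right multiplication respectively, we identify
\begin{equation}
\Delta_{\varphi,\psi} = L(h_\varphi) R(h_\psi^{-1})
\end{equation}
on the subspace $L(s^\M(\varphi))\, R(s^\M(\psi))\, L^2(\M,\tau)$, and $\Delta_{\varphi,\psi} = 0$ on its complement. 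This is consistent with \lemmaref[item:RelativeDeltaKernel]{lemma:RelativeDelta}, because the commutant support is $R(s^\M(\psi))$.

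The vector $h_\varphi^\half$ lies in this subspace. This uses the hypothesis $s^\M(\varphi) \le s^\M(\psi)$, together with $h_\varphi^\half = s^\M(\varphi)\, h_\varphi^\half\, s^\M(\varphi)$. The joint functional calculus of the commuting operators $L(h_\varphi)$ and $R(h_\psi)$ then gives
\begin{equation}
\ln \Delta_{\varphi,\psi} = L(\ln h_\varphi) - R(\ln h_\psi)
\end{equation}
on that subspace. Formally this yields
\begin{equation}
\langle h_\varphi^\half, \ln\Delta_{\varphi,\psi}\, h_\varphi^\half\rangle = \tau\bigl(h_\varphi^\half (\ln h_\varphi) h_\varphi^\half\bigr) - \tau\bigl(h_\varphi^\half h_\varphi^\half \ln h_\psi\bigr),
\end{equation}
which by cyclicity of $\tau$ is the claimed expression $\tau(h_\varphi\ln h_\varphi - h_\varphi \ln h_\psi)$.

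\textbf{Main obstacle: unboundedness.} The expression above has to be made rigorous. The logarithms are unbounded of both signs, $h_\varphi^\half$ need not lie in the domain of $L(\ln h_\varphi)$ or $R(\ln h_\psi)$, and the two terms may individually be infinite. We split $\ln = \ln_+ - \ln_-$.

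For the first term, note that $h_\varphi \ln_- h_\varphi$ is bounded because $\lambda \ln_-\lambda \le e^{-1}$. Hence $\tau(h_\varphi \ln_- h_\varphi)$ is finite on the support of $\varphi$, since it is at most $e^{-1}\tau(s^\M(\varphi))$ or dominated via $\varphi$. Any infinity therefore sits only in positive parts.

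The quantity $\langle \Omega, \ln \Delta\, \Omega\rangle$ is defined as $\int \ln\lambda \,\mathrm{d}\mu(\lambda)$ with respect to the spectral measure of $\Delta_{\varphi,\psi}$ in $\Omega = h_\varphi^\half$. We express this spectral measure through the joint spectral measure of $(L(h_\varphi), R(h_\psi))$ in $h_\varphi^\half$. Then we apply monotone convergence to the spectral cutoffs $E^{h_\varphi}_{[1/n,n]}$ and $E^{h_\psi}_{[1/n,n]}$, using \lemmaref{lemma:affiliated_tau_convergence} to pass suprema through $\tau$. For the cut-off, bounded functions, the trace identities $\langle h_\varphi^\half, L(g(h_\varphi))\, h_\varphi^\half\rangle = \tau(h_\varphi g(h_\varphi))$ and $\langle h_\varphi^\half, R(k(h_\psi))\, h_\varphi^\half\rangle = \tau(h_\varphi k(h_\psi))$ follow directly from the definition of the inner product and cyclicity.

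Both sides are then interpreted with the same convention, namely the value $+\infty$ when the positive part diverges. The relative entropy is bounded below by \propref{prop:srel_properties}, so the negative parts stay finite. The limits agree, and this proves the claim.
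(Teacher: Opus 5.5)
Your core computation is the paper's own proof. You realize $\Delta_{\varphi,\psi}$ as the function $\kappa\lambda^{-1}$ of the strongly commuting pair $(L(h_\varphi),R(h_\psi))$, check that $h_\varphi^\half\in L(s^\M(\varphi))R(s^\M(\psi))L^2(\M,\tau)$ using $s^\M(\varphi)\le s^\M(\psi)$, and push forward the joint spectral measure $\nu$ of that pair in $h_\varphi^\half$. The paper does this only at the formal level.

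The step you add to handle unboundedness is false. The bound $\tau(h_\varphi\ln_-h_\varphi)\le\mathe^{-1}\tau(s^\M(\varphi))$ is useless when $\tau(s^\M(\varphi))=\infty$, and $\lambda\ln_-\lambda$ is not dominated by a multiple of $\lambda$ near $0$. For example, on $\BH$ with $\tau=\Tr$, take a density $h$ with eigenvalues proportional to $1/(n\ln^2 n)$, $n\ge2$. Then $\Tr(h\ln_-h)=\infty$. For $\varphi=\psi$ given by $h$ one has $S^\BH_\mathrm{rel}(\varphi\vert\varphi)=0$, yet your separately computed limits are $\tau(h_\varphi\ln h_\varphi)=\tau(h_\varphi\ln h_\psi)=-\infty$. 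The same $\infty-\infty$ occurs even for finite $\tau$, through $\tau(h_\varphi\ln_+h_\psi)$, which enters $S$ with a minus sign. Take $\varphi=\psi$ on $L^\infty([0,1])$ with density $h(x)=x^{-1}\ln^{-2}(x/\mathe)$, so that $\int h\ln_+h=\infty$; your convention would then return $+\infty$ instead of $0$. The lower bound in \propref{prop:srel_properties} controls only the total, not the four pieces $\tau(h_\varphi\ln_\pm h_\varphi)$ and $\tau(h_\varphi\ln_\pm h_\psi)$. So computing the two traces separately and subtracting does not prove the statement in general.

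The repair is never to split the logarithm. By the pushforward, $S^\M_\mathrm{rel}(\varphi\vert\psi)=\int_{(0,\infty)^2}\ln(\kappa/\lambda)\total\nu(\kappa,\lambda)$, and this integral is well defined in $(-\infty,\infty]$:
\begin{itemize}
\item its negative part is $\ln_+(\lambda/\kappa)\le\lambda/\kappa$;
\item your own cutoff argument, with $E_n,F_n$ the spectral projections of $h_\varphi,h_\psi$ for $[n^{-1},n]$, gives $\int\lambda\kappa^{-1}\total\nu=\lim_n\psi(F_nE_nF_n)=\psi(s^\M(\varphi))<\infty$.
\end{itemize}
The Umegaki expression must be read as this combined integral. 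That is what the paper's intermediate formula $\tau\bigl(h_\varphi^\half[L(\ln h_\varphi)-R(\ln h_\psi)]h_\varphi^\half\bigr)$ expresses. Your term-by-term monotone convergence is then needed only to justify rewriting it as $\tau(h_\varphi\ln h_\varphi)-\tau(h_\varphi\ln h_\psi)$. That rewriting is legitimate whenever at least one of $\tau(h_\varphi\abs{\ln h_\varphi})$ and $\tau(h_\varphi\abs{\ln h_\psi})$ is finite, for instance for finite $\tau$ and bounded $h_\varphi$.
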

\begin{proof}
For $s^\M(\psi) \geq s^\M(\varphi)$, Araki's relative entropy~\eqref{eq:Srel} is given by
\begin{equation}
S^\M_\mathrm{rel}(\varphi \vert \psi) = \int_0^\infty \ln t \total \norm{ E^{\Delta_{\varphi,\psi}}(t) \Phi }^2 \eqend{,}
\end{equation}
where $\Phi$ is the implementing vector of $\varphi$ in the natural positive cone. By \thmref{thm:semifinite_relative_modular_operator}, we have $\Delta_{\varphi,\psi}^\half = L\left( h_\varphi^\half \right) R\left( h_\psi^{-\half} \right)$. Since left and right multiplication commute, they have a joint spectral calculus and we denote by $\mu$ their joint spectral measure, supported on their joint spectrum $\sigma\left( L(h_\varphi), R(h_\psi) \right) \subset (\mathbb{R}_+)^2$. Using this spectral measure, it holds that
\begin{equation}
L\left( h_\varphi^\half \right) = \int_{(\mathbb{R}_+)^2} \kappa^\half \total \mu(\kappa,\lambda) \eqend{,} \quad R\left( h_\psi^{-\half} \right) = \int_{\mathbb{R}_+ \times (0,\infty)} \lambda^{-\half} \total \mu(\kappa,\lambda) \eqend{.}
\end{equation}
Hence, we obtain
\begin{splitequation}
S^\M_\mathrm{rel}(\varphi \vert \psi) &= \int_0^\infty \ln t \total \left\langle h_\varphi^\half, E^{\Delta_{\psi,\varphi}}(t) h_\varphi^\half \right\rangle_{L^2(\M,\tau)} \\
&= \int_{\mathbb{R}_+ \times (0,\infty)} \ln\left( \kappa \lambda^{-1} \right) \total \norm{ \mu(\kappa,\lambda) h_\varphi^\half }_{L^2(\M,\tau)}^2 \\
&= \tau\left( h_\varphi^\half \left[ L(\ln h_\varphi) - R(\ln h_\psi) \right] h_\varphi^\half \right) \\
&= \tau\left( h_\varphi \ln h_\varphi - h_\varphi \ln h_\psi \right) \eqend{,}
\end{splitequation}
and the conclusion.
\end{proof}

\subsection{Recovering the relative entropy}
\label{sec:Df=Srel}

We now set out to prove $D_{f_0}^\M = S^\M_\mathrm{rel}$ by a method different from the one in \secref{sec:ModularBounds}. We will first consider semifinite von Neumann algebras (\thmref{thm:D=Ssemifinite}) and then extend the result to general von Neumann algebras (\thmref{thm:D=Sgeneral}). In the case of semifinite $(\M, \tau)$, recall that positive normal functionals $\varphi \in \M\stp$ can be identified with their densities $h_\varphi \in L^1(\M,\tau)$ by \thmref[item:RNIsomorphism]{thm:noncommutativeRadon-Nikodym}, and that by \thmref[item:RNIsomorphismJordan]{thm:noncommutativeRadon-Nikodym} the hockeystick divergence takes the form $E^\M_t(\varphi \vert \psi) = \tau\left( (h_\varphi - t \, h_\psi)_+ \right)$.

In the interest of a reasonably short proof of the following result, we have isolated a few operator and trace integral results that are used in this proof in \secref{sec:operatorint}.
\begin{theorem}
\label{thm:D=Ssemifinite}
Let $(\M, \tau)$ be a semifinite von Neumann algebra, $\varphi, \psi \in \M\stp$ and $f_0(t) = t \ln t$ the information function. Then it holds that
\begin{equation}
D^\M_{f_0}(\varphi \vert \psi) = S^\M_\mathrm{rel}(\varphi \vert \psi) \eqend{,}
\end{equation}
where $S^\M_\mathrm{rel}(\varphi \vert \psi)$ is the relative entropy between $\varphi$ and $\psi$ (\defref{def:RelativeEntropy}).
\end{theorem}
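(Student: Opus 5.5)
We first dispose of the degenerate cases and then reduce to a trace identity for densities. If $\varphi \not\ll \psi$, then $D^\M_{f_0}(\varphi \vert \psi) = \infty$ by \thmref[item:DBounds]{thm:fdivergence_properties}, since $f_0''(t) = t^{-1}$ is not integrable on $[1,\infty)$. Also $S^\M_\mathrm{rel}(\varphi \vert \psi) = \infty$ by \defref{def:RelativeEntropy}, so the two sides agree. From now on we assume $s^\M(\varphi) \leq s^\M(\psi)$ and write $h_1 \defeq h_\varphi$, $h_2 \defeq h_\psi \in L^1(\M,\tau)_+$. By \thmref[item:RNIsomorphismJordan]{thm:noncommutativeRadon-Nikodym}, $E^\M_t(\varphi \vert \psi) = \tau\bigl( (h_1 - t h_2)_+ \bigr)$. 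By \corref{cor:semifinite_relative_entropy}, $S^\M_\mathrm{rel}(\varphi \vert \psi) = \tau(h_1 \ln h_1 - h_1 \ln h_2)$. Using the alternative representation \thmref[item:DAlternativeFormula]{thm:fdivergence_properties} with $f_0$, the claim reduces to
\begin{equation}
\lim_{\epsilon \searrow 0} \Bigl[ \int_\epsilon^\infty \frac{1}{t} \, \tau\bigl( (h_1 - t h_2)_+ \bigr) \total t + \varphi(\1) ( 1 + \ln \epsilon ) \Bigr] = \tau(h_1 \ln h_1 - h_1 \ln h_2) \eqend{.}
\end{equation}
The $\epsilon$-dependent boundary terms $\psi(\1)[\dots]$ cancel exactly for $f_0$, as in the proof of \thmref[item:DModBounds1]{thm:ModularBoundsDf0}.

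The core step is the operator identity $I(h_1,h_2) = J(h_1,h_2)$ from the introduction, with the roles arranged so that the positive parts $(h_1 - t h_2)_+$ appear. We first prove it for bounded, spectrally cut-off densities. Replace $h_2$ by $h_2^{(n)} = h_2 E^{h_2}_{[1/n,n]} + \frac1n E^{h_2}_{[0,1/n)}\cdots$, and similarly cut off $h_1$, so that both are bounded with a bounded inverse on the relevant support. Also replace $f_0''$ by an increasing sequence of $L^1$-functions $g_k \nearrow t^{-1}$, for instance $g_k = t^{-1}\chi_{[1/k,k]}$.

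For such data the change of variables $t \leftrightarrow C_{h_1,h_2}(s)$ can be carried out inside $\tau$. The resolvent representation
\begin{equation}
\ln a - \ln b = \int_0^\infty \bigl[ (b+s)^{-1} - (a+s)^{-1} \bigr] \total s
\end{equation}
together with the sandwiches $(h_1 + s)^{-1/2}$ produces $\tau\bigl( J \bigr) = \tau\bigl( h_1 \ln h_1 - h_1 \ln h_2 \bigr)$ (up to the affine correction). For $I$, we use a layer-cake argument: the derivative in $t$ of $\tau\bigl( (h_1 - t h_2)_+ \bigr)$ is $-\tau\bigl( h_2 E^{h_1 - t h_2}_{(0,\infty)} \bigr)$, and this derivative is matched against the derivative of the operator logarithm. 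In the bounded, $L^1$-truncated setting all integrals are absolutely convergent, so Fubini and the trace commute freely. These operator-integral facts would be isolated in \secref{sec:operatorint}.

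Next we remove the regularizations. The limit $g_k \nearrow f_0''$ is handled by monotone convergence on the hockeystick side, using $E_t \geq 0$, and by the corresponding monotone convergence in the trace integral. For the density cut-offs, the cut-off functionals converge in norm. On the $D$ side, lower semicontinuity \thmref[item:DContinuity]{thm:fdivergence_properties} gives one inequality. On the $S$ side, we use the dominated and monotone convergence results \propref[item:domination]{prop:srel_properties}: keep domination $\varphi_n \leq \lambda \psi_n$ by cutting $h_1$ in a way compatible with $h_2$, or first add $\delta \psi$ to $\varphi$ or $\delta\varphi$ to $\psi$ and use monotone convergence in $\psi$.

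For the reverse inequality we avoid upper semicontinuity. Instead we use cut-offs implemented by conditional-expectation-like compressions $E^{h_2}_{[1/n,n]}$, for which the data processing inequality and \thmref[item:f-divergence_Compressions]{thm:fdivergence_properties} give monotonicity of $D$. Alternatively, we combine with the upper bound $D \leq S + \varphi(\1)$ from \thmref{thm:ModularBoundsDf0} to get finiteness and then dominated convergence in $t$.

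We expect the main obstacle to be this final approximation step. The difficulty is to choose the spectral cut-offs so that both $D^\M_{f_0}$ and $S^\M_\mathrm{rel}$ converge simultaneously, even though $h_1$ and $h_2$ do not commute and the densities are unbounded. If the compression approach fails, we would first try to prove the identity with $h_2$ replaced by $h_2 + \delta h_1$, which guarantees the domination needed in \propref[item:domination]{prop:srel_properties}. We would then let $\delta \searrow 0$, using monotone convergence for $S$ and monotonicity of $E_t$ in the second argument for $D$.
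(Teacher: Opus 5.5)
Your handling of the case $\varphi\not\ll\psi$ and the $\epsilon$-bookkeeping via \thmref[item:DAlternativeFormula]{thm:fdivergence_properties} are correct. The gap is in the core step. You assert that the resolvent representation of the logarithm turns $\tau(J)$ into $\tau(h_1\ln h_1-h_1\ln h_2)$ ``up to the affine correction'', but you give no mechanism, and with your regularization none exists.

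Take first $f''(t)=t^{-1}$ itself, with $C(s)=(h_2+s)^{-1/2}h_1(h_2+s)^{-1/2}$. (The sandwich must be by $h_2=h_\psi$, not by $h_1$, if the positive parts $(h_1-th_2)_+$ are to appear.) Then the $J$-integrand is $s(h_2+s)^{-1}h_1(h_2+s)^{-1}$, whose trace behaves like $\varphi(\1)/s$, so $\tau(J)=\infty$. This is the $t\to0$ divergence of $\int t^{-1}E_t\,\total t$. The finite answer only emerges from a cancellation of divergent terms, and that cancellation has to be organized by the choice of regularization.

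Now take your choice $g_k=t^{-1}\chi_{[1/k,k]}$. The $J$-integrand then contains the spectral projection $\chi_{[1/k,k]}(C(s))$ of the noncommutative operator $C(s)$. No resolvent identity expresses this through $h_1$ and $h_2$. In addition, \propref{prop:operatorint_ihab_jhab_taux_f} is only available for continuous $f$.

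The missing idea is the paper's rational regularization $f_a''(t)=\bigl((a+t)(1+at)\bigr)^{-1}$. With $A=h_\psi$, $B=h_\varphi$ and $C=C_{A,B}(s)$, one has
\[
C^2 f_a''(C)=\frac{(1-a^2)\1+a^3(a\1+C)^{-1}-(\1+aC)^{-1}}{a(1-a^2)}, \qquad \sqrt{\tfrac{s}{A+s}}\,(\alpha\1+\beta C)^{-1}\sqrt{\tfrac{s}{A+s}}=s\bigl(\alpha(A+s)+\beta B\bigr)^{-1}.
\]
So the $J$-integrand is a linear combination of the resolvent terms $s(h_\psi+s)^{-1}$, $s(h_\psi+a^{-1}h_\varphi+s)^{-1}$ and $s(h_\psi+ah_\varphi+s)^{-1}$. \lemmaref{lemma:operator_log_derivative} then evaluates $D_{f_a}$ exactly, as a combination of $S(\varphi\vert\psi+a\varphi)$, $S(\varphi\vert a\psi+\varphi)$, $S(\psi\vert\psi+a\varphi)$ and $S(\psi\vert a\psi+\varphi)$ plus explicit constants. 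The limit $a\searrow0$ follows by monotone convergence, together with $\frac1a S(\psi\vert\psi+a\varphi)\to-\varphi(\1)$.

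The same choice also removes the approximation problem you flag as the main obstacle:
\begin{itemize}
\item Since $f_a''$ and $(f_a)_*''$ lie in $L^1([1,\infty))$, $D_{f_a}$ is norm continuous by \thmref[item:DContinuity]{thm:fdivergence_properties}. The density cut-offs can therefore be removed at fixed $a$, with no need for upper semicontinuity of $D_{f_0}$.
\item The uniform domination required in \propref[item:domination]{prop:srel_properties} comes for free, with $\lambda=a^{-1}$, because the second arguments are $\psi+a\varphi$ and $a\psi+\varphi$.
\end{itemize}

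Two of your fallback options do not work. ``Cutting $h_1$ compatibly with $h_2$'' cannot give a uniform $\lambda$, since $\varphi\ll\psi$ does not imply $\varphi\le\lambda\psi$. ``Finiteness plus dominated convergence in $t$'' has no dominating function for general spectral cut-offs.

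Adding $\delta\varphi$ to $\psi$ and compressing to $p_n=E^{h_\psi+\delta h_\varphi}_{[1/n,n]}$ is viable. (Compressing by spectral projections of $h_\psi$ alone, as you suggest, does not make $p_nh_\varphi p_n$ bounded.) With this choice:
\begin{itemize}
\item the corner $p_n\M p_n$ has finite trace;
\item both compressed densities are bounded, and the second is boundedly invertible;
\item $D$ is squeezed between lower semicontinuity and the bound $E^{p_n\M p_n}_t\le E^\M_t$, up to the vanishing corrections $\varphi(\1-p_n)$ and $\psi(\1-p_n)$;
\item $S$ converges by dominated convergence.
\end{itemize}
However, this only reduces the problem to the bounded case. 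There, the exact evaluation of $\tau(J)$ described above is still missing from your argument.
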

\begin{proof}
If $\varphi \not\ll \psi$, by \thmref[item:DBounds]{thm:fdivergence_properties} we have $D_{f_0}^\M(\varphi \vert \psi) = +\infty$ because $\norm{ f_0'' \bigr\rvert_{[1,\infty)} }_{L^1(\M,\tau)} = +\infty$, which agrees with $S^\M_\mathrm{rel}(\varphi \vert \psi) = +\infty$ (\defref{def:RelativeEntropy}). We may therefore restrict to $\varphi \ll \psi$. Restricting to the corner $s^\M(\psi) \M s^\M(\psi)$ then changes neither the relative entropy nor the $f_0$-divergence by \thmref[item:f-divergence_Compressions]{thm:fdivergence_properties}, i.e., $S^\M_\mathrm{rel}(\varphi \vert \psi) = S^{s^\M(\psi) \M s^\M(\psi)}_\mathrm{rel}(\varphi \vert \psi)$ and $D^\M_{f_0}(\varphi \vert \psi) = D^{s^\M(\psi) \M s^\M(\psi)}_{f_0}(\varphi \vert \psi)$, because $\varphi \ll \psi$ is supported on this corner. As $h_\psi$ is injective on this corner, we may without loss of generality assume $h_\psi$ to be injective.

As an approximation to $f_0$, consider the family of functions $f_a \colon \mathbb{R}_+ \to \mathbb{R}_+$ with $a \in (0,1)$, defined by
\begin{equation}
\label{eq:fa}
f_a(t) \defeq \frac{(t+a) \ln(t+a)}{1-a^2} - \frac{(1+a t) \ln(1+a t)}{a (1-a^2)} + t \eqend{.}
\end{equation}
One easily checks the pointwise limit $\lim_{a \searrow 0} f_a(t) = t \ln t = f_0(t)$ and the second derivatives of $f_a$ and its conjugate $(f_a)_*$
\begin{equation}
f''_a(t) = \frac{1}{(a+t) (1+a t)} \eqend{,} \quad (f_a)_*''(t) = \frac{1}{t} f_a''(t) \eqend{.}
\end{equation}
As $a \searrow 0$, we have $f_a''(t) \to t^{-1}$ and $(f_a)_*''(t) \to t^{-2}$ pointwise on $(0,\infty)$ and monotonically from below. By the monotone convergence theorem applied to the integral representation \eqref{eq:Df}, we thus obtain
\begin{equation}
\lim_{a \to 0^+} D^\M_{f_a}(\varphi \vert \psi) = D^\M_{f_0}(\varphi \vert \psi) \eqend{.}
\end{equation}
Since $f_a'', (f_a)_*'' \in L^1([1,\infty))$, \thmref[item:DContinuity]{thm:fdivergence_properties} states that $D^\M_{f_a} \colon \M_* \times \M_* \to \mathbb{R}$ is norm continuous. We may therefore pass to sequences $\{ \varphi_n \}_{n \in \mathbb{N}}$, $\{ \psi_n \}_{n \in \mathbb{N}}$ of normal positive linear functionals converging in norm to $\varphi$ and $\psi$.

We choose
\begin{equation}
\label{eq:varphi_psi_n_def}
\varphi_n(x) \defeq \tau\left( x \, E^{h_\varphi}_{[0,n]} h_\varphi \right) \eqend{,} \quad \psi_n(x) \defeq \tau\left( x \, E^{h_\psi}_{[n^{-1},n]} h_\psi \right)
\end{equation}
and note that the densities $h_{\varphi,n}$ and $h_{\psi,n}$ are bounded, and $h_{\psi,n}^{-1}$ is bounded as well. Since $\varphi(x) - \varphi_n(x) = \tau\left( x \, E^{h_\varphi}_{(n,\infty)} h_\varphi \right) = \tau\left( x^\frac{1}{2} E^{h_\varphi}_{(n,\infty)} h_\varphi \, x^\frac{1}{2} \right) \geq 0$ for all $x \in \M_+$, $\varphi - \varphi_n$ is a normal positive linear functional and it holds that $\norm{ \varphi - \varphi_n } = \varphi(\1) - \varphi_n(\1)$. Using \lemmaref{lemma:affiliated_tau_convergence}, we obtain
\begin{equation}
\lim_{n \to \infty} \varphi_n(\1) = \lim_{n \to \infty} \tau\left( E^{h_\varphi}_{[0,n]} h_\varphi \right) = \tau(h_\varphi) = \varphi(\1) \eqend{.}
\end{equation}
The same argument applies to $\psi_n$, using that $\lim_{n \to \infty} E^{h_\psi}_{[n^{-1},n]} h_\psi = E^{h_\psi}_{(0,\infty)} h_\psi = h_\psi$ since $h_\psi$ is assumed to be injective. Thus $\varphi_n \to \varphi$ and $\psi_n \to \psi$ in norm.

Using formula~\eqref{eq:f-divergenceFullIntegralFormula}, we obtain
\begin{splitequation}
D^\M_{f_a}(\varphi_n \vert \psi_n) &= \lim_{\epsilon \searrow 0} \Biggl[ \int_\epsilon^\infty f_a''(t) E^\M_t(\varphi_n \vert \psi_n) \total t + \psi_n(\1) [ f_a(\epsilon) - f_a(1) + f_a'(\epsilon) - \epsilon f_a'(\epsilon) ] \\
&\hspace{6em}+ [ \varphi_n(\1) - \psi_n(\1) ] [ 1 + f_a'(\epsilon) - f_a'(1) ] \Biggr] \\
&= \int_0^\infty f_a''(t) \tau\left( ( h_{\varphi_n} - t h_{\psi_n} )_+ \right) \total t + \varphi_n(\1) \left[ 1 + \frac{\ln a}{1-a^2} \right] \\
&\quad+ \psi_n(\1) \left[ \frac{\ln(1+a)}{a} + \frac{a \ln a}{1-a^2} - 1 \right] \eqend{.}
\end{splitequation}
We now make use of \propref{prop:operatorint_ihab_jhab_taux_f} (with $x = \1$), according to which the above hockeystick integral equals
\begin{splitequation}
&\int_0^\infty f_a''(t) E^\M_t(\varphi_n \vert \psi_n) \total t = \int_0^\infty f_a''(t) \tau\left( ( h_{\varphi_n} - t h_{\psi_n} )_+ \right) \total t \\
&\hspace{6em}= \int_0^\infty \tau\left( \sqrt{ \frac{s}{h_{\psi_n} + s} } \, \left[ C_{h_{\psi_n},h_{\varphi_n}}(s) \right]^2 f_a''\left( C_{h_{\psi_n},h_{\varphi_n}}(s) \right) \sqrt{ \frac{s}{h_{\psi_n} + s} } \right) \total s
\end{splitequation}
with $C_{A,B}(s) = ( A + s )^{-\frac{1}{2}} B ( A + s )^{-\frac{1}{2}}$. Using resolvent identities, the latter integral can be written in the form
\begin{splitequation}
&\frac{1}{a (1-a^2)} \int_0^\infty s \, \tau\left( \frac{1-a^2}{h_{\psi_n} + s} + \frac{a^2}{h_{\psi_n} + a^{-1} h_{\varphi_n} + s} - \frac{1}{h_{\psi_n} + a h_{\varphi_n} + s} \right) \total s \\
&= \frac{1}{1-a^2} \int_0^\infty \tau\left( h_{\varphi_n} \left[ \frac{1}{h_{\psi_n} + a h_{\varphi_n} + s} - \frac{1}{h_{\psi_n} + a^{-1} h_{\varphi_n} + s} \right] \right) \total s \\
&\quad- \frac{1}{a (1-a^2)} \int_0^\infty \tau\left( h_{\psi_n} \left[ \frac{1}{h_{\psi_n} + s} - \frac{1}{h_{\psi_n} + a h_{\varphi_n} + s} \right] \right) \total s \\
&\quad+ \frac{a}{1-a^2} \int_0^\infty \tau\left( h_{\psi_n} \left[ \frac{1}{h_{\psi_n} + s} - \frac{1}{h_{\psi_n} + a^{-1} h_{\varphi_n} + s} \right] \right) \total s \eqend{.}
\end{splitequation}
For $a \in (0,1)$, the argument of each $\tau$ is a positive operator, and we can therefore use \lemmaref{lemma:operator_log_derivative} to obtain
\begin{splitequation}
&\frac{1}{a (1-a^2)} \int_0^\infty s \, \tau\left( \frac{1-a^2}{h_{\psi_n} + s} + \frac{a^3}{a h_{\psi_n} + a s + h_{\varphi_n}} - \frac{1}{h_{\psi_n} + s + a h_{\varphi_n}} \right) \total s \\
&\quad= \frac{1}{1-a^2} \, \tau\left( h_{\varphi_n} \left[ \ln\left( h_{\psi_n} + a^{-1} h_{\varphi_n} \right) - \ln\left( h_{\psi_n} + a h_{\varphi_n} \right) \right] \right) \\
&\qquad- \frac{1}{a (1-a^2)} \, \tau\left( h_{\psi_n} \Bigl[ \ln\left( h_{\psi_n} + a h_{\varphi_n} \right) - \ln h_{\psi_n} \Bigr] \right) \\
&\qquad+ \frac{a}{1-a^2} \, \tau\left( h_{\psi_n} \left[ \ln\left( h_{\psi_n} + a^{-1} h_{\varphi_n} \right) - \ln h_{\psi_n} \right] \right) \eqend{.}
\end{splitequation}
It follows that
\begin{splitequation}
D^\M_{f_a}(\varphi_n \vert \psi_n) &= \frac{1}{1-a^2} \, \tau\left( h_{\varphi_n} \left[ \ln\left( a h_{\psi_n} + h_{\varphi_n} \right) - \ln\left( h_{\psi_n} + a h_{\varphi_n} \right) \right] \right) \\
&\quad- \frac{1}{a (1-a^2)} \, \tau\left( h_{\psi_n} \Bigl[ \ln\left( h_{\psi_n} + a h_{\varphi_n} \right) - \ln h_{\psi_n} \Bigr] \right) \\
&\quad+ \frac{a}{1-a^2} \, \tau\left( h_{\psi_n} \left[ \ln\left( a h_{\psi_n} + h_{\varphi_n} \right) - \ln h_{\psi_n} \right] \right) + \varphi_n(\1) + \psi_n(\1) \left[ \frac{\ln(1+a)}{a} - 1 \right] \\
&= \frac{1}{1-a^2} S^\M_\mathrm{rel}( \varphi_n \vert \psi_n + a \varphi_n ) - \frac{1}{1-a^2} S^\M_\mathrm{rel}( \varphi_n \vert a \psi_n + \varphi_n ) \\
&\quad+ \varphi_n(\1) + \psi_n(\1) \left[ \frac{\ln(1+a)}{a} - 1 \right] + \frac{1}{a} S^\M_\mathrm{rel}( \psi_n \vert \psi_n + a \varphi_n ) \\
&\quad+ \frac{a}{1-a^2} S^\M_\mathrm{rel}( \psi_n \vert \psi_n + a \varphi_n ) - \frac{a}{1-a^2} S^\M_\mathrm{rel}( \psi_n \vert a \psi_n + \varphi_n ) \eqend{,}
\end{splitequation}
where we expressed the various trace expressions using the relative entropy. Since for all $a \in (0,1)$ the second functional of the various relative entropies always dominates the first, we can use the dominated convergence property of \propref[item:domination]{prop:srel_properties} (with $\lambda = a^{-1}$), and it follows that
\begin{splitequation}
D^\M_{f_a}(\varphi \vert \psi) &= \lim_{n \to \infty} D^\M_{f_a}(\varphi_n \vert \psi_n) \\
&= \frac{1}{1-a^2} S^\M_\mathrm{rel}( \varphi \vert \psi + a \varphi ) - \frac{1}{1-a^2} S^\M_\mathrm{rel}( \varphi \vert a \psi + \varphi ) + \varphi(\1) + \psi(\1) \left[ \frac{\ln(1+a)}{a} - 1 \right] \\
&\quad+ \frac{1}{a} S^\M_\mathrm{rel}( \psi \vert \psi + a \varphi ) + \frac{a}{1-a^2} S^\M_\mathrm{rel}( \psi \vert \psi + a \varphi ) - \frac{a}{1-a^2} S^\M_\mathrm{rel}( \psi \vert a \psi + \varphi ) \eqend{.}
\end{splitequation}
In the limit $a \to 0^+$, the second functional of the various relative entropies is always monotonically decreasing, and using the monotone convergence property of \propref[item:domination]{prop:srel_properties}, we obtain (with $S^\M_\mathrm{rel}(\varphi \vert \varphi) = 0$)
\begin{equation}
D^\M_{f_0}(\varphi \vert \psi) = \lim_{a \to 0^+} D^\M_{f_a}(\varphi \vert \psi) = S^\M_\mathrm{rel}( \varphi \vert \psi ) + \varphi(\1) + \lim_{a \to 0^+} \frac{1}{a} S^\M_\mathrm{rel}( \psi \vert \psi + a \varphi ) \eqend{.}
\end{equation}
To compute the limit of the last term, we write
\begin{equation}
\lim_{a \to 0^+} \frac{1}{a} S^\M_\mathrm{rel}( \psi \vert \psi + a \varphi ) = \lim_{a \to 0^+} \frac{1}{a} \tau\left( h_\psi \ln h_\psi - h_\psi \ln( h_\psi + a h_\varphi ) \right) = - \tau( h_\varphi ) = - \varphi(\1) \eqend{,}
\end{equation}
where we used \lemmaref{lemma:operator_log_derivative} in the second equality, and the conclusion follows.
\end{proof}

Having established $D^\M_{f_0} = S^\M_\mathrm{rel}$ for semifinite $\M$, we now extend it to general von Neumann algebras. The idea is to approximate a general von Neumann algebra $\M$ by suitably chosen semifinite or even finite ones (Haagerup reduction). We gratefully acknowledge that Lauritz van Luijk has shown us this argument.

Recall that given a von Neumann algebra $\M$ and an abelian locally compact group $G$, we can associate to each $\varphi \in \M\stp$ a canonical \emph{dual functional} $\widehat{\varphi} \in (\M \rtimes G)\stp$ \cite[Def.~3.1]{haagerup1979}, \cite[Def.~9.23]{goldsteinlabuschagne2025}. Furthermore, the map $\varphi \mapsto \widehat{\varphi}$ is a bijection between $\M\stp$ and those positive normal functionals on the crossed product $\M \rtimes G$ that are invariant under the dual action \cite[Thm.~9.24]{goldsteinlabuschagne2025}. If $G$ is discrete, the dual weight construction also yields a faithful normal conditional expectation $C$ from $\M \rtimes G$ onto $\M$ such that $\widehat{\varphi} = \varphi \circ C$ for all $\varphi\in \M\stp$. These ideas lie at the heart of Haagerup's reduction theorem. Given a von Neumann algebra $\M$, one considers the crossed product $\M \rtimes \mathbb{Q}_D$, where $\mathbb{Q}_D$ denotes the group of dyadic rationals endowed with the discrete topology, and the action of $\mathbb{Q}_D$ on $\M$ is given by the restriction of the modular automorphism group of some normal faithful semifinite weight. Defining $\widehat\M \defeq \M \rtimes \mathbb{Q}_D$ and constructing suitable von Neumann subalgebras $\widehat\M_n \subset \widehat\M$, the reduction theorem contains the following result \cite[Thm.~13.1]{goldsteinlabuschagne2025}, \cite[Thm.~2.1]{haagerupjungexu2009}:
\begin{theorem}
\label{thm:HaagerupReduction}
Let $\M$ be a von Neumann algebra. Then there exists a larger von Neumann algebra $\widehat\M \supset \M$ and a sequence $\widehat\M_n \subset \widehat\M$ of von Neumann subalgebras such that
\begin{enumerate}
\item there exists a faithful normal conditional expectation $C \colon \widehat\M \to \M$,
\item each $\widehat\M_n$ is finite, and
\item $\bigcup_{n \in \mathbb{N}} \widehat\M_n \subset \widehat\M$ is dense in the $\sigma$-strong-${}^*$ topology.
\end{enumerate}
\end{theorem}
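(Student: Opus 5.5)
This is Haagerup's reduction theorem, quoted from the literature, so my plan is to reconstruct the standard construction and check the three properties.

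\emph{Setup and conditional expectation.} Fix a faithful normal semifinite weight $\phi$ on $\M$; we may assume $\phi$ is a faithful normal state in the $\sigma$-finite case, and use the weight version otherwise. Let $\sigma^\phi$ be its modular group and $G \defeq \mathbb{Q}_D$ the dyadic rationals with the discrete topology, acting by $\sigma^\phi_t$, $t \in G$. Set $\widehat\M \defeq \M \rtimes_{\sigma^\phi} G$, generated on $\ell^2(G,\Hil)$ by $\pi(\M)$ and the unitaries $\lambda(t)$. Since $G$ is discrete, the map $C\bigl(\sum_t x_t \lambda(t)\bigr) \defeq x_0$ extends to a faithful normal conditional expectation onto $\pi(\M) \cong \M$. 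Concretely, $C(y) = V^* y V$ for the isometry $V\xi = \delta_0 \otimes \xi$, and faithfulness follows from the Fourier coefficients $x_t = C(y\lambda(t)^*)$ determining $y$. This gives item~1. The dual weight is $\widehat\phi = \phi \circ C$, and its modular group is $\sigma^{\widehat\phi}_t = \mathrm{Ad}\,\lambda(t)$ for $t \in G$, so it is inner on $G$.

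\emph{Finite subalgebras.} Because $\lambda(t)$ implements $\sigma^{\widehat\phi}_t$ and lies in the centralizer, there is a positive self-adjoint $b_n$ affiliated with the centre of $\widehat\M_{\widehat\phi}$, obtained from the Fourier/Bochner representation of $t \mapsto \lambda(t)$ restricted to $2^{-n}\mathbb{Z}$, with
\begin{equation}
\sigma^{\widehat\phi}_t = \mathrm{Ad}\, \mathe^{\mathi t b_n} \quad \text{for } t \in 2^{-n}\mathbb{Z} \eqend{.}
\end{equation}
The operator $b_n$ can be chosen with spectrum in $[0,2^n 2\pi)$, hence bounded. Define $\phi_n \defeq \widehat\phi(\mathe^{-b_n}\,\cdot\,)$, so that $\sigma^{\phi_n}_t = \mathrm{Ad}(\mathe^{-\mathi t b_n}) \circ \sigma^{\widehat\phi}_t$. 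This modular group is $2^{-n}$-periodic. Let $\widehat\M_n$ be its centralizer. On $\widehat\M_n$, $\phi_n$ is a faithful normal semifinite trace, and the periodicity gives a faithful normal conditional expectation
\begin{equation}
\widehat\M \to \widehat\M_n \eqend{,} \quad x \mapsto 2^n \int_0^{2^{-n}} \sigma^{\phi_n}_t(x) \total t \eqend{.}
\end{equation}
To get \emph{finiteness} rather than semifiniteness, I will use the $\sigma$-finite case, where $\phi$ is a state and $\phi_n$ is a bounded faithful trace on $\widehat\M_n$. For general $\M$, I would either reduce via $\sigma$-finite corners or follow \cite{haagerupjungexu2009} verbatim. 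I will also check that $b_{n+1}$ can be arranged to commute with $b_n$ so that $\widehat\M_n \subset \widehat\M_{n+1}$; this uses $2^{-n}\mathbb{Z} \subset 2^{-n-1}\mathbb{Z}$ and a consistent choice of spectral branches.

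\emph{Density (main obstacle).} Item~3 is the hard step. I would show that the conditional expectations $E_n$ onto $\widehat\M_n$ satisfy $E_n(x) \to x$ $\sigma$-strongly$^*$ for every $x \in \widehat\M$. It suffices to check this on generators $\pi(x)\lambda(s)$ with $x$ analytic for $\sigma^\phi$ and $s \in G$. For $n$ large, $s \in 2^{-n}\mathbb{Z}$, so $\lambda(s)\mathe^{-\mathi s b_n}$ lies in the centralizer. The averaging of $\pi(x)$ tends to $\pi(x)$ in $L^2(\widehat\phi)$ norm because the rotation by $\mathe^{-\mathi t b_n}$ compensates $\sigma_t$ on the shrinking interval $[0,2^{-n}]$. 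This estimate is exactly Haagerup's lemma that $\norm{E_n(x)-x}_{\widehat\phi} \to 0$. Together with the uniform boundedness of $E_n(x)$, it gives $\sigma$-strong$^*$ convergence, hence density of $\bigcup_n \widehat\M_n$.
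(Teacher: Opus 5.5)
The paper gives no proof of its own here: it quotes the result from \cite{haagerupjungexu2009} for $\sigma$-finite $\M$ and from \cite{goldsteinlabuschagne2025} for general $\M$. Your outline is the Haagerup--Junge--Xu construction, and items~1 and~2 are handled correctly for $\sigma$-finite $\M$ (with $C(y) = \pi(V^* y V)$).

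The step you yourself call the main obstacle, however, is only asserted, and your heuristic for it is not what makes it work. Write $\lambda(2^{-n}) = \mathe^{\mathi a_n}$ with $0 \leq a_n \leq 2\pi$ (principal branch), so $b_n = 2^n a_n$. Then $\norm{b_n} = 2^{n+1}\pi$, so for $t \in [0,2^{-n}]$ the conjugation $\mathrm{Ad}\,\mathe^{-\mathi t b_n}$ is not close to the identity. (Also, $\lambda(s)\mathe^{-\mathi s b_n} = \1$ for $s \in 2^{-n}\mathbb{Z}$. The useful fact is that all of $\widehat\M_{\widehat\phi}$, in particular every $\lambda(s)$, lies in every $\widehat\M_n$, so only $\pi(x)$ needs treatment.)

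What does work is the following. First, $\norm{[\lambda(2^{-n}),\pi(x)]} = \norm{\sigma^\phi_{2^{-n}}(x) - x} \to 0$, so $\pi(x)$ almost commutes with continuous functions of $\lambda(2^{-n})$. But $\mathe^{\mathi t b_n} = \mathe^{\mathi (2^n t) a_n}$ is a function of it with a jump at $1 \in \mathbb{T}$. The error is therefore carried by spectral projections $p$ of $\lambda(2^{-n})$ for small arcs around $1$. These have small $\widehat\phi$-weight, because $\widehat\phi(\lambda(s)) = \delta_{s,0}$ makes the spectral distribution of $\lambda(2^{-n})$ in $\widehat\phi$ the normalized Haar measure. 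Since $p \in \widehat\M_{\widehat\phi}$ and $x$ is analytic, both $\norm{\pi(x)p}_{\widehat\phi}$ and $\norm{p\,\pi(x)}_{\widehat\phi}$ are bounded by multiples of $\widehat\phi(p)^{1/2}$.

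Two further points are needed to complete this step. To pass from generators to all of $\widehat\M$ you need $\widehat\phi \circ E_n = \widehat\phi$ (true since $\mathe^{b_n} \in \widehat\M_n$), which makes $E_n$ contractive for $\norm{\cdot}_{\widehat\phi}$. The inclusions $\widehat\M_n \subset \widehat\M_{n+1}$, which \thmref{thm:D=Sgeneral} needs for martingale convergence, do not follow from $[b_n,b_{n+1}] = 0$, which is automatic. They follow from $b_{n+1} - b_n = 2^{n+1}\pi q$ with $q = E^{a_{n+1}}_{[\pi,2\pi)}$, together with the fact that each $x \in \widehat\M_n$ commutes with $\mathe^{-\mathi 2^{-n-1} b_n}\lambda(2^{-n-1}) = \1 - 2q$.

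Second, nothing is proved for non-$\sigma$-finite $\M$. The reference \cite{haagerupjungexu2009} assumes $\sigma$-finiteness. With a faithful normal semifinite weight $\phi$, your construction gives $\pi(\M_\phi) \subset \widehat\M_n$, because centralizer elements commute with every $\lambda(s)$ and hence with $b_n$. For $\M = \BH$ with $\dim\Hil > 2^{\aleph_0}$, every such weight $\Tr(h\,\cdot\,)$ has infinite centralizer $\{h\}'$. So the $\widehat\M_n$ obtained this way are semifinite but not finite. Passing to $\sigma$-finite corners does not by itself produce the single $\widehat\M \supset \M$ with an expectation onto all of $\M$ that the statement requires. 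Your argument therefore gives only the $\sigma$-finite version. As the paper remarks, that is all its application needs after cutting down by $s^\M(\varphi+\psi)$.
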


Note that this theorem is proven for $\sigma$-finite $\M$ in \cite{haagerupjungexu2009}, but extends to the non-$\sigma$-finite case \cite{goldsteinlabuschagne2025}. However, in our application to $D^\M_{f_0}(\varphi \vert \psi)$, using \thmref[item:f-divergence_Compressions]{thm:fdivergence_properties} we could also restrict to $\sigma$-finite $\M$ by cutting down with the support projection of $\varphi + \psi$.
\begin{theorem}
\label{thm:D=Sgeneral}
Let $\M$ be a von Neumann algebra, and $\varphi, \psi \in \M\stp$. Then
\begin{equation}
D^\M_{f_0}(\varphi \vert \psi) = S^\M_\mathrm{rel}(\varphi \vert \psi) \eqend{.}
\end{equation}
\end{theorem}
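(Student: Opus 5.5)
We reduce the general case to the semifinite one (\thmref{thm:D=Ssemifinite}) via Haagerup reduction, \thmref{thm:HaagerupReduction}. Given $\varphi,\psi \in \M\stp$, we take $\widehat\M \supset \M$, the faithful normal conditional expectation $C \colon \widehat\M \to \M$, and the increasing sequence of finite subalgebras $\widehat\M_n$ with $\sigma$-strong-${}^*$ dense union. We then lift the functionals to $\widehat\varphi \defeq \varphi \circ C$ and $\widehat\psi \defeq \psi \circ C$ in $\widehat\M\stp$.

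\textbf{Step 1: lift to $\widehat\M$.} By the conditional expectation statement in \thmref[item:DDPI]{thm:fdivergence_properties}, applied with the unital normal map $C$, we get $D^{\widehat\M}_{f_0}(\widehat\varphi \vert \widehat\psi) = D^\M_{f_0}(\varphi \vert \psi)$. By \propref[item:Srel_ConditionalExpectation]{prop:srel_properties}, likewise $S^{\widehat\M}_\mathrm{rel}(\widehat\varphi \vert \widehat\psi) = S^\M_\mathrm{rel}(\varphi \vert \psi)$. It therefore suffices to prove the equality on $\widehat\M$.

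\textbf{Step 2: finite subalgebras.} Each $\widehat\M_n$ is finite, so it carries a faithful normal semifinite (indeed finite on suitable pieces) tracial weight; in the non-$\sigma$-finite case one uses a faithful normal semifinite trace, which exists for finite von Neumann algebras. Hence \thmref{thm:D=Ssemifinite} applies to the restrictions:
\begin{equation}
D^{\widehat\M_n}_{f_0}\left( \widehat\varphi \rvert_{\widehat\M_n} \Big\vert \widehat\psi \rvert_{\widehat\M_n} \right) = S^{\widehat\M_n}_\mathrm{rel}\left( \widehat\varphi \rvert_{\widehat\M_n} \Big\vert \widehat\psi \rvert_{\widehat\M_n} \right) \eqend{.}
\end{equation}

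\textbf{Step 3: pass to the limit.} Density in the $\sigma$-strong-${}^*$ topology implies $\sigma(\widehat\M,\widehat\M_*)$-density of the union, since the two topologies have the same closed convex sets, so the martingale properties apply. \thmref[item:f-divergenceMartingale]{thm:fdivergence_properties} gives convergence of the left side to $D^{\widehat\M}_{f_0}(\widehat\varphi \vert \widehat\psi)$. \propref[item:Srel_Martingale]{prop:srel_properties} gives convergence of the right side to $S^{\widehat\M}_\mathrm{rel}(\widehat\varphi \vert \widehat\psi)$. Combining with Step 1 yields the claim.

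The main obstacle is the martingale convergence for $D_{f_0}$, which is only asserted as part of \thmref{thm:fdivergence_properties}, so we verify it here. Each $E^{\M_i}_t$ increases in $i$ to $E^\M_t$ by \propref[item:Hockey_prop2_Martingale]{prop:hockeystick_properties2}, with monotonicity following from the data processing inequality for the inclusions. The integrands $f_0''(t)E_t$ and $(f_0)_*''(t)E_t$ are nonnegative. The correction term $\varphi(\1_{\M_i}) - \psi(\1_{\M_i})$ converges, and here $\1_{\M_i} = \1$ anyway for unital subalgebras. Monotone convergence for nets can be handled by passing to the sequence $n$, since the index set here is $\mathbb{N}$, so the limit holds even when the value is $+\infty$.

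A secondary point is whether the $\widehat\M_n$ are unital subalgebras, and whether Step 2 needs the trace to be finite rather than semifinite. The former is standard in the Haagerup construction. For the latter, \thmref{thm:D=Ssemifinite} covers any faithful normal semifinite trace, so either works.
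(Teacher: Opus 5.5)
Your proposal is correct and follows the paper's proof essentially verbatim. You lift via the conditional expectation $C$ from Haagerup reduction, apply \thmref{thm:D=Ssemifinite} on the finite subalgebras $\widehat\M_n$, and pass to the limit using martingale convergence for both $D_{f_0}$ and $S_\mathrm{rel}$. Your added justification of the martingale step for $D_{f_0}$ also supplies what the paper leaves as a ``straightforward computation'': $E^{\widehat\M_n}_t$ increases in $n$ by the data processing inequality for inclusions, and monotone convergence applies over the sequence.
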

\begin{proof}
We know that $D_{f_0}^\M$ is invariant under normal conditional expectations in the sense of \propref[item:DDPI]{thm:fdivergence_properties}, and by \propref[item:f-divergenceMartingale]{thm:fdivergence_properties} converges for increasing $\sigma(\M, \M_*)$-dense nets of subalgebras. By \propref[item:Srel_ConditionalExpectation]{prop:srel_properties}, \ref{item:Srel_Martingale} also $S^\M_\mathrm{rel}$ has these two properties. Because $\sigma(\M, \M_*)$- and $\sigma$-strong-${}^*$ closures of ${}^*$-subalgebras agree, we may use \thmref{thm:D=Ssemifinite} and \thmref{thm:HaagerupReduction} to conclude
\begin{splitequation}
D^\M_{f_0}\left( \varphi \vert \psi \right) &= D^{\widehat\M}_{f_0}\left( \varphi \circ C \vert \psi \circ C \right) = \lim_{n \to  \infty} D^{\widehat\M_n}_{f_0}\left( (\varphi \circ C) \rvert_{\widehat\M_n} \Big\vert (\psi\circ C) \rvert_{\widehat\M_n} \right) \\
&= \lim_{n \to \infty} S^{\widehat\M_n}_\mathrm{rel}\left( (\varphi \circ C) \rvert_{\widehat\M_n} \Big\vert (\psi\circ C) \rvert_{\widehat\M_n} \right) = S^{\widehat\M}_\mathrm{rel}\left( \varphi \circ C \vert \psi \circ C \right) = S^\M_\mathrm{rel}(\varphi \vert \psi) \eqend{.}
\end{splitequation}
\end{proof}

\section{Operator integrals}
\label{sec:operatorint}

Two key steps in the proof of \thmref{thm:D=Ssemifinite} rely on results that will be proven only in this section:

\begin{enumerate}
\item[(i)] The equality of the two operator integrals $I_f(A, B)$ and $J_f(A, B)$ defined in \eqref{eq:operatorint_ihab_taux_def}.
\item[(ii)] The identification of $J_f(A,B)$ with the semifinite relative entropy formula for the function $f=f_a$.
\end{enumerate}

This section is organized as follows: In \secref{sec:operatorint_welldefined}, we show that the two integrals are well-defined as Bochner integrals. In \secref{sec:operatorint_equality}, we prove their equality (step (i), see \propref{prop:operatorint_ihab_jhab_taux_f}): First for constant $f$, then for general continuous $f$. The proof for constant $f$ is the essential one. We remark that the proofs in this section use many of the same techniques that were employed by Cheng and Liu~\cite{chengliu2025} in the matrix algebra case. In \secref{sec:operatorint_technical}, we provide the integral representation of the logarithm (step (ii), \lemmaref{lemma:operator_log_derivative}) and collect some technical lemmas that are used repeatedly.

\subsection{Well-definedness of the integrals}
\label{sec:operatorint_welldefined}

\begin{lemma}
\label{lemma:integrability}
Let $A, B \in L^1(\M,\tau) \cap \M_+$, $A \geq a > 0$, and $f \colon \mathbb{R}_+ \to \mathbb{R}$ continuous, and define
\begin{equations}
F^I_{A,B,f}, F^J_{A,B,f} &\colon \mathbb{R}_+ \to L^1(\M,\tau) \cap \M \eqend{,} \\
F^I_{A,B,f}(t) &\defeq f(t) \, (B - t A)_+ \eqend{,} \\
F^J_{A,B,f}(s) &\defeq \sqrt{ \frac{s}{A + s} } \, \left[ C_{A,B}(s) \right]^2 f\left( C_{A,B}(s) \right) \sqrt{ \frac{s}{A + s} } \eqend{,}
\end{equations}
where $C_{A,B}(s) \defeq ( A + s )^{-\half} B ( A + s )^{-\half} \geq 0$.

\begin{enumerate}
\item \label{item:Integrands} The functions $F^I_{A,B,f}$ and $F^J_{A,B,f}$ are well-defined, continuous in $L^1(\M,\tau)$ norm and in the norm of $\M$, and integrable over $\mathbb{R}_+$ in both norms.

\item \label{item:BochnerIntegrals} The integrals
\begin{equation}
\label{eq:operatorint_ihab_taux_def}
I_f(A, B) \defeq \int_0^\infty F^I_{A,B,f}(t) \total t \eqend{,} \quad J_f(A, B) \defeq \int_0^\infty F^J_{A,B,f}(s) \total s
\end{equation}
exist as Bochner integrals in $L^1(\M,\tau)$ and $\M$. They depend only on the restriction of $f$ to the compact interval
\begin{align}
\I_{A,B} \defeq \left[ 0, \norm{ B } \norm{ A^{-1} } \right] \eqend{,}
\end{align}
and depend continuously on $f \in C^0(\I_{A,B})$ (with supremum norm).
\end{enumerate}
\end{lemma}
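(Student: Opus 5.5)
We split the argument into the two integrands, each estimated pointwise in $t$ (resp.\ $s$) in both the $\M$-norm and the $L^1(\M,\tau)$-norm, and then obtain the Bochner integrals from continuity plus an integrable dominating function.

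\emph{The integrand $F^I_{A,B,f}$.} Since $B - tA$ is bounded and selfadjoint, $(B-tA)_+ = g(B-tA)$ with $g(x) = \max(x,0)$ Lipschitz, so $t \mapsto (B-tA)_+$ is norm continuous in $\M$. For the $L^1$-norm we use $\norm{ (X)_+ - (Y)_+ }_{L^1} \leq \norm{ X - Y }_{L^1}$ for selfadjoint $X,Y \in L^1(\M,\tau)$. If this Lipschitz estimate is not readily available, we instead write $(X)_+ = \frac{1}{2}(X + \abs{X})$ and use continuity of $\abs{\cdot}$ on $L^1(\M,\tau)$; this gives $\norm{(B-tA)_+ - (B-t'A)_+}_{L^1} \lesssim \abs{t-t'}^{1/2}$ via the Powers--St{\o}rmer-type estimate $\norm{\abs{X}-\abs{Y}}_1 \leq \sqrt{2\norm{X+Y}_1\norm{X-Y}_1}$, which suffices for continuity.

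Support in $t$ is the key point. For $t > \norm{B}\norm{A^{-1}}$ we have $tA \geq ta > \norm{B} \geq B$, since $A \geq a$ with $a \geq \norm{A^{-1}}^{-1}$, so $(B-tA)_+ = 0$. Hence $F^I$ vanishes outside $\I_{A,B}$. Moreover $0 \leq (B-tA)_+$ and $\tau((B-tA)_+) \leq \tau(B)$ by \corref[item:PMonotone]{cor:Jordan}, and $\norm{(B-tA)_+} \leq \norm{B}$. So $F^I$ is continuous with compact support and hence Bochner integrable in both Banach spaces. It depends only on $f|_{\I_{A,B}}$, with $\norm{I_f - I_g} \leq \abs{\I_{A,B}}\, \norm{f-g}_\infty \sup_t \norm{(B-tA)_+}$ in either norm.

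\emph{The integrand $F^J_{A,B,f}$.} Note $(A+s)^{-1/2}$ is bounded and norm continuous in $s \geq 0$ because $A \geq a > 0$. Then $C(s) \leq \norm{B}(A+s)^{-1} \leq \norm{B}\norm{A^{-1}}$, so $\sigma(C(s)) \subset \I_{A,B}$ and $f(C(s))$ depends only on $f|_{\I_{A,B}}$. By functional calculus, $s \mapsto C(s)^2 f(C(s))$ is norm continuous, using continuity of continuous functional calculus in the norm topology on a compact spectral interval via Weierstrass approximation. The $L^1$-continuity follows because $C(s) \in L^1$ with the factor $B$ carrying the trace class property: $C(s)^2 f(C(s)) = C(s)\, h(C(s))$ with $h(x) = x f(x)$ continuous, and products of an $L^1$-continuous and an $\M$-norm-continuous bounded family are $L^1$-continuous by H\"older.

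For integrability at large $s$ we use $\norm{C(s)} \leq \norm{B}/(a+s)$ and $\norm{s/(A+s)} \leq 1$. This gives
\begin{equation}
\norm{F^J(s)}_{L^1} \leq \norm{B}_{L^1} \frac{\norm{B}}{(a+s)^2} \sup_{\I_{A,B}} \abs{f} \eqend{,}
\end{equation}
and the same bound with $\norm{B}$ in place of $\norm{B}_{L^1}$ holds in $\M$. Both are integrable on $\mathbb{R}_+$, and near $s = 0$ the integrand is bounded. The same estimate, applied with $f-g$ in place of $f$, gives continuous dependence on $f \in C^0(\I_{A,B})$.

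\emph{Main obstacle.} The main obstacle is the $L^1(\M,\tau)$-continuity of the nonlinear maps $X \mapsto X_+$ and $X \mapsto X\,h(X)$ in the semifinite setting. We handle it by approximating $h$ uniformly by polynomials on $\I_{A,B}$, for which continuity is immediate from H\"older, and then passing to the limit using the uniform bound $\norm{X\,p(X) - X\,h(X)}_{L^1} \leq \norm{X}_{L^1}\sup\abs{p-h}$.
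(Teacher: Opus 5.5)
Your proposal is correct and follows essentially the paper's own route: compact support of $F^I_{A,B,f}$ together with Kosaki's norm continuity of $\varphi\mapsto\abs{\varphi}$ on $\M_*\cong L^1(\M,\tau)$, and for $F^J_{A,B,f}$ the inclusion $\sigma(C_{A,B}(s))\subset\I_{A,B}$, Weierstrass approximation, a single $L^1$ factor combined with H\"older, and the $(a+s)^{-2}$ bound.

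Two corrections are needed. First, the estimate $\norm{X_+-Y_+}_{L^1}\leq\norm{X-Y}_{L^1}$ is false in general. Since $\abs{X}=2X_+-X$, it would make the absolute value Lipschitz on the trace class, which it is not. Polynomial approximation cannot replace it either, because $x_+/x$ is discontinuous at $0$. So the Kosaki argument you list as a fallback is the one you actually need. Second, $A\geq a$ gives $a\leq\norm{A^{-1}}^{-1}$ rather than $\geq$. The vanishing of $(B-tA)_+$ for $t>\norm{B}\norm{A^{-1}}$ should therefore be derived from $A\geq\norm{A^{-1}}^{-1}$; the paper simply takes $a=\norm{A^{-1}}^{-1}$.
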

\begin{proof}
\ref{item:Integrands} For $F^I_{A,B,f}$, we first recall that the map $L^1(\M,\tau)_\mathrm{sa} \ni X \mapsto X_+ \in L^1(\M,\tau)$ is continuous in $L^1(\M,\tau)$ norm: We have $X_+ = \left( X + \abs{X} \right)/2$, and the absolute value is norm continuous as a map on $L^1(\M,\tau)$ because $L^1(\M,\tau) \cong \M_*$ and $\varphi \mapsto \abs{ \varphi }$ is norm continuous on $\M_*$ \cite{kosaki1984b}. As $t \mapsto (B - t A) \in L^1(\M,\tau)$ is clearly continuous in $L^1(\M,\tau)$ norm, and $f$ is continuous as well, the continuity claim follows. Analogous but simpler arguments show the continuity in the norm of $\M$.

Since $B$ and $A^{-1}$ are bounded ($A \geq a > 0$ with $a = \norm{ A^{-1} }^{-1}$), we have $B - t A \leq \norm{ B } - t a$, and therefore $( B - t A )_+ = 0$ for $t > \norm{ B } \norm{ A^{-1} }$. Thus $F^I_{A,B,f}$ is a compactly supported function continuous in both the $L^1(\M,\tau)$ norm and the norm of $\M$, and hence integrable in both norms.

For $F^J_{A,B,f}$, we first consider the function $C_{A,B}$, abbreviated to $C$ in this proof. Since $A \geq a > 0$, we have for  $s, s' \geq 0$
\begin{equation}
\norm{ (A+s)^{-\frac{1}{2}} - (A+s')^{-\frac{1}{2}} } \leq \sup_{\lambda \geq a} \abs{ (\lambda + s)^{-\frac{1}{2}} - (\lambda + s')^{-\frac{1}{2}} } \leq \frac{1}{2} \norm{ A^{-1} }^\frac{3}{2} \abs{ s-s' } \eqend{,}
\end{equation}
which shows that $\mathbb{R}_+ \ni s \mapsto (A+s)^{-\half} \in \M$ is norm continuous and bounded. By the continuity of left and right multiplication of $L^1(\M,\tau)$ by elements from $\M$ in $L^1(\M,\tau)$ norm, which follows from the noncommutative H{\"o}lder inequality, it follows that $C$ is continuous in $L^1(\M,\tau)$ norm and uniformly bounded by $\norm{ C(s) }_{L^1(\M,\tau)} \leq \norm{ B }_{L^1(\M,\tau)} (a+s)^{-1} \leq \norm{ B }_{L^1(\M,\tau)} \norm{ A^{-1} }$.

This bound holds in the same way for the norm in $\M$, which implies that $\sigma(C(s)) \subset \I_{A,B}$ for all $s \geq 0$. On the compact interval $\I_{A,B}$, using Weierstraß' Theorem we may approximate the continuous function $f$ uniformly by polynomials. For $f$ a polynomial, continuity of $s \mapsto f(C(s))$ is clear by the norm continuity of the algebraic operations, and a standard $\epsilon/3$-argument shows that $\mathbb{R}_+ \ni s \mapsto f(C(s)) \in \M$ is continuous in the norm of $\M$. We now view $F^J_{A,B,f}$ as the product of various norm continuous functions $\mathbb{R}_+ \to \M$ (the exterior factors with the roots, $f \circ C$, and one factor $C$) and the $L^1(\M,\tau)$-norm continuous function $C \colon \mathbb{R}_+ \to L^1(\M,\tau)$. This shows the continuity of $F^J_{A,B,f}$ in $L^1(\M,\tau)$ norm. Analogously, one checks the continuity in the norm of $\M$.

In the estimate
\begin{splitequation}
\norm{ F^J_{A,B,f}(s) }_{L^1(\M,\tau)} &\leq \norm{ \sqrt{ \frac{s}{A + s} } }^2 \norm{ C(s) } \norm{ C(s) }_{L^1(\M,\tau)} \norm{ f(C(s)) } \\
&\leq \frac{s}{a+s} \frac{\norm{ B }_{L^1(\M,\tau)} \norm{ B }}{(a+s)^2} \sup_{\lambda \in \I_{A,B}} \abs{ f(\lambda) } \eqend{,}
\end{splitequation}
following again from the noncommutative H{\"o}lder inequality, the supremum is finite by continuity of $f$, and the upper bound is integrable. Hence $F^J_{A,B,f}$ is integrable over $\mathbb{R}_+$ in $L^1(\M,\tau)$ norm, and by analogous arguments also in the norm of $\M$.

\ref{item:BochnerIntegrals} is a direct consequence of the arguments given in \ref{item:Integrands}.
\end{proof}

\subsection{Equality of the integrals}
\label{sec:operatorint_equality}

We now show that the integrals are actually equal. In this section, we make the following standing assumption:
\begin{assumption}
\label{ass:operatorint}
Let $(\M,\tau)$ be a semifinite von Neumann algebra, $A, B, C, D \in L^1(\M,\tau)$ self-adjoint with $A$, $B$ and $A^{-1}$ positive and bounded, $x \in \M$, and $f \colon \mathbb{R}_+ \to \mathbb{R}_+$ a continuous bounded positive function.
\end{assumption}
Moreover, we employ \lemmaref{lemma:resolvent_bound_imaginary} and \ref{lemma:resolvent_bound_complex} for bounding resolvents, without specifying their use explicitly each time.

\begin{lemma}
$I_f(A,B) = J_f(A,B)$ is equivalent to
\begin{equation}
\label{eq:IntegralEqualityWeak}
I^{\tau,x}_f(A,B) \defeq \int_0^\infty \tau\left( x^* F^I_{A,B,f}(t) \, x \right) \total t = \int_0^\infty \tau\left( x^* F^J_{A,B,f}(s) \, x \right) \total s \eqdef J^{\tau,x}_{f}(A,B)
\end{equation}
for all $x \in \M$.
\end{lemma}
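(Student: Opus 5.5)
The two directions are handled separately. The forward implication is immediate, and the reverse one is a duality argument between $L^1(\M,\tau)$ and $\M$.

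\emph{Forward direction.} Suppose $I_f(A,B) = J_f(A,B)$. By \lemmaref[item:BochnerIntegrals]{lemma:integrability} both are Bochner integrals in $L^1(\M,\tau)$. For fixed $x \in \M$ the map $L^1(\M,\tau) \ni h \mapsto \tau(x^* h x)$ is a bounded linear functional. Indeed, by cyclicity and the noncommutative H\"older inequality,
\begin{equation}
\abs{ \tau(x^* h x) } = \abs{ \tau(h x x^*) } \leq \norm{ x }^2 \norm{ h }_{L^1(\M,\tau)} \eqend{.}
\end{equation}
Bounded linear functionals commute with Bochner integrals. Hence $\tau(x^* I_f(A,B) x) = I^{\tau,x}_f(A,B)$ and likewise for $J$, which gives \eqref{eq:IntegralEqualityWeak}.

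\emph{Reverse direction.} Assume \eqref{eq:IntegralEqualityWeak} holds for all $x \in \M$. Set $D \defeq I_f(A,B) - J_f(A,B) \in L^1(\M,\tau)$. By the same interchange of $\tau(x^* \,\cdot\, x)$ with the integrals, we get $\tau(x^* D x) = \tau(D x x^*) = 0$ for all $x \in \M$.

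We now use polarization. For $y, z \in \M$, the expression $\tau(D\, y z^*)$ is a linear combination of terms $\tau(D (y + i^k z)(y + i^k z)^*)$, $k = 0, \dots, 3$, each of which vanishes. So $\tau(D\, w) = 0$ for all products $w = y z^*$. Taking $z = \1$ gives every $w \in \M$.

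Now recall from \thmref[item:RNIsomorphism]{thm:noncommutativeRadon-Nikodym} that $h \mapsto \varphi_h$ is an isometric isomorphism $L^1(\M,\tau) \to \M_*$. Since $\varphi_D(w) = \tau(D w) = 0$ for all $w \in \M$, we have $\varphi_D = 0$, hence $D = 0$.

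The only point needing care is justifying the exchange of $\tau(x^*\,\cdot\,x)$ with the integral, and this is exactly the $L^1$-Bochner integrability from \lemmaref{lemma:integrability}. The rest is routine.
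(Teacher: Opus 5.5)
Your proof is correct and follows essentially the paper's argument. You interchange $\tau(x^*\,\cdot\,x)$ with the $L^1(\M,\tau)$-valued Bochner integrals (Hille), then extend the vanishing of $\tau(D\,xx^*)$ to $\tau(Dw)=0$ for all $w\in\M$. The only difference is the last step: the paper takes $w=D^*$, which uses that $D$ also lies in $\M$, and invokes faithfulness of $\tau$. You instead use injectivity of $h\mapsto\varphi_h$, which needs only $D\in L^1(\M,\tau)$.
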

\begin{proof}
Since the Bochner integrals exist in $L^1(\M,\tau)$, and $\tau$ is continuous in the $L^1(\M,\tau)$ norm, by Hille's theorem~\cite[Thm.~II.~2.6]{diesteluhl1977} we have $I^{\tau,x}_f(A,B) = \tau\left( x^* I_f(A,B) \, x \right)$, and analogously for the other integral. Thus \eqref{eq:IntegralEqualityWeak} implies $0 = \tau\left( y \Bigl[ I_f(A,B) - J_f(A,B) \Bigr] \right)$ for all $y \in \M_+$, hence by linearity for all $y \in \M$ and in particular for $y = \Bigl[ I_f(A,B) - J_f(A,B) \Bigr]^*$. Faithfulness of $\tau$ then implies $I_f(A,B) = J_f(A,B)$. The other implication is clear.
\end{proof}

The first step is to establish the equality \eqref{eq:IntegralEqualityWeak} for the constant function $f = 1$, which is \propref{prop:operatorint_ihab_jhab_taux_const}. This requires a number of preparatory steps: In \propref{prop:operatorint_gh_lipschitz}, we approximate a step function underlying the hockeystick divergence by smooth functions. In \lemmaref{lemma:operatorint_kab_analytic} we prove analyticity of various resolvent-type functions that appear in the integrals, see in particular the integral representation for $I_1^{\tau,x}(A,B)$ through these functions in \propref{prop:operatorint_ihab_resolvent_form}. In \propref{prop:operatorint_ihab_jhab_taux_const}, we then make use of analytic continuation to show the equality of the two integrals for constant $f = 1$, and this is generalized to general continuous $f$ in \propref{prop:operatorint_ihab_jhab_taux_f}.

\begin{proposition}
\label{prop:operatorint_gh_lipschitz}
For $\epsilon > 0$ let
\begin{equation}
\label{eq:operatorint_ihab_jhab_taux_gh_def}
g^{A,B,x}_s(t) \defeq \frac{2}{\pi} \tau\left( x^* \left[ \frac{(B - t A)^2}{s^2 + (B - t A)^2} \right] x \right) \eqend{,} \quad h^{A,B,x}_\epsilon(t) \defeq \int_0^\infty g^{A,B,x}_{s+\epsilon}(t) \total s \eqend{.}
\end{equation}
Then $g^{A,B,x}_s$ (for $s > 0$) and $h^{A,B,x}_\epsilon$ are locally Lipschitz continuous in $t$, and it holds that
\begin{equation}
I^{\tau,x}_1(A, B) = \int_0^\infty \tau\left( x^* (B - t A)_+ x \right) \total t = \frac{1}{4} \lim_{R \to \infty} \lim_{\epsilon \searrow 0} \int_{-R}^R \left( 1 - t \partial_t \right) h^{A,B,x}_\epsilon(t) \total t \eqend{.}
\end{equation}
\end{proposition}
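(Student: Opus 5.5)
The plan is to evaluate the $s$-integral in $h^{A,B,x}_\epsilon$ explicitly by functional calculus. Then pass to $\epsilon \searrow 0$ to obtain $\tau\left( x^* \abs{B - tA} \, x \right)$, and finally remove the derivative term by an integration by parts on $[-R,R]$. Throughout write $Y_t \defeq B - tA$, a bounded selfadjoint element of $L^1(\M,\tau) \cap \M$, and $\omega(X) \defeq \tau(x^* X x)$. The latter is a positive functional, continuous in $L^1(\M,\tau)$ norm by the noncommutative H\"older inequality. For the scalar function
\begin{equation}
k_\epsilon(y) \defeq \frac{2}{\pi} \int_\epsilon^\infty \frac{y^2}{s^2 + y^2} \total s = \abs{y} \left( 1 - \frac{2}{\pi} \arctan \frac{\epsilon}{\abs{y}} \right) \eqend{,} \qquad k_\epsilon(0) = 0 \eqend{,}
\end{equation}
we have $0 \leq k_\epsilon(y) \leq \abs{y}$, and $k_\epsilon(y) \nearrow \abs{y}$ as $\epsilon \searrow 0$. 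By Tonelli (the integrand $s \mapsto \omega$ of a positive operator is nonnegative, and one can pass through the spectral measure of $Y_t$ in the positive functional $\omega$), this gives $h^{A,B,x}_\epsilon(t) = \omega\left( k_\epsilon(Y_t) \right)$.

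\emph{Lipschitz continuity.} For $g_s$ I write $\frac{Y^2}{s^2+Y^2} = \1 - s^2 (s^2 + Y^2)^{-1}$. I differentiate or difference in $t$ using the resolvent identity
\begin{equation}
(s^2+Y_t^2)^{-1} - (s^2+Y_{t'}^2)^{-1} = (s^2+Y_t^2)^{-1}\left( Y_{t'}^2 - Y_t^2 \right) (s^2+Y_{t'}^2)^{-1} \eqend{,}
\end{equation}
where $Y_{t'}^2 - Y_t^2$ is $(t-t')$ times an $L^1$-element bounded locally uniformly in $t,t'$. The resolvent bounds of \lemmaref{lemma:resolvent_bound_imaginary} give $\norm{ (s^2+Y^2)^{-1} Y } \leq (2s)^{-1}$ and $\norm{ (s^2+Y^2)^{-1} } \leq s^{-2}$. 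This yields a local Lipschitz constant of order $s^{-1}$ for $g_s$ at large $s$. That is not integrable, so for $h_\epsilon$ I instead use the scalar representation $k_\epsilon$. The map $y \mapsto k_\epsilon(y)$ is Lipschitz with constant $1$ and is even a smooth function of $y^2$ with bounded derivative on bounded sets. Hence $t \mapsto k_\epsilon(Y_t)$ is locally Lipschitz in $L^1$ norm. I would get this either by Weierstra\ss\ approximation on the compact spectrum range $\left[ -\norm{B} - R\norm{A}, \norm{B} + R\norm{A} \right]$, as in \lemmaref{lemma:integrability}, or, more cleanly, via the operator-Lipschitz-type estimate obtained from integrating the resolvent identity above in $s$. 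This regularity step is the one I expect to be most delicate: one needs a Lipschitz bound uniform enough to justify the integration by parts, and the naive bound on $g_s$ is not integrable in $s$.

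\emph{Integration by parts and $\epsilon \to 0$.} Since $h_\epsilon$ is locally Lipschitz, it is absolutely continuous, so
\begin{equation}
\int_{-R}^R (1 - t\partial_t) h_\epsilon(t) \total t = 2 \int_{-R}^R h_\epsilon(t) \total t - R \left[ h_\epsilon(R) + h_\epsilon(-R) \right] \eqend{.}
\end{equation}
Next, $h_\epsilon(t) \nearrow \omega(\abs{Y_t})$ pointwise, by monotone convergence in the spectral representation. Moreover $h_\epsilon(t) \leq \omega(\abs{Y_t})$, which is bounded on $[-R,R]$. Dominated convergence therefore turns the $\epsilon$-limit into the same expression with $h_\epsilon$ replaced by $\omega(\abs{Y_t})$.

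\emph{Evaluation.} For $t \leq 0$ we have $Y_t \geq B \geq 0$, so $\abs{Y_t} = Y_t$. For $t \geq \norm{B}\norm{A^{-1}}$ we have $Y_t \leq 0$, so $\abs{Y_t} = tA - B$. On $[0,R]$ use $\abs{Y} = 2Y_+ - Y$. Then
\begin{equation}
\int_{-R}^R \omega(\abs{Y_t}) \total t = R^2 \omega(A) + 2\int_0^R \omega\left( (Y_t)_+ \right) \total t \eqend{,}
\end{equation}
and for large $R$ the boundary term is $R\left[ \omega(RA - B) + \omega(B + RA) \right] = 2R^2 \omega(A)$. The $R^2$ terms cancel, leaving $4\int_0^R \omega\left( (Y_t)_+ \right) \total t$. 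The integrand vanishes for $t > \norm{B}\norm{A^{-1}}$, so the $R \to \infty$ limit is trivial and, after dividing by $4$, equals $I^{\tau,x}_1(A,B)$.
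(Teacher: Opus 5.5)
Your overall route is the paper's, and most steps are correct: the spectral evaluation $h^{A,B,x}_\epsilon(t)=\omega(k_\epsilon(Y_t))$ (your $k_\epsilon(y)=\frac{2}{\pi}\,y\arctan(y/\epsilon)$ is exactly the paper's $\arctan$ formula), the monotone limit $\epsilon\searrow 0$ to $\tau(x^*\abs{B-tA}x)$, the integration by parts on $[-R,R]$, and the cancellation of the $R^2\,\tau(x^*Ax)$ terms.

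\textbf{The gap is the local Lipschitz continuity of $h^{A,B,x}_\epsilon$.} This is part of the statement, and your integration by parts depends on it. You infer: ``$k_\epsilon$ is Lipschitz with constant $1$ and has bounded derivative on bounded sets, hence $t\mapsto k_\epsilon(Y_t)$ is locally Lipschitz in $L^1(\M,\tau)$.'' That is not a valid principle. The map $y\mapsto\abs{y}$ has Lipschitz constant $1$, yet $X\mapsto\abs{X}$ is not Lipschitz in trace norm for trace-class operators on an infinite-dimensional Hilbert space. Even $C^1$ functions need not be operator Lipschitz. Neither of your two mechanisms closes this:
\begin{itemize}
\item Weierstra\ss\ approximation transports continuity but no Lipschitz bound. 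Uniform convergence of polynomials $p_n=\sum_k a_{n,k}y^k$ gives no control of $\sum_k k\abs{a_{n,k}}M^{k-1}$.
\item Integrating the resolvent identity in $s$ is exactly what your own $O(s^{-1})$ estimate says diverges.
\end{itemize}

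\textbf{The missing observation is elementary.} Under the standing assumption, $Y_t=B-tA$ is bounded with $\norm{Y_t}\le\norm{B}+\abs{t}\norm{A}$. So besides $\norm{(s^2+Y_t^2)^{-1}Y_t}\le(2s)^{-1}$ you also have $\norm{(s^2+Y_t^2)^{-1}Y_t}\le\norm{Y_t}\,s^{-2}$. In your resolvent identity, write $Y_{t'}^2-Y_t^2=(t-t')(Y_tA+AY_{t'})$, so that each $Y$ sits next to its own resolvent. For $t,t'\in[-R,R]$ this gives $\abs{g^{A,B,x}_s(t)-g^{A,B,x}_s(t')}\le\frac{4}{\pi}\norm{x}^2\tau(A)(\norm{B}+R\norm{A})\abs{t-t'}\,s^{-2}$. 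This is integrable over $s\in[\epsilon,\infty)$, so integrating in $s$ gives local Lipschitz continuity of $h^{A,B,x}_\epsilon$ at once. It is precisely the paper's $c\,s^{-2}$ bound.

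Your functional-calculus idea can also be rescued, because $k_\epsilon$ is real analytic. Replace it by a smooth compactly supported $f$ equal to $k_\epsilon$ on the relevant spectral interval. Duhamel's formula then yields $\norm{f(X)-f(X')}_{L^1(\M,\tau)}\le\int\abs{\xi}\,\abs{\hat{f}(\xi)}\total\xi\;\norm{X-X'}_{L^1(\M,\tau)}$. Some such genuinely operator-theoretic input has to be supplied, since scalar Lipschitz continuity alone does not transfer.
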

\begin{proof}
The local Lipschitz continuity of $g^{A,B,x}_s$ follows from
\begin{splitequation}
&\pi \frac{g^{A,B,x}_s(t) - g^{A,B,x}_s(t')}{t-t'} \\
&= \tau\left( x^* \left[ \frac{\mathi s}{s - \mathi (B - t' A)} A \frac{1}{s - \mathi (B - t A)} - \frac{\mathi s}{s + \mathi (B - t A)} A \frac{1}{s + \mathi (B - t' A)} \right] x \right) \\
&= - \tau\biggl( x^* \biggl[ \frac{B - t A}{s - \mathi (B - t A)} A \frac{1}{s + \mathi (B - t' A)} + \frac{B - t A}{s + \mathi (B - t A)} A \frac{1}{s + \mathi (B - t' A)} \\
&\qquad\qquad+ \frac{s}{s - \mathi (B - t' A)} A \frac{1}{s + \mathi (B - t' A)} \left[ 2 B - (t+t') A \right] \frac{1}{s - \mathi (B - t A)} \\
&\qquad\qquad+ (t-t') \frac{s}{s - \mathi (B - t A)} A \frac{1}{s - \mathi (B - t' A)} A \frac{1}{s + \mathi (B - t' A)} \biggr] x \biggr)
\end{splitequation}
which is obtained using resolvent identities, and the bounds
\begin{equation}
\abs{ \tau\left( x^* f(s) A g(s) x \right) } \leq \norm{ x }^2 \norm{ f(s) } \norm{ g(s) } \tau(A) \eqend{,} \quad \norm{ \frac{1}{s \pm \mathi (B-t A)} } \leq \frac{1}{s} \eqend{,}
\end{equation}
which show that $\abs{ g^{A,B,x}_s(t) - g^{A,B,x}_s(t') } \leq \abs{t-t'} \, c \, s^{-2}$ for some $c = c(A,B,t,t',x)$. Since this bound (with $s \to s + \epsilon$) is integrable in $s$ for any $\epsilon > 0$, also $h^{A,B,x}_\epsilon$ is locally Lipschitz continuous. We now want to show that the limit
\begin{equation}
\lim_{\epsilon \searrow 0} h^{A,B,x}_\epsilon(t) = \tau\left( x^* \abs{B - t A} x \right)
\end{equation}
is obtained monotonically. For this, we use \lemmaref{lemma:operatorint_tau_function} with $C = \abs{B - t A}$ and Tonelli's theorem to obtain
\begin{splitequation}
h^{A,B,x}_\epsilon(t) &= \frac{2}{\pi} \int_0^\infty \int_0^\infty \frac{\lambda}{( s + \epsilon )^2 + \lambda^2} \total \mu^x_{\abs{B-tA}}(\lambda) \total s \\
&= \frac{2}{\pi} \int_0^\infty \int_0^\infty \frac{\lambda}{( s + \epsilon )^2 + \lambda^2} \total s \total \mu^x_{\abs{B-tA}}(\lambda) = \frac{2}{\pi} \int_0^\infty \arctan\left( \frac{\lambda}{\epsilon} \right) \total \mu^x_{\abs{B-tA}}(\lambda) \eqend{.}
\end{splitequation}
Since $\mu^x_{\abs{B-tA}}$ is a finite measure and $\arctan\left( \frac{\lambda}{\epsilon} \right)$ converges monotonically to $\frac{\pi}{2}$ for $\lambda \geq 0$, the monotone convergence theorem shows that
\begin{equation}
\lim_{\epsilon \searrow 0} \int_0^\infty \arctan\left( \frac{\lambda}{\epsilon} \right) \total \mu^x_{\abs{B-tA}}(\lambda) = \int_0^\infty \frac{\pi}{2} \total \mu^x_{\abs{B-tA}}(\lambda) = \frac{\pi}{2} \tau\left( x^* \abs{B - t A} x \right) \eqend{.}
\end{equation}
Using again the monotone convergence theorem, it thus follows that
\begin{splitequation}
&\lim_{\epsilon \searrow 0} \int_{-R}^R h^{A,B,x}_\epsilon(t) \total t = \int_{-R}^R \tau\left( x^* \abs{B - t A} x \right) \total t \\
&\quad= 2 \int_0^R \tau\left( x^* (B - t A)_+ x \right) \total t - \int_0^R \tau\left( x^* (B - t A) x \right) \total t \\
&\qquad+ 2 \int_{-R}^0 \tau\left( x^* (B - t A)_- x \right) \total t + \int_{-R}^0 \tau\left( x^* (B - t A) x \right) \total t \\
&\quad= 2 \int_0^R \tau\left( x^* (B - t A)_+ x \right) \total t + R^2 \, \tau(x^* A x) \eqend{,}
\end{splitequation}
where we used that $\abs{C} = 2 C_+ - C = 2 C_- + C$ and that $(B - t A)_- = 0$ for $t \leq 0$. Because $h^{A,B,x}_\epsilon(t)$ is locally Lipschitz continuous, by Rademacher's theorem it is almost everywhere differentiable and its derivative satisfies the fundamental theorem of calculus. Partial integration then shows that
\begin{equation}
\int_{-R}^R \left( 1 - t \partial_t \right) h^{A,B,x}_\epsilon(t) \total t = 2 \int_{-R}^R h^{A,B,x}_\epsilon(t) \total t - R h^{A,B,x}_\epsilon(R) - R h^{A,B,x}_\epsilon(-R) \eqend{,}
\end{equation}
such that
\begin{splitequation}
&\frac{1}{4} \lim_{R \to \infty} \lim_{\epsilon \searrow 0} \int_{-R}^R \left( 1 - t \partial_t \right) h^{A,B,x}_\epsilon(t) \total t \\
&\quad= \lim_{R \to \infty} \left[ \frac{1}{2} \lim_{\epsilon \searrow 0} \int_{-R}^R h^{A,B,x}_\epsilon(t) \total t - \frac{R}{4} \tau\left( x^* \abs{B - R A} x \right) - \frac{R}{4} \tau\left( x^* \abs{B + R A} x \right) \right] \\
&\quad= \lim_{R \to \infty} \Biggl[ \int_0^R \tau\left( x^* (B - t A)_+ x \right) \total t \\
&\hspace{6em}+ \frac{R^2}{2} \, \tau(x^* A x) - \frac{R}{4} \tau\left( x^* \abs{B - R A} x \right) - \frac{R}{4} \tau\left( x^* \abs{B + R A} x \right) \Biggr] \eqend{.}
\end{splitequation}
Since $(B - t A)_+ = 0$ for $t > \norm{ B } \norm{ A^{-1} }$, the limit $R \to \infty$ is trivial for the integral, and we conclude by noting that for $R > \norm{ B } \norm{ A^{-1} }$ we have
\begin{equation}
\tau\left( x^* \abs{B - R A} x \right) + \tau\left( x^* \abs{B + R A} x \right) = \tau\left( x^* (R A - B) x \right) + \tau\left( x^* (B + R A) x \right) = 2 R \, \tau( x^* A x ) \eqend{.}
\end{equation}
\end{proof}

\begin{lemma}
\label{lemma:operatorint_kab_analytic}
Let
\begin{equation}
k_{0,\pm}^{A,B,C}(s,t) \defeq \frac{1}{s \pm \mathi (B - t A)}
\end{equation}
and recursively for $n \in \mathbb{N}$
\begin{equation}
k_{n,\pm}^{A,B,C}(s,t) \defeq k_{n-1,\pm}^{A,B,C}(s,t) C \frac{1}{s \pm \mathi (B - t A)} \eqend{.}
\end{equation}
For any $s, \delta > 0$, the function
\begin{equation}
z \mapsto \tau\left( x^* k_{2,\pm}^{A,\delta B,B}(s,z) \, x \right) \eqend{,} \quad z \in \mathbb{R} \pm \mathi \mathbb{R}_+
\end{equation}
is analytic in the given half-plane. For any $s, \delta > 0$ and $\phi \in \left[ - \frac{\pi}{2}, \frac{\pi}{2} \right]$, the function
\begin{equation}
z \mapsto \tau\left( x^* k_{2,\pm}^{A,\delta B,B}\left( z, \mathi \mathe^{\mathi \phi} \right) \, x \right) \eqend{,} \quad \Re z \geq \epsilon
\end{equation}
is analytic in the shifted right half-plane for any $\epsilon > 0$. For any $s \geq 0$, the function
\begin{equation}
z \mapsto \tau\left( x^* k_{2,\pm}^{A,0,B}(z,1) \, x \right) \eqend{,} \quad z \in \mathbb{R} \mp \mathi \mathbb{R}_+
\end{equation}
is analytic in the given half-plane.
\end{lemma}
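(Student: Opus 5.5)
The plan is to reduce all three claims to one statement: an operator-valued resolvent expression is holomorphic wherever the relevant inverses exist and are locally bounded. From there, analyticity of the scalar function follows by continuity of $\tau(x^*\,\cdot\,x)$. Explicitly, $k_{2,\pm}^{A,\delta B,B}(s,z) = R_\pm(s,z)\,B\,R_\pm(s,z)\,B\,R_\pm(s,z)$ with $R_\pm(s,z) \defeq [s \pm \mathi(\delta B - zA)]^{-1}$. Since $B \in L^1(\M,\tau)$ and the resolvents lie in $\M$, the noncommutative H\"older inequality gives $\abs{\tau(x^* Y_1 B Y_2 B Y_3 x)} \leq \norm{x}^2 \norm{B} \norm{B}_{L^1(\M,\tau)} \prod_j \norm{Y_j}$. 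The map $(Y_1,Y_2,Y_3) \mapsto \tau(x^* Y_1 B Y_2 B Y_3 x)$ is therefore a bounded trilinear form on $\M$. It thus suffices to show that $z \mapsto R_\pm$ is norm-holomorphic as an $\M$-valued function on the stated domain. Products of holomorphic Banach-algebra-valued maps composed with bounded multilinear forms are holomorphic, e.g.\ by checking complex differentiability directly via the resolvent identity $R(z)-R(z') = R(z)\,[R(z)^{-1}-R(z')^{-1}]\,R(z')$.

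For the first claim, write $z = u \pm \mathi v$ with $v \geq 0$. Then $s \pm \mathi(\delta B - zA) = (s + vA) \pm \mathi(\delta B - uA)$. Its real part is $s + vA \geq s > 0$, a selfadjoint operator bounded below. I will use the standard fact (the content of \lemmaref{lemma:resolvent_bound_complex}, or a direct argument) that an operator $T$ with $\Re T = (T+T^*)/2 \geq c > 0$ is invertible with $\norm{T^{-1}} \leq c^{-1}$. The proof is $\Re\langle\xi,T\xi\rangle \geq c\norm{\xi}^2$, applied to both $T$ and $T^*$. This gives a locally uniform bound $\norm{R_\pm} \leq s^{-1}$ on the closed half-plane. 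The map $z \mapsto R_\pm(s,z)^{-1}$ is affine in $z$ with bounded coefficient $A$, so invertibility plus the resolvent identity yields norm-holomorphy in the open half-plane and continuity up to the boundary.

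The second claim has $t = \mathi\mathe^{\mathi\phi}$ and varying $z$. The operator is $z \pm \mathi\delta B \mp \mathi\cdot\mathi\mathe^{\mathi\phi}A = z \pm \mathi\delta B \pm \mathe^{\mathi\phi}A$. Its real part is $\Re z + (\pm\cos\phi)A$, and here I need to be careful with the sign conventions. For the ``$+$'' branch, $\cos\phi \geq 0$ on $[-\pi/2,\pi/2]$ gives real part $\geq \Re z \geq \epsilon$. For the ``$-$'' branch, the real part $\Re z - \cos\phi\,A$ may fail to be positive. I would first recheck the intended signs: likely the lemma implicitly pairs $\pm$ with $\phi$ so that the term is accretive. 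Otherwise I would note that only the accretive combination is used later and prove that case. This sign bookkeeping is the step I expect to be the main obstacle. Once it is settled, the same accretivity bound $\norm{R} \leq \epsilon^{-1}$ together with holomorphy of $z \mapsto (z + K)^{-1}$ for fixed bounded $K$ finishes the claim.

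The third claim concerns $k_{2,\pm}^{A,0,B}(z,1)$, i.e.\ $(z \mp \mathi A)^{-1}$ for $z$ in the half-plane $\mathbb{R} \mp \mathi\mathbb{R}_+$. Writing $z = u \mp \mathi v$ with $v \geq 0$, the operator becomes $u \mp \mathi(v + A)$. Since $v + A \geq A \geq \norm{A^{-1}}^{-1} > 0$, its imaginary part is bounded away from zero. Hence it is invertible with norm of the inverse at most $\norm{A^{-1}}$; this is where the standing assumption that $A^{-1}$ is bounded enters, and it even allows $\Re z = 0$. Holomorphy then follows exactly as before, and composing with the bounded trilinear form gives analyticity of the scalar function.
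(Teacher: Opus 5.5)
Your proposal is correct, and it reaches analyticity by a different mechanism than the paper.

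\textbf{How the two routes differ.} The paper computes the partial derivatives in $\Re z$ and $\Im z$ via resolvent identities. It passes them through $\tau$ using \lemmaref{lemma:operatorint_tau_differentiable}, which needs $L^1$-norm continuity and boundedness of those derivatives, and then checks the Cauchy--Riemann equations. The only substantive input it names is that the shifted operators are invertible with bounded inverses.

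You instead prove norm-holomorphy of the $\M$-valued resolvents: inversion is holomorphic on the invertible group, and the operators are affine in $z$. You then compose with the trilinear form $(Y_1,Y_2,Y_3)\mapsto\tau(x^*Y_1BY_2BY_3x)$, which is bounded on $\M^3$ by H\"older because $B\in L^1(\M,\tau)\cap\M$. This removes all explicit derivative computations and the Cauchy--Riemann check. Your accretivity estimates, $\norm{R}\le s^{-1}$, $\norm{R}\le(\Re z)^{-1}$ and $\norm{R}\le\norm{A^{-1}}$, make explicit the invertibility that the paper only asserts. The underlying input (bounded resolvents) is the same in both proofs.

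\textbf{The ``$-$'' branch of the second claim.} Your concern here is justified: as literally stated this branch is false. The paper's proof wrongly asserts invertibility of $z-\mathi\delta B-\mathe^{\mathi\phi}A$. Already for $\M=\mathbb{C}$, $A=a>0$, $B=b>0$, $x=1$ and $\phi=0$, one gets
$$\tau\bigl(x^*k_{2,-}^{A,\delta B,B}(z,\mathi)\,x\bigr)=b^2(z-a-\mathi\delta b)^{-3}.$$
This has a pole at $z=a+\mathi\delta b$, which lies in $\Re z\ge\epsilon$ whenever $\epsilon\le a$.

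The version actually needed in \propref{prop:operatorint_ihab_jhab_taux_const} uses the lower half-circle $t=-\mathi\mathe^{\mathi\phi}$, since by the first claim the ``$-$'' function is analytic in the lower half-plane. There the operator is $z-\mathi\delta B+\mathe^{\mathi\phi}A$, whose Hermitian part is $\Re z+\cos\phi\,A\ge\Re z$. Your accretive argument then applies verbatim. So proving only the accretive pairing, as you propose, is the correct fix rather than a gap in your proof.
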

\begin{proof}
Using resolvent identities, we can compute formally the derivatives of the given functions with respect to $\Re z$ and $\Im z$, which are of the form $\tau( C'(\Re z) )$ or $\tau( D'(\Im z) )$. Clearly, the Cauchy--Riemann equations are fulfilled and the functions are analytic whenever the derivatives can be justified. Similar straightforward but lengthy computations to the ones used to prove local Lipschitz continuity in \propref{prop:operatorint_gh_lipschitz} show that $C'(\Re z)$ and $D'(\Im z)$ are norm continuous in $\Re z$ and $\Im z$, respectively, with the norm uniformly bounded as long as $z$ lies in the stated half-planes. The main point here is the invertibility of the operators $s \pm \mathi \delta B \mp \mathi z A$, $z \pm \mathi \delta B \pm \mathe^{\mathi \phi} A$, and $z \mp \mathi A$ respectively, such that the norms of their resolvents are bounded. Therefore, \lemmaref{lemma:operatorint_tau_differentiable} is applicable, and the derivatives are indeed given by the expressions obtained by the formal computation.
\end{proof}

\begin{proposition}
\label{prop:operatorint_ihab_resolvent_form}
It holds that
\begin{equation}
I^{\tau,x}_1(A, B) = \frac{1}{2 \pi} \lim_{R \to \infty} \lim_{\epsilon \searrow 0} \int_\epsilon^\infty s \int_{-R}^R \tau\left[ x^* \left( k_{2,+}^{A,B,B}(s,t) + k_{2,-}^{A,B,B}(s,t) \right) x \right] \total t \total s \eqend{.}
\end{equation}
with the functions $k_{n,\pm}^{A,B,C}(s,t)$ defined in \lemmaref{lemma:operatorint_kab_analytic}.
\end{proposition}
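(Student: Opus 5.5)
The plan is to start from \propref{prop:operatorint_gh_lipschitz}, which already gives
\begin{equation}
I^{\tau,x}_1(A,B) = \frac{1}{4} \lim_{R \to \infty} \lim_{\epsilon \searrow 0} \int_{-R}^R (1 - t\partial_t) h^{A,B,x}_\epsilon(t) \total t \eqend{,}
\end{equation}
and to rewrite the integrand $(1 - t\partial_t) g^{A,B,x}_{s}(t)$ in terms of the resolvent products $k_{2,\pm}^{A,B,B}$. The key algebraic identity is
\begin{equation}
\frac{X^2}{s^2+X^2} = 1 - \frac{s}{2}\left[ \frac{1}{s+\mathi X} + \frac{1}{s-\mathi X} \right], \quad X = B - tA \eqend{,}
\end{equation}
so that $g_s(t) = \frac{2}{\pi}\tau\bigl(x^*x\bigr) - \frac{s}{\pi}\tau\bigl(x^*[k_{0,+}+k_{0,-}](s,t)x\bigr)$. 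The constant term is annihilated by $1-t\partial_t$ only up to itself, but it contributes $\tau(x^*x)$ times a divergent $s$-integral; I would handle this by noting that it is $t$-independent and combines with the $t\partial_t$ term. More precisely, the goal is to show that $(1-t\partial_t)$ applied to $k_{0,\pm}$ produces exactly the $k_{2,\pm}$ terms after also using the $s$-derivative.

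The steps are as follows. First, compute $\partial_t k_{0,\pm}(s,t) = \pm\mathi\, k_{0,\pm} A k_{0,\pm} = \pm\mathi\, k_{1,\pm}^{A,B,A}$. Second, use $tA = B - X$ to rewrite $t\,k_{0,\pm} A k_{0,\pm} = k_{0,\pm}Bk_{0,\pm} - k_{0,\pm}Xk_{0,\pm}$. Third, note that $\pm\mathi X = (s\pm\mathi X) - s$, so $\pm\mathi\,k_{0,\pm}Xk_{0,\pm} = k_{0,\pm} - s\,k_{0,\pm}^2$.

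Combining these, $(1-t\partial_t)k_{0,\pm}$ becomes $\mp\mathi\, k_{1,\pm}^{A,B,B}$ plus terms of the form $s k_{0,\pm}^2 = -s\partial_s k_{0,\pm}$. The $s$-integral of these last terms, weighted by $s$ and taken over $(\epsilon,\infty)$, can be integrated by parts in $s$. Their contribution should cancel against the constant piece $\frac{2}{\pi}\tau(x^*x)$, with boundary terms vanishing as $\epsilon\searrow 0$ and at $s\to\infty$; I expect that one has to keep track of the large-$s$ asymptotics $k_{0,\pm}\sim 1/s$ precisely so that the constants cancel. This leaves the $k_{1}$-type term with $C = B$. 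To reach $k_2$, I would integrate by parts once more in $s$, since $\partial_s k_{1,\pm}^{A,B,B} = -(k_{0,\pm}^2 B k_{0,\pm} + k_{0,\pm}Bk_{0,\pm}^2)$. Alternatively, I would apply $\pm\mathi X = (s\pm\mathi X)-s$ again inside, which converts $k_1$ into $k_2^{A,B,B}$ modulo terms that are odd under $\pm$ and cancel in the sum $k_{2,+}+k_{2,-}$. The factor conventions should reproduce the prefactor $\frac{1}{2\pi}$ together with the weight $s$.

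Fubini/Tonelli swaps between the $t$- and $s$-integrals and the interchange of $\tau$ with the integrals will be justified by the resolvent bounds $\norm{k_{0,\pm}}\le 1/s$ and $\abs{\tau(x^* k\, A\, k' x)} \le \norm{x}^2\tau(A)\norm{k}\norm{k'}$. These bounds make everything absolutely integrable on $[-R,R]\times[\epsilon,\infty)$; this is why the limits are taken in the stated order.

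The main obstacle is bookkeeping the non-absolutely-convergent pieces. The constant $\tau(x^*x)$ term and the $1/s$ tails are not individually integrable in $s$, so the cancellation must happen at the level of the integrand before integrating, or else with an explicit cutoff in $s$ that is removed at the end. My first attempt would be to write everything as $\int_\epsilon^\infty \partial_s(\cdots)\total s$ plus the $k_2$ term, and to check that the boundary term at $s=\infty$ vanishes exactly because the leading $1/s$ asymptotics of $s\,k_{0,\pm}$ reproduce the constant. The boundary term at $s=\epsilon$ should then be $O(\epsilon)$ after the $t$-integration over the compact interval $[-R,R]$.
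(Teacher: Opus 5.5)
Your first step is sound, and it reproduces part of the paper's computation. With $X = B - tA$ one has the operator identity
\[
(1 - t\partial_t)\frac{X^2}{s^2+X^2} = \partial_s\left[\frac{sX^2}{s^2+X^2}\right] + \frac{\mathi s}{2}\Bigl(k_{1,+}^{A,B,B} - k_{1,-}^{A,B,B}\Bigr)(s,t) \eqend{,}
\]
so all non-$k_1$ pieces do combine into an exact $s$-derivative whose boundary contributions vanish. Splitting off $\tau(x^*x)$ is also harmless, since $\tau(\1) \leq \norm{A^{-1}} \tau(A) < \infty$ under Assumption~\ref{ass:operatorint}. What this step yields, however, is the $k_1$-representation
\[
I^{\tau,x}_1(A,B) = \frac{\mathi}{4\pi} \lim_{R\to\infty}\lim_{\epsilon\searrow0} \int_{-R}^R \int_\epsilon^\infty s\,\tau\Bigl(x^*\bigl[k_{1,+}^{A,B,B}-k_{1,-}^{A,B,B}\bigr](s,t)\,x\Bigr)\total s\total t \eqend{,}
\]
not the stated $k_2$-representation. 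The passage from one to the other is where your proposal has a genuine gap, because neither mechanism you suggest works.

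An integration by parts in $s$ only adds resolvent factors; it never produces a second factor of $B$. In the algebraic route,
\[
k_{1,\pm} = s\,k_{0,\pm}Bk_{0,\pm}^2 \pm \mathi\,k_{2,\pm} \mp \mathi t\,k_{0,\pm}Bk_{0,\pm}Ak_{0,\pm} \eqend{,}
\]
and the leftover terms do not cancel by $\pm$-symmetry. Already for $\M = \mathbb{C}$, $A = a$, $B = b$, $x = 1$, where $I_1 = b^2/(2a)$, the $k_2$-piece produced by this identity contributes $-b^2/(4a)$.

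More tellingly, at fixed $t \neq b/a$ one has $\int_0^\infty s\,(k_{2,+}+k_{2,-})\total s = 0$, whereas $\frac{\mathi}{2}\int_0^\infty s\,(k_{1,+}-k_{1,-})\total s = \frac{\pi b}{2}\operatorname{sgn}(b-ta)$. With the cutoff, $\int_\epsilon^\infty s\,(k_{2,+}+k_{2,-})\total s = 2\epsilon^3 b^2/(\epsilon^2+X^2)^2$, which is a nascent delta function at $t = b/a$ of total weight $\pi b^2/a$. So the two representations are not related pointwise in $t$ by any manipulation in $s$; they agree only after integrating in $t$.

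The missing idea is a second integration by parts, this time in $t$. The extra $B$ can only enter through $t\partial_t k_{0,\pm} = \pm\mathi\,k_{0,\pm}(B - X)k_{0,\pm}$. Equivalently, homogeneity gives $\pm 2\mathi\,k_{2,\pm} = (s\partial_s + t\partial_t + 2)k_{1,\pm}$, and the $t\partial_t$-part of this is not an $s$-derivative.

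The paper accordingly computes $t\partial_t(1-t\partial_t)g^{A,B,x}_s = -\frac{2}{\pi}s\,\tau(x^*[k_{2,+}+k_{2,-}]x) - \frac{2\mathi}{\pi}\partial_s\tau(x^*\tilde k^{A,B}x)$. It then uses $\int_{-R}^R F\total t = [tF]_{t=-R}^{t=R} - \int_{-R}^R tF'\total t$. This produces boundary terms
\[
\frac{\mathi R}{\pi}\int_\epsilon^\infty s\,\tau\bigl(x^*[(k_{1,+}-k_{1,-})(s,R) + (k_{1,+}-k_{1,-})(s,-R)]x\bigr)\total s
\]
carrying an explicit factor $R$, which the crude resolvent bounds only control as $O(R)$. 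The paper shows they vanish in three steps:
\begin{enumerate}
\item rescale $s \to Rs$ and set $\delta = R^{-1}$;
\item pass by dominated convergence to $\frac{4}{\pi}\int_{-\infty}^\infty s\,\tau(x^*k^{A,0,B}_{2,-}(s,1)x)\total s$;
\item kill this limit using the analyticity from \lemmaref{lemma:operatorint_kab_analytic} and a contour shift to $+\mathi\infty$.
\end{enumerate}
Your plan never performs the $t$-integration by parts, so it never meets these boundary terms. Without them, the argument does not reach the stated formula.
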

\begin{proof}
By \propref{prop:operatorint_gh_lipschitz}, for any $s > 0$ the function $g^{A,B,x}_s(t)$ defined in~\eqref{eq:operatorint_ihab_jhab_taux_gh_def} is locally Lipschitz continuous in $t$, and by Rademacher's theorem almost everywhere differentiable. For its derivative, using resolvent identities we compute formally
\begin{splitequation}
\left( 1 - t \partial_t \right) g^{A,B,x}_s(t) &= \frac{1}{\pi} \tau\biggl( x^* \biggl[ \mathi s k_{1,+}^{A,B,B}(s,t) - \mathi s k_{1,-}^{A,B,B}(s,t) \\
&\hspace{4em}+ 2 \frac{(B - t A)^2 \left[ (B - t A)^2 - s^2 \right]}{\left[ s^2 + (B - t A)^2 \right]^2} \biggr] x \biggr) \eqend{.}
\end{splitequation}
This is of the form $\tau\left( C'(t) \right)$ with $C'(t) \in L^1(\M,\tau)$, and similar straightforward but lengthy computations to the ones used to prove local Lipschitz continuity in \propref{prop:operatorint_gh_lipschitz} show that $C'(t)$ is continuous in $t$ in $L^1(\M,\tau)$ norm with the norm uniformly bounded for $t \in [-R,R]$. Therefore, \lemmaref{lemma:operatorint_tau_differentiable} is applicable and the derivative of $g^{A,B,x}_s(t)$ is equal to the stated expression. More straightforward estimates also give
\begin{equation}
\abs{ \left( 1 - t \partial_t \right) g^{A,B,x}_{s + \epsilon}(t) } \leq \frac{4}{\pi} \norm{ x }^2 \norm{ B - t A } \Bigl[ \tau\left( \abs{ B - t A } \right) + \tau(B) \Bigr] \, (s + \epsilon)^{-2} \eqend{,}
\end{equation}
such that both the dominated convergence theorem and Fubini's theorem are applicable and yield
\begin{equation}
\int_{-R}^R \left( 1 - t \partial_t \right) h^{A,B,x}_\epsilon(t) \total t = \int_\epsilon^\infty \int_{-R}^R \left( 1 - t \partial_t \right) g^{A,B,x}_s(t) \total t \total s \eqend{.}
\end{equation}
for the function $h^{A,B,x}_\epsilon(t)$ defined in~\eqref{eq:operatorint_ihab_jhab_taux_gh_def}. We repeat the procedure, and with the same arguments as before and a lengthy computation obtain
\begin{equation}
t \partial_t \left( 1 - t \partial_t \right) g^{A,B,x}_s(t) = - \frac{2}{\pi} s \, \tau\left( x^* \left[ k_{2,+}^{A,B,B}(s,t) + k_{2,-}^{A,B,B}(s,t) \right] x \right) - \frac{2 \mathi}{\pi} \partial_s \tau\left( x^* \tilde{k}^{A,B}(s,t) \, x \right)
\end{equation}
with
\begin{equation}
\tilde{k}^{A,B}(s,t) \defeq s^2 k_{1,+}^{A,B,B}(s,t) - s^2 k_{1,-}^{A,B,B}(s,t) + \frac{2 \mathi (B - t A)^2 s^3}{\left[ (B - t A)^2 + s^2 \right]^2} \eqend{.}
\end{equation}
Integration by parts then yields
\begin{splitequation}
\int_{-R}^R \left( 1 - t \partial_t \right) g^{A,B,x}_s(t) \total t &= \int_{-R}^R \partial_t \left[ t \left( 1 - t \partial_t \right) g^{A,B,x}_s(t) \right] \total t - \int_{-R}^R t \partial_t \left( 1 - t \partial_t \right) g^{A,B,x}_s(t) \total t \\
&= \left[ t \left( 1 - t \partial_t \right) g^{A,B,x}_s(t) \right]_{t = R} - \left[ t \left( 1 - t \partial_t \right) g^{A,B,x}_s(t) \right]_{t = - R} \\
&\quad- \int_{-R}^R t \partial_t \left( 1 - t \partial_t \right) g^{A,B,x}_s(t) \total t \eqend{,}
\end{splitequation}
and using \propref{prop:operatorint_gh_lipschitz} and combining all of the above we obtain
\begin{splitequation}
&I^{\tau,x}_1(A, B) = \frac{1}{4} \lim_{R \to \infty} \lim_{\epsilon \searrow 0} \int_{-R}^R \left( 1 - t \partial_t \right) h^{A,B,x}_\epsilon(t) \total t \\
&\quad= \lim_{R \to \infty} \lim_{\epsilon \searrow 0} \int_\epsilon^\infty \Biggl[ \frac{1}{4} \left[ t \left( 1 - t \partial_t \right) g^{A,B,x}_s(t) \right]_{t = R} - \frac{1}{4} \left[ t \left( 1 - t \partial_t \right) g^{A,B,x}_s(t) \right]_{t = - R} \\
&\qquad+ \frac{\mathi}{2 \pi} \int_{-R}^R \partial_s \tau\left( x^* \tilde{k}^{A,B}(s,t) \, x \right) \total t + \frac{s}{2 \pi} \int_{-R}^R \tau\left( x^* \left[ k_{2,+}^{A,B,B}(s,t) + k_{2,-}^{A,B,B}(s,t) \right] x \right) \total t \Biggr] \total s \eqend{.}
\end{splitequation}
We thus obtain the conclusion if we show that all except the last term vanish in the limits $R \to \infty$ and $\epsilon \searrow 0$. We start with the middle term involving $\tilde{k}^{A,B}$. Another straightforward computation shows that
\begin{splitequation}
\abs{ \partial_s \tau\left( x^* \tilde{k}^{A,B}(s,t) \, x \right) } &\leq 2 \norm{ x }^2 \norm{ B - t A } \Bigl( 10 \tau(B) + 7 \tau( \abs{ B - t A } ) \Bigr) s^{-2} \eqend{,} \\
\abs{ \tau\left( x^* \tilde{k}^{A,B}(s,t) \, x \right) } &\leq 2 \norm{ x }^2 \Bigl( 2 \tau(B) + \tau( \abs{ B - t A } ) \Bigr) \min\left( 1, \norm{ B - t A } s^{-1} \right) \eqend{,}
\end{splitequation}
and using Fubini's theorem we can interchange the $s$ and $t$ integrations to obtain
\begin{equation}
\int_\epsilon^\infty \int_{-R}^R \partial_s \tau\left( x^* \tilde{k}^{A,B}(s,t) \, x \right) \total t \total s = - \int_{-R}^R \tau\left( x^* \tilde{k}^{A,B}(\epsilon,t) \, x \right) \total t \eqend{.}
\end{equation}
Since the integrand is bounded uniformly in $\epsilon$, the dominated convergence theorem shows that we can interchange the limit $\epsilon \searrow 0$ with the integration in $t$, and the integrand converges pointwise to $0$. For the first term, the same reasoning as before yields
\begin{splitequation}
\left( 1 - t \partial_t \right) g^{A,B,x}_s(t) &= \frac{\mathi s}{\pi} \tau\left( x^* \left[ k_{1,+}^{A,B,B}(s,t) - k_{1,-}^{A,B,B}(s,t) \right] x \right) \\
&\quad+ \frac{1}{\pi} \partial_s \tau\left( x^* \left[ \frac{(B - t A)^2}{s + \mathi (B - t A)} + \frac{(B - t A)^2}{s - \mathi (B - t A)} \right] x \right)
\end{splitequation}
and the bound $\abs{ \tau\left( x^* \left[ \frac{(B - t A)^2}{s \pm \mathi (B - t A)} \right] x \right) } \leq \norm{ x }^2 \norm{ B - t A } \tau\left( \abs{B - t A} \right) \, s^{-1}$, such that
\begin{splitequation}
&\int_\epsilon^\infty \biggl[ \left[ t \left( 1 - t \partial_t \right) g^{A,B,x}_s(t) \right]_{t = R} - \left[ t \left( 1 - t \partial_t \right) g^{A,B,x}_s(t) \right]_{t = - R} \biggr] \total s \\
&\quad= \frac{\mathi R}{\pi} \int_\epsilon^\infty s \, \tau\left( x^* \left[ k_{1,+}^{A,B,B}(s,R) - k_{1,-}^{A,B,B}(s,R) + k_{1,+}^{A,B,B}(s,-R) - k_{1,-}^{A,B,B}(s,-R) \right] x \right) \total s \\
&\qquad- \frac{R}{\pi} \tau\left( x^* \left[ \frac{(B - R A)^2}{\epsilon + \mathi (B - R A)} + \frac{(B - R A)^2}{\epsilon - \mathi (B - R A)} \right] x \right) \\
&\qquad- \frac{R}{\pi} \tau\left( x^* \left[ \frac{(B + R A)^2}{\epsilon + \mathi (B + R A)} + \frac{(B + R A)^2}{\epsilon - \mathi (B + R A)} \right] x \right) \eqend{.}
\end{splitequation}
The bounds $\abs{ \tau\left( x^* k_{1,+}^{A,B,B}(s,\pm R) \, x \right) }, \abs{ \tau\left( x^* k_{1,-}^{A,B,B}(s,\pm R) \, x \right) } \leq \norm{ x }^2 \, \tau(B) \norm{ \frac{1}{B \mp R A} }^2$ show that for $R > \norm{ B } \norm{ A^{-1} }$ the integrand is bounded for all $s \in [0,1]$, such that the limit $\epsilon \searrow 0$ is finite. Rescaling then $s \to R s$, setting $\delta = R^{-1}$ and using resolvent identities, we obtain
\begin{splitequation}
&\frac{\mathi R}{\pi} \int_0^\infty s \, \tau\left( x^* \left[ k_{1,+}^{A,B,B}(s,R) - k_{1,-}^{A,B,B}(s,R) + k_{1,+}^{A,B,B}(s,-R) - k_{1,-}^{A,B,B}(s,-R) \right] x \right) \total s \\
&\quad= \frac{2}{\pi} \int_0^\infty s \, \tau\left( x^* \left[ \kappa^{A,B}_-(s,-\delta) + \kappa^{A,B}_-(s,\delta) + \kappa^{A,B}_+(s,-\delta) + \kappa^{A,B}_+(s,\delta) \right] x \right) \total s
\end{splitequation}
with
\begin{equation}
\kappa^{A,B}_\pm(s,\delta) \defeq \frac{1}{s \pm \mathi (A + \delta B)} B \frac{1}{s \pm \mathi (A - \delta B)} B \frac{1}{s \pm \mathi (A + \delta B)} \eqend{.}
\end{equation}
The bound
\begin{equation}
s \, \abs{ \tau\left( x^* \kappa_\pm^{A,B}(s,\delta) \, x \right) } \leq \norm{ x }^2 \norm{ B } \tau(B) \min\left( s^{-2}, \norm{ \frac{1}{A + \delta B} }^2 \right)
\end{equation}
shows that for sufficiently small $\delta$ (say $\delta < ( 2 \norm{ A^{-1} } \norm{B} )^{-1}$) the integrand is bounded in absolute value by an integrable function of $s$ uniformly in $\delta$. Using the dominated convergence theorem, we can exchange the limit $\delta \to 0^+$ (corresponding to $R \to \infty$) with the integration and obtain
\begin{splitequation}
&\lim_{R \to \infty} \frac{\mathi R}{\pi} \int_0^\infty s \, \tau\left( x^* \left[ k_{1,+}^{A,B,B}(s,R) - k_{1,-}^{A,B,B}(s,R) + k_{1,+}^{A,B,B}(s,-R) - k_{1,-}^{A,B,B}(s,-R) \right] x \right) \total s \\
&\quad= \frac{4}{\pi} \int_0^\infty s \, \tau\left( x^* \left[ \frac{1}{s + \mathi A} B \frac{1}{s + \mathi A} B \frac{1}{s + \mathi A} + \frac{1}{s - \mathi A} B \frac{1}{s - \mathi A} B \frac{1}{s - \mathi A} \right] x \right) \total s \\
&\quad= \frac{4}{\pi} \int_{-\infty}^\infty s \, \tau\left( x^* \frac{1}{s + \mathi A} B \frac{1}{s + \mathi A} B \frac{1}{s + \mathi A} \, x \right) \total s = \frac{4}{\pi} \int_{-\infty}^\infty s \, \tau\left( x^* k_{2,-}^{A,0,B}(s,1) \, x \right) \total s \eqend{.}
\end{splitequation}
By \lemmaref{lemma:operatorint_kab_analytic}, the function $z \mapsto z \, \tau\left( x^* k_{2,-}^{A,0,B}(z,1) \, x \right)$ is analytic for $\Im z \geq 0$, and by Cauchy's theorem, its integral over the rectangle contour with endpoints $\pm R$ and $\pm R + \mathi M$ vanishes. For $u \geq 0$, the bound
\begin{equation}
\abs{ (s + \mathi u) \, \tau\left( x^* k_{2,-}^{A,0,B}(s + \mathi u,1) \, x \right) } \leq \norm{ x }^2 \norm{ B } \tau(B) \left( s^2 + u^2 + \norm{ A^{-1} }^{-2} \right)^{-1}
\end{equation}
shows that the integrals over the straight lines from $\pm R$ to $\pm R + \mathi M$ vanish in the limit $R \to \infty$, such that we obtain
\begin{equation}
\int_{-\infty}^\infty s \, \tau\left( x^* k_{2,-}^{A,0,B}(s,1) \, x \right) \total s = \int_{-\infty}^\infty (s + \mathi M) \, \tau\left( x^* k_{2,-}^{A,0,B}(s + \mathi M,1) \, x \right) \total s
\end{equation}
for any $M \geq 0$. The same bound shows that the integrand is bounded in absolute value by an integrable function of $s$ uniformly in $M$ and vanishes pointwise as $M \to \infty$. Using the dominated convergence theorem we can take the limit $M \to \infty$ inside the integral, such that this term vanishes as well.
\end{proof}

\begin{proposition}
\label{prop:operatorint_ihab_jhab_taux_const}
For any $x \in \M$, the integrals $I^{\tau,x}_1(A, B)$ and $J^{\tau,x}_1(A, B)$~\eqref{eq:IntegralEqualityWeak} are equal.
\end{proposition}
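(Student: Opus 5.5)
The plan is to begin from the resolvent representation of \propref{prop:operatorint_ihab_resolvent_form},
\begin{equation}
I^{\tau,x}_1(A,B) = \frac{1}{2\pi} \lim_{R \to \infty} \lim_{\epsilon \searrow 0} \int_\epsilon^\infty s \int_{-R}^R \tau\left[ x^* \left( k_{2,+}^{A,B,B}(s,t) + k_{2,-}^{A,B,B}(s,t) \right) x \right] \total t \total s \eqend{,}
\end{equation}
and to turn it into $J^{\tau,x}_1(A,B)$ by contour deformations in the complex $t$- and $s$-planes. For $f = 1$ the integrand of $J$ reads explicitly
\begin{equation}
\tau\left( x^* \, s \, (A+s)^{-1} B (A+s)^{-1} B (A+s)^{-1} \, x \right) \eqend{.}
\end{equation}
The target resolvent $(A+s)^{-1}$ arises from $s \pm \mathi(B - tA)$ when $t$ is placed on the imaginary axis, $t = \pm \mathi u$, and the $B$-term inside the resolvent is then scaled away. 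This is exactly the structure anticipated by \lemmaref{lemma:operatorint_kab_analytic}, through the functions $k^{A,\delta B,B}_{2,\pm}$ with the point $\mathi \mathe^{\mathi\phi}$ and the limiting case $\delta = 0$.

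\textbf{Step 1 (rotate the $t$-contour).} For fixed $s > 0$, the function $t \mapsto \tau(x^* k_{2,\pm}^{A,B,B}(s,t) x)$ is analytic in the half-plane $\pm\Im t > 0$ by the first statement of \lemmaref{lemma:operatorint_kab_analytic}, after rescaling $\delta$. This holds because the real part of $s \pm \mathi(B - tA)$ equals $s \pm (\Im t) A \geq s$ there. Since three resolvents appear, each of order $\abs{t}^{-1}$ for large $\abs{t}$, the integrand decays like $\abs{t}^{-3}$.

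I would therefore rotate the segment $[-R,R]$ into the imaginary half-axis, $t = \pm \mathi \mathe^{\mathi\phi} u$ with $\phi$ running from $\mp\pi/2$ to $0$, keeping the two arcs of radius $R$. The pieces along the imaginary axis then carry resolvents $(s + uA \pm \mathi B)^{-1}$. The arcs of radius $R$ are \emph{not} negligible uniformly as $\epsilon \searrow 0$, since $s \sim R$ dominates. This is why the order of limits in \propref{prop:operatorint_ihab_resolvent_form} matters, and why I would keep these arc contributions explicitly.

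\textbf{Step 2 (rescale and rotate in $s$).} On the arcs and on the imaginary axis, substitute $s = R\sigma$, respectively $s = u\sigma$, and set $\delta = R^{-1}$ or $u^{-1}$. This produces expressions of the form $\tau(x^* k^{A,\delta B,B}_{2,\pm}(\sigma, \mathi \mathe^{\mathi\phi}) x)$, which by the second statement of \lemmaref{lemma:operatorint_kab_analytic} are analytic in $\sigma$ for $\Re\sigma \geq \epsilon$.

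Using that analyticity together with bounds of the type $\norm{(\sigma + \mathe^{\mathi\phi}A \pm \mathi\delta B)^{-1}} \lesssim (\abs{\sigma} + \norm{A^{-1}}^{-1})^{-1}$, valid for $\delta < (2\norm{A^{-1}}\norm{B})^{-1}$, I would pass to $\delta \to 0$ by dominated convergence, as in the last part of the proof of \propref{prop:operatorint_ihab_resolvent_form}. The arc contributions should then collapse: the $\phi$-integral of the $\delta = 0$ limit reduces, via the third statement of \lemmaref{lemma:operatorint_kab_analytic} and a Cauchy argument as before, to an expression in $(\sigma + A)^{-1}$ only. The sum of the $\pm$ pieces along the imaginary axis, after undoing the scaling, should reproduce $\int_0^\infty s\,\tau(x^*(A+s)^{-1}B(A+s)^{-1}B(A+s)^{-1}x)\total s$, the factor $1/(2\pi)$ being absorbed by the two half-axis orientations and the $\phi$-integration.

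\textbf{Main obstacle.} The step I expect to be hardest is the bookkeeping of the boundary and arc terms in the double limit $\lim_{R}\lim_{\epsilon}$. Fubini in $(s,t)$ fails near $s = 0$, where the $t$-integral of $k_2$ over the full line would vanish by closing the contour. The whole nonzero answer is therefore carried by the region $s \lesssim \epsilon$ together with the large arcs.

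My first attempt would be to keep $\epsilon$ and $R$ finite and perform the rotation on the truncated rectangle/sector. I would bound every error term with the resolvent lemmas \lemmaref{lemma:resolvent_bound_imaginary} and \lemmaref{lemma:resolvent_bound_complex} and the trace estimate $\abs{\tau(x^* f A g x)} \leq \norm{x}^2\norm{f}\norm{g}\tau(A)$. Only afterwards would I take $\epsilon \searrow 0$ and then $R \to \infty$, checking at the end against the scalar case $A, B > 0$, where both sides equal $\int_0^\infty (B - tA)_+ \total t = B^2/(2A)$.
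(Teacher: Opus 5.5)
There is a genuine gap. The plan attributes $J^{\tau,x}_1(A,B)$ to the wrong contour pieces, and the computation that actually produces it is missing.

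For fixed $s$ and each sign, $t \mapsto \tau(x^* k^{A,B,B}_{2,\pm}(s,t)\, x)$ is analytic in $\pm\Im t > 0$, and the endpoints $\pm R$ are fixed. So $\int_{-R}^R$ already equals the integral over the semicircle of radius $R$, and routing the path through the imaginary axis adds nothing. If you rotate both halves of $[-R,R]$, you get $\int_0^{\pm\mathi R}$ and $\int_{\pm\mathi R}^0$, which cancel identically.

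A single imaginary-axis piece is not even finite in the limit. After your substitution $s = u\sigma$ it behaves like $\mathi J^{\tau,x}_1(A,B)\int^R u^{-1}\total u$. For scalars $A=a$, $B=b$, the $k_{2,+}$ piece on $[0,\mathi R]$ equals $\mathi\frac{b^2}{2a}\ln\frac{Ra}{b}+\frac{\pi b^2}{4a}+o(1)$. Its finite part is only half of the full $k_{2,+}$ contribution $\frac{\pi b^2}{2a}$.

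You are right that the large arcs matter, but the value is not carried by $s\lesssim\epsilon$ at all; the limit $\epsilon\searrow0$ is harmless. It is carried by $s\sim R$ on the semicircle, which holds the \emph{entire} answer.

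The third statement of \lemmaref{lemma:operatorint_kab_analytic} does not help here. It concerns the real point $t=1$ at $\delta=0$, i.e.\ the endpoint of the rescaled contour. In \propref{prop:operatorint_ihab_resolvent_form} it is used to show that a boundary term vanishes, and it cannot evaluate the arc integral.

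What is needed is a $\phi$-dependent rotation of the $s$-contour on the semicircle, which is how the paper argues.
First, rescale $s,t,\epsilon$ by $R$ with $\delta=R^{-1}$, and close $[-1,1]$ by the unit semicircle $t=\mathi\mathe^{\mathi\phi}$, $\phi\in[-\frac{\pi}{2},\frac{\pi}{2}]$.
Next, use Fubini (the integrand is $O(s^{-3})$) to move the $\phi$-integral outside.
Then, for each fixed $\phi$, rotate $s\mapsto\mathe^{\mathi\phi}s$ using the second statement of the lemma; the connecting arcs at radius $\epsilon$ and at infinity contribute nothing.

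Since $\mathe^{\mathi\phi}s+\mathi\delta B+\mathe^{\mathi\phi}A=\mathe^{\mathi\phi}(s+A+\mathi\delta\mathe^{-\mathi\phi}B)$, the phases cancel: $\mathe^{\mathi\phi}$ from $\total t$, $\mathe^{2\mathi\phi}$ from $s\,\total s$, and $\mathe^{-3\mathi\phi}$ from the three resolvents. Dominated convergence as $\delta,\epsilon\searrow0$ is uniform in $\phi$, since $A-\delta B$ is invertible for small $\delta$. It leaves the $\phi$-independent integrand $s\,\tau(x^*(s+A)^{-1}B(s+A)^{-1}B(s+A)^{-1}x)$. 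The $\phi$-integral then gives a factor $\pi$ for $k_{2,+}$ and the same for $k_{2,-}$, and the prefactor $\frac{1}{2\pi}$ yields exactly $J^{\tau,x}_1(A,B)$.

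Your Step~2 already has the right objects, namely $k^{A,\delta B,B}_{2,\pm}(\sigma,\mathi\mathe^{\mathi\phi})$ and their analyticity in $\sigma$. What it lacks is this rotation, or an equivalent argument. One such argument: $F(w)\defeq\int_0^\infty\sigma\,\tau(x^*(\sigma+wA)^{-1}B(\sigma+wA)^{-1}B(\sigma+wA)^{-1}x)\total\sigma$ is analytic for $\Re w>0$ and continuous up to $\abs{\arg w}=\frac{\pi}{2}$. By scaling it equals $w^{-1}F(1)$ on $w>0$, hence everywhere. Without some such step the arc contribution is never evaluated, and with $J^{\tau,x}_1(A,B)$ assigned to the imaginary axis the factors cannot close.
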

\begin{proof}
By \propref{prop:operatorint_ihab_resolvent_form}, it holds that
\begin{equation}
I^{\tau,x}_1(A, B) = \frac{1}{2 \pi} \lim_{R \to \infty} \lim_{\epsilon \searrow 0} \int_\epsilon^\infty s \int_{-R}^R \tau\left[ x^* \left( k_{2,+}^{A,B,B}(s,t) + k_{2,-}^{A,B,B}(s,t) \right) x \right] \total t \total s \eqend{,}
\end{equation}
and rescaling $\epsilon \to R \epsilon$, $t \to R t$, $s \to R s$ and setting $\delta = R^{-1}$, this can be written as
\begin{equation}
I^{\tau,x}_1(A, B) = \frac{1}{2 \pi} \lim_{\delta \to 0^+} \lim_{\epsilon \searrow 0} \int_\epsilon^\infty s \int_{-1}^1 \tau\left[ x^* \left( k_{2,+}^{A,\delta B,B}(s, t) + k_{2,-}^{A,\delta B,B}(s, t) \right) x \right] \total t \total s \eqend{.}
\end{equation}
Consider now the closed contour consisting of the interval $[-1,1]$ and the half-circle of radius 1 in the upper half-plane. By \lemmaref{lemma:operatorint_kab_analytic}, the function $z \mapsto \tau\left( x^* k_{2,+}^{A,\delta B,B}(s,z) \, x \right)$ is analytic in the upper half-plane, such that by Cauchy's theorem its integral over the contour vanishes and parametrizing the half-circle by $z = \mathi \mathe^{\mathi \phi}$ with $\phi \in \left[ - \frac{\pi}{2}, \frac{\pi}{2} \right]$ we obtain
\begin{equation}
\int_{-1}^1 \tau\left( x^* k_{2,+}^{A,\delta B,B}(s,t) \, x \right) \total t = \int_{- \frac{\pi}{2}}^\frac{\pi}{2} \tau\left( x^* k_{2,+}^{A,\delta B,B}\left( s, \mathi \mathe^{\mathi \phi} \right) \, x \right) \mathe^{\mathi \phi} \total \phi \eqend{.}
\end{equation}
The bound
\begin{equation}
\abs{ \tau\left( x^* k_{2,+}^{A,\delta B,B}\left( s, \mathi \mathe^{\mathi \phi} \right) \, x \right) } \leq \norm{ x }^2 \norm{ B } \tau(B) \, s^{-3}
\end{equation}
shows that the integrand is bounded in absolute value by an integrable function of $s$, and using Fubini's theorem we can interchange the integrations to obtain
\begin{equation}
\int_\epsilon^\infty s \int_{-1}^1 \tau\left( x^* k_{2,+}^{A,\delta B,B}(s, t) \, x \right) \total t \total s = \int_{- \frac{\pi}{2}}^\frac{\pi}{2} \int_\epsilon^\infty s \, \tau\left( x^* k_{2,+}^{A,\delta B,B}\left( s, \mathi \mathe^{\mathi \phi} \right) \, x \right) \total s \, \mathe^{\mathi \phi} \total \phi \eqend{.}
\end{equation}
By \lemmaref{lemma:operatorint_kab_analytic}, the function $z \mapsto z \, \tau\left( x^* k_{2,+}^{A,\delta B,B}\left( z, \mathi \mathe^{\mathi \phi} \right) \, x \right)$ is analytic for $\Re z \geq \epsilon$, and by Cauchy's theorem its integral over the closed contour consisting of the interval $[ \epsilon, R ]$, the straight line from $\mathe^{\mathi \phi} \epsilon$ to $\mathe^{\mathi \phi} R$ and the two arcs connecting $\epsilon$ and $\mathe^{\mathi \phi} \epsilon$ as well as $R$ and $\mathe^{\mathi \phi} R$ vanishes for any $R > \epsilon$. Parametrizing the large arc by $z = \mathe^{\mathi \alpha \phi} R$ with $\alpha \in [0,1]$, the integral over it can be bounded by
\begin{splitequation}
&\abs{ \int_0^1 R^2 \mathe^{2 \mathi \alpha \phi} \, \tau\left( x^* k_{2,+}^{A,\delta B,B}\left( R \mathe^{\mathi \alpha \phi}, \mathi \mathe^{\mathi \phi} \right) \, x \right) \phi \total \alpha } \\
&\quad\leq R^2 \abs{\phi} \norm{ x }^2 \norm{ B } \tau(B) \int_0^1 \norm{ \frac{1}{R \mathe^{\mathi \alpha \phi} + \mathi \delta B + \mathe^{\mathi \phi} A} }^3 \total \alpha \\
&\quad= \frac{\abs{\phi} \norm{ x }^2 \norm{ B } \tau(B)}{R} \int_0^1 \norm{ \frac{1}{\1 + \mathi \delta R^{-1} B \, \mathe^{- \mathi \alpha \phi} + R^{-1} A \, \mathe^{\mathi (1-\alpha) \phi}} }^3 \total \alpha \eqend{,}
\end{splitequation}
and since $A$ and $B$ are bounded, the norm is finite for sufficiently large $R$ and the integral vanishes in the limit $R \to \infty$. For the small arc, we obtain analogously
\begin{splitequation}
&\abs{ \int_0^1 \epsilon^2 \mathe^{2 \mathi \alpha \phi} \, \tau\left( x^* k_{2,+}^{A,\delta B,B}\left( \epsilon \, \mathe^{\mathi \alpha \phi}, \mathi \mathe^{\mathi \phi} \right) \, x \right) \phi \total \alpha } \\
&\quad\leq \epsilon \abs{\phi} \norm{ x }^2 \norm{ B } \tau(B) \int_0^1 \norm{ \frac{1}{\epsilon \, \mathe^{- \mathi (1-\alpha) \phi} + \mathi \delta B \, \mathe^{- \mathi \phi} + A} }^3 \total \alpha \eqend{,}
\end{splitequation}
and for sufficiently small $\epsilon$ and $\delta$, the norm is again finite, uniformly in $\phi$ and $\alpha$. Since the integration in $\phi$ is over the finite interval $\left[ - \frac{\pi}{2}, \frac{\pi}{2} \right]$, for this term we may exchange the limit $\epsilon \searrow 0$ with the integration, and it follows that
\begin{splitequation}
&\lim_{\epsilon \searrow 0} \int_\epsilon^\infty s \int_{-1}^1 \tau\left( x^* k_{2,+}^{A,\delta B,B}(s, t) \, x \right) \total t \total s \\
&\quad= \lim_{\epsilon \searrow 0} \int_{- \frac{\pi}{2}}^\frac{\pi}{2} \int_\epsilon^\infty s \, \tau\left( x^* k_{2,+}^{A,\delta B,B}\left( \mathe^{\mathi \phi} s, \mathi \mathe^{\mathi \phi} \right) \, x \right) \total s \, \mathe^{3 \mathi \phi} \total \phi \eqend{.}
\end{splitequation}
Using the bound
\begin{splitequation}
&\abs{ \mathe^{3 \mathi \phi} \tau\left( x^* k_{2,+}^{A,\delta B,B}\left( \mathe^{\mathi \phi} s, \mathi \mathe^{\mathi \phi} \right) \, x \right) } \\
&\quad= \abs{ \tau\left( x^* \left[ \frac{1}{s + \mathi \delta B \, \mathe^{- \mathi \phi} + A} B \frac{1}{s + \mathi \delta B \, \mathe^{- \mathi \phi} + A} B \frac{1}{s + \mathi \delta B \, \mathe^{- \mathi \phi} + A} \right] x \right) } \\
&\quad\leq \norm{ x }^2 \norm{ B } \tau(B) \norm{ \frac{1}{s + \mathi \delta B \, \mathe^{- \mathi \phi} + A} }^3 \leq \norm{ x }^2 \norm{ B } \tau(B) \begin{cases} s^{-3} & s > 1 \eqend{,} \\ \norm{ \frac{1}{A - \delta B} }^3 & s \in [0,1] \eqend{,} \end{cases}
\end{splitequation}
because $A^{-1}$ is bounded, we see that for sufficiently small $\delta$ (say $\delta < ( 2 \norm{ A^{-1} } \norm{B} )^{-1}$) the integrand is bounded in absolute value by an integrable function of $s$ uniformly in $\phi$, $\epsilon$ and $\delta$. We may therefore interchange the limits $\delta, \epsilon \searrow 0$ with the integration and obtain
\begin{splitequation}
&\lim_{\delta \to 0^+} \lim_{\epsilon \searrow 0} \int_\epsilon^\infty s \int_{-1}^1 \tau\left( x^* k_{2,+}^{A,\delta B,B}(s, t) \, x \right) \total t \total s \\
&\quad= \int_{- \frac{\pi}{2}}^\frac{\pi}{2} \int_0^\infty s \, \tau\left( x^* \left[ \frac{1}{s + A} B \frac{1}{s + A} B \frac{1}{s + A} \right] x \right) \total s \total \phi \eqend{.}
\end{splitequation}
The same arguments show that also
\begin{splitequation}
&\lim_{\delta \to 0^+} \lim_{\epsilon \searrow 0} \int_\epsilon^\infty s \int_{-1}^1 \tau\left( x^* k_{2,-}^{A,\delta B,B}(s, t) \, x \right) \total t \total s \\
&\quad= \pi \int_0^\infty s \, \tau\left( x^* \left[ \frac{1}{s + A} B \frac{1}{s + A} B \frac{1}{s + A} \right] x \right) \total s \eqend{,}
\end{splitequation}
and combining both we obtain the conclusion.
\end{proof}

We now generalize this result to arbitrary continuous functions $f$.
\begin{proposition}
\label{prop:operatorint_ihab_jhab_taux_f}
For any continuous $f \colon \mathbb{R}_+ \to \mathbb{R}$, the integrals $I^{\tau,x}_f(A, B)$ and $J^{\tau,x}_f(A, B)$ \eqref{eq:IntegralEqualityWeak} are equal.
\end{proposition}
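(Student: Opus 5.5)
The plan is to reduce to the constant case \propref{prop:operatorint_ihab_jhab_taux_const} by a change of the operator $A$, and then to pass to general $f$ by density. By \lemmaref[item:BochnerIntegrals]{lemma:integrability}, both $I_f(A,B)$ and $J_f(A,B)$, and hence $I^{\tau,x}_f$ and $J^{\tau,x}_f$, depend only on $f\rvert_{\I_{A,B}}$. They depend linearly and continuously on $f \in C^0(\I_{A,B})$, since $\tau(x^*\,\cdot\,x)$ is $L^1$-continuous. By Weierstraß' theorem it therefore suffices to prove $I^{\tau,x}_{p}(A,B) = J^{\tau,x}_{p}(A,B)$ for the monomials $p(t) = t^k$, $k \in \mathbb{N}_0$. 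Linearity lets us drop the positivity and boundedness of $f$ from \assref{ass:operatorint}.

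\textbf{Generating the monomials.} Replace $A$ by $A_\mu \defeq A + \mu B$ for $\mu$ in a small complex disc around $0$. For real $\mu \geq 0$ we have $A_\mu \geq A \geq a$, so \propref{prop:operatorint_ihab_jhab_taux_const} gives $I^{\tau,x}_1(A_\mu,B) = J^{\tau,x}_1(A_\mu,B)$.

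On the $I$-side, an elementary computation gives
\begin{equation}
(B - t A_\mu)_+ = (1 - \mu t)\left( B - \tfrac{t}{1 - \mu t} A \right)_+ \quad \text{for } \mu t < 1, \qquad (B - t A_\mu)_+ = 0 \quad \text{otherwise}.
\end{equation}
The substitution $t' = t/(1 - \mu t)$ then yields
\begin{equation}
I^{\tau,x}_1(A_\mu,B) = I^{\tau,x}_{g_\mu}(A,B), \qquad g_\mu(t) \defeq (1 + \mu t)^{-3} .
\end{equation}
Since $g_\mu(t) = \sum_k \binom{-3}{k} \mu^k t^k$ converges uniformly on $\I_{A,B}$ for small $\mu$, and $f \mapsto I^{\tau,x}_f$ is continuous, this gives
\begin{equation}
I^{\tau,x}_{g_\mu}(A,B) = \sum_k \binom{-3}{k} \mu^k\, I^{\tau,x}_{t^k}(A,B).
\end{equation}
On the $J$-side I would aim for the analogous identity $J^{\tau,x}_1(A_\mu,B) = J^{\tau,x}_{g_\mu}(A,B)$. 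For positive numbers $a,b$ this follows from the substitution $s \mapsto s$ together with $c_\mu = c/(1 + \mu c)$, where $c = b/(a+s)$: indeed
\begin{equation}
\frac{s}{a + \mu b + s}\, c_\mu^2 = \frac{s}{a+s}\, c^2 (1 + \mu c)^{-3}.
\end{equation}
Comparing Taylor coefficients at $\mu = 0$ of the two power series would then give $I^{\tau,x}_{t^k} = J^{\tau,x}_{t^k}$ for every $k$, which finishes the proof.

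\textbf{Main obstacle.} The hard step is the operator version of the $J$-side identity. The factors do not commute, and $(A + s + \mu B)^{-\half}$ is not obtained from $(A+s)^{-\half}$ by conjugation. Writing $X_s = (A+s)^{-\half}$ and $C = C_{A,B}(s)$, one has
\begin{equation}
A + s + \mu B = X_s^{-1}(1 + \mu C) X_s^{-1},
\end{equation}
so $T \defeq X_s (1 + \mu C)^{-\half}$ satisfies $T T^* = (A_\mu + s)^{-1}$. Hence $(A_\mu + s)^{-\half} = T V^*$ for some unitary $V$, by polar decomposition. I would try to rewrite $F^J_{A_\mu,B,1}(s)$ in terms of $X_s$, $C$ and $V$, and to show that the $\mu$-derivatives at $\mu = 0$ of $\tau\bigl(x^* F^J_{A_\mu,B,1}(s)\, x\bigr)$ coincide with those of $\tau\bigl(x^* F^J_{A,B,g_\mu}(s)\, x\bigr)$ after integrating over $s$. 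The unitary should drop out after the $s$-integration or by traciality, possibly only up to an $s$-derivative that integrates to zero, in the spirit of the $\tilde{k}$-terms in \propref{prop:operatorint_ihab_resolvent_form}.

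If this direct route fails, the fallback is to avoid the $J$-identity altogether. One would redo \propref{prop:operatorint_ihab_resolvent_form} and \propref{prop:operatorint_ihab_jhab_taux_const} with the weight $t^k$ inserted; the same resolvent and contour arguments should go through with polynomial factors. Throughout, the interchange of $\mu$-derivatives with the $s$- and $t$-integrals is justified by analyticity in $\mu$. For $\abs{\mu} < \bigl(2 \norm{B} \norm{A^{-1}}\bigr)^{-1}$ the operator $A_\mu$ has real part bounded below by $a/2$, so the resolvent bounds of \lemmaref{lemma:integrability} hold uniformly in $\mu$. Both sides are then holomorphic in $\mu$ by dominated convergence, and agree for real $\mu \geq 0$ by the constant case.
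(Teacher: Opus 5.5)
Your reduction to monomials, your use of the constant case for $A+\mu B$ with $\mu\ge 0$, and your $I$-side substitution giving $I^{\tau,x}_1(A+\mu B,B)=I^{\tau,x}_{g_\mu}(A,B)$ with $g_\mu(t)=(1+\mu t)^{-3}$ are all correct. They coincide with the paper's argument.

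The gap is the $J$-side identity $J^{\tau,x}_1(A+\mu B,B)=J^{\tau,x}_{g_\mu}(A,B)$. You flag it as the main obstacle and leave it open. You offer only a hoped-for mechanism: the polar-decomposition unitary ``dropping out'' after the $s$-integration or by traciality, possibly up to boundary terms. Your fallback, rerunning the resolvent and contour arguments with weights $t^k$, is also not carried out. Without this identity there is no second power series whose coefficients you can compare, so the proof as proposed is incomplete.

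The missing step is elementary, and your scalar computation already contains it. For $f=1$ the half-powers in $F^J$ multiply out in adjacent pairs:
\begin{equation}
F^J_{A,B,1}(s)=s\,(A+s)^{-1}B\,(A+s)^{-1}B\,(A+s)^{-1} \eqend{.}
\end{equation}
So only the full resolvent of $A+\mu B$ enters, never $(A+\mu B+s)^{-\half}$, and hence no unitary appears. Your own identity $TT^*=(A_\mu+s)^{-1}$ reads $(A+\mu B+s)^{-1}=X_s(\1+\mu C)^{-1}X_s$, with $X_s=(A+s)^{-\half}$ and $C=C_{A,B}(s)$. Since $C$ commutes with $(\1+\mu C)^{-1}$, this gives, pointwise in $s$ and for every $\mu\ge 0$,
\begin{equation}
F^J_{A+\mu B,B,1}(s)=s\,X_s(\1+\mu C)^{-1}C(\1+\mu C)^{-1}C(\1+\mu C)^{-1}X_s=s\,X_s\,C^2(\1+\mu C)^{-3}X_s=F^J_{A,B,g_\mu}(s) \eqend{.}
\end{equation}
No trace, no $s$-integration and no integration by parts is involved.

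With this your argument closes. For $0\le\mu$ with $\mu\norm{B}\norm{A^{-1}}<1$, the binomial series of $g_\mu$ converges uniformly on $\I_{A,B}$. Continuity in $f$ from \lemmaref[item:BochnerIntegrals]{lemma:integrability} then turns $I^{\tau,x}_{g_\mu}=J^{\tau,x}_{g_\mu}$ into an equality of two real power series in $\mu$ on an interval $[0,\mu_0)$. All coefficients must therefore agree, and $\binom{-3}{k}\neq0$ gives $I^{\tau,x}_{t^k}=J^{\tau,x}_{t^k}$. No complex $\mu$ is needed; indeed $I^{\tau,x}_1(A+\mu B,B)$ is undefined for non-real $\mu$, since it involves the positive part of a non-selfadjoint operator.

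The paper handles the $J$-side differently: it differentiates $s\,RBRBR$, with $R=(A+rB+s)^{-1}$, in $r$ under the integral. Since $\partial_rR=-RBR$, all terms of $\partial_r[R(BR)^n]$ coincide and sum to $-(n+1)R(BR)^{n+1}$. Hence the $k$-th derivative at $r=0$ is $(-1)^k\frac{(k+2)!}{2}F^J_{A,B,t^k}(s)$, matching the $k$-th derivative of $(1+ru)^{-3}$ on the $I$-side. Once completed as above, your power-series version reaches the same conclusion without having to justify interchanging $r$-derivatives with the integrals.
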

\begin{proof}
In view of \lemmaref[item:BochnerIntegrals]{lemma:integrability} and the Weierstraß approximation theorem, it is sufficient to consider $f$ to be a monomial. We already know from \propref{prop:operatorint_ihab_jhab_taux_const} that the statement is true for $f = 1$, i.e., we have
\begin{splitequation}
\int_0^\infty \tau\left( x^* ( B - t A )_+ x \right) \total t &= \int_0^\infty \tau\left( x^* \sqrt{ \frac{s}{A + s} } \, \left[ C_{A,B}(s) \right]^2 \sqrt{ \frac{s}{A + s} } x \right) \total s \\
&= \int_0^\infty s\, \tau\left( x^* \, \frac{1}{A + s} B \frac{1}{A + s} B \frac{1}{A + s} \, x \right) \total s \eqend{.}
\end{splitequation}
Taking $A + r B$ with $r > 0$ instead of $A$, we obtain
\begin{splitequation}
\int_0^\infty \tau\left( x^* ( (1 - r t) B - t A )_+ x \right) \total t &= \int_0^\frac{1}{r} \tau\left( x^* ( (1 - r t) B - t A )_+ x \right) \total t \\
&= \int_0^\frac{1}{r} (1 - r t) \tau\left( x^* \left( B - \frac{t}{1-r t} A \right)_+ x \right) \total t \\
&= \int_0^\infty (1 + r u)^{-3} \, \tau\left( x^* \left( B - u A \right)_+ x \right) \total u \eqend{,}
\end{splitequation}
where the first equality follows because for $r t > 1$ we have $( (1 - r t) B - t A )_+ = 0$, and the third by the change of variables $t = u/(1 + r u)$. Therefore, we obtain
\begin{equation}
\int_0^\infty \frac{1}{(1 + r u)^3} \, \tau\left( x^* \left( B - u A \right)_+ x \right) \total u = \int_0^\infty s\, \tau\left( x^* g_{A,B}(r,s) \, x \right) \total s
\end{equation}
with $g_{A,B}(r,s) \defeq (A + r B + s)^{-1} B (A + r B + s)^{-1} B (A + r B + s)^{-1}$ for all $r > 0$. The same estimates as in the proof of \lemmaref{lemma:integrability} show that both integrals are absolutely convergent, and using the dominated convergence theorem we can interchange derivatives with respect to $r$ with the integration. Furthermore, these estimates show that $g_{A,B}(r,s)$ is norm continuous in $r$ with respect to the $L^1(\M,\tau)$ norm, and differentiable with
\begin{splitequation}
&\partial_r g_{A,B}(r,s) = \lim_{\epsilon \to 0} \frac{g_{A,B}(r + \epsilon,s) - g_{A,B}(r,s)}{\epsilon} \\
&\qquad= \lim_{\epsilon \to 0} \frac{1}{\epsilon} \biggl[ \left( \frac{1}{A + r B + s + \epsilon B} - \frac{1}{A + r B + s} \right) B \frac{1}{A + r B + s + \epsilon B} B \frac{1}{A + r B + s + \epsilon B} \\
&\hspace{6em}+ \frac{1}{A + r B + s} B \left( \frac{1}{A + r B + s + \epsilon B} - \frac{1}{A + r B + s} \right) B \frac{1}{A + r B + s + \epsilon B} \\
&\hspace{6em}+ \frac{1}{A + r B + s} B \frac{1}{A + r B + s} B \left( \frac{1}{A + r B + s + \epsilon B} - \frac{1}{A + r B + s} \right) \biggr] \\
&\qquad= - 3 \frac{1}{A + r B + s} B \frac{1}{A + r B + s} B \frac{1}{A + r B + s} B \frac{1}{A + r B + s} \eqend{,}
\end{splitequation}
where we used the resolvent identity
\begin{equation}
\frac{1}{A + r B + s + \epsilon B} - \frac{1}{A + r B + s} = - \epsilon \frac{1}{A + r B + s + \epsilon B} B \frac{1}{A + r B + s} \eqend{.}
\end{equation}
The same estimates show that also $\partial_r g_{A,B}(r,s)$ is norm continuous in $r$ with respect to the $L^1(\M,\tau)$ norm, and we can apply \lemmaref{lemma:operatorint_tau_differentiable} to obtain
\begin{splitequation}
&- 3 \int_0^\infty \frac{u}{(1 + r u)^4} \, \tau\left( x^* \left( B - u A \right)_+ x \right) \total u = \partial_r \int_0^\infty \frac{1}{(1 + r u)^3} \, \tau\left( x^* \left( B - u A \right)_+ x \right) \total u \\
&\quad= \partial_r \int_0^\infty s\, \tau\left( x^* \, g_{A,B}(r,s) \, x \right) \total s = \int_0^\infty s\, \partial_r \tau\left( x^* \, g_{A,B}(r,s) \, x \right) \total s \\
&\quad= - 3 \int_0^\infty s\, \tau\left( x^* \, \frac{1}{A + r B + s} B \frac{1}{A + r B + s} B \frac{1}{A + r B + s} B \frac{1}{A + r B + s} \, x \right) \total s \eqend{.}
\end{splitequation}
Setting now $r = 0$, we obtain
\begin{splitequation}
\int_0^\infty u \, \tau\left( x^* \left( B - u A \right)_+ x \right) \total u &= \int_0^\infty s\, \tau\left( x^* \, \frac{1}{A + s} B \frac{1}{A + s} B \frac{1}{A + s} B \frac{1}{A + s} \, x \right) \total s \\
&= \int_0^\infty \tau\left( x^* \sqrt{ \frac{s}{A + s} } \, \left[ C_{A,B}(s) \right]^3 \sqrt{ \frac{s}{A + s} } x \right) \total s \eqend{.}
\end{splitequation}
The same arguments apply to higher derivatives, and we obtain
\begin{equation}
\int_0^\infty u^k \, \tau\left( x^* \left( B - u A \right)_+ x \right) \total u = \int_0^\infty \tau\left( x^* \sqrt{ \frac{s}{A + s} } \, \left[ C_{A,B}(s) \right]^{2+k} \sqrt{ \frac{s}{A + s} } x \right) \total s
\end{equation}
for any $k \in \mathbb{N}$.
\end{proof}

\subsection{Trace lemmas}
\label{sec:operatorint_technical}

In this section, we work directly with the unbounded densities and collect some technical lemmas concerning properties and behaviour of the tracial weight $\tau$. The standing assumption~\ref{ass:operatorint} is no longer assumed here.

\begin{lemma}
\label{lemma:operator_log_derivative}
Let $A, B, C \in L^1(\M,\tau)$ be positive self-adjoint operators with $\dom B \subset \dom C$ and $B \geq C$. Then it holds that
\begin{equation}
\label{eq:operator_log_derivative_int1}
\int_0^\infty \tau\left( A \frac{1}{C + s} - A \frac{1}{B + s} \right) \total s = \tau\left( A \ln B - A \ln C \right) \eqend{,}
\end{equation}
where both sides could be equal to $+\infty$, and if $A$ is injective, it also holds that
\begin{equation}
\label{eq:operator_log_derivative_int2}
\int_0^\infty \tau\left( \frac{\sqrt{A}}{A + s} B \frac{\sqrt{A}}{A + s} \right) \total s = \tau(B)
\end{equation}
and that
\begin{equation}
\lim_{\epsilon \searrow 0} \frac{1}{\epsilon} \, \tau\left( A \ln (A + \epsilon B) - A \ln A \right) = \tau(B) \eqend{.}
\end{equation}
\end{lemma}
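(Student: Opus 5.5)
The plan is to reduce all three statements to scalar identities via spectral calculus and Tonelli's theorem, handling the noncommutativity of $A$ with $B$, $C$ through the trace. The scalar identity behind \eqref{eq:operator_log_derivative_int1} is
\begin{equation}
\int_0^\infty \left( \frac{1}{c+s} - \frac{1}{b+s} \right) \total s = \ln b - \ln c ,
\end{equation}
and the operator version of its integrand is $A^\half[(C+s)^{-1} - (B+s)^{-1}]A^\half$. Since $B \geq C$ and inversion is operator monotone, $(C+s)^{-1} \geq (B+s)^{-1}$. Hence each integrand $\tau\bigl( A^\half [(C+s)^{-1} - (B+s)^{-1}] A^\half \bigr)$ is nonnegative, and here $\tau(A\,\cdot)$ is read as $\tau(A^\half \cdot A^\half)$ using cyclicity.

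For \eqref{eq:operator_log_derivative_int1}, I will first work with the truncated integrals $\int_0^N$. For each finite $N$, $\int_0^N [(C+s)^{-1} - (B+s)^{-1}]\,\total s = \ln(B+N) - \ln B - \ln(C+N) + \ln C$ holds as an operator identity, by the spectral theorem applied to $B$ and $C$ separately. Then:
\begin{enumerate}
\item Apply $\tau(A^\half \cdot A^\half)$ and exchange $\tau$ with the $s$-integral. This is justified by normality of the weight $\tau$ together with \lemmaref{lemma:affiliated_tau_convergence}, since the integrand is positive and the partial integrals increase.
\item Let $N \to \infty$. The difference $\ln(B+N) - \ln(C+N)$ is positive, because $\ln$ is operator monotone. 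It tends to $0$, and it is bounded by $\ln(1 + \norm{B}/N)$ in the bounded case.
\item In general, take a positive monotone limit in $N$. The increasing family $\ln B - \ln C - [\ln(B+N) - \ln(C+N)]$ has supremum $\ln B - \ln C$, which is exactly the point where $+\infty$ is allowed.
\end{enumerate}
The care needed is that $\ln B - \ln C$ need not be positive termwise but is positive as a difference. I would therefore define $\tau(A\ln B - A\ln C)$ as the supremum of $\tau\bigl(A^\half(\ln(B+N) - \ln(C+N) - \ln B + \ln C)\cdots\bigr)$, or equivalently through the form sums. I expect making this definition consistent with the meaning used in the proof of \thmref{thm:D=Ssemifinite} to be the main obstacle. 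My first attempt would be to insert spectral cutoffs $E^A_{[1/n,n]}$ and use monotone convergence in $n$.

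For \eqref{eq:operator_log_derivative_int2}, cyclicity of $\tau$, applied to positive expressions and justified via $B^\half$, rewrites the integrand as $\tau\bigl( B^\half \tfrac{A}{(A+s)^2} B^\half \bigr)$. Then Tonelli's theorem and the scalar identity $\int_0^\infty \lambda(\lambda+s)^{-2}\,\total s = 1$ for $\lambda > 0$ give $\int_0^\infty A(A+s)^{-2}\,\total s = s^\M(A)$. Since $A$ is injective, this equals $\1$, so the integral is $\tau(B)$.

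For the last statement, I combine the two previous results. Applying \eqref{eq:operator_log_derivative_int1} with $A + \epsilon B \geq A$ gives
\begin{equation}
\frac{1}{\epsilon}\,\tau\bigl(A\ln(A+\epsilon B) - A\ln A\bigr) = \frac{1}{\epsilon}\int_0^\infty \tau\Bigl(A\bigl[(A+s)^{-1} - (A+\epsilon B+s)^{-1}\bigr]\Bigr)\,\total s .
\end{equation}
The resolvent identity turns the integrand into $\tau\bigl(A (A+s)^{-1} B (A+\epsilon B+s)^{-1}\bigr)$. As $\epsilon \searrow 0$, this increases monotonically to $\tau\bigl(\tfrac{\sqrt A}{A+s} B \tfrac{\sqrt A}{A+s}\bigr)$, by operator monotonicity after symmetrizing through the positivity established above. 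Monotone convergence together with \eqref{eq:operator_log_derivative_int2} then yields $\tau(B)$. If the monotonicity fails to be transparent, the fallback is dominated convergence: bound the integrand by the $\epsilon = 0$ expression, obtained from the first resolvent difference being dominated by its derivative via concavity of $t \mapsto -(x+t)^{-1}$.
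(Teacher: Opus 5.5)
Your treatment of \eqref{eq:operator_log_derivative_int1} has a gap: you truncate the $s$-integral at the wrong end. The tail $s\to\infty$ is harmless, since for bounded $A$ one has $\norm{A[(C+s)^{-1}-(B+s)^{-1}]}_{L^1(\M,\tau)}\le\norm{A}\,\norm{B-C}_{L^1(\M,\tau)}\,s^{-2}$. The singular end is $s\to0$, and that is the one you keep.

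Since $\tau(C)<\infty$, $C$ cannot be bounded below by a positive constant when $\tau(\1)=\infty$; in the application, e.g.\ $C=h_{\psi_n}$, it even has a kernel. So $0\in\sigma(C)$. Then $\int_0^N(C+s)^{-1}\total s$ is infinite on $\ker C$, $\ln C$ and $\ln B$ are unbounded, and their difference is not a priori defined. The ``operator identity by the spectral theorem applied to $B$ and $C$ separately'' is therefore not available. Its right-hand side also has the wrong sign: the integral is $[\ln B-\ln C]-[\ln(B+N)-\ln(C+N)]$, as you use in step~3. Your proposed definition of $\tau(A\ln B-A\ln C)$ as a supremum of expressions that already contain $\ln B-\ln C$ is circular. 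Spectral cutoffs of $A$ do not touch the problem, which sits at the bottom of the spectra of $B$ and $C$.

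The missing step is to regularize at small $s$, as the paper does. By the bound above, $\int_\epsilon^\infty A[(C+s)^{-1}-(B+s)^{-1}]\total s=A[\ln(B+\epsilon)-\ln(C+\epsilon)]$ is a Bochner integral in $L^1(\M,\tau)$, so Hille's theorem moves $\tau$ inside. The operators $0\le\ln(B+\epsilon)-\ln(C+\epsilon)\le\ln(1+B/\epsilon)$ are well defined. They increase as $\epsilon\searrow0$, since their $\epsilon$-derivative is $(B+\epsilon)^{-1}-(C+\epsilon)^{-1}\le0$. Then \lemmaref{lemma:affiliated_tau_convergence} turns this monotone limit into $\tau(A\ln B-A\ln C)\in[0,\infty]$. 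That monotone limit is the non-circular meaning of the right-hand side. The cutoff $E^A_{[0,n]}A$ is needed only afterwards, to pass to unbounded $A$.

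Your argument for \eqref{eq:operator_log_derivative_int2} is the paper's: cyclicity, the spectral measure of $A$ in the positive functional $\tau(B^\half\,\cdot\,B^\half)$, Tonelli, and $E^A_{(0,\infty)}=\1$ by injectivity.

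For the final limit, your primary justification does not work as stated. Put $R_\epsilon\defeq(A+\epsilon B+s)^{-1}$. The inequality $R_\epsilon\le R_0$ alone does not control $\tau(A^\half R_0BR_\epsilon A^\half)$, because $R_0BR_\epsilon$ is not a positive sandwich of the monotone family. Your fallback does work, once phrased as operator convexity of inversion rather than scalar concavity. With $X=A+s$ and $Y=X+\epsilon B$ one has $Y^{-1}-X^{-1}+\epsilon X^{-1}BX^{-1}=Z^*Z\ge0$ for $Z=Y^{-\half}-Y^{\half}X^{-1}$. This gives $0\le\epsilon^{-1}(R_0-R_\epsilon)\le R_0BR_0$. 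Convexity of $\epsilon\mapsto\tau(A^\half R_\epsilon A^\half)$ even makes the difference quotients increase as $\epsilon\searrow0$. Together with \eqref{eq:operator_log_derivative_int2}, this gives $\tau(B)$ by monotone or dominated convergence.

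This is a different route from the paper's. The paper polarizes the integrand into $\tfrac12(R_0+R_\epsilon)B(R_0+R_\epsilon)-\tfrac12(R_0BR_0+R_\epsilon BR_\epsilon)$ and applies monotone convergence to each piece, asserting their monotonicity without derivation. Your tangent-line inequality supplies the monotonicity and the domination explicitly in one line.
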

\begin{proof}
We first show that the integrals are well-defined. For all $s \geq \epsilon > 0$, continuity of the map $s \mapsto (C + s)^{-1} \in \M$ in the norm of $\M$ and the continuity of left and right multiplication of $L^1(\M,\tau)$ by elements from $\M$ in $L^1(\M,\tau)$ norm, which follows from the noncommutative H{\"o}lder inequality, shows that $s \mapsto A (C + s)^{-1} - A (B + s)^{-1}$ is continuous in $L^1(\M,\tau)$ norm and bounded by $\norm{ A (C+s)^{-1} - A (B+s)^{-1} }_{L^1(\M,\tau)} \leq 2 \norm{ A }_{L^1(\M,\tau)} \epsilon^{-1}$. For bounded $A$, the further estimate
\begin{splitequation}
\norm{ A \frac{1}{C + s} - A \frac{1}{B + s} }_{L^1(\M,\tau)} &\leq \norm{ A } \norm{ \frac{1}{C + s} - \frac{1}{B + s} }_{L^1(\M,\tau)} \\
&= \norm{ A } \norm{ \frac{1}{C + s} (B-C) \frac{1}{B + s} }_{L^1(\M,\tau)} \\
&\leq \frac{\norm{A}}{s^2} \norm{ B-C }_{L^1(\M,\tau)}
\end{splitequation}
shows that the integrand is bounded by an integrable function of $s$ for all $s \geq \epsilon > 0$. Therefore, by Bochner's theorem \cite[Thm.~II.~2.2]{diesteluhl1977} the integral $\int_\epsilon^\infty \left[ A (C + s)^{-1} - A (B + s)^{-1} \right] \total s$ exists as a Bochner integral with values in $L^1(\M,\tau)$, and by Hille's theorem \cite[Thm. II.~2.6]{diesteluhl1977} we have
\begin{splitequation}
\int_\epsilon^\infty \tau\left( A \frac{1}{C + s} - A \frac{1}{B + s} \right) \total s &= \tau\left( A \int_\epsilon^\infty \left( \frac{1}{C+s} - \frac{1}{B+s} \right) \total s \right) \\
&= \tau\left( A \Bigl[ \ln(B + \epsilon) - \ln(C + \epsilon) \Bigr] \right) \eqend{.}
\end{splitequation}
Since $B \geq C$, it holds that $\tau\left( A (C + s)^{-1} - A (B + s)^{-1} \right) \geq 0$ and the left-hand side converges monotonically to $\int_0^\infty \tau\left( A (C + s)^{-1} - A (B + s)^{-1} \right) \total s$. Since the function $t \mapsto \ln(t + \epsilon)$ is operator monotone for all $\epsilon > 0$, we have $\ln(B + \epsilon) - \ln(C + \epsilon) \geq 0$ and the net $( \ln(B + \epsilon) - \ln(C + \epsilon) )_{\epsilon \in (0,1)}$ is increasing as $\epsilon \searrow 0$. By \lemmaref{lemma:affiliated_tau_convergence}, we then obtain $\lim_{\epsilon \searrow 0} \tau\left( A \bigl[ \ln(B + \epsilon) - \ln(C + \epsilon) \bigr] \right) = \tau\left( A \ln B - A \ln C \right)$. For unbounded $A$, we consider $A_n \defeq E^A_{[0,n]} A$, and obtain the conclusion again by monotone convergence and \lemmaref{lemma:affiliated_tau_convergence}.

For the integral \eqref{eq:operator_log_derivative_int2}, norm continuity and the reduction to bounded $B_n \defeq E^B_{[0,n]} B$ follow analogously. Employing then \lemmaref{lemma:operatorint_tau_function} with $x = \sqrt{ B_n } \in \M$, $C = A \in L^1(\M,\tau)$ and $g(t) = (t+s)^{-2}$, we obtain
\begin{splitequation}
\int_\epsilon^\infty \tau\left( \frac{\sqrt{A}}{A + s} \sqrt{B_n} \frac{\sqrt{A}}{A + s} \right) \total s &= \int_\epsilon^\infty \int_{(0,\infty)} \frac{1}{(t+s)^2} \total \mu_A^{\sqrt{B_n}}(t) \total s \\
&= \int_{(0,\infty)} \int_\epsilon^\infty \frac{1}{(t+s)^2} \total s \total \mu_A^{\sqrt{B_n}}(t) \\
&= \int_{(0,\infty)} \frac{1}{\epsilon + t} \total \mu_A^{\sqrt{B_n}}(t) = \tau\left( B_n \frac{A}{A + \epsilon} \right) \eqend{,}
\end{splitequation}
where in the second equality we employed Tonelli's theorem since the integrand is positive, and in the last used again \lemmaref{lemma:operatorint_tau_function}, now with $g(t) = (t + \epsilon)^{-1}$. By monotone convergence and normality of $\tau$, we obtain the conclusion as $\epsilon \searrow 0$.

For the final limit, using the first integral and resolvent identities we compute
\begin{splitequation}
&\frac{1}{\epsilon} \, \tau\left( A \ln(A + \epsilon B) - A \ln A \right) \\
&\quad= \frac{1}{\epsilon} \int_0^\infty \tau\left( A \frac{1}{A + s} - A \frac{1}{A + \epsilon B + s} \right) \total s \\
&\quad= \int_0^\infty \tau\left( A^\frac{1}{2} \frac{1}{A + s} B \frac{1}{A + \epsilon B + s} A^\frac{1}{2} \right) \total s \\
&\quad= \frac{1}{2} \int_0^\infty \tau\left( A^\frac{1}{2} \left( \frac{1}{A + s} + \frac{1}{A + \epsilon B + s} \right) B \left( \frac{1}{A + s} + \frac{1}{A + \epsilon B + s} \right) A^\frac{1}{2} \right) \total s \\
&\qquad- \frac{1}{2} \int_0^\infty \tau\left( A^\frac{1}{2} \frac{1}{A + s} B \frac{1}{A + s} A^\frac{1}{2} + A^\frac{1}{2} \frac{1}{A + \epsilon B + s} B \frac{1}{A + \epsilon B + s} A^\frac{1}{2} \right) \total s \eqend{.}
\end{splitequation}
Since each of the integrands is positive and monotonically increasing as $\epsilon \searrow 0$, the monotone convergence theorem and \lemmaref{lemma:affiliated_tau_convergence} show that
\begin{splitequation}
&\lim_{\epsilon \searrow 0} \int_0^\infty \tau\left( A^\frac{1}{2} \left( \frac{1}{A + s} + \frac{1}{A + \epsilon B + s} \right) B \left( \frac{1}{A + s} + \frac{1}{A + \epsilon B + s} \right) A^\frac{1}{2} \right) \total s \\
&\quad= 4 \int_0^\infty \tau\left( A^\frac{1}{2} \frac{1}{A + s} B \frac{1}{A + s} A^\frac{1}{2} \right) \total s = 4 \tau(B)
\end{splitequation}
and
\begin{splitequation}
&\lim_{\epsilon \searrow 0} \int_0^\infty \tau\left( A^\frac{1}{2} \frac{1}{A + \epsilon B + s} B \frac{1}{A + \epsilon B + s} A^\frac{1}{2} \right) \total s \\
&\quad= \int_0^\infty \tau\left( A^\frac{1}{2} \frac{1}{A + s} B \frac{1}{A + s} A^\frac{1}{2} \right) \total s = \tau(B) \eqend{,}
\end{splitequation}
and the conclusion follows.
\end{proof}

\begin{lemma}
\label{lemma:operatorint_tau_differentiable}
Let $C(r) \in L^1(\M,\tau)$ be continuous and continuously differentiable in $L^1(\M,\tau)$ norm for all $r \in [a,b]$, $b > a \geq 0$, with $\sup_{r \in [a,b]} \norm{ C'(r) }_{L^1(\M,\tau)} < \infty$. Then it holds that $\partial_r \tau( C(r) ) = \tau( C'(r) )$.
\end{lemma}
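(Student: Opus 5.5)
The plan is to reduce the statement to the fundamental theorem of calculus for Bochner integrals in $L^1(\M,\tau)$, and then exploit that $\tau$ is a bounded linear functional on $L^1(\M,\tau)$. The first step is to observe that $\tau$ extends continuously to $L^1(\M,\tau)$ with $\abs{\tau(X)} \leq \norm{X}_{L^1(\M,\tau)}$. This follows from the isometric isomorphism $L^1(\M,\tau) \cong \M_*$ of \thmref[item:RNIsomorphism]{thm:noncommutativeRadon-Nikodym}, since $\tau(X) = \varphi_X(\1)$ and $\abs{\varphi_X(\1)} \leq \norm{\varphi_X} = \norm{X}_{L^1(\M,\tau)}$.

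With this in hand, the most direct route is to differentiate under $\tau$ using the definition of the derivative. For $r, r+h \in [a,b]$ we have
\begin{equation}
\abs{ \frac{\tau(C(r+h)) - \tau(C(r))}{h} - \tau(C'(r)) } \leq \norm{ \frac{C(r+h) - C(r)}{h} - C'(r) }_{L^1(\M,\tau)} ,
\end{equation}
and the right-hand side tends to $0$ as $h \to 0$, because $C$ is differentiable in $L^1(\M,\tau)$ norm. At the endpoints $a$ and $b$ this should be read as a one-sided limit. This already gives $\partial_r \tau(C(r)) = \tau(C'(r))$, so the uniform bound on $C'$ is not strictly needed for this pointwise statement.

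Where the lemma is applied to functions that are only a.e.\ differentiable, I would instead use the integrated form. Here the uniform bound $\sup_{r} \norm{C'(r)}_{L^1(\M,\tau)} < \infty$ together with continuity of $C'$ makes $C'$ Bochner integrable on $[a,b]$, and the vector-valued fundamental theorem of calculus gives $C(r) - C(a) = \int_a^r C'(u) \total u$. Hille's theorem \cite[Thm.~II.~2.6]{diesteluhl1977} then lets us commute $\tau$ with the Bochner integral, so that $\tau(C(r)) - \tau(C(a)) = \int_a^r \tau(C'(u)) \total u$. Continuity of $u \mapsto \tau(C'(u))$ and the scalar fundamental theorem of calculus then recover the claim.

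The only point needing care, and the one I expect to be the main obstacle, is justifying that $\tau$ is a well-defined and continuous linear functional on all of $L^1(\M,\tau)$. This is a concern when $\tau$ is merely a semifinite weight and the elements $C(r)$ are unbounded affiliated operators. I would handle it via the identification $\tau(X) \defeq \varphi_X(\1)$ from \thmref{thm:noncommutativeRadon-Nikodym}, which agrees with the original trace on $\M_\tau$ and is norm-bounded by construction. Everything else is routine.
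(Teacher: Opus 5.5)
Your proof is correct, and your main argument takes a different and more elementary route than the paper's.

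The paper integrates rather than differentiates. It uses the bound on $\norm{C'(r)}_{L^1(\M,\tau)}$ and Bochner's theorem to form $\int_s^t C'(r)\total r$. It then applies the fundamental theorem of calculus to the norm-continuous $L^1(\M,\tau)$-valued map $C'$ to get $C(t)-C(s)=\int_s^t C'(r)\total r$. Next it pulls $\tau$ through the Bochner integral via Hille's theorem. Finally it differentiates the scalar identity $\tau(C(t))-\tau(C(s))=\int_s^t \tau(C'(r))\total r$ in $t$.

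Your one-line estimate uses only that $\tau$ is a bounded linear functional on $L^1(\M,\tau)$, so it commutes with the norm derivative at every point where $C$ is norm-differentiable. You justify that boundedness cleanly via $\tau(X)=\varphi_X(\1)$ and $\abs{\varphi_X(\1)}\leq\norm{X}_{L^1(\M,\tau)}$, a point the paper takes for granted.

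Your route shows that continuity of $C'$ and the uniform bound are superfluous for the pointwise claim. The paper's route yields the integrated identity as a by-product, but nothing more for the lemma itself. For a bounded functional, Hille's theorem reduces to the same continuity of $\tau$ that your estimate uses.

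Your second paragraph is essentially the paper's proof. As written it still relies on continuity of $C'$, so it does not handle the ``only a.e.\ differentiable'' situation you allude to; that aside is not needed for the lemma.
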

\begin{proof}
By assumption, $\int_s^t \norm{ C'(r) }_{L^1(\M,\tau)} \total r$ is finite for all $b \geq t \geq s \geq a$. Hence, by Bochner's theorem~\cite[Thm.~II.~2.2]{diesteluhl1977} $\int_s^t C'(r) \total r$ exists as Bochner integral with values in $L^1(\M,\tau)$. Since $L^1(\M,\tau)$ is a Banach space, the fundamental theorem of calculus applies to the norm continuous function $C'(r)$, and we obtain using Hille's theorem~\cite[Thm.~II.~2.6]{diesteluhl1977}
\begin{equation}
\tau(C(t)) - \tau(C(s)) = \tau\left( \int_s^t C'(r) \total r \right) = \int_s^t \tau( C'(r) ) \total r \eqend{,}
\end{equation}
from which the conclusion follows by differentiating with respect to $t$.
\end{proof}

\begin{lemma}
\label{lemma:operatorint_tau_function}
For a bounded Borel measurable function $g \colon \mathbb{R}_+ \to \mathbb{R}_+$, a positive operator $C \in L^1(\M,\tau)$ and $x \in \M$, the identity
\begin{equation}
\tau\left( x \, C g(C) x^* \right) = \int_{\mathbb{R}_+} g(t) \total \mu_C^x(t)
\end{equation}
holds, where $\mu_C^x$ is the finite Borel measure with
\begin{equation}
\mu_C^x(M) \defeq \tau\left( x \, C E^C_M x^* \right) \quad\text{for}\quad M \in \mathcal{B}(\mathbb{R}_+) \eqend{.}
\end{equation}
If $C$ is bounded, the support of $\mu^C_x$ is the interval $[0, \norm{C}]$. Furthermore, the tail function $\mu_C^x((t,\infty))$ is right-continuous.
\end{lemma}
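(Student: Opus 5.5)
The plan is to reduce everything to the spectral theorem for the positive selfadjoint operator $C$ affiliated with $\M$, combined with normality of $\tau$ and its cyclicity. First I check that $\mu_C^x$ is a finite positive Borel measure. For each Borel set $M$, the operator $C E^C_M = \int_M \lambda \total E^C(\lambda)$ is positive, and it is dominated by $C$. Hence $C E^C_M \in L^1(\M,\tau)$, and $x \, C E^C_M x^*$ is a positive element of $L^1(\M,\tau)$ with trace at most $\norm{x}^2 \tau(C)$. This bound follows from the noncommutative H\"older inequality, or equivalently from $\tau(x C E_M x^*) = \tau(C^\half E_M x^* x E_M C^\half) \leq \norm{x}^2 \tau(C)$.

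Finite additivity in $M$ is immediate, since $M \mapsto C E^C_M$ is finitely additive. For countable additivity, take disjoint sets $M_k$. The partial sums $\sum_{k \leq N} x C E^C_{M_k} x^*$ increase to $x C E^C_{\bigcup M_k} x^*$, and these are positive operators in $L^1$. By normality of $\tau$, in the form of \lemmaref{lemma:affiliated_tau_convergence}, the traces converge, which gives $\sigma$-additivity. The total mass is $\mu_C^x(\mathbb{R}_+) = \tau(x C x^*) < \infty$.

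Next I prove the integral identity by the standard measure-theoretic bootstrap. For an indicator $g = \chi_M$ we have $C g(C) = C E^C_M$, so the identity holds by definition. Linearity of both sides in $g$ then extends it to nonnegative simple functions. For a bounded Borel $g \geq 0$, choose simple $0 \leq g_n \nearrow g$. By the spectral theorem, $C g_n(C) \nearrow C g(C)$ as positive operators, uniformly bounded by $\norm{g}_\infty C$. Therefore $x C g_n(C) x^*$ increases to $x C g(C) x^*$. Normality of $\tau$ on the left-hand side, together with the monotone convergence theorem on the right-hand side, gives the identity for $g$.

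If $C$ is bounded, then $E^C$ is concentrated on $\sigma(C) \subset [0,\norm{C}]$, so $\mu_C^x$ vanishes outside this interval. The statement says the support \emph{is} $[0,\norm{C}]$; I would read this as "is contained in", since equality can fail, for example for $x=0$. Finally, the tail $t \mapsto \mu_C^x((t,\infty))$ is right-continuous by continuity from below of a finite measure, because $(t_n,\infty) \nearrow (t,\infty)$ as $t_n \searrow t$.

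The only delicate point is the monotone convergence step inside $\tau$ for operators that may be unbounded. I would handle it either through \lemmaref{lemma:affiliated_tau_convergence}, or by rewriting $\tau(x C g(C) x^*) = \tau\bigl(g(C)^\half C^\half x^* x C^\half g(C)^\half\bigr)$. The second form is an increasing family of positive trace-class operators, to which normality applies directly.
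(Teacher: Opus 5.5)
Your proposal is correct and follows essentially the same route as the paper: finiteness from $\tau(x C x^*) \leq \norm{x}^2 \tau(C)$, $\sigma$-additivity and the integral identity via monotone approximation of $g$ by simple functions plus normality of $\tau$ in the form of Lemma~\ref{lemma:affiliated_tau_convergence}. Your reading of the support claim as ``contained in $[0,\norm{C}]$'' is the right one, since the statement as written is too strong: $\mu_C^x$ never charges $\{0\}$, and for $C$ a finite-trace projection it is a point mass at $1$.

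One caveat concerns the alternative in your last paragraph. The operators $g_n(C)^{1/2} C^{1/2} x^* x C^{1/2} g_n(C)^{1/2}$ need not increase in operator order; only their traces do. So rely on your first option, where $x \, C g_n(C) \, x^*$ genuinely increases. Alternatively, rewrite the trace as $\tau\bigl(D^{1/2} g_n(C) D^{1/2}\bigr)$ with $D = C^{1/2} x^* x C^{1/2}$, which is an increasing family.
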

\begin{proof}
The $\sigma$-additivity of $\mu_C^x$ is inherited from the one of the spectral measures $E^C_M$, and positivity follows because $\tau$ is a positive weight. Furthermore, $\mu_C^x$ is finite since
\begin{equation}
\mu_C^x(\mathbb{R}_+) = \tau\left( x \, C x^* \right) \leq \norm{ x }^2 \tau(C) < \infty \eqend{,}
\end{equation}
since $\tau(C) < \infty$ by assumption, and the tail function is right-continuous because $\tau$ is normal and for a decreasing sequence $\{ t_n \}_{n \in \mathbb{N}}$ with $\lim_{n \to \infty} t_n = t$ the spectral projections $E^C_{(t_n,\infty)}$ form an increasing sequence strongly converging to $E^C_{(t,\infty)}$~\cite[Thm.~2.7.11]{brattelirobinson1}. The support statements are clear. Approximating then $g$ by a monotonically increasing sequence of simple functions $\{ g_n \}_{n \in \mathbb{N}}$ of the form $g_n(t) = \sum_{i=1}^{s_n} a_{n,i} \, \chi_{M_i}(t)$ with $s_n \in \mathbb{N}$, we obtain
\begin{splitequation}
\int_{\mathbb{R}_+} g(t) \total \mu_C^x(t) &= \lim_{n \to \infty} \sum_{i=1}^{s_n} a_{n,i} \, \mu_C^x\left( M_i \right) = \lim_{n \to \infty} \sum_{i=1}^{s_n} a_{n,i} \, \tau\left( x C E^C_{M_i} x^* \right) \\
&= \lim_{n \to \infty} \tau\left( x \left[ \sum_{i=1}^{s_n} a_{n,i} \, C E^C_{M_i} \right] x^* \right) = \lim_{n \to \infty} \tau\left( x \, C \int_{\mathbb{R}_+} g_n(t) \total E^C(t) \, x^* \right) \\
&= \tau\left( x \, C g(C) x^* \right) \eqend{,}
\end{splitequation}
where we used linearity and \lemmaref{lemma:affiliated_tau_convergence}.
\end{proof}

\begin{lemma}
\label{lemma:resolvent_bound_imaginary}
Let $C$, $D$ be densely defined closed operators on a Hilbert space $\mathcal{H}$ whose numerical ranges are contained in the half-planes $\Im z \geq \gamma$ and $\Im z \leq \delta$, respectively. For every $s$ with $- \mathi s \not\in \sigma(C)$ and $s + \gamma > 0$, we have the bound $\norm{ ( C + \mathi s )^{-1} } \leq (s+\gamma)^{-1}$, and for every $s$ with $- \mathi s \not\in \sigma(D)$ and $s + \delta < 0$, we have the bound $\norm{ ( D + \mathi s )^{-1} } \leq \abs{s+\delta}^{-1}$.
\end{lemma}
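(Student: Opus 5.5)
The plan is to use the standard numerical-range estimate for resolvents of closed operators: establish a lower bound $\norm{(C+\mathi s)\xi} \geq c \norm{\xi}$, and then use that $-\mathi s$ lies in the resolvent set to turn this into a norm bound on the inverse. Both claims are treated in the same way; the one for $D$ follows from the one for $C$ after a reflection.

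For $C$, take $\xi \in \dom C$ with $\norm{\xi} = 1$. By assumption the numerical range condition gives $\Im \langle \xi, C\xi \rangle \geq \gamma$. It follows that
\begin{equation}
\Im \langle \xi, (C + \mathi s)\xi \rangle = \Im \langle \xi, C \xi \rangle + s \geq s + \gamma > 0 \eqend{.}
\end{equation}
By Cauchy--Schwarz, $\norm{(C+\mathi s)\xi} \geq \abs{\langle \xi, (C+\mathi s)\xi\rangle} \geq s+\gamma$. Scaling this to arbitrary $\xi \in \dom C$ gives $\norm{(C+\mathi s)\xi} \geq (s+\gamma)\norm{\xi}$.

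Since $-\mathi s \notin \sigma(C)$, the operator $C + \mathi s$ is a bijection from $\dom C$ onto $\Hil$ with bounded inverse. Applying the lower bound to $\xi = (C+\mathi s)^{-1}\eta$ gives $\norm{(C+\mathi s)^{-1}\eta} \leq (s+\gamma)^{-1}\norm{\eta}$, which is the first claim.

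For $D$, let $\xi \in \dom D$ be a unit vector. Then
\begin{equation}
\Im\langle \xi, (D+\mathi s)\xi\rangle \leq \delta + s < 0 \eqend{,}
\end{equation}
so $\abs{\langle \xi, (D+\mathi s)\xi\rangle} \geq \abs{s+\delta}$. The same argument as for $C$ then yields $\norm{(D+\mathi s)^{-1}} \leq \abs{s+\delta}^{-1}$. Alternatively, one can apply the first part to $-D$, whose numerical range lies in $\Im z \geq -\delta$, with parameter $-s$.

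The only real obstacle is making sure that invertibility is genuinely supplied by the hypothesis and not merely injectivity with closed range. This is why the assumption $-\mathi s \notin \sigma(C)$ is used directly. Without it, one would instead argue via the adjoint: its numerical range lies in the conjugate half-plane, which makes $C^* - \mathi s$ injective, so $C+\mathi s$ has dense range; that range is also closed by the lower bound and closedness of $C$, hence equal to $\Hil$.
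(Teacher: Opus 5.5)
Your proof is correct and follows the paper's argument essentially line by line: you get the lower bound $\norm{(C+\mathi s)\xi}\geq(s+\gamma)\norm{\xi}$ from the imaginary part of $\langle\xi,(C+\mathi s)\xi\rangle$ and Cauchy--Schwarz, take invertibility directly from $-\mathi s\notin\sigma(C)$, and handle $D$ by passing to $-D$. Only your closing aside is wrong and should be dropped: for unbounded closed $C$ the numerical range of $C^*$ need not lie in the conjugate half-plane (for the closed symmetric operator $C=-\mathi\,\mathrm{d}/\mathrm{d}x$ on $H^1_0(0,1)$, the adjoint $C^*$ has every complex number as an eigenvalue and $C+\mathi s$ is not surjective), so the spectral hypothesis is genuinely needed, but your proof never relies on that aside.
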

\begin{proof}
For $u \in \dom(C)$, define $v \defeq (C + \mathi s) u$. Then we have
\begin{equation}
\Im \left( u, v \right)_\mathcal{H} = \Im \left( u, C u \right)_\mathcal{H} + s \norm{ u }_\mathcal{H}^2 \geq (s+\gamma) \norm{ u }_\mathcal{H}^2 \geq 0
\end{equation}
by the hypothesis on the numerical range of $C$. Using the Cauchy--Schwarz inequality, it follows that
\begin{equation}
(s+\gamma) \norm{ u }_\mathcal{H}^2 \leq \Im \left( u, v \right)_\mathcal{H} \leq \abs{ \left( u, v \right)_\mathcal{H} } \leq \norm{ u }_\mathcal{H} \norm{ v }_\mathcal{H} \eqend{,}
\end{equation}
and hence
\begin{equation}
\norm{ v }_\mathcal{H} = \norm{ (C + \mathi s) u }_\mathcal{H} \geq (s+\gamma) \norm{ u }_\mathcal{H} \eqend{.}
\end{equation}
Since $- \mathi s \not\in \sigma(C)$ and $C$ is densely defined and closed, the inverse $(C + \mathi s)^{-1}$ is bounded and thus defined on all of $\mathcal{H}$. Hence, taking $u = (C + \mathi s)^{-1} w \in \dom(C)$ for any $w \in \mathcal{H}$, we obtain
\begin{equation}
\norm{ w }_\mathcal{H} \geq (s+\gamma) \norm{ (C + \mathi s)^{-1} w }_\mathcal{H} \eqend{,}
\end{equation}
and taking the supremum over all $w$ with $\norm{ w }_\mathcal{H} = 1$ the required bound on $\norm{ (C + \mathi s)^{-1} }$ follows. The bound on $\norm{ ( D + \mathi s )^{-1} } = \norm{ ( - D - \mathi s )^{-1} }$ is then obtained by noting that $-D$ has numerical range contained in the half-plane $\Im z \geq - \delta$.
\end{proof}
\begin{lemma}
\label{lemma:resolvent_bound_complex}
Let $C$, $D$ be densely defined closed symmetric operators on a Hilbert space $\mathcal{H}$ with a common dense domain $\mathcal{D}$, and assume that $C > 0$ is boundedly invertible. Then we have $\norm{ (C + \mathi D)^{-1} } \leq \norm{ C^{-1} }$.
\end{lemma}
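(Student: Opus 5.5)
The plan is to mimic the proof of \lemmaref{lemma:resolvent_bound_imaginary}: establish a lower bound $\norm{ (C + \mathi D) u } \geq \norm{ C^{-1} }^{-1} \norm{ u }$ for all $u$ in the domain, and then use invertibility to turn this into the norm bound on the inverse. We first treat $u \in \mathcal{D}$ and set $v \defeq (C + \mathi D) u$. Since $C$ and $D$ are symmetric on $\mathcal{D}$, both $(u, C u)$ and $(u, D u)$ are real. Hence
\begin{equation}
\Re (u, v) = (u, C u) \eqend{.}
\end{equation}
Next we bound $(u, C u)$ from below. Positivity together with bounded invertibility of $C$ gives $C \geq c \1$ with $c \defeq \norm{ C^{-1} }^{-1}$; here we take $C$ selfadjoint, which is the situation in the applications, e.g.\ $A + s$ with $A \geq a > 0$. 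It follows that $(u, C u) \geq c \norm{ u }^2$. Cauchy--Schwarz then yields $c \norm{ u }^2 \leq \abs{ (u, v) } \leq \norm{ u } \norm{ v }$, that is, $\norm{ (C + \mathi D) u } \geq c \norm{ u }$ on $\mathcal{D}$.

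The lower bound passes to the closure of $C + \mathi D$ by a limiting argument, so this closure is injective and has closed range. It remains to show that the range is dense, and the natural route is the adjoint. The adjoint of $C + \mathi D$ extends $C - \mathi D$ on $\mathcal{D}$, and the same estimate with $D$ replaced by $-D$ shows that $C - \mathi D$ is bounded below, hence injective, on $\mathcal{D}$. This gives dense range provided the adjoint is exactly the closure of $C - \mathi D$ on $\mathcal{D}$.

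The main obstacle is therefore surjectivity, i.e.\ that the inverse is defined on all of $\mathcal{H}$, since mere symmetry on a common domain does not in general control the adjoint. Following the statement of \lemmaref{lemma:resolvent_bound_imaginary}, we would assume in addition that $C + \mathi D$ is invertible. In the cases where the lemma is used, $D$ is bounded (a multiple of $B$, or $A$ with bounded $A$), so $C + \mathi D$ is closed on $\dom C$ with adjoint $C - \mathi D$, and the adjoint argument closes. With invertibility in hand, we take $u = (C + \mathi D)^{-1} w$ for arbitrary $w \in \mathcal{H}$ and obtain $\norm{ (C + \mathi D)^{-1} w } \leq \norm{ C^{-1} } \norm{ w }$, which is the claim.
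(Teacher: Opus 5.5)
Your argument is correct and is essentially the paper's: the paper bounds the inverse by the reciprocal distance of $0$ from the numerical range of $C + \mathi D$ (citing Schm\"udgen, Prop.~2.8), and notes that this distance is at least $\norm{ C^{-1} }^{-1}$ because $\Re (u, (C+\mathi D)u) = (u, Cu)$; this is exactly your Cauchy--Schwarz estimate $\norm{ (C+\mathi D)u } \geq \norm{ C^{-1} }^{-1} \norm{u}$, done by hand. Your surjectivity caveat is justified, since the numerical-range bound only controls the inverse on the range and the paper does not address this separately, and your resolution (add invertibility as a hypothesis, or use that $D$ is bounded in all applications, so the adjoint is $C - \mathi D$ on $\dom C$) closes it; you also need not assume $C$ selfadjoint, because a closed symmetric operator with $0$ in its resolvent set is automatically selfadjoint.
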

\begin{proof}
By~\cite[Prop.~2.8]{schmuedgen2012}, $\norm{ (C + \mathi D - \lambda)^{-1} }$ is bounded from above by the inverse of the distance of $\lambda \in \mathbb{C}$ from the numerical range of $C + \mathi D$. Since for $u \in \mathcal{D}$, we have
\begin{equation}
\abs{ \left( u, (C + \mathi D) u \right)_\mathcal{H} } \geq \abs{ \Re \left( u, (C + \mathi D) u \right)_\mathcal{H} } = \abs{ \left( u, C u \right)_\mathcal{H} } \eqend{,}
\end{equation}
the distance of $0$ from the numerical range of $C + \mathi D$ is bounded from below by the distance of $0$ from the numerical range of $C$. Since $C^{-1}$ is bounded and positive, this distance is exactly $\norm{ C^{-1} }^{-1}$, and the conclusion follows.
\end{proof}

\begin{acknowledgments}
\thmref{thm:D=Sgeneral} has been shown independently and simultaneous to our work by Koßmann, Schwonnek,
Liu, and Cheng \cite{kossmannschwonnekliucheng2026}.

The idea to use Haagerup reduction to extend \thmref{thm:D=Ssemifinite} from the semifinite to the general case in \thmref{thm:D=Sgeneral} is due to Lauritz van Luijk, who has shown us this argument.

This work has been funded by the Deutsche Forschungsgemeinschaft (DFG, German Research Foundation) --- project no. 463380247 within the Heisenberg grant LE 2222/3-1 (``Quantenfelder und Operatoralgebren'').
\end{acknowledgments}

\appendix

\subsection*{Conflict of interest statement}

On behalf of all authors, the corresponding author states that there is no conflict of interest.

\subsection*{Data availability statement}

This manuscript has no associated data.

\bibliographystyle{JHEP}
\bibliography{literature}

\providecommand{\href}[2]{#2}\begingroup\raggedright\begin{thebibliography}{10}

\bibitem{csiszar1963}
I.~Csisz{\'a}r, \emph{{Eine informationstheoretische Ungleichung und ihre
  Anwendung auf den Beweis der Ergodizit{\"a}t von Markoffschen Ketten}},
  {\emph{Magyar Tud. Akad. Mat. Kutat{\'o} Int. K{\"o}zl.} {\bfseries 8} (1963)
  85},
  \href{https://real.mtak.hu/id/eprint/201426}{https://real.mtak.hu/id/eprint/201426}.

\bibitem{sasonverdu2016}
I.~Sason and S.~Verd{\'u}, \emph{{$f$-divergence Inequalities}},
  \href{https://doi.org/10.1109/TIT.2016.2603151}{\emph{IEEE Trans. Inf.
  Theory} {\bfseries 62} (2016) 5973}
  [\href{https://arxiv.org/abs/1508.00335}{{\ttfamily 1508.00335}}].

\bibitem{takesaki2001}
M.~Takesaki, \emph{{Theory of Operator Algebras I}}, vol.~124 of
  \emph{{Encyclopaedia of Mathematical Sciences}}, Springer-Verlag, Berlin,
  Heidelberg, Germany (2003),
  \href{https://doi.org/10.1007/978-1-4612-6188-9}{10.1007/978-1-4612-6188-9}.

\bibitem{watrous2018}
J.~Watrous, \emph{{The Theory of Quantum Information}}, Cambridge University
  Press, Cambridge, UK (2018).

\bibitem{holevo2019}
A.S.~Holevo, \emph{{Quantum Systems, Channels, Information: A Mathematical
  Introduction}}, De Gruyter Brill, Berlin, Germany (2019).

\bibitem{vanluijkstottmeisterwernerwilming2024}
L.~van Luijk, A.~Stottmeister, R.F.~Werner and H.~Wilming, \emph{{Embezzlement
  of entanglement, quantum fields, and the classification of von Neumann
  algebras}},  \href{https://arxiv.org/abs/2401.07299}{{\ttfamily 2401.07299}}.

\bibitem{vanluijkstottmeisterwernerwilming2025}
L.~van Luijk, A.~Stottmeister, R.F.~Werner and H.~Wilming, \emph{{Pure State
  Entanglement and von Neumann Algebras}},
  \href{https://doi.org/10.1007/s00220-025-05465-5}{\emph{Commun. Math. Phys.}
  {\bfseries 406} (2025) 296}
  [\href{https://arxiv.org/abs/2409.17739}{{\ttfamily 2409.17739}}].

\bibitem{emch1972}
G.G.~Emch, \emph{{Algebraic methods in statistical mechanics and quantum field
  theory}}, Wiley Interscience, New York, USA (1972).

\bibitem{haag1996}
R.~Haag, \emph{{Local quantum physics: Fields, particles, algebras}},
  Springer-Verlag, Berlin, Heidelberg, Germany, second revised and enlarged~ed.
  (1996).

\bibitem{puszworonowicz1975}
W.~Pusz and S.L.~Woronowicz, \emph{{Functional calculus for sesquilinear forms
  and the purification map}},
  \href{https://doi.org/10.1016/0034-4877(75)90061-0}{\emph{Rept. Math. Phys.}
  {\bfseries 8} (1975) 159}.

\bibitem{kuboando1980}
F.~Kubo and T.~Ando, \emph{{Means of positive linear operators}},
  \href{https://doi.org/10.1007/BF01371042}{\emph{Math. Ann.} {\bfseries 246}
  (1980) 205}.

\bibitem{araki1976}
H.~Araki, \emph{{Relative Entropy of States of von Neumann Algebras}},
  \href{https://doi.org/10.2977/prims/1195191148}{\emph{Publ. RIMS, Kyoto
  Univ.} {\bfseries 11} (1976) 809}.

\bibitem{araki1977}
H.~Araki, \emph{{Relative Entropy of States of von Neumann Algebras II}},
  \href{https://doi.org/10.2977/prims/1195190105}{\emph{Publ. RIMS, Kyoto
  Univ.} {\bfseries 13} (1977) 173}.

\bibitem{arakimasuda1982}
H.~Araki and T.~Masuda, \emph{{Positive Cones and $L_p$-Spaces for von Neumann
  Algebras}}, \href{https://doi.org/10.2977/prims/1195183577}{\emph{Publ. RIMS,
  Kyoto Univ.} {\bfseries 18} (1982) 339}.

\bibitem{petz1985}
D.~Petz, \emph{{Quasi-entropies for States of a von Neumann Algebra}},
  \href{https://doi.org/10.2977/prims/1195178929}{\emph{Publ. RIMS, Kyoto
  Univ.} {\bfseries 21} (1985) 787}.

\bibitem{petz1986a}
D.~Petz, \emph{{Quasi-entropies for finite quantum systems}},
  \href{https://doi.org/10.1016/0034-4877(86)90067-4}{\emph{Rept. Math. Phys.}
  {\bfseries 23} (1986) 57}.

\bibitem{hiai2018}
F.~Hiai, \emph{{Quantum $f$-divergences in von Neumann algebras. I. Standard
  $f$-divergences}}, \href{https://doi.org/10.1063/1.5039973}{\emph{J. Math.
  Phys.} {\bfseries 59} (2018) 012202}
  [\href{https://arxiv.org/abs/1805.02050}{{\ttfamily 1805.02050}}].

\bibitem{hayashi2017}
M.~Hayashi, \emph{{Quantum Information Theory: Mathematical Foundation}},
  {Graduate Texts in Physics}, Springer-Verlag, Berlin, Heidelberg, Germany,
  second~ed. (2017),
  \href{https://doi.org/10.1007/978-3-662-49725-8}{10.1007/978-3-662-49725-8}.

\bibitem{jaksicogatapilletseiringer2012}
V.~Jak{\v s}i{\'c}, Y.~Ogata, C.A.~Pillet and R.~Seiringer, \emph{{Quantum
  Hypothesis Testing and Non-Equilibrium Statistical Mechanics}},
  \href{https://doi.org/10.1142/S0129055X12300026}{\emph{Rev. Math. Phys.}
  {\bfseries 24} (2012) 1230002}
  [\href{https://arxiv.org/abs/1109.3804}{{\ttfamily 1109.3804}}].

\bibitem{hirche2018}
C.~Hirche, \emph{{From asymptotic hypothesis testing to entropy inequalities}},
  {PhD thesis}, Universitat Aut{\`o}noma de Barcelona, Bellaterra, Spain, 2018.
\newblock \href{https://arxiv.org/abs/1812.05142}{{\ttfamily 1812.05142}}.

\bibitem{helstrom1969}
C.W.~Helstrom, \emph{{Quantum Detection and Estimation Theory}},
  \href{https://doi.org/10.1007/BF01007479}{\emph{J. Stat. Phys.} {\bfseries 1}
  (1969) 231}.

\bibitem{jencova2010}
A.~Jen{\v c}ov{\'a}, \emph{{Quantum Hypothesis Testing and Sufficient
  Subalgebras}}, \href{https://doi.org/10.1007/s11005-010-0398-0}{\emph{Lett.
  Math. Phys.} {\bfseries 93} (2010) 15}
  [\href{https://arxiv.org/abs/0810.4045}{{\ttfamily 0810.4045}}].

\bibitem{sharmawarsi2012}
N.~Sharma and N.A.~Warsi, \emph{{On the strong converses for the quantum
  channel capacity theorems}},
  \href{https://arxiv.org/abs/1205.1712}{{\ttfamily 1205.1712}}.

\bibitem{sharmawarsi2013}
N.~Sharma and N.A.~Warsi, \emph{{Fundamental Bound on the Reliability of
  Quantum Information Transmission}},
  \href{https://doi.org/10.1103/PhysRevLett.110.080501}{\emph{Phys. Rev. Lett.}
  {\bfseries 110} (2013) 080501}
  [\href{https://arxiv.org/abs/1302.5281}{{\ttfamily 1302.5281}}].

\bibitem{hirchetomamichel2024}
C.~Hirche and M.~Tomamichel, \emph{{Quantum R{\'e}nyi and f-Divergences from
  Integral Representations}},
  \href{https://doi.org/10.1007/s00220-024-05087-3}{\emph{Commun. Math. Phys.}
  {\bfseries 405} (2024) 208}
  [\href{https://arxiv.org/abs/2306.12343}{{\ttfamily 2306.12343}}].

\bibitem{frenkel2023}
P.E.~Frenkel, \emph{{Integral formula for quantum relative entropy implies data
  processing inequality}},
  \href{https://doi.org/10.22331/q-2023-09-07-1102}{\emph{Quantum} {\bfseries
  7} (2023) 1102} [\href{https://arxiv.org/abs/2208.12194}{{\ttfamily
  2208.12194}}].

\bibitem{liuhirchecheng2025}
P.-C.~Liu, C.~Hirche and H.-C.~Cheng, \emph{{Layer Cake Representations for
  Quantum Divergences}},  \href{https://arxiv.org/abs/2507.07065}{{\ttfamily
  2507.07065}}.

\bibitem{jencova2024}
A.~Jen{\v c}ov{\'a}, \emph{{Recoverability of quantum channels via hypothesis
  testing}}, \href{https://doi.org/10.1007/s11005-024-01775-2}{\emph{Lett.
  Math. Phys.} {\bfseries 114} (2024) 31}
  [\href{https://arxiv.org/abs/2303.11707}{{\ttfamily 2303.11707}}].

\bibitem{vanluijkwilming2026}
L.~van Luijk and H.~Wilming, \emph{{Sufficiency and Petz recovery for positive
  maps}},  \href{https://arxiv.org/abs/2604.08380}{{\ttfamily 2604.08380}}.

\bibitem{hircherouzefranca2023}
C.~Hirche, C.~Rouz{\'e} and D.S.~Fran{\c c}a, \emph{{Quantum Differential
  Privacy: An Information Theory Perspective}},
  \href{https://doi.org/10.1109/TIT.2023.3272904}{\emph{IEEE Trans. Info.
  Theor.} {\bfseries 69} (2023) 5771}
  [\href{https://arxiv.org/abs/2202.10717}{{\ttfamily 2202.10717}}].

\bibitem{beigi2013}
S.~Beigi, \emph{{Sandwiched R{\'e}nyi divergence satisfies data processing
  inequality}}, \href{https://doi.org/10.1063/1.4838855}{\emph{J. Math. Phys.}
  {\bfseries 54} (2013) 122202}
  [\href{https://arxiv.org/abs/1306.5920}{{\ttfamily 1306.5920}}].

\bibitem{muellerhermesreeb2017}
A.~M{\"u}ller-Hermes and D.~Reeb, \emph{{Monotonicity of the Quantum Relative
  Entropy Under Positive Maps}},
  \href{https://doi.org/10.1007/s00023-017-0550-9}{\emph{Ann. H. Poincar{\'e}}
  {\bfseries 18} (2017) 1777}
  [\href{https://arxiv.org/abs/1512.06117}{{\ttfamily 1512.06117}}].

\bibitem{jencova2018}
A.~Jen{\v c}ov{\'a}, \emph{{R{\'e}nyi Relative Entropies and Noncommutative
  $L_p$-Spaces}}, \href{https://doi.org/10.1007/s00023-018-0683-5}{\emph{Ann.
  H. Poincar{\'e}} {\bfseries 19} (2018) 2513}
  [\href{https://arxiv.org/abs/1609.08462}{{\ttfamily 1609.08462}}].

\bibitem{kosaki1986}
H.~Kosaki, \emph{{Relative entropy of states: a variational expression}},
  {\emph{J. Operator Theory} {\bfseries 16} (1986) 335},
  \href{https://www.theta.ro/jot/archive/1986-016-002/1986-016-002-010.pdf}{https://www.theta.ro/jot/archive/1986-016-002/1986-016-002-010.pdf}.

\bibitem{friedland2026}
S.~Friedland, \emph{{A generalization of Frenkel's formula}},
  \href{https://arxiv.org/abs/2602.10962}{{\ttfamily 2602.10962}}.

\bibitem{liebloss1997}
E.H.~Lieb and M.~Loss, \emph{{Analysis}}, vol.~14 of \emph{{Graduate Studies in
  Mathematics}}, American Mathematical Society, Providence, Rhode Island, USA,
  second~ed. (2001).

\bibitem{ogata2011}
Y.~Ogata, \emph{{A generalization of Powers--St{\o}rmer inequality}},
  \href{https://doi.org/10.1007/s11005-011-0504-y}{\emph{Lett. Math. Phys.}
  {\bfseries 97} (2011) 339}.

\bibitem{powersstormer1970}
R.T.~Powers and E.~St{\o}rmer, \emph{{Free States of the Canonical
  Anticommutation Relations}},
  \href{https://doi.org/10.1007/BF01645492}{\emph{Commun. Math. Phys.}
  {\bfseries 16} (1970) 1}.

\bibitem{chengliu2026}
H.-C.~Cheng and P.-C.~Liu, \emph{{Multiple Quantum Hypothesis Testing: One-Shot
  Pairwise Bounds and Sharp Asymptotics}},
  \href{https://arxiv.org/abs/2606.06246}{{\ttfamily 2606.06246}}.

\bibitem{chengliu2025}
H.-C.~Cheng and P.-C.~Liu, \emph{{Error Exponents for Quantum Packing Problems
  via An Operator Layer Cake Theorem}},
  \href{https://arxiv.org/abs/2507.06232v3}{{\ttfamily 2507.06232v3}}.

\bibitem{chenggourlamiliu2025}
H.-C.~Cheng, G.~Gour, L.~Lami and P.-C.~Liu, \emph{{The operator layer cake
  theorem is equivalent to Frenkel's integral formula}},
  \href{https://arxiv.org/abs/2512.04345}{{\ttfamily 2512.04345}}.

\bibitem{brattelirobinson1}
O.~Bratteli and D.W.~Robinson, \emph{{Operator Algebras and Quantum Statistical
  Mechanics}}, vol.~1 of \emph{{Theoretical and Mathematical Physics}},
  Springer-Verlag, Berlin, Heidelberg, Germany (1987),
  \href{https://doi.org/10.1007/978-3-662-02520-8}{10.1007/978-3-662-02520-8}.

\bibitem{rudin1987}
W.~Rudin, \emph{{Real and complex analysis}}, McGraw-Hill, Singapore, third
  international~ed. (1987).

\bibitem{dixmier1981}
J.~Dixmier, \emph{{Von Neumann Algebras}}, North-Holland, Amsterdam, The
  Netherlands (1981).

\bibitem{ohyapetz1993}
M.~Ohya and D.~Petz, \emph{{Quantum Entropy and its Use}}, {Theoretical and
  Mathematical Physics}, Springer-Verlag, Berlin, Heidelberg, Germany (1993).

\bibitem{petz1986c}
D.~Petz, \emph{{Sufficient Subalgebras and the Relative Entropy of States of a
  von Neumann Algebra}},
  \href{https://doi.org/10.1007/BF01212345}{\emph{Commun. Math. Phys.}
  {\bfseries 105} (1986) 123}.

\bibitem{takesaki2003}
M.~Takesaki, \emph{{Theory of Operator Algebras II}}, vol.~125 of
  \emph{{Encyclopaedia of Mathematical Sciences}}, Springer-Verlag, Berlin,
  Heidelberg, Germany (2003),
  \href{https://doi.org/10.1007/978-3-662-10451-4}{10.1007/978-3-662-10451-4}.

\bibitem{terp1981}
M.~Terp, ``{$L^p$ spaces associated with von Neumann algebras}.'' Report No. 3a
  + 3b, K{\o}benhavns Universitets Matematiske Institut, June, 1981.

\bibitem{correadasilva2018}
R.~{Correa Da Silva}, \emph{{Lecture Notes on Noncommutative {$L_p$}-Spaces}},
  \href{https://arxiv.org/abs/1803.02390}{{\ttfamily 1803.02390}}.

\bibitem{pedersen2018}
G.K.~Pedersen, \emph{{$C^*$-Algebras and Their Automorphism Groups}},  {Pure
  and Applied Mathematics}, Academic Press (2018),
  \href{https://doi.org/10.1016/B978-0-12-814122-9.00005-2}{DOI}.

\bibitem{luczakpodsedkowskawieczorek2021}
A.~{\L}uczak, H.~Pods{\k{e}}dkowska and R.~Wieczorek, \emph{{Relative modular
  operator in semifinite von Neumann algebras and its use}},
  \href{https://doi.org/10.1142/S0129055X22500404}{\emph{Rev. Math. Phys.}
  {\bfseries 35} (2021) 2250040}
  [\href{https://arxiv.org/abs/1912.09633}{{\ttfamily 1912.09633}}].

\bibitem{umegaki1962}
H.~Umegaki, \emph{{Conditional expectation in an operator algebra, IV (Entropy
  and information)}}, \href{https://doi.org/10.2996/kmj/1138844604}{\emph{Kodai
  Math. Sem. Rep.} {\bfseries 14} (1962) 59}.

\bibitem{haagerup1979}
U.~Haagerup, \emph{{On the dual weights for crossed products of von Neumann
  algebras I: Removing separability conditions}},
  \href{https://doi.org/10.7146/math.scand.a-11768}{\emph{Math. Scand.}
  {\bfseries 43} (1978) 99–118}.

\bibitem{goldsteinlabuschagne2025}
S.~Goldstein and L.~Labuschagne, \emph{{Noncommutative Measures and $L^p$ and
  Orlicz Spaces, with Applications to Quantum Physics}}, Oxford University
  Press, Oxford, UK (2025),
  \href{https://doi.org/10.1093/oso/9780198950202.001.0001}{10.1093/oso/9780198950202.001.0001}.

\bibitem{haagerupjungexu2009}
U.~Haagerup, M.~Junge and Q.~Xu, \emph{{A reduction method for noncommutative
  $L_p$-spaces and applications}},
  \href{https://doi.org/10.1090/S0002-9947-09-04935-6}{\emph{Trans. Amer. Math.
  Soc.} {\bfseries 362} (2010) 2125}
  [\href{https://arxiv.org/abs/0806.3635}{{\ttfamily 0806.3635}}].

\bibitem{kosaki1984b}
H.~Kosaki, \emph{{On the Continuity of the Map $\varphi\to
  \left\lvert\varphi\right\rvert$ from the Predual of a $W^*$-Algebra}},
  \href{https://doi.org/10.1016/0022-1236(84)90055-7}{\emph{J. Funct. Analysis}
  {\bfseries 59} (1984) 123}.

\bibitem{diesteluhl1977}
J.~Diestel and J.J.~{Uhl, Jr.}, \emph{{Vector measures}}, vol.~15 of
  \emph{{Mathematical Surveys}}, American Mathematical Society, Providence,
  Rhode Island, USA (1977).

\bibitem{schmuedgen2012}
K.~Schmüdgen, \emph{{Unbounded Self-adjoint Operators on Hilbert Space}},
  Springer Dordrecht, Dordrecht, The Netherlands (2012),
  \href{https://doi.org/10.1007/978-94-007-4753-1}{10.1007/978-94-007-4753-1}.

\bibitem{kossmannschwonnekliucheng2026}
G.~Koßmann, R.~Schwonnek, P.-C.~Liu and H.-C.~Cheng, \emph{{Device-independent
  Quantum Key Distribution in the commuting operator framework}}, {\emph{To
  appear}}.

\end{thebibliography}\endgroup

\end{document}